\newcolumntype{L}{>{\arraybackslash}X}
\theoremstyle{plain}
\newtheorem{theorem}{Theorem}[section]
\theoremstyle{remark}
\newtheorem{remark}[theorem]{Remark}
\newtheorem{roadmap}[theorem]{\em \textbf{Roadmap}}
\newtheorem{example}[theorem]{Example}
\theoremstyle{plain}
\newtheorem{corollary}[theorem]{Corollary}
\newtheorem{lemma}[theorem]{Lemma}
\newtheorem{proposition}[theorem]{Proposition}
\newtheorem{definition}[theorem]{Definition}
\newtheorem{assumption}[theorem]{Assumption}
\numberwithin{equation}{section}
\def\N{{\mathbb N}}
\def\Z{{\mathbb Z}}
\def\R{{\mathbb R}}
\def\C{{\mathbb C}}
\newcommand{\one}{{{\bf 1}}}
\newcommand{\E}{{\mathbf E}}
\renewcommand{\P}{{\mathbf P}}
\newcommand{\F}{{\mathscr F}}
\newcommand{\Progress}{\mathscr{P}}
\newcommand{\g}{\gamma}
\newcommand{\s}{\delta}
\newcommand{\om}{\omega}
\renewcommand{\O}{\Omega}
\renewcommand{\a}{\kappa}
\newcommand{\crit}{\mathrm{c}}
\newcommand{\Borel}{\mathscr{B}}
\newcommand{\Tor}{\mathbb{T}}
\newcommand{\Dom}{\mathcal{O}}
\newcommand{\T}{\mathbb{T}}
\newcommand{\Hs}{\mathbb{H}}
\newcommand{\Bs}{\mathbb{B}}
\newcommand{\p}{\mathbb{P}}
\newcommand{\D}{\mathscr{D}}
\newcommand{\wt}{\widetilde}
\newcommand{\wh}{\widehat}
\newcommand{\embed}{\hookrightarrow}
\newcommand{\ps}{\partial^2\,}
\newcommand{\ph}{\nabla_{x,y}\,}
\newcommand{\ft}{\wt{f}_h}
\newcommand{\fts}{\wt{f}_{h,\sigma}}
\newcommand{\ftss}{\mathcal{L}_{h,\sigma}}
\newcommand{\ftg}{\wt{f}_{h,g}}
\newcommand{\fover}{\overline{\mathcal{L}}_{h,\sigma}}
\newcommand{\fwt}{\wt{\mathcal{L}}_{h,\sigma}}
\newcommand{\dd}{\mathrm{d}}
\newcommand{\loc}{\mathrm{loc}}
\newcommand{\Rsec}{\mathcal{R}}
\newcommand{\NN}{\mathsf{N}}
\newcommand{\q}{\vec{q}}
\newcommand{\qq}{\mathbb{Q}}
\newcommand{\x}{\mathbf{x}}
\newcommand{\kb}{\mathbf{k}}
\newcommand{\Br}{\mathcal{B}}
\newcommand{\diff}{\mathsf{d}}
\newcommand{\tr}{\mathsf{s}}
\newcommand{\set}{\mathcal{S}}
\newcommand{\nt}{\chi}
\newcommand{\X}{\mathcal{X}}
\newcommand{\Y}{\mathcal{Y}}
\newcommand{\nn}{N_{v}}
\newcommand{\JJ}{\mathcal{I}}
\newcommand{\norm}{\mathcal{N}}
\newcommand{\pz}{p_0}
\newcommand{\A}{\widehat{A}}
\newcommand{\B}{\widehat{B}}
\newcommand{\FF}{\widehat{F}}
\newcommand{\GG}{\widehat{G}}
\newcommand{\uu}{U}
\newcommand{\XX}{\widehat{X}}
\newcommand{\zero}{0}
\newcommand{\ellip}{\nu}
\newcommand{\Xap}{X^{{\rm Tr}}_{\a,p}}
\newcommand\reallywidehat[1]{%
\savestack{\tmpbox}{\stretchto{%
  \scaleto{%
    \scalerel*[\widthof{\ensuremath{#1}}]{\kern-.6pt\bigwedge\kern-.6pt}%
    {\rule[-\textheight/2]{1ex}{\textheight}}
  }{\textheight}%
}{0.5ex}}%
\stackon[1pt]{#1}{\tmpbox}%
}
\begin{document}

\date\today

\title[The primitive equations with rough transport noise]{The primitive equations with rough transport noise: 
Global well-posedness and regularity}

\keywords{Primitive equations, blow-up criteria, regularity, global well-posedness, critical spaces, anisotropic spaces, transport noise, stochastic maximal regularity, turbulent flows, Kraichnan's turbulence.}

\thanks{The author has received funding from the VICI subsidy VI.C.212.027 of the Netherlands Organisation for Scientific Research (NWO), and from the European Research Council (ERC) under the Eu\-ropean Union’s Horizon 2020 research and innovation programme (grant agreement No 948819) \includegraphics[height=0.4cm]{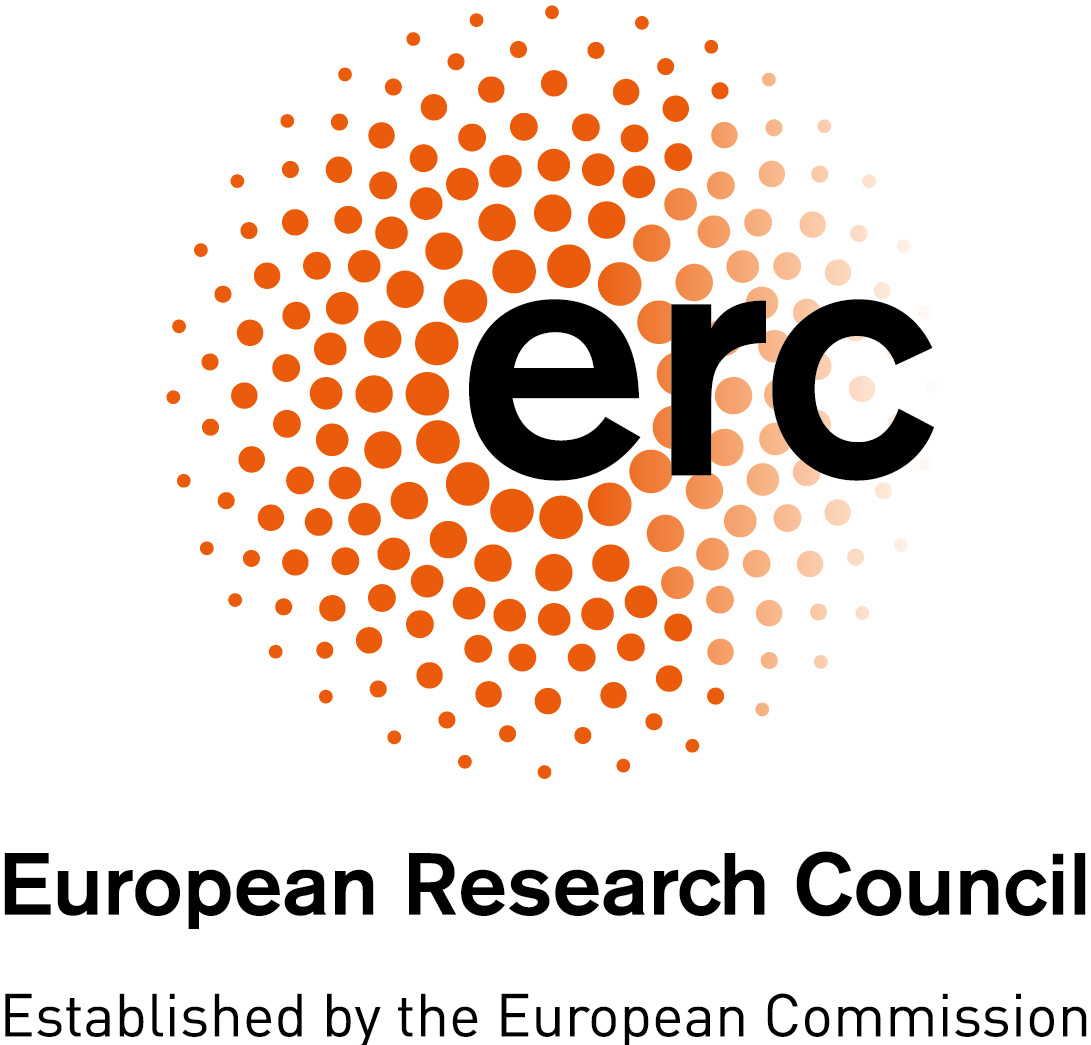}\,\includegraphics[height=0.4cm]{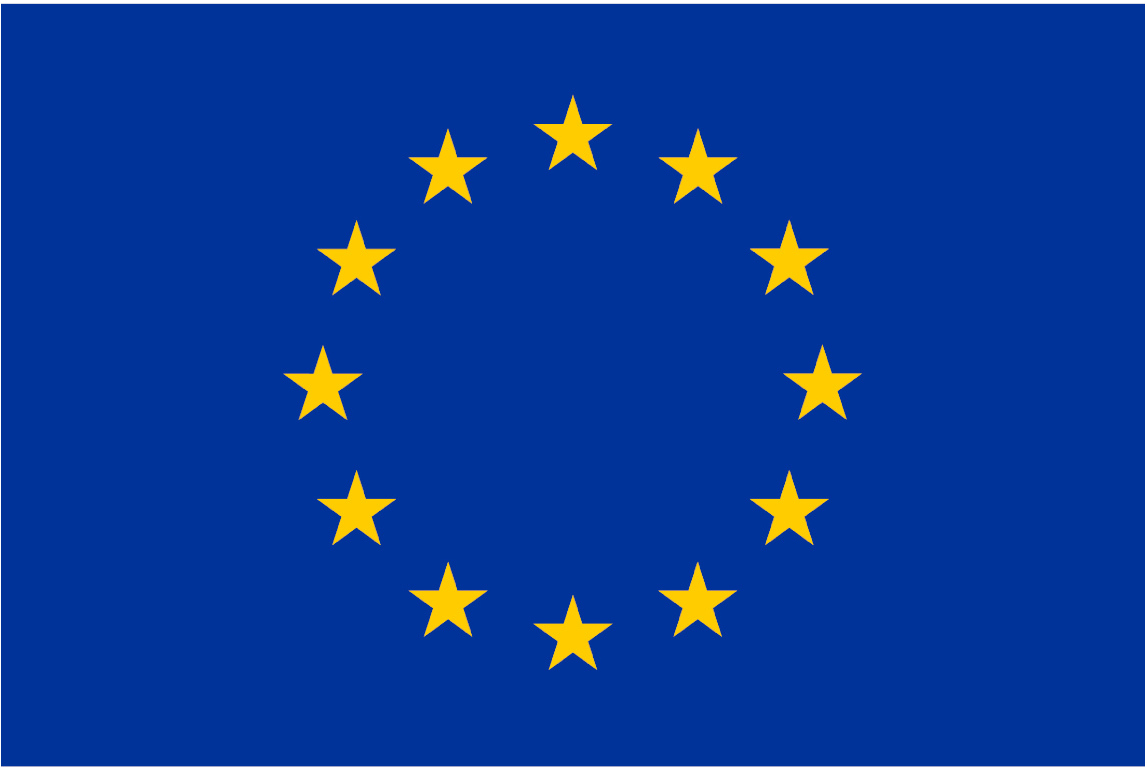}.
}

\author{Antonio Agresti}
\address{Delft Institute of Applied Mathematics\\
Delft University of Technology \\ P.O. Box 5031\\ 2600 GA Delft\\The
Netherlands} 
\email{antonio.agresti92@gmail.com}

\subjclass[2010]{Primary 35Q86; Secondary 35R60, 60H15, 76M35, 76U60} 

\begin{abstract}
In this paper we establish global well-posedness and instantaneous regularization results for the primitive equations with transport noise of H\"older regularity $ \g>\frac{1}{2}$. It is known that if $\g<1$, then the noise is too rough for a strong formulation of primitive equations in an $L^2$-based setting. 
To handle rough noise, we crucially use $L^q$-techniques with $q> 2$. Interestingly, we identify a family of critical anisotropic Besov spaces for primitive equations, which is new even in the deterministic case. The behavior of these spaces reflects the intrinsic anisotropy of the primitive equations and plays an essential role in establishing global well-posedness and regularization. Our results cover Kraichnan's type noise with correlation greater than one, and as a by-product, a 2D noise reproducing the Kolmogorov spectrum of turbulence.
Moreover, the instantaneous regularization is new also in the widely studied case of $H^1$-data and $\g>1 $.
\end{abstract}

\maketitle

\addtocontents{toc}{\protect\setcounter{tocdepth}{1}}
\tableofcontents

\section{Introduction}
\label{s:intro}
The primitive equations (PEs in the following) are one of the fundamental models for geophysical flows used to describe oceanic and atmospheric dynamics. They are derived from the Navier-Stokes equations in domains where the vertical scale is much smaller than the horizontal scale by the small aspect ratio limit. We refer to \cite{KGHHKW20,LT19}  for rigorous justifications in the deterministic setting, and \cite[Section 2]{Primitive2} for the stochastic counterpart. Detailed information on the geophysical background for the various versions of the deterministic primitive equations can be found e.g.\ in \cite{K21_global,Ped,Vallis06}. 
In this paper we establish global well-posedness and instanstaneous regularization for PEs with  \emph{rough} transport noise on the periodic box $\T^3=[0,1]^3$:
\begin{equation}
\label{eq:primitive_intro}
\left\{
\begin{aligned}
		&\partial_t v   -\Delta v= \big[-\ph p-(u\cdot\nabla)v\big] \\ 
		&\qquad\qquad\quad 
		+\sum_{n\geq 1}\big[(\sigma_{n}\cdot\nabla) v-\ph \wt{p}_n\big]\, \dot{ \beta}_t^n, \quad &\text{ on }&\Tor^3,\\
		&\partial_t \theta   -\Delta \theta= -(u\cdot\nabla)\theta 		+\sum_{n\geq 1}(\nt_{n}\cdot\nabla) \theta\, \dot{ \beta}_t^n, \qquad\quad &\text{ on }&\Tor^3,\\
		&\partial_z p +\theta=0, \qquad  \partial_z \wt{p}_n=0,&\text{ on }&\Tor^3,\\
&\nabla \cdot u=0,&\text{ on }&\Tor^3,\\
&v(0,\cdot)=v_0,\qquad \theta(0,\cdot)=\theta_0,&\text{ on }&\Tor^3.
\end{aligned}
\right.
\end{equation}
Here the unknown processes are the velocity field $u:\R_+\times \O\times \T^3\to \R^3$, the temperature $\theta:\R_+\times \O\times \T^3\to \R$ and the pressures $p,\wt{p}_n:\R_+\times \O\times \T^3\to \R$. The velocity field $u$ is decomposed as $(v,w)$ where $v:\R_+\times \O\times \T^3\to \R^2$  and $w:\R_+\times \O\times \T^3\to \R$ denote the horizontal and the vertical component of $u$, respectively. Finally, $(\beta^n)_{n\geq 1}$ is a sequence of standard independent Brownian motions on a filtered probability space, and \eqref{eq:primitive_intro} is complemented with the following boundary conditions:
\begin{equation}
\label{eq:boundary_conditions_intro}
w=0 \  \text{ on }\ \T_{x,y}^2\times\{0,1\}.
\end{equation}
Our results also cover the case of stochastic PEs with additional lower-order nonlinearities and transport noise in the Stratonovich form, see Section \ref{ss:stratonovich} for details. 
In fluid mechanics, and in particular for geophysical flows, the Stratonovich formulation is relevant, and it is often seen as a more realistic model compared to the It\^o formulation, see e.g.\ \cite{BiFla20,Franzke14,HL84,W_thesis} and the references therein. 
A discussion on different boundary conditions can be found in Subsection \ref{ss:open_problems}. 
Before going further, let us mention that, 
in the physics literature, the PEs are often coupled with a balance for the salinity. However, it is omitted here as its mathematical treatment (but not its physical contribution) is analogous to the one for the temperature.

In the deterministic setting, the mathematical study of PEs began with the works of J.L.\ Lions, R.\ Temam, S.\ Wang \cite{LiTeWa1,LiTeWa2}, where they established the global existence of Leray's type solutions to the deterministic primitive equations. A breakthrough result on \emph{global well-posedness} of PEs with $H^1$-data was obtained by C.\ Cao and E.S.\ Titi in \cite{CT07}; see also \cite{Ko07,Kukavica_2007,GGHHK20_bounded,GGHK21_scaling,HH20_fluids_pressure,Salinity_MHK,HK16,Ju17} for further works.

In the stochastic setting, PEs with additive and/or multiplicative noise have been studied by several authors, see e.g.\ \cite{Primitive2,Primitive1,BS21,Debussche_2012,GHKVZ14} and the references therein.
In the current work, we are mainly concerned with stochastic perturbations of \emph{transport} type.  
In the context of stochastic fluid dynamics, transport noise has attracted a lot of interest in the last decades see e.g.\ \cite{AV21_NS,BiFla20,FG95,FL19,FlaPa21,HLN19,HLN21_annals,H15_SVP,MR01,MR04,MR05} and the references therein.
The PEs with transport noise \eqref{eq:primitive_intro} can be derived in two ways: using either two-scale systems \cite[Subsection 7.3]{DP22_two_scale}, or the above-mentioned hydrostatic approximation and using the Navier-Stokes equations with transport noise as a starting point \cite[Subsection 2.3]{Primitive2}. In \cite{MR01,MR04} the authors derived Euler and Navier-Stokes equations with transport noise by using the Newton law and assuming that a fluid particle located at $\x_0$ at time $t=0$ obeys the following stochastic dynamics:
\begin{equation}
\label{eq:flow_map}
\dot{\x}(t)=u(t,\x(t))+ \sigma(t,\x(t))\circ \dot{\Br}_t \ \text{ for }\  t>0, \ \ \ \text{ and }\ \  \
\x(0)=\x_0.
\end{equation}
In the above $\circ$ and $\Br$ denote the Stratonovich integration and a cylindrical Brownian motion, respectively. The generalized random field $\sigma(t,\x(t))\circ \dot{\Br}_t$ models the `turbulent' part of the velocity field, while $u(t,\x(t))$ models the `regular' part.

The interest in stochastic flows induced by \eqref{eq:flow_map} is related to the seminal works by R.\ Kraichnan \cite{K68,K94} on turbulent transportation (see e.g.\ \cite{GK96,GV00,MK99_simplified} and the references therein for subsequent results). In the former works, a turbulent velocity field on a $d$-dimensional set is modeled via a Gaussian vector field which is white in time and colored in space with correlation of the form: For 
 $\x,\x'\in \T^d$,
\begin{equation}
\label{eq:correlation_intro}
\mathscr{C}(\x,\x')
=\sum_{\kb\in \Z^d\setminus \{0\}} 
\Big(\mathrm{Id}-\frac{\kb \otimes \kb}{|\kb|^2}\Big)\frac{\mathscr{E}_{\alpha}(\kb)}{|\kb|^{d-1}}\, e^{i \kb\cdot (\x-\x')}\  \text{ where } \ 
\mathscr{E}_{\alpha}(\kb)\eqsim \frac{1}{|\kb|^{1+\alpha}}
\end{equation}
for some $\alpha\in (0,\infty)$. In applications to turbulent flows, one is interested in the range $\alpha\in (0,2)$ (cf.\ \cite[pp.\ 138]{MR05}). 
In particular, the \emph{Kolmogorov spectrum of turbulence} corresponds to $\alpha=\frac{4}{3}$ in \eqref{eq:correlation_intro}, cf.\ \cite[pp. 426-427 and 436]{MK99_simplified} and \cite[Remark 5.3]{GY21_stabilization}. 
Such Gaussian vector field can be realized via the random vector field $\wh{\sigma}_d(\x)\, \dot{\Br}_t= \sum_{n\geq 1}\wh{\sigma}_{d,n} (\x)\dot{\beta}^n_t$ where $(\beta^n)_{n\geq 1}$ is a sequence of standard independent Brownian motions. In Proposition \ref{prop:smoothness_Kraichnan}, we show that the parameter $\alpha$ uniquely determines the smoothness of $\wh{\sigma}_d$: 
\begin{equation}
\label{eq:regularity_Knoise_intro}
\wh{\sigma}_d\not\in H^{\alpha}(\T^d;\ell^2) \quad 
\text{ and }\quad \wh{\sigma}_d\in C^{\g}(\T^d;\ell^2) \ \ \text{ for all }\g<\tfrac{\alpha}{2}.
\end{equation} 
With this application in mind, we study the PEs \eqref{eq:primitive_intro} with transport noise of H\"older coefficients with  as little regularity as possible. However, in our main results, we restrict to the case $\alpha>1$. We refer to Subsection \ref{ss:open_problems} for comments on the case $\alpha\in (0,1]$. Let us note that our analysis covers the case $\alpha=\frac{4}{3}$ and hence the case of the Kolmogorov spectrum (see \cite{P91_Kolmogorov} for its relevance in oceanic flows and in the heat balance). 
Further comments and references are given below Theorem \ref{t:global_intro}.

The remaining part of this section is organized as follows. In Subsection \ref{ss:global_intro}, for illustrative purposes we state a special case of our main results. Compared to the existing literature, the key novelties are related to the \emph{roughness} of the transport noise and \emph{instantaneous regularization} of solutions. In particular, we are able to consider a 2D transport noise that reproduces the Kolmogorov spectrum of turbulence in the equation for the fluid motion, while we can allow the full 3D noise in the temperature balance. In Subsection \ref{ss:scaling_intro} we provide a throughout discussion on critical spaces for the (stochastic) PEs. Finally, in Subsection \ref{ss:overview}  and  \ref{ss:notation} we provide an overview of our results and introduce some notation, respectively.

\subsection{Illustration -- Global well-posedness in the Kolmogorov regime}
\label{ss:global_intro}
We begin by stating the following special case of our main results.

\begin{theorem}[Global well-posedness of PEs with rough transport noise -- Informal version]
\label{t:global_intro}
Suppose that 
\begin{align}
\label{eq:ass_intro_1}
\sigma&=(\sigma_n)_{n\geq 1}\in C^{\g_0}(\T^3;\ell^2(\N_{>0};\R^3)) \text{ for some } \g_0>\tfrac{1}{2},\\
\label{eq:ass_intro_2}
\sigma_n &=  \sigma_n(x,y,z)  \text{ is independent of }z \text{ for all }n\geq 1,\\
\label{eq:ass_intro_3}
\nt&=(\nt_n)_{n\geq 1}\in C^{\g_1}(\T^3;\ell^2(\N_{>0};\R^3))  \text{ for some } \g_1>0,
\end{align}
and that there exists $r\in (0,2)$ such that a.e.\ on $\R_+\times \O\times \T^d$ and for all $\xi\in \R^3$
\begin{equation}
\label{eq:parabolicity_intro}
\sum_{n\geq 1} |\sigma_n \cdot\xi|^2 \leq r|\xi|^2 \quad \text{ and }\quad 
\sum_{n\geq 1} |\nt_n \cdot\xi|^2 \leq r|\xi|^2  \ \ \ \emph{\text{(parabolicity)}}.
\end{equation}
Then for all $v_0\in H^1(\T^3;\R^2)$, $\theta_0\in H^1(\T^3)$ such that $\int_{\T_z} \nabla_{x,y}\cdot  v_0(\cdot,z)\,\dd z=0$, 
the stochastic PEs \eqref{eq:primitive_intro} admits a \emph{unique global solution} 
$$
v:\R_+\times \O\times \T^3\to \R^2, \qquad  \theta:\R_+\times \O\times \T^3\to \R,
$$ which instantaneously regularizes in space and time, i.e.\
\begin{align}
\label{eq:regularity_intro_1}
v
&\in C^{\mu_0,\nu_0}_{\loc} (\R_+\times \T^3;\R^2) & \text{ a.s.\ for all }&\mu_0\in [0,\tfrac{1}{2}),\ \nu_0\in (0,2\wedge(1+\g_0)),\\
\label{eq:regularity_intro_2}
\theta &\in C^{\mu_1,\nu_1}_{\loc} (\R_+\times \T^3; \R) & \text{ a.s.\ for all }&\mu_1\in [0,\tfrac{1}{2}),\ \nu_1\in (0,2\wedge(1+\g_1)).
\end{align}
Finally, the global solution $(v,\theta)$ depends continuously w.r.t.\ $(v_0,\theta_0)$.
\end{theorem}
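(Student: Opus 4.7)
The plan is to combine stochastic maximal $L^p(L^q)$-regularity in a suitable critical anisotropic Besov space (as identified in the paper) with a blow-up criterion, and then to rule out blow-up via a Cao--Titi type $H^1$-a priori bound adapted to the stochastic setting. First I would eliminate the pressures: the hydrostatic constraint $\partial_z p=-\theta$ together with $\nabla\cdot u=0$ and $w|_{z=0,1}=0$ yields $w(x,y,z)=-\int_0^z \nabla_{x,y}\cdot v(x,y,z')\,\dd z'$ and expresses the surface pressure as a nonlocal Riesz-type operator of $v$ and $\theta$; similarly $\wt p_n$ is determined by $\sigma_n$ via the hydrostatic Leray projector since $\partial_z \wt p_n=0$. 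This turns \eqref{eq:primitive_intro} into an autonomous quasilinear SPDE for $(v,\theta)$ with leading-order transport noise.

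On the resulting system I would invoke stochastic maximal $L^p(L^q)$-regularity with $q>2$: the threshold $\g_0>\tfrac12$ in \eqref{eq:ass_intro_1} is exactly what is required to close the estimates for the transport term $(\sigma_n\cdot\nabla)v$ in an $L^q$-based setting, while the parabolicity condition \eqref{eq:parabolicity_intro} guarantees the operator-theoretic hypotheses of the framework. A contraction argument in a critical anisotropic Besov space then yields a unique local strong solution $(v,\theta)$ up to a maximal stopping time $\tau>0$, together with continuous dependence on $(v_0,\theta_0)$. Once such a local solution exists, the instantaneous regularization \eqref{eq:regularity_intro_1}--\eqref{eq:regularity_intro_2} follows by bootstrapping: on $(0,\tau)$ the equation for $(v,\theta)$ is a linear parabolic SPDE with H\"older coefficients and already-controlled drift, so parabolic Schauder-type estimates in the stochastic setting give the spatial H\"older exponent up to $\nu_i<1+\g_i$, with the usual $\mu_i<\tfrac12$ parabolic trade-off in time.

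To show $\tau=\infty$ a.s., I would use a blow-up criterion from the stochastic maximal regularity machinery, which forces some critical norm of $(v,\theta)$ to blow up on $\{\tau<\infty\}$, and exclude this via a priori $H^1$-bounds \`a la Cao--Titi. Writing $v=\bar v+\wt v$ with $\bar v(x,y)=\int_0^1 v(x,y,z)\,\dd z$ the barotropic mode and $\wt v$ the baroclinic fluctuation, assumption \eqref{eq:ass_intro_2} ensures that $\bar v$ satisfies a closed 2D Navier--Stokes-type SPDE with transport noise, for which $H^1$-estimates are accessible, while $\wt v$ enjoys better integrability of $\partial_z \wt v$ coming from the divergence constraint. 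Applying It\^o's formula to $\|\nabla v\|_{L^2}^2+\|\nabla\theta\|_{L^2}^2$, the It\^o correction $\tfrac12\sum_n (\sigma_n\cdot\nabla)^2$ together with the stochastic integral contributes a second-order elliptic term that, thanks to \eqref{eq:parabolicity_intro} with $r<2$, is strictly dominated by the diffusion and leaves only controllable remainders. Gronwall and Burkholder--Davis--Gundy then close the estimate, producing arbitrarily high moments of $\sup_{t\le T}(\|v\|_{H^1}+\|\theta\|_{H^1})$ for every deterministic $T$, which combined with the blow-up criterion forces $\tau=\infty$.

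The main obstacle is this last step: the Cao--Titi splitting and the associated nonlinear estimates (in particular the sharp use of anisotropic Ladyzhenskaya/Sobolev inequalities on the $uv\cdot\nabla v$ term with $w$ reconstructed from $v$) must survive in the presence of \emph{rough} transport noise, where several deterministic cancellations are spoiled by It\^o corrections. The compatibility of the barotropic/baroclinic decomposition with the $z$-independence assumption \eqref{eq:ass_intro_2} on $\sigma_n$ is what allows the deterministic strategy to carry over, and extracting enough regularity of the stochastic integrals to justify the It\^o-calculus manipulations in $H^1$ (given only $\g_0>\tfrac12$) is the subtle technical point that makes the $L^q$-based local theory of Step 2 indispensable.
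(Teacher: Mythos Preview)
Your overall architecture matches the paper: reformulate via the hydrostatic Leray projector, obtain local well-posedness in a critical anisotropic Besov space by stochastic maximal $L^p(L^q)$-regularity, bootstrap regularity, and then combine a Serrin-type blow-up criterion with an a priori energy estimate to conclude $\tau=\infty$. The discrepancy is in the energy estimate itself, and it is a genuine gap.

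You propose to close It\^o's formula on $\|\nabla v\|_{L^2}^2+\|\nabla\theta\|_{L^2}^2$ and in particular to control the barotropic mode $\bar v$ in $H^1$ via its 2D Navier--Stokes structure. This is precisely the classical Cao--Titi route that the paper explains \emph{cannot} be run when $\g_0<1$. Applying It\^o to $\|\nabla v\|_{L^2}^2$ requires either $(\sigma_n\cdot\nabla)v\in H^1(\ell^2)$ or, equivalently after differentiating the equation in $x,y$, control on $\nabla_{x,y}\sigma_n$. With only $\sigma\in C^{\g_0}(\ell^2)$ and $\g_0\in(\tfrac12,1)$, horizontal derivatives of $\sigma$ are genuine distributions and the commutator/It\^o-correction terms do not close in $L^2$. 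The paper states this obstruction explicitly and, accordingly, \emph{never} proves any strong-norm estimate for $\bar v$.

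What the paper does instead is exploit the $z$-independence in \eqref{eq:ass_intro_2} in the vertical direction only: It\^o's formula is applied to $\|\partial_z v\|_{L^2}^2$ (and to $\|\wt v\|_{L^4}^4$), where differentiating in $z$ produces only $\partial_z\sigma_{n,z}$, which is in $L^\infty$ by assumption (the horizontal components of $\sigma_n$ being $z$-independent contribute nothing). This yields $\partial_z v\in L^\infty_t L^2_\x\cap L^2_t H^1_\x$ and $\wt v\in L^\infty_t L^4_\x$, but no $H^1$-control on $\bar v$. The closing step is then an anisotropic embedding: these bounds on $\partial_z v$ imply $v\in L^{2p_0}_t(L^{2r}_{x,y}(H^1_z))$ with $r=\tfrac{2q}{\delta q+2}$, which is exactly the quantity needed to run a stochastic Gronwall argument directly in the critical space $B^{2/q}_{(q,2),p_0}$ (via stochastic maximal regularity), and this feeds into the Serrin criterion. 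The anisotropy of the target space $B^{2/q}_{(q,2),p_0}$ is essential here; the corresponding isotropic embedding fails (see Remark~\ref{r:necessity_anisotropic}). So the missing idea in your proposal is to replace the unavailable full $\nabla v$ estimate by a $\partial_z v$ estimate and to close in the anisotropic Besov scale rather than in $H^1$.
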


The above is a special case of Theorems \ref{t:regularity2}, \ref{t:global} and \ref{t:continuous_dependence} (see also  Section \ref{s:non_isothermal} for the non-isothermal versions). 
The above global well-posedness result also holds for the PEs with transport noise in Stratonovich form, see Section \ref{ss:stratonovich}. In the latter case the parabolicity assumption \eqref{eq:parabolicity_intro} is redundant.

In Theorem \ref{t:global_intro}, the unknowns $w,p,\wt{p}$ are not specified as they are determined by $v$ and $\theta$, see Subsection \ref{ss:reformulation}. Let us anticipate that in Theorems \ref{t:regularity2}, \ref{t:global} and \ref{t:continuous_dependence} we also consider rougher initial data, lower-order nonlinearities in \eqref{eq:primitive_intro} and $z$-dependent $\sigma_{n,z}=\sigma_n \cdot e_{z}$ (see Assumption \ref{ass:global}).
The condition \eqref{eq:ass_intro_2} forces the transport noise to be two-dimensional. Physical motivations for the assumption \eqref{eq:ass_intro_2} are discussed in \cite[Remark 2.2 and 2.3]{Primitive2}, and \eqref{eq:ass_intro_2} can be justified via the \emph{stochastic hydrostatic approximation} introduced in \cite[Subsection 2.2]{Primitive2} (see \cite{AG01_approx,KGHHKW20,LT19} for the deterministic counterpart). We emphasize that \eqref{eq:ass_intro_2} is in accordance with measurements of turbulent oceanic streams which can be well-approximated by two-dimensional flows, see e.g.\
\cite{BE12,C01_twod,R73_twod,T02_twod,Y088_twod}.

Compared to the existing literature, the main novelties of Theorem \ref{t:global_intro} are:
\begin{enumerate}[{\rm(a)}]
\item \label{it:rough_transp_intro} The choice $\g_0<1$ and $\g_1<1$ in \eqref{eq:ass_intro_1} and \eqref{eq:ass_intro_3}, respectively.
\item \label{it:regularity_intro}  The instantaneous regularization result of \eqref{eq:regularity_intro_1}-\eqref{eq:regularity_intro_2}. 
\end{enumerate}

\eqref{it:rough_transp_intro}: 
In the author's opinion, this is the \emph{key} novelty of 
Theorem \ref{t:global_intro}. Indeed, the latter allows us to cover Kraichnan-type noise with correlation intensity $\alpha\in (1,2]$, see \eqref{eq:correlation_intro}-\eqref{eq:regularity_Knoise_intro}. In particular, it covers the case where $\sigma=(\wh{\sigma}_2,0)$ where $\wh{\sigma}_2$ is as below \eqref{eq:correlation_intro} with $\alpha=\frac{4}{3}$, hence reproducing the Kolmogorov spectrum of turbulence (cf.\ the comments below \eqref{eq:correlation_intro}). As recalled below Theorem \ref{t:global_intro}, the 2d nature of the choice $\sigma=(\wh{\sigma}_2,0)$ is in accordance with the physical derivation in \cite[Section 2]{Primitive2}. For the transport noise in the heat balance, due to \eqref{eq:ass_intro_3}, Kraichnan's noise with arbitrarily small correlation $\alpha>0$ is allowed. In particular, we may take $\nt=\wh{\sigma}_3$ with $\alpha=\frac{4}{3}$ (see e.g.\ \cite{P91_Kolmogorov} for physical motivations).

In the comments below we mainly focus on the velocity field $v$, as related remarks on the temperature $\theta$ are similar and simpler.

Kraichnan's type noises with correlation parameter $\alpha\leq 2$ were out of reach in previous works (see e.g.\ \cite{Primitive2,Primitive1,BS21,Debussche_2012}). To the best of our knowledge, in all previous results the transport noise coefficient $\sigma$ is assumed to be at least in the class $W^{1,r}(\T^3;\ell^2)$ for some $r>3$ (see e.g.\ \cite[Assumption 3.1(2)]{Primitive1}). 
However, as \eqref{eq:regularity_Knoise_intro} shows, this forces $\alpha>2$.
Next we discuss the reasons behind the latter assumption in previous works on stochastic PEs.
The smoothness condition on $\sigma$ is a consequence of the \emph{criticality} of PEs in the $H^1$-setting (see \cite[Subsection 5.1]{Primitive1} and Subsection \ref{ss:scaling_intro} below) and therefore in an $L^2$-scale one \emph{cannot} treat the PEs in a weaker setting than in the $H^1$-one. As a by-product one cannot handle transport noise with smoothness coefficients as in \eqref{eq:ass_intro_1} for $\g<1$ (therefore $\alpha\geq 2$). To see this one can argue as follows. As standard in SPDEs (see e.g.\ \cite{AV19_QSEE_1,DPZ,LR15}), one recasts the PEs as a stochastic evolution equation on a suitable Banach space. In the $H^1$-setting, to make sense of the transport noise term, one has to require that
\begin{equation}
\label{eq:transport_noise_mapping}
  v\in  H^{2}(\T^3) \quad \Longrightarrow \quad
((\sigma_n\cdot \nabla)v)_{n\geq 1 } \in H^{1}(\T^3;\ell^2).
\end{equation}
Reasoning as in \cite[Subsection 4.2]{Primitive1} this above is essentially equivalent to $\sigma\in W^{1,3}(\T^3;\ell^2)$ (or $\sigma\in H^{1}(\T^3;\ell^2)$ if $\sigma$ also satisfies \eqref{eq:ass_intro_2}). In light of \eqref{eq:regularity_Knoise_intro}, in a Hilbert setting there is no way to study the PEs with Kraichnan's type noise with correlation $\alpha \leq 2$. To overcome this difficulty we use the $L^q$-setting instead of a Hilbert space one. The basic advantage of the $L^q$-setting is that the choice of integrability $q\gg 2$ reduces the spatial smoothness required for the critical space, i.e.\ from $H^1$ to $B^{2/q}_{(q,2),p}=B^{2/q}_{(q,2),p}(\T^3;\R^2)$ for some $2<p,q<\infty$, see Subsection \ref{ss:notation}. Therefore reasoning as above one needs (roughly) to check that
\begin{equation}
\label{eq:transport_noise_mapping_2}
v\in  B^{1+2/q}_{(q,2),p}(\T^3) \quad \Longrightarrow  \quad
((\sigma_n\cdot \nabla)v)_{n\geq 1} \in B^{2/q}_{(q,2),p}(\T^3;\ell^2) .
\end{equation}  
The above holds provided $\sigma\in C^{\g}(\T^3;\ell^2)$ for $\g>2/q$, cf.\ Lemma \ref{l:pointwise}. In particular, if $q$ increases, then the smoothness requirement for $\sigma$ becomes weaker than the one needed for the implication \eqref{eq:transport_noise_mapping} to hold. However, as it will turn out in Subsection \ref{ss:proof_local}, for local well-posedness one needs $q<4$ and hence $2/q>1/2$, which leads to \eqref{eq:ass_intro_1}. 
The choice of the \emph{anisotropic Besov space} $B^{2/q}_{(q,2),p}$ in \eqref{eq:transport_noise_mapping_2} is intimately related to the scaling properties of PEs which are discussed in the next subsection.

\eqref{it:regularity_intro}: %
In the physical interesting case $\g_0<1$, \eqref{eq:regularity_intro_1} yields, for all $\nu_0<1+\g_0$, 
\begin{align}
\label{eq:v_increases_one_derivative}
v (t)\in C^{\nu_0}(\T^3;\R^2)\ \  \text{a.s.\ for all $t>0$\  \  while \ }   v(0)=v_0\in H^1(\T^3;\R^2).
\end{align}
In particular, $v$  gains almost \emph{one} derivative compared to the noise coefficients $(\sigma_n)_{n\geq 1}$. Due to the presence of the transport term $(\sigma_n\cdot \nabla) v$, the gain of regularity is sub-optimal (cf.\ \cite[Theorems 2.4 and 2.7]{AV21_NS} for a similar situation in case of stochastic Navier-Stokes equations). 
The regularity result \eqref{eq:regularity_intro_1} is a particular case of Theorem \ref{t:regularity_temp}. For high-order regularity, i.e.\ $\g_0\geq  1$, we refer to Theorem \ref{t:regularity2} and Remark \ref{r:high_order_temp}.
%
%
To the best of our knowledge, such instantaneous regularization results are \emph{new} in the stochastic framework even in the case $\g_0\gg 1$ and $H^1$-data (c.f.\ \cite{Primitive1,BS21,Debussche_2012}), and relies on the bootstrap techniques introduced in \cite{AV19_QSEE_2}.
The main difficulty behind instantaneous regularization results such as \eqref{eq:v_increases_one_derivative} is that, in the context of PEs, energy estimates are (typically) available in an $H^1$-setting. However, due to the above-mentioned criticality, one \emph{cannot} further bootstrap regularity by standard PDEs arguments (as for the stochastic Navier-Stokes equations \cite[Theorem 2.12]{AV21_NS}). Therefore one needs to argue differently.
In the deterministic setting, instantaneous regularization for PEs has been proven in  \cite{GGHHK20_analytic} by using the so-called `parameter trick' (see e.g.\ \cite[Subsection 5.2]{pruss2016moving}). However, as discussed at the end of \cite[Subsection 1.3]{AV19_QSEE_2}, the former is not available in a stochastic framework and thus the use of \cite{AV19_QSEE_2} cannot (in general) be avoided, cf.\ Subsection \ref{ss:scaling_intro} below.

As we will emphasize in Remarks \ref{r:regularity_necessary_anisotropic} and \ref{r:necessity_anisotropic}, the fact that $B^{2/q}_{(q,2),p}$ has the same scaling of $H^1$ (more precisely, $H^1\embed B^{2/q}_{(q,2),p} $ sharply) is of \emph{fundamental}  importance in proving the global well-posedness and instantaneous regularization results of \eqref{it:rough_transp_intro}-\eqref{it:regularity_intro}. More precisely, we do not know how to prove any of the statements in Theorem \ref{t:global_intro} in the isotropic setting directly, i.e.\ with $B^{2/q}_{(q,2),p}$ replaced by the classical Besov space $B^{2/q}_{q,p}$ as used in \cite{GGHHK20_analytic}. 
Of course the isotropic case can then be derived from the anisotropic one as $B^{2/q}_{q,p}\embed B^{2/q}_{(q,2),p}$ for $q\geq 2$.


\subsection{Scaling and criticality}
\label{ss:scaling_intro}
In the context of PEs \eqref{eq:primitive_intro}, the vertical direction plays a special role as the corresponding component $w$ of the velocity field $u$ does not obey an evolution PDE, but it is uniquely determined by the incompressibility condition and \eqref{eq:boundary_conditions_intro}, cf.\ \eqref{eq:def_w} below. In particular, there is no local (and a fortiori global) rescaling of the equations which leaves (locally) invariant the set of solutions. Therefore, it is not possible to identify critical spaces from a PDE point of view. 
In the deterministic literature, two approaches to the criticality of PEs have been used.
The first one is reminiscent of the fact that the PEs can be obtained from the (anisotropic) Navier-Stokes equations by means of the hydrostatic approximation (see e.g.\ \cite{AG01_approx,KGHHKW20,HH20_fluids_pressure,LT19}). Therefore one can define critical spaces for PEs as the one invariant under the natural rescaling of the Navier-Stokes equations, i.e.\ for $\lambda>0$,
\begin{equation}
\label{eq:rescaling_NS}
u\mapsto u_{\lambda}\qquad  \text{ where }\qquad  u_{\lambda}(t,\x)=\lambda^{1/2}u(\lambda t,\lambda^{1/2}\x)
\end{equation}
see e.g.\ \cite{Can04,LePi,PW18,Trie13} and \cite[Subsection 1.1]{AV21_NS} for the case of transport noise. 
In the PDE literature, spaces of initial data of the Navier-Stokes equations that are invariant under the induced rescaling $u_0\mapsto \lambda^{1/2}u_0(\lambda^{1/2}\cdot)$ are referred to as \emph{critical}. 
Therefore, 
 invariant spaces for initial data for Navier-Stokes equations are the ones with Sobolev index\footnote{The Sobolev index of $H^{s,q}(\T^d)$ and $B^{s}_{q,p}(\T^d)$ is given by $s-d/q$.} $-1$. 
Hence, one can define critical spaces for PEs as those having a Sobolev index $-1$. To the best of our knowledge, in the deterministic setting the only known space with such property and in which well-posedness results hold is $L^{(\infty,1)}\stackrel{{\rm def}}{=}L^{\infty}(\T^2_{x,y};L^{1}(\T_z;\R^2))$ (see \cite{GGHK21_scaling}, and also \cite{GGHHK20_bounded} for a related result). 
The second approach to define critical spaces is to use the theory of critical spaces for evolution equations developed by J.\ Pr\"uss, G.\ Simonett and M.\ Wilke \cite{PSW18_critical,addendum}, and its stochastic counterpart in  \cite{AV19_QSEE_1,AV19_QSEE_2} (see also \cite{Analysis3,W23_survey}). Such theory gives an `abstract' way to define critical spaces and in many situations, the latter turns out to enjoy the right local scaling of the underlined PDE. In the context of PEs, if one uses \emph{isotropic} $L^q(\T^d)$ spaces, then one obtains the Besov spaces $B^{2/q}_{q,p}=B^{2/q}_{q,p}(\T^3;\R^2)$ as critical spaces where $1<q<\infty$ and $p$ is large, see \cite{GGHHK20_analytic}. 
Interestingly, the previous spaces are consistent with the $H^1$-theory used in the seminal work \cite{CT07} on global well-posedness of PEs by considering $q=p=2$. However, the Sobolev index (and therefore the scaling) of the space $B^{2/q}_{q,p}$ is given by  $-\frac{1}{q}$, is $q$-dependent. Thus, the criticality of the latter Besov-type spaces might seem artificial as it depends on the choice of the ground space $L^q$. 

In this paper we introduce a \emph{new} class of critical spaces for stochastic PEs which, to the best of our knowledge, have not been considered in the deterministic literature. Our idea is to combine the above-mentioned two approaches. More precisely, we follow the stochastic evolution equation approach of \cite{AV19_QSEE_1,AV19_QSEE_2}, but we also take into account the natural anisotropy of the PEs in the vertical direction as exploited in \cite{GGHK21_scaling}. Eventually, this leads us to obtain local (and global) well-posedness in  (critical) \emph{anisotropic} Besov spaces of the form 
$$
B^{2/q}_{(q,2),p}=B^{2/q}_{(q,2),p}(\T^3;\R^2)\ \  \text{with $q\geq 2$ and $p$ large}.
$$ 
Here $q$ refers to the horizontal integrability, while the second parameter 2 refers to the vertical one (see Subsection \ref{ss:notation} for the precise definitions). The previous spaces have Sobolev index $-\frac{1}{2}$ that is independent of $(q,p)$, and therefore working with anisotropic spaces allows us to solve the issue concerning the Besov-type critical spaces used in \cite{GGHHK20_analytic}. Note that, for $q=p=2$, they coincide with the energy space $H^1=H^1(\T^3;\R^2)$ used in \cite{CT07} and $ H^{1}\embed B^{2/q}_{(q,2),p}$ sharply for all $q,p\geq 2$, due to the consistency of the Sobolev index. As commented before Subsection \ref{ss:scaling_intro} is of basic importance for the proof of global well-posedness and instantaneous regularization as in Theorem \ref{t:global_intro}.
%

%
To conclude, let us note that $B^{2/q}_{(q,2),p}$ does not enjoy the scaling property of the Navier-Stokes equations as it was for the space $L^{(\infty,1)}$. From the arguments of the current work, it is reasonable to expect local (and global) well-posedness results for the \emph{deterministic} PEs with initial data in $B^{2/q}_{(q,1),p}$ instead of $B^{2/q}_{(q,2),p}$. 
The space $B^{2/q}_{(q,1),p}$ has Sobolev index $-1$ and therefore respect the scaling invariance of the Navier-Stokes equations, i.e.\ \eqref{eq:rescaling_NS}. Moreover, the space $L^{(\infty,1)}$ used in \cite{GGHK21_scaling} can be seen as an `endpoint' as $q\to \infty$ of the latter scale of spaces. 
 However, one cannot use  $B^{2/q}_{(q,1),p}$ in the context of SPDEs with transport noise, as this does not allow to use important tools such as stochastic maximal $L^p_t$-regularity, which turned out to be very useful in handling transport noise in an $L^q_{\x}$-setting, cf.\ \cite{AV_torus,Kry,VP18}. Indeed, maximal $L^p_t$-regularity estimates are known to fail in $L^r_{\x}$-spaces with $r\in [1,2)$, see  \cite{NVW1,NVW3,MaximalLpregularity,NVW11}. In the study of SPDEs with transport noise, one is forced to use  Banach spaces with type 2 (cf.\ \cite[Chapter 7]{Analysis2} for the notion of type), and therefore $B^{2/q}_{(q,2),p}$ is the `closest' space to $B^{2/q}_{(q,1),p}$ with the latter property.

\subsection{Discussion on non-isothermal models and overview}
\label{ss:overview}
Below we list the main results of the present work. 
To keep the presentation as clear as possible the main body of this work deals with \emph{isothermal} models, i.e.\ in the case of constant temperature. 
The modifications needed to treat non-isothermal models such as \eqref{eq:primitive_intro} are only minor and are discussed in Subsection \ref{s:non_isothermal}. 
Let us recall that in the manuscript we consider a generalized version of \eqref{eq:primitive_intro}, i.e.\ \eqref{eq:primitive}   (see \eqref{eq:primitive_temperature} for the non-isothermal case) where we take into account also additional terms which allows us to also consider Stratonovich formulation of the transport noise, see Subsection \ref{ss:stratonovich}.
Let us remark that there exist more complicated non-isothermal models than the one considered here, see e.g.\ \cite{Primitive2,K21_global}. For the latter models, the extension from the isothermal to the non-isothermal case is not trivial. However,  we expect that by combining the methods of the current paper and the one of \cite{Primitive2} one can prove global well-posedness also for the non-isothermal model of \cite{Primitive2} with transport noise having the same roughness as considered here. 

The main results of this paper are as follows. Some open problems are discussed in Subsection \ref{ss:open_problems}.

\begin{itemize}
\item Local well-posedness and regularization in critical spaces -- Theorems \ref{t:local}, \ref{t:regularity1} and Proposition \ref{prop:continuity}.
\item Serrin's type blow-up criteria -- Theorem \ref{t:serrin}.
\item High-order regularity -- Theorem \ref{t:regularity2}.
\item Global well-posedness (under additional assumptions) -- Theorem \ref{t:global}.
\item Extension of the above results to non-isothermal models -- Section \ref{s:non_isothermal}.
\item Transport noise in Stratonovich form -- Subsection \ref{ss:stratonovich}.
\end{itemize}

\subsection{Comparison with the deterministic setting}
\label{ss:comparison_deterministic}
Compared to the deterministic setting, the approach taken here is drastically different. Indeed, to the best of the author's knowledge, all the results known for deterministic PEs with rough initial data $v_0$
rely on the global well-posedness in an $H^1$-setting  (see e.g.\ \cite{GGHHK20_bounded,GGHK21_scaling,Salinity_MHK,HK16,Ju17}). Indeed, in the deterministic setting, one combines instantaneous regularization results as in \eqref{eq:regularity_intro_1}-\eqref{eq:regularity_intro_2} with the global well-posedness in $H^1$ of \cite{CT07} to obtain global well-posedness in a weaker setting, cf.\ Step 1 in \cite[Theorem 7]{HH20_fluids_pressure}. 
However, as discussed below Theorem \ref{t:global_intro}, in the physically relevant case $\g_0\in (1,2)$, the $H^1$-setting is not available in case of rough transport noise.

Therefore,  all the issues which we deal with in the current work are of stochastic nature.
In particular, the family of critical spaces $B^{2/q}_{(q,2),p}$ used here, although not considered before, are not an enemy but rather the key tool that allows us to overcome the difficulties coming from the roughness of the noise.

\subsection{Notation}
\label{ss:notation}
Throughout the paper we let 
$\R_+=(0,\infty)$.
Moreover $\T^3$ denotes the three-dimensional torus, and we usually write $\x=(x,y,z)\in \T^3=\T_{x}\times \T_y\times \T_z$ and $\T_{x,y}^2 =\T_x\times \T_y$. 
Next, we introduce several function spaces which will be used later on. To this end, we employ the real and complex interpolation functors denoted by $(\cdot,\cdot)_{\vartheta,p}$ and $[\cdot,\cdot]_{\vartheta}$, respectively (see \cite{BeLo,InterpolationLunardi,Tri95} for details). Below $p,q,\zeta\in [1,\infty)$, $t\in (0,\infty]$, $\vartheta_1,\vartheta_2\in (0,1)$ and  $s\in\R$ are fixed.

\begin{itemize}
\item \emph{Anisotropic Lebesgue spaces.} $L^{(q,\zeta)}(\Tor^3)$ is the space of (equivalence classes of) measurable functions $f:\T^3\to \R$ such that 
$$
\|f\|_{L^{(q,\zeta)}(\T^3)}\stackrel{{\rm def}}{=}\Big[\int_{\T^2_{x,y}} \Big( \int_{\T_z} |f(x,y,z)|^{\zeta}\,\dd z \Big)^{q/\zeta}\dd x\dd y\Big]^{1/q}<\infty.
$$
\item  \emph{Anisotropic Bessel potential spaces}.  $H^{(q,\zeta)}(\Tor^3)$ is the space of all distribution $f\in \D'(\T^3)$ such that $(1-\Delta)^{s/2} f\in L^{(q,\zeta)}(\T^3)$ and
$$
\|f\|_{H^{s,(q,\zeta)}(\T^3)}\stackrel{{\rm def}}{=}\|(1-\Delta)^{s/2} f\|_{L^{(q,\zeta)}(\T^3)}.
$$

\item  \emph{Anisotropic Besov spaces}.  Let $s_0,s_1\in \R$ and $\vartheta\in (0,1)$ be such that $s_0(1-\vartheta)+s_1\vartheta=s$.  Then
$B^s_{(q,\zeta),p}(\Tor^3)$ is defined via real interpolation:
$$
B^s_{(q,\zeta),p}(\Tor^3)\stackrel{{\rm def}}{=}(H^{s_0,(q,\zeta)}(\T^3),H^{s_1,(q,\zeta)}(\T^3))_{\vartheta,p}.
$$
For the independence on the choice of $s_0,s_1$, see the comments below \eqref{eq:def_B_anisotropic}.
\item \emph{Anisotropic H\"{o}lder spaces}. $C^{\vartheta_1,\vartheta_2}((0,t)\times \T^3)$ is the set of all bounded maps $f:(0,t)\times \T^3\to \R$ such that
$$
\sup_{(s_0,\x_0),(s_1,\x_1)\in (0,t)\times \T^3} \frac{|f(s_0,\x_0)-f(s_1,\x_1)|}{ |s_0-s_1|^{\vartheta_1}+|\x_0-\x_1|^{\vartheta_2}}<\infty.
$$
\item \emph{Vector valued spaces}. For an integer $m\geq 1$ we let $\mathcal{A}(\T^3;\R^m)\stackrel{{\rm def}}{=}(\mathcal{A}(	\T^3))^m$ for $\mathcal{A}\in \{L^{(q,\zeta)},H^{s,(q,\zeta)}, 
B^s_{(q,\zeta),p}\}$, and similar for $C^{\vartheta_1,\vartheta_2}((0,t)\times \T^3;\R^m)$. 
Finally, $H^{s,(q,\zeta)}(\ell^2)$ denotes the set of all sequences $f=(f_n)_{n\geq 1}$ such that $f_n\in H^{s,(q,\zeta)}$ for all $n\geq 1$ and 
$$
\|f\|_{H^{s,(q,\zeta)}(\ell^2)}\stackrel{{\rm def}}{=} \|((1-\Delta)^{s/2}f_n)_{n\geq 1}\|_{L^{(q,\zeta)}(\ell^2)}<\infty
.$$
\end{itemize}
Below we do not consider the anisotropy in all directions, as they are not needed to deal with primitive equations. We refer to Appendix \ref{app:anisotropic} for the general situation. 

Next we introduce some weighted Banach valued function spaces. To this end, we fix $\vartheta\in (0,1)$, $p\in (1,\infty)$, $t\in (0,\infty]$, a Banach space $X$ and the weight 
$$
w_{\a}(\tau)={\tau}^{\a} \ \ \text{ where } \ \ \tau\geq 0,\ \a\in (-1,p-1).
$$
\begin{itemize}
\item \emph{Weighted Lebesgue spaces}. $L^p(0,t,w_{\a};X)$ is the set of all strongly measurable maps $f:(0,t)\to X$ such that 
$$
\|f\|_{L^p(0,t,w_{\a};X)}\stackrel{{\rm def}}{=} \Big(\int_{0}^t \|f(\tau)\|_X^pw_{\a}(\tau)\,\dd \tau \Big)^{1/p}<\infty.
$$
\item \emph{Weighted Sobolev spaces}. $W^{1,p}(0,t,w_{\a};X)$ is the set of all $f:(0,t)\to X$ such that $f,f'\in L^p(0,t,w_{\a};X)$, endowed with the norm 
$$
\|f\|_{W^{1,p}(0,t,w_{\a};X)}\stackrel{{\rm def}}{=}\|f\|_{L^p(0,t,w_{\a};X)}+\|f'\|_{L^p(0,t,w_{\a};X)}.
$$
\item 
\emph{Weighted Bessel potential spaces}.
$
H^{\vartheta,p}(0,t,w_{\a};X)$ is defined via complex interpolation as  
$$
H^{\vartheta,p}(0,t,w_{\a};X)\stackrel{{\rm def}}{=}[L^{p}(0,t,w_{\a};X),W^{1,p}(0,t,w_{\a};X)]_{\vartheta} .
$$ 
\end{itemize}
We often write $L^{(q,\zeta)}$ instead of $L^{(q,\zeta)}(\Tor^3)$ or $L^{(q,\zeta)}(\Tor^3;\R^m)$ with $m\in \N$ if no confusion seems likely. We use a similar notation for Bessel potential and Besov spaces. We write $L^q$, $H^{s,q}$, $B^s_{q,p}$ and similar for isotropic spaces, i.e.\ when $q=\zeta$.
We say that $f\in \mathcal{A}_{\loc}(I;X)$, whenever $\mathcal{A}$ is a function space and $I$ an interval, provided $f\in \mathcal{A}(J;X)$ for all $J\subseteq I$ with compact closure.

Finally we introduce the relevant probabilistic notation. Throughout the paper we fix a filtered probability space $(\O,\mathcal{A},(\F_t)_{t\geq 0},\P)$ and we denote by $\E$ the expectation w.r.t.\ $\P$.
$(\beta_n)_{n\geq 1}$ denotes a sequence of standard independent Brownian motions on such probability space. A measurable map $\tau:\O\to [0,\infty]$ is called a stopping time if $\{\tau\leq t\}\in \F_t$ for all $t\geq 0$. 

The Borel and progressive $\sigma$-algebra are denoted by $\Borel$ and $\Progress$, respectively. 

\subsubsection*{Acknowledgements}
The author thanks Amru Hussein and Emiel Lorist for useful suggestions and discussions. The author is indebted to Mark Veraar for his comments and suggestions on Appendix \ref{app:smr}. 

\section{Preliminaries}

\subsection{Smoothness of the Kraichnan model}
\label{ss:kraichnan}
In this subsection, we discuss some basic facts and properties of the Kraichnan model on the $d$-dimensional torus $\T^d$. 
Here, we partially follow the exposition in \cite[Section 5]{GY21_stabilization}. 

The Kraichnan model was introduced by R.\ Kraichnan \cite{K68} in the study of scalar advection by turbulent fluids (see also \cite{K68_enhancement,MK99_simplified}). The classical Kraichnan noise models \emph{isotropic} flows on $\R^d$ or $\mathbb{S}^d$, while the $\T^d$-case is described in \cite[Section 2.4]{CDG07_kraichnan}. 
The Kraichnan model $\wh{\sigma}_d$ on $\T^d$ with $d\geq 2$ can be introduced by specifying its correlation function, i.e.\  \eqref{eq:correlation_intro}.
The parameter $\alpha$ will be referred to as the `correlation parameter' of the Kraichnan model, as $\alpha$ rules its long-range spatial correlation. 
As recalled in the introduction, according to  \cite[pp.\ 426-427 and 436]{MK99_simplified} and \cite{GY21_stabilization}, the \emph{Kolmogorov spectrum of turbulence} corresponds to the choice $\alpha=\frac{4}{3}$ in \eqref{eq:correlation_intro}.
Indeed, as explained in \cite[pp.\ 426]{MK99_simplified}, taking into account that the inertial correlation time scales like $\eqsim |\kb|^{-2/3}$, the energy spectrum satisfies have scaling $$ 
\mathscr{E}_{4/3}(\kb) |\kb|^{2/3}\eqsim |\kb|^{-5/3},$$ 
that is the Kolmogorov spectrum in the inertial range.
Other choices of $\mathscr{E}_{\alpha}$ in the correlation function in \eqref{eq:correlation_intro} are possible. For instance by considering infrared and ultraviolet cutoffs at the integral and the Kolmogorov dissipation length scales, respectively (see \cite[pp.\ 426]{MK99_simplified}).

Set $\Z^d_0\stackrel{{\rm def}}{=}\Z^d\setminus\{0\}$ and let $\mathscr{E}_{\alpha}:\Z^d_0 \to [0,\infty)$ be such that $|\mathscr{E}_{\alpha}(\kb)|\eqsim |\kb|^{-1-\alpha}$ for some $\alpha>0$.
One can realize the Kraichnan model on $\T^d$ with correlation $\alpha\in (0,\infty)$ by choosing $\wh{\sigma}_d$ as $\wh{\sigma}=(\wh{\sigma}_{\kb,\ell})_{\kb\in \Z^d_0, \ell\in \{1,\dots,d-1\}}$, where for any pair $(\kb,-\kb)\in \Z^d_0\times \Z^d_0$ with $\kb$ lexicographically dominating $-\kb$ we have
\begin{align}
\label{eq:Kraich_1}
\wh{\sigma}_{\kb,\ell} (\x)
= a_{\kb,\ell}\,\frac{\big[\mathscr{E}_{\alpha}(\kb)\big]^{1/2}}{ |\kb|^{(d-1)/{2}}} \,\cos(2\pi \kb\cdot \x), \qquad \x\in \T^d,\\
\label{eq:Kraich_2}
\wh{\sigma}_{-\kb,\ell}(\x)=a_{\kb,\ell} \,\frac{\big[\mathscr{E}_{\alpha}(\kb)\big]^{1/2}}{|\kb|^{(d-1)/{2}}}\, \sin(2\pi  \kb\cdot \x), \qquad \x\in \T^d,
\end{align}
where $(a_{\kb,\ell})_{1\leq \ell\leq d-1}$ is an orthonormal basis of the hyperplane $\{\mathbf{y}\in \R^d\,:\, \kb\cdot \mathbf{y}=0\}$.

The following result relates the smoothness of $\wh{\sigma}$ to the correlation parameter $\alpha$.

\begin{proposition}
\label{prop:smoothness_Kraichnan}
Let $\wh{\sigma}=(\wh{\sigma}_\kb)_{\kb\in \Z^d_0,\ell\in \{1,\dots,d-1\}}$ be the Kraichnan model \eqref{eq:Kraich_1}-\eqref{eq:Kraich_2}  on $\T^d$ with $d\geq 2$ and correlation $\alpha\in (0,\infty)$, i.e.\ $\mathscr{E}_{\alpha}(\kb)\eqsim |\kb|^{-1-\alpha}$. Then 
\begin{enumerate}[{\rm(1)}]
\item\label{eq:smoothness_sigma_hat}
$\wh{\sigma}\not\in H^{\alpha/2,2}(\T^d;\ell^2)$.
\vspace{0.1cm}
\item\label{eq:smoothness_sigma_hat2}
$\wh{\sigma}\in C^{\g}(\T^d;\ell^2)$ for all 
$\g<\alpha/2.$
\end{enumerate}
\end{proposition}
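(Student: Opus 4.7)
The plan is to reduce both statements to elementary estimates on the amplitudes $|\mathscr{E}_\alpha(\kb)|^{1/2}/|\kb|^{(d-1)/2}$, exploiting the fact that $\{\cos(2\pi\kb\cdot),\sin(2\pi\kb\cdot)\}_{\kb}$ is (up to constants) an orthonormal system in $L^2(\T^d)$ and that $\sum_{\ell=1}^{d-1}|a_{\kb,\ell}|^2=d-1$.

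For \eqref{eq:smoothness_sigma_hat}, I would compute the squared norm directly. Since $(1-\Delta)^{s/2}$ acts on the Fourier mode $\cos(2\pi\kb\cdot\x)$ as multiplication by $(1+4\pi^2|\kb|^2)^{s/2}$ and similarly for sine, and since $\|\cos(2\pi\kb\cdot)\|_{L^2(\T^d)}^2=\|\sin(2\pi\kb\cdot)\|_{L^2(\T^d)}^2=\tfrac{1}{2}$ for $\kb\in\Z_0^d$, one obtains
\begin{equation*}
\|\wh{\sigma}\|_{H^{s,2}(\T^d;\ell^2)}^2
=\sum_{\kb\in\Z^d_0}\sum_{\ell=1}^{d-1}\|(1-\Delta)^{s/2}\wh{\sigma}_{\kb,\ell}\|_{L^2}^2
\eqsim \sum_{\kb\in\Z^d_0}\frac{\mathscr{E}_\alpha(\kb)}{|\kb|^{d-1}}(1+|\kb|^2)^{s}
\eqsim \sum_{\kb\in\Z^d_0}|\kb|^{-d-\alpha+2s},
\end{equation*}
(I am using that the pairs $(\kb,-\kb)$ with cosine/sine modes together cover $\Z^d_0$.) Comparison with a standard integral shows this sum diverges as soon as $-d-\alpha+2s\geq -d$, i.e.\ $s\geq \alpha/2$; in particular at $s=\alpha/2$, yielding $\wh{\sigma}\notin H^{\alpha/2,2}(\T^d;\ell^2)$.

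For \eqref{eq:smoothness_sigma_hat2}, I would prove the stronger pointwise bound $\|\wh{\sigma}(\x)-\wh{\sigma}(\x')\|_{\ell^2}\lesssim |\x-\x'|^{\g}$ combined with uniform boundedness, which implies $C^{\g}$-regularity. For the boundedness, $|\cos|,|\sin|\leq 1$ gives
\begin{equation*}
\|\wh{\sigma}\|_{L^\infty(\T^d;\ell^2)}^2\lesssim \sum_{\kb\in\Z^d_0}\frac{\mathscr{E}_\alpha(\kb)}{|\kb|^{d-1}}\eqsim \sum_{\kb\in\Z^d_0}|\kb|^{-d-\alpha}<\infty.
\end{equation*}
For the H\"older bound, I would use the elementary interpolation estimate
$|\cos(2\pi\kb\cdot\x)-\cos(2\pi\kb\cdot\x')|\leq \min(2,2\pi|\kb||\x-\x'|)\lesssim |\kb|^{\g}|\x-\x'|^{\g}$ for any $\g\in[0,1]$ (and similarly for sine), so that
\begin{equation*}
\|\wh{\sigma}(\x)-\wh{\sigma}(\x')\|_{\ell^2}^2
\lesssim |\x-\x'|^{2\g}\sum_{\kb\in\Z^d_0}\frac{\mathscr{E}_\alpha(\kb)\,|\kb|^{2\g}}{|\kb|^{d-1}}
\eqsim |\x-\x'|^{2\g}\sum_{\kb\in\Z^d_0}|\kb|^{-d-\alpha+2\g},
\end{equation*}
and the last sum converges precisely when $\g<\alpha/2$.

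There is no real obstacle: both parts reduce to the convergence/divergence of $\sum_\kb |\kb|^{-d-\alpha+2s}$, which is immediate by comparison with a radial integral. The only minor bookkeeping points are to account correctly for the lexicographic pairing of $\kb$ and $-\kb$ (so the cosine/sine modes together recover the full sum over $\Z^d_0$) and the identity $\sum_{\ell=1}^{d-1}|a_{\kb,\ell}|^2=d-1$ coming from the orthonormality of the basis of $\kb^\perp$.
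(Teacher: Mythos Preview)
Your argument for part~(1) is correct and essentially identical to the paper's: both reduce to the divergence of $\sum_{\kb}|\kb|^{-d}$ via the Plancherel identity.

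For part~(2), your approach is correct but covers only $\g\in(0,1)$, since the elementary bound $|\cos(2\pi\kb\cdot\x)-\cos(2\pi\kb\cdot\x')|\lesssim|\kb|^{\g}|\x-\x'|^{\g}$ is only valid for $\g\leq 1$. The proposition is stated for all $\alpha\in(0,\infty)$, so when $\alpha>2$ one must also reach $\g>1$, i.e.\ control H\"older norms of derivatives. The paper handles the full range by bounding each component in $C^{\g}$ directly: from $\|\partial^{I}\wh{\sigma}_{\kb,\ell}\|_{L^\infty}\lesssim |\kb|^{|I|-(d+\alpha)/2}$ for every multi-index $I$, interpolation gives $\|\wh{\sigma}_{\kb,\ell}\|_{C^{\g}}\lesssim |\kb|^{\g-(d+\alpha)/2}$ for all non-integer $\g>0$, and then $\sum_{\kb}|\kb|^{2\g-d-\alpha}<\infty$ for $\g<\alpha/2$ (yielding in fact the slightly stronger inclusion $\wh{\sigma}\in\ell^2(C^{\g})$). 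Your method extends to $\g>1$ by applying the same trigonometric estimate to $\partial^{I}\wh{\sigma}_{\kb,\ell}$ with $|I|=\lfloor\g\rfloor$, but as written the argument is incomplete for $\alpha>2$.
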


Below we actually show the following stronger version of \eqref{eq:smoothness_sigma_hat2}: $\wh{\sigma}\in \ell^2(C^{\g}(\T^d))$ for all $\g<\alpha/2$. However, the latter is not used in the following.

\begin{proof}
%
\eqref{eq:smoothness_sigma_hat}: By \eqref{eq:Kraich_1}-\eqref{eq:Kraich_2}, for all  $\kb\in\Z^d_0$,
\begin{equation}
\label{eq:decay_fourier_coefficients_kraichnan_model}
\Big|\int_{\T^d} \wh{\sigma}_{\kb,\ell}(\x) e^{-2\pi i \, \kb\cdot \x}\,\dd \x\Big| \eqsim
\frac{1}{ |\kb|^{(d+\alpha)/{2}}}.
\end{equation}
Therefore
\begin{align*}
\|\wh{\sigma}\|_{H^{\alpha/2,2}(\ell^2)}^2
&\eqsim \max_{1\leq \ell\leq d-1}\sum_{\kb,\kb'\in \Z^d_0}(1+|\kb'|^2)^{\alpha/2}\Big| 
\int_{\T^d} \wh{\sigma}_{\kb,\ell}(\x) e^{-2\pi i \, \kb'\cdot \x}\,\dd \x\Big|^2\\
&\stackrel{(i)}{\geq}  \sum_{\kb\in \Z^d_0} \frac{(1+|\kb|^2)^{\alpha/2}}{|\kb|^{d+\alpha}}\eqsim \sum_{n\geq 1}n^{-1}=\infty.
\end{align*}
where $(i)$ follows by \eqref{eq:decay_fourier_coefficients_kraichnan_model} and considering only the terms $\kb=\kb'$ in the double sum.

\eqref{eq:smoothness_sigma_hat2}: Fix $\g<\frac{\alpha}{2}$. Without loss of generality, we assume that $\g\not\in \N$.
To begin, note that, for all multi-index $I=(I_i)_{i=1}^d\in \N_{0}^d$,
$$
\|(\partial^{I_1}_{x_1}\dots \partial_{x_d}^{I_d})\wt{\sigma}_{\kb,\ell}\|_{L^{\infty}}\lesssim |\kb|^{|I|-\frac{d+\alpha}{2}} 
\quad \text{ where } \quad 
|I|=\sum_{i=1}^d I_i. 
$$
Hence
$
\|\wt{\sigma}_{\kb,\ell}\|_{C^{\g}}\lesssim |\kb|^{\g-\frac{d+\alpha}{2}} $ by interpolation as $\g\not\in \N$.
Since $\g<\frac{\alpha}{2}$, we have
\begin{align*}
\max_{1\leq \ell\leq d-1}\sum_{\kb\in \Z^d_0} 
\|\wt{\sigma}_{\kb,\ell}\|_{C^{\g}}^2
\lesssim \sum_{\kb\in \Z^d_0} |\kb|^{2\g -d-\alpha} \eqsim \sum_{n\geq 1}n ^{-1-\alpha+2\g}<\infty .
\end{align*}
This completes the proof of Proposition \ref{prop:smoothness_Kraichnan}.
\end{proof}

\subsection{Hydrostatic Helmholtz projection and related function spaces}
\label{ss:projections}
In this subsection, we discuss the relevant function spaces of divergence-free vector fields which allow us to reformulate the PEs as stochastic evolution equation on a suitable Banach space. We begin by introducing Helmholtz-type projections. Below we employ the notation on anisotropic function spaces introduced in Subsection \ref{ss:notation}. 

Fix $q\in (1,\infty)$ and let $\p_{x,y}:L^q(\T^2_{x,y};\R^2)\to L^q(\T^2_{x,y};\R^2)$ be the Helmholtz projection, i.e.\ $\p_{x,y}f = f -\qq_{x,y} f$ where $\qq_{x,y} f \stackrel{{\rm def}}{=} \nabla_{x,y} \Psi_{f}$, and $\Psi_{f}\in H^{1,q}(\T^2_{x,y})$ is the unique solution to the following elliptic PDE:
$$
\Delta_{x,y} \Psi_f =\nabla_{x,y}\cdot f \  \text{ in }\D'(\T^2_{x,y}), \ \ \text{ and }\ \ \  \int_{\T^2_{x,y}}\Psi_f(x,y)\,\dd x\dd y=0.
$$
The \emph{hydrostatic} Helmholtz projection is given by
\begin{equation}
\label{eq:def_p_q}
\p f\stackrel{{\rm def}}{=}  f- \qq f , \quad \text{ where }\ \quad 
\qq f \stackrel{{\rm def}}{=} \qq_{x,y}\Big[\int_{\T_z} f(\cdot,z)\,\dd z\Big].
\end{equation}
It is easy to check that  $\p,\qq :L^{(q,\zeta)}(\T^3;\R^2)\to L^{(q,\zeta)}(\T^3;\R^2)$ are bounded linear operators for all $q,\zeta\in (1,\infty)$ and that the same holds with $L^{(q,\zeta)}(\T^3;\R^2)$ replaced by either $H^{s,(q,\zeta)}(\T^3;\R^2)$ or $B^{s}_{(q,\zeta),p}(\T^3;\R^2)$ where $s\in \R$ and $p\in (1,\infty)$. In the latter case, if $s<0$, then the operator $\int_{\T_z}\cdot\,\dd z$ in \eqref{eq:def_p_q} is understood in the distribution sense, i.e.\ 
$$
\Big\langle \int_{\T_z}f(\cdot,z)\,\dd z, \varphi\Big\rangle\stackrel{{\rm def}}{=}
\Big\langle f,\int_{\T_z}\varphi(\cdot,z)\,\dd z\Big\rangle \ \ \text{ for all }\ f\in \D'(\T^3),\, \varphi\in \D(\T^3). 
$$
Finally, we define spaces of divergence-free type vector fields that are suitable to study PEs: For 
$\mathcal{A}\in \{L^{(q,\zeta)},H^{s,(q,\zeta)},B^s_{(q,\zeta),p}\}$ we let 
\begin{align*}
\mathbb{A}(\Dom)
\stackrel{{\rm def}}{=} \p (\mathcal{A}(\T^3;\R^2)) 
=\Big\{f\in \mathcal{A}(\T^3;\R^2) \,:\, \nabla_{x,y}\cdot \big[\int_{\T_z} f(\cdot,z)\,\dd z\big]=0\text{ in }\D'(\T^3) \Big\}.
\end{align*}
For notational convenience, we often write $\mathbb{A}$ instead of $\mathbb{A}(\Dom)$.

\section{Statement of the main results -- The isothermal case}
\label{s:statement}
In this section, we state our main results concerning regularity, local and global well-posedness of the primitive equations with rough transport noise on the three-dimensional torus $\T^3=[0,1]^3$:
\begin{equation}
\label{eq:primitive}
\left\{
\begin{aligned}
		&\partial_t v  -\big[\nabla\cdot(a\cdot \nabla v) + (b\cdot\nabla) v\big]\\
		&\ \ = \big[- \ph p - (v\cdot \nabla_{x,y})v- w\partial_z v+\ps\wt{p} +f(\cdot,v,\nabla v)\big] \\ 
		&\ \ 		+\sum_{n\geq 1} 
		[-\ph \wt{p}+ (\sigma_{n}\cdot\nabla) v+ g_{n}(\cdot,v)]\, \dot{ \beta}_t^n, &\text{on }&\Tor^3,\\
		&\partial_z p +\theta=0, \qquad  \partial_z \wt{p}_n=0,&\text{on }&\Tor^3,\\
&\nabla_{x,y} \cdot v +\partial_z w=0,&\text{on }&\Tor^3,\\
&v(0,\cdot)=v_0,&\text{on }&\Tor^3.
\end{aligned}
\right.
\end{equation}
The above is complemented with the boundary conditions $w|_{\T^2_{x,y}\times \{0,1\}}=0$ and 
\begin{equation}
\label{eq:def_h_statement}
\ps \wt{p}\stackrel{{\rm def}}{=}\frac{h}{2}\sum_{n\geq 1} 
\big[\nabla \cdot (\ph \wt{p}_n\otimes \sigma_n) + b_{0,n} \ph \wt{p}_n\big] \ \ \text{ where }\ \ h\in [ -1,\infty).
\end{equation} 
The coefficients $(a,b,b_0)$ are specified in Assumption \ref{ass:primitive} below.

The terms $\nabla\cdot(a\cdot \nabla v)$, $(b\cdot\nabla) v$ and $\ps\wt{p}$ in \eqref{eq:primitive} are motivated by the corrective terms in the Stratonovich formulation of transport noise, see \eqref{eq:stratonovich_correction1}-\eqref{eq:stratonovich_correction2} in Subsection \ref{ss:stratonovich}. Indeed, with a particular choice of $(a,b,b_0,h)$ in \eqref{eq:def_abh_stra}, then \eqref{eq:def_h_statement} coincide with the PEs with transport noise in Stratonovich form \eqref{eq:primitive_stra}. The case $(a,b,h)=(\mathrm{Id}, 0,0)$ corresponds to the It\^o formulation of transport noise.

This section is organized as follows. In Subsection \ref{ss:reformulation} we reformulate the stochastic PEs as a stochastic evolution equation for the unknown $v$, while in Subsection \ref{ss:assumption_solution} we collect the main assumptions and definitions used to formulate our main results. In Subsection \ref{ss:local} and \ref{ss:serrin}
 we state our main results on local well-posedness, regularity, and blow-up criteria for \eqref{eq:primitive} in critical spaces (see Subsection \ref{ss:scaling_intro} for a discussion on criticality). In Subsection \ref{ss:global}, under additional assumptions on the noise coefficients, we state our main results on global well-posedness of \eqref{eq:primitive}. 
Finally, in Subsection \ref{ss:open_problems}, we discuss some open problems.
	
\subsection{Reformulation of the primitive equations}
\label{ss:reformulation}
In this subsection we reformulate the stochastic PEs as a stochastic evolution equation for the unknown horizontal velocity $v$, cf.\ \cite{HH20_fluids_pressure} or \cite[Subsection 2.3]{Primitive2}. 
 To begin, note that, integrating the divergence-free condition $\nabla_{x,y}\cdot v+\partial_z w=0$ and using $w|_{\T^2_{x,y}\times \{0,1\}}=0$, one obtains
\begin{align}
\label{eq:def_w}
w(t,\x)=[w(v)](t,\x)\stackrel{{\rm def}}{=}-\int_{0}^{z}\nabla_{x,y}\cdot v(t,x,y,z') \,\dd z'&  &\text{for }& \x=(x,y,z)\in \T^3,\\
\label{eq:incompressibility}
\int_{\T_z}\nabla_{x,y}\cdot v(t,x,y,z) \,\dd z=0\quad &  &\text{for } &(x,y)\in \T^2_{x,y}.
\end{align}
Note that \eqref{eq:incompressibility} is an incompressibility condition for the \emph{barotropic} mode $\int_{\T_z}v(\cdot,z)\,\dd z$.

Applying the hydrostatic Helmholtz projection $\p$ to the first equation of \eqref{eq:primitive},
\begin{align*}
\partial_t v  &=\p \big[ \nabla\cdot(a\cdot \nabla v)+(b\cdot\nabla)v- (v\cdot \nabla_{x,y})v- w(v)\partial_z v+\ps\wt{p} +f(\cdot,v,\nabla v)\big] \\ 
&\ +\sum_{n\geq 1} \p[(\sigma_{n}\cdot\nabla) v+ g_{n}(\cdot,v)]\, \dot{ \beta}_t^n,
\end{align*}
where we used that $\p v =v$ due to \eqref{eq:incompressibility}. Note that the pressures can be uniquely determined from $v$ via 
\begin{align}
\label{eq:pressure2}
\ph \wt{p}_n&=\qq\big[(\sigma_n\cdot\nabla) v+g_n(\cdot,v)\big],\\
\label{eq:pressure1}
\ph p &=\qq \big[\nabla\cdot(a\cdot \nabla v) +(b\cdot\nabla)v\\
\nonumber
&\qquad \quad - (v\cdot \nabla_{x,y})v- w(v)\partial_z v+\ps\wt{p} +f(\cdot,v,\nabla v)\big],
\end{align}
where $\qq$ is as in \eqref{eq:def_p_q}. Using \eqref{eq:pressure2}, we get $
\ps \wt{p}=\ft(\cdot,v)$ where
\begin{align}
\label{eq:def_f_tilde}
\ft(\cdot,v)&=\fts(\cdot)v+\ftg(\cdot,v),\\
\ftg(\cdot,v)
\label{eq:def_f_tilde2}
&\stackrel{{\rm def}}{=}
-\frac{h}{2}\sum_{n\geq 1} \Big(\nabla \cdot (\qq[g_n(\cdot,v)]\otimes \sigma_n)
+b_{0,n}  \qq[g_n(\cdot,v)]\Big),\\
\label{eq:def_f_tilde1}
\fts(\cdot)v
&\stackrel{{\rm def}}{=}
-\frac{h}{2}\sum_{n\geq 1}\Big( \nabla \cdot (\qq[(\sigma_n\cdot\nabla) v]\otimes \sigma_n)+b_{0,n} \qq[(\sigma_n\cdot\nabla) v]\Big).
\end{align}
The above decomposition of $\ft$ is motivated by the fact that $\fts $ is a \emph{linear} operator having the same order of the leading differential operators, while $\ftg$ is a nonlinear mapping and of lower-order type. In particular, looking at \eqref{eq:primitive} as a semilinear stochastic evolution equations (see \eqref{eq:def_ABFG}-\eqref{eq:SEE} below),  we will therefore consider $\fts$ (resp.\ $\ftg$) as a contribution of the linear (resp.\ non-linear) part.

Hence, at least formally, the PEs \eqref{eq:primitive} are equivalent to 
\begin{equation}
\label{eq:primitive2}
\left\{
\begin{aligned}
		&\partial_t v  		
		= \p\big[\nabla\cdot(a\cdot \nabla v)+(b\cdot\nabla) v- (v\cdot \nabla_{x,y})v- w(v)\partial_z v\big]\\
		&\  \  +\p\big[\ft(\cdot,v) +f(\cdot,v,\nabla v)\big]
		+\sum_{n\geq 1} \p\big[ (\sigma_{n}\cdot\nabla) v+ g_{n}(\cdot,v)\big]\, \dot{ \beta}_t^n, &\text{on }&\Tor^3,\\
&v(0,\cdot)=v_0,&\text{on }&\Tor^3.
\end{aligned}
\right.
\end{equation}
Note that in the system \eqref{eq:primitive2} the only unknown is the horizontal velocity $v$, while $w$, $p$ and $\wt{p}_n$ can be recovered from $v$ by using \eqref{eq:def_w}, \eqref{eq:pressure1} and \eqref{eq:pressure2}.
Finally, the divergence-free condition for the barotropic mode \eqref{eq:incompressibility} is preserved under the flow induced by \eqref{eq:primitive2} provided $\int_{\T_z}\nabla_{x,y}\cdot v_0(\cdot,z) \,\dd z=0 $ as it will be assumed below. 

\subsection{Main assumption and $(p,\a,\s,q)$-solutions}
\label{ss:assumption_solution}
We begin by listing our main assumptions. Further assumptions will be introduced where needed.

\begin{assumption} We say that Assumption \ref{ass:primitive}$(p,\s,q)$ holds if the following conditions are satisfied.
\label{ass:primitive}
\begin{enumerate}[{\rm(1)}]
\item $h\in [-1,\infty)$.
\item\label{it:integrability_parameter_primitive} $\delta\in [0,1)$ and either $[\, p\in (2,\infty)\text{ and }q\in [2,\infty)\, ]$ or $[\, p=q=2\, ]$.
\item\label{it:ass_primitive1} For all $n\geq 1$ the following mapping are $\Progress\otimes \Borel(\T^3)$-measurable
\begin{align*}
 a=(a_{\eta,\xi})_{\eta,\xi\in \{x,y,z\}}&:\R_+\times \O\times \T^3\to \R^{3\times 3},\\
b=(b_{\xi})_{\xi\in \{x,y,z\}},\ \sigma_n=(\sigma_{n,\xi})_{\xi\in \{x,y,z\}}&:\R_+\times \O\times \T^3\to \R^3,\\
 b_{0,n}&:\R_+\times \O\times \T^3\to \R.
\end{align*}
\item\label{it:ass_primitive4} For all $n\geq 1$, the mappings
$$
(a_{\eta,\xi})_{\eta,\xi\in \{x,y\}},  (\sigma_{n,\xi})_{\xi\in \{x,y\}}   \text{ are independent of }z\in \T_z.
$$
\item\label{it:ass_primitive3} There exist $M,\g>0$ such that $\g> 1-\s$, and for a.a.\ $(t,\om)\in  \R_+\times \O$,
$$
\|a(t,\om,\cdot)\|_{C^{\g}(\Tor^3;\R^{3\times 3})}
+\|(\sigma_n(t,\om,\cdot))_{n\geq 1}\|_{C^{\g}(\Tor^3;\ell^2)}\leq M   .
$$
\item\label{it:ass_primitive2} There exists $\nu\in (0,1)$ such that for all $\lambda=(\lambda_{\xi})_{\xi\in \{x,y,z\}}\in \R^3$
\begin{equation*}
\sum_{\eta,\xi\in \{x,y,z\}} \Big(a_{\eta,\xi}-\frac{1}{2}\sum_{n\geq 1} \sigma_{n,\eta} \sigma_{n,\xi}\Big) 
\lambda_{\eta}\lambda_{\xi}  \geq  \ellip|\lambda|^2 \ \ \text{ a.e.\ on }\R_+\times \O.
\end{equation*}
 \item\label{it:ass_primitiveb_b0} There exists $N>0$ such that,  for a.a.\ $(t,\om)\in \R_+\times \O$,
 \begin{equation*}
 \|b(t,\om,\cdot)\|_{L^{\infty}(\T^3;\R^3)}+
 \|(b_{0,n}(t,\om,\cdot))_{n\geq 1}\|_{L^{\infty}(\T^3;\ell^2)}\leq N .
 \end{equation*}
\item\label{it:ass_primitive7} For all $n\geq 1$, the maps $f:\R_+\times \O\times \T^3\times \R^2\times \R^{3\times 2}\to \R^2$, 
$g_n:\R_+\times \O\times \T^3\times \R^2\to \R^2$ are $\Progress\otimes \Borel(\T^3\times \R^2\times \R^{3\times 2})$- and $\Progress\otimes \Borel(\T^3\times \R^2)$-measurable, respectively; and moreover
\begin{align*}
f(\cdot,0,0)
&\in L^{\infty}(\R_+\times \O\times \T^3;\R^2),\\
(g_n(\cdot,0))_{n\geq 1}  
&\in L^{\infty}(\R_+\times \O;W^{1,\infty}(\T^3;\ell^2)).
\end{align*}
\item\label{it:ass_primitive8} For all $n\geq 1$, the map $\T^3\times \R^2 \ni (\x,\xi)\to g_n(\cdot,\x,\xi)$ is differentiable a.e.\ on $\R_+\times \O\times \T^3$. Finally, there exists $L>0$ for which the following estimates hold for all $\xi,\xi'\in \R^3$, $\eta,\eta'\in \R^{3\times 2}$ and a.e.\ on $\R_+\times \O\times \T^3$:
\begin{align*}
|f(\cdot,\xi,\eta)-f(\cdot,\xi',\eta')|&\leq L\big(|\xi-\xi'|+ |\eta-\eta'|\big),\\
\|(g_n(\cdot,\xi)-g_n(\cdot,\xi'))_{n\geq 1}\|_{\ell^2}&\leq L|\xi-\xi'|,\\
\|(\nabla_{\x} g_n(\cdot,\xi)-\nabla_{\x} g_n(\cdot,\xi'))_{n\geq 1}\|_{\ell^2}&\leq L|\xi-\xi'|,\\
\|(\nabla_{\xi} g_n(\cdot,\xi)-\nabla_{\xi} g_n(\cdot,\xi'))_{n\geq 1}\|_{\ell^2}&\leq L|\xi-\xi'|.
\end{align*}
\end{enumerate}
\end{assumption}

%

Physical motivation for Assumption \ref{ass:primitive}\eqref{it:ass_primitive4} are given \cite[Remarks 2.2 and 2.3]{Primitive2}.

The parameters $(p,\s,q)$ determine the solution space for \eqref{eq:primitive} which is a weighted space of the form $L^p_t(H^{2-\s,(q,2)}_{\x})$, cf.\ Definition \ref{def:solution} below. In particular, $\s$ rules the smoothness in space. 
The use of time weights is connected with the trace theory of anisotropic spaces (see e.g.\ \cite{MV14,ALV23} for details), while the use of the anisotropy $(q,2)$ for space variable is motivated by the scaling of PEs as discussed in Subsection \ref{ss:scaling_intro}.
Our main results are formulated for $\s<\frac{1}{2}$ and thus $\g>\frac{1}{2}$ due to Assumption \ref{ass:primitive}\eqref{it:ass_primitive3}. Some comments on the case $\g\leq \frac{1}{2}$ are given in Subsection \ref{ss:open_problems}.

\begin{example}[Kolmogorov's spectrum of turbulence via 2d transport noise]
\label{ex:Kolgomorov}
Let $\wh{\sigma}$ be the Kraichnan model with intensity parameter $\alpha\in (1,\infty)$ on $\T^2$, see Subsection \ref{ss:kraichnan}. Then taking $\sigma_{k,\ell}\stackrel{{\rm def}}{=}(\wh{\sigma}_{k,\ell},0)\in \R^3$ and an enumeration of such coefficients, by Proposition \ref{prop:smoothness_Kraichnan} one has that Assumption \ref{ass:primitive}\eqref{it:ass_primitive3} holds provided $\delta> 1-\frac{\alpha}{2}$. In particular, for the case $\alpha=\frac{4}{3}$, where $\wh{\sigma}$ reproduces the Kolmogorov spectrum of turbulence (see \cite[pp. 427 and 436]{MK99_simplified}), one is forced to take $\delta >\frac{1}{3}$. 
\end{example}

\begin{remark}[Weakening the assumptions on $f,g$]
Arguing as in \cite{Primitive1} (see Assumption 3.1(7) and Assumption 3.5(2) there), Assumption \ref{ass:primitive}\eqref{it:ass_primitive8} can be weakened a locally Lipschitz condition jointly with a sublinear condition. 
\end{remark}

Next, we introduce the notion of $(p,\a,\s,q)$-solutions to \eqref{eq:primitive} in light of the reformulation \eqref{eq:primitive2} in Subsection \ref{ss:reformulation}.
To this end, we recall that the sequence of standard independent Brownian motions $(\beta^n)_{n\geq 1}$ uniquely induces an $\ell^2$-cylindrical Brownian motion given by 
 (see e.g.\ \cite[Definition 2.11 and Example 2.12]{AV19_QSEE_1})
\begin{equation}
\label{eq:def_Br}
\Br_{\ell^2}(F)=\textstyle{\sum}_{n\geq 1} \int_{\R_+}F_n\,\dd \beta^n_t \ \ \text{ for  } \ F=(F_n)_{n\geq 1}\in L^2(\R_+;\ell^2).
\end{equation}

\begin{definition}[$(p,\a,\s,q)$-solution]
\label{def:solution}
Suppose that Assumption \ref{ass:primitive}$(p,\s,q)$ holds. Let $\a\in[0,\frac{p}{2}-1)$ if $p>2$ or $\a=0$ otherwise.
Let $\tau$ be a stopping time, and let $v:[0,\tau)\times \O\to \Hs^{2-\s,(q,2)}(\T^3)$ be a stochastic process. 
\begin{enumerate}[{\rm(1)}]
\item\label{it:def_sol1} We say that $(v,\tau)$ is a \emph{local $(p,\a,\s,q )$-solution} to \eqref{eq:primitive} if the there exists a sequence of stopping times $(\tau_j)_{j\geq 1 }$ for which the following hold for all $j\geq 1$:
\begin{itemize}
\item $\tau_j\leq \tau$ a.s.;
\item $\one_{[0,\tau_j]} v$ is progressively measurable, and a.s.\ 
\begin{align*}
v&\in L^p(0,\tau_j,w_{\a};H^{2-\s,(q,2)}),\\
-(v\cdot\nabla_{x,y})v - w(v)\partial_z v+\ftg(\cdot,v)+f(\cdot,v,\nabla v) &\in L^p(0,\tau_j,w_{\a};H^{-\s,(q,2)}),\\
(g_n(\cdot,v))_{n\geq 1} &\in L^p(0,\tau_j,w_{\a};H^{1-\s,(q,2)}(\ell^2));
\end{align*}
\item for all $t\in [0,\tau_j]$ and a.s.\ 
\begin{align*}
v(t)-v_0 
&=\int_0^t  \p\big[\nabla\cdot(a\cdot\nabla v)+ (b\cdot\nabla) v\\
&\qquad \qquad \qquad-(v\cdot\nabla_{x,y})v- w(v)\partial_z v+\ft(\cdot,v)+f(\cdot,v,\nabla v)\big]\,\dd s \\
& +\int_0^t \one_{[0,\tau_j]} \Big(\p[(\sigma_n\cdot\nabla) v+ g_n(\cdot,v) ]\Big)_{n\geq 1}\,\dd \Br_{\ell^2}(s),
\end{align*}
\end{itemize}
where $w(v)$ and $\ft(\cdot,v),\ftg(\cdot,v)$ are as in \eqref{eq:def_w} and \eqref{eq:def_f_tilde}-\eqref{eq:def_f_tilde2}, respectively.
\item\label{it:def_sol2} A local $(p,\a,\s,q)$-solution to \eqref{eq:primitive} is said to be a \emph{$(p,\a,\s,q)$-solution}  to \eqref{eq:primitive} if any other local $(p,\a,\s,q)$-solution $(v',\tau')$ to \eqref{eq:primitive} we have $\tau'\leq \tau$ a.s.\ and $v=v'$ a.e.\ on $[0,\tau')\times \O$.
\item A $(p,\a,\s,q)$-solution $(v,\tau)$  to \eqref{eq:primitive} is said to be \emph{global} if $\tau=\infty$ a.s. 
\end{enumerate}
\end{definition}

Note that $(p,\a,\s,q)$-solutions are unique by definition.

Employing Assumption \ref{ass:primitive} and Lemma \ref{l:pointwise}, one can check that, a.s.\ for all $j\geq 1$,
\begin{align*}
\nabla (a\cdot \nabla v) , \ \fts(\cdot)v&\in L^p(0,\tau_j,w_{\a};H^{-\s,(q,2)}),\\
((\sigma_n\cdot \nabla)v)_{n\geq 1}&\in L^p(0,\tau_j,w_{\a};H^{1-\s,(q,2)}(\ell^2)),
\end{align*}
where $(\tau_j)_{j\geq 1}$ is a sequence of stopping times as in Definition \ref{def:solution}\eqref{it:def_sol1}. Thus,
all the integrals appearing in the integral formulation of \eqref{eq:primitive} are well-defined as either Bochner or It\^o integrals. In latter case, one uses $L^p(w_{\a})\embed L^2$ as $\a<\frac{p}{2}-1$.

Under natural conditions, our results show that a different choice of the parameters $(p,\a,\s,q)$ leads to the same solution, see Corollary \ref{cor:compatibility} below.

\subsection{Local well-posedness and regularity in critical spaces}
\label{ss:local}
We begin with the following result on the existence of local unique solutions in critical spaces.

\begin{theorem}[Local existence and uniqueness in critical spaces]
\label{t:local}
Assume that  
\begin{equation}
\label{eq:assumptions_critical_setting}
\begin{aligned}
\text{ either }\ &\big[ q=p=2 \text{ and }\s=0\big],\\
 \text{ or }\  & \Big[
 \s\in \Big(0,\frac{1}{2}\Big),\   q\in \Big(\frac{2}{2-\s},\frac{2}{1-\s}\Big) \text{ and }\
 \frac{1}{p}+\frac{1}{q}+\frac{\delta}{2}\leq 1\Big].
\end{aligned}
\end{equation}
Let Assumption \ref{ass:primitive}$(p,\s,q)$ be satisfied and set $\a=\a_{\crit}\stackrel{{\rm def}}{=}p(1-\frac{1}{q}-\frac{\delta}{2})-1$. 
Then for each 
$$
v_0\in L^0_{\F_0}(\O;\Bs^{2/q}_{(q,2),p}(\T^3)),
$$
the problem \eqref{eq:primitive} has a (unique) $(p,\a_{\crit},\s,q)$-solution
$(v,\tau)$ such that  a.s.\ $\tau>0$  and
\begin{align}
\label{eq:reg_critical1}
v&\in  H^{\vartheta,p}_{\loc}([0,\tau),w_{\a_{\crit}};\Hs^{2-\s-2\vartheta,(q,2)}(\Tor^3))\  \text{ for all }\vartheta\in [0,\tfrac{1}{2}),\\
\label{eq:reg_critical2}
v&\in  C([0,\tau);\Bs^{2/q}_{(q,2),p}(\Tor^3));
\end{align}
where in the case $p=2$, \eqref{eq:reg_critical1} holds only for $\vartheta=0$.
\end{theorem}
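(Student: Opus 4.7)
The plan is to cast the reformulated system \eqref{eq:primitive2} as a semilinear stochastic evolution equation on the UMD, type 2 Banach space $\Hs^{-\s,(q,2)}(\Tor^3)$ and invoke the critical local well-posedness theory of \cite{AV19_QSEE_1,AV19_QSEE_2}. I place \emph{all} terms of leading order in the linear part, namely
\begin{align*}
A(t)v &:= -\p\bigl[\nabla\cdot(a(t)\cdot\nabla v)+(b(t)\cdot\nabla) v + \fts(t,\cdot)v\bigr], \\
B_n(t)v &:= \p\bigl[(\sigma_n(t,\cdot)\cdot\nabla) v\bigr],
\end{align*}
so that the Stratonovich-type pressure correction $\fts$ is grouped with $\nabla\cdot(a\nabla v)$ and controlled by the parabolicity in Assumption \ref{ass:primitive}\eqref{it:ass_primitive2}. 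The remainder
\begin{align*}
F(t,v) &:= -\p\bigl[(v\cdot\nabla_{x,y})v + w(v)\,\partial_z v\bigr] + \p\bigl[\ftg(t,\cdot,v)+f(t,\cdot,v,\nabla v)\bigr],\\
G_n(t,v) &:= \p\, g_n(t,\cdot,v),
\end{align*}
is treated as a semilinear nonlinearity.

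The first analytic step is to establish stochastic maximal $L^p(w_{\a_{\crit}})$-regularity (SMR) for the pair $(A,B)$ with domain $\Hs^{2-\s,(q,2)}(\Tor^3)$ in the ground space $\Hs^{-\s,(q,2)}(\Tor^3)$. This is exactly where the assumption $\g>1-\s$ enters: by the Hölder multiplier estimate (Lemma \ref{l:pointwise}), $a\cdot\nabla$ and $\sigma_n\cdot\nabla$ are bounded from $\Hs^{2-\s,(q,2)}$ into $\Hs^{1-\s,(q,2)}$ (and into $\Hs^{1-\s,(q,2)}(\ell^2)$ for the noise), so that $(A,B)$ is a perturbation of the constant-coefficient hydrostatic Stokes operator coupled with a divergence-form transport noise; the parabolicity condition ensures the required strict dissipativity of the effective second-order operator $\nabla\cdot(a\nabla)-\tfrac{1}{2}\sum_n (\sigma_n\cdot\nabla)^2$. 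SMR in the anisotropic Bessel-potential scale then follows from the theory referenced in the appendix of the paper.

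The second analytic step is the verification of the critical Lipschitz bounds for $(F,G)$. On the real-interpolation scale
$$
X_{\beta}:=(\Hs^{-\s,(q,2)},\Hs^{2-\s,(q,2)})_{\beta,p} = \Bs^{2\beta-\s}_{(q,2),p}(\Tor^3),
$$
the critical choice $\beta = 1-\tfrac{1+\a_{\crit}}{p}$ gives $2\beta-\s=2/q$, so the trace at $t=0$ lands in $\Bs^{2/q}_{(q,2),p}(\Tor^3)$, matching the hypothesis on $v_0$ and fixing $\a_{\crit}=p(1-\tfrac{1}{q}-\tfrac{\s}{2})-1$. The genuinely critical nonlinearity is $w(v)\partial_z v$: since $w(v)=-\int_0^z\nabla_{x,y}\cdot v\,\dd z'$ only loses horizontal regularity, anisotropic Hölder together with the anisotropic Sobolev embedding (treating $L^{(q,2)}$ in $(x,y,z)$) reduces the required bound
$$
\|w(v)\partial_z v - w(v')\partial_z v'\|_{\Hs^{-\s,(q,2)}} \lesssim (1+\|v\|_{X_{\beta}}+\|v'\|_{X_{\beta}})\|v-v'\|_{X_{\beta}}
$$
to the scaling identity $\tfrac{1}{p}+\tfrac{1}{q}+\tfrac{\s}{2}\leq 1$, which is exactly \eqref{eq:assumptions_critical_setting}. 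The term $(v\cdot\nabla_{x,y})v$ is handled similarly and is in fact subcritical under the same condition; $\ftg, f, g$ are lower-order by Assumption \ref{ass:primitive}\eqref{it:ass_primitive7}--\eqref{it:ass_primitive8}.

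With SMR and these critical Lipschitz estimates in hand, the abstract local well-posedness result for semilinear SPDEs in critical spaces from \cite{AV19_QSEE_1,AV19_QSEE_2} produces the unique maximal $(p,\a_{\crit},\s,q)$-solution $(v,\tau)$ with $\tau>0$ a.s.\ and the mixed-derivative regularity \eqref{eq:reg_critical1}; the time-continuity \eqref{eq:reg_critical2} follows from the trace embedding of weighted anisotropic Sobolev spaces. The main obstacle is the SMR step: with $\g>\tfrac{1}{2}$ only, one cannot close the linear estimate in a Hilbert ($L^2$-based) setting and is forced into the $L^q$-framework with $q>2$. Here the \emph{anisotropy} $(q,2)$ is essential because, by the scaling considerations of Subsection \ref{ss:scaling_intro}, the Sobolev index of $\Bs^{2/q}_{(q,2),p}$ stays equal to $-\tfrac{1}{2}$ regardless of $q$; this is precisely what allows $\sigma_n\cdot\nabla$ to remain a genuine perturbation of the Laplacian, and the criticality condition $\tfrac{1}{p}+\tfrac{1}{q}+\tfrac{\s}{2}\leq 1$ to be simultaneously compatible with the roughness assumption $\g>1-\s$.
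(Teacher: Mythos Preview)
Your overall plan is the paper's proof: recast \eqref{eq:primitive2} as the abstract stochastic evolution equation \eqref{eq:SEE} with the splitting \eqref{eq:def_ABFG}, verify stochastic maximal $L^p$-regularity for $(A,B)$ (Lemma \ref{l:smr}) and the nonlinear estimates (Lemma \ref{l:nonlinearities}), and then apply \cite[Theorem 4.8]{AV19_QSEE_1}. So the strategy is correct and identical to the paper's.

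There is, however, a concrete slip in your handling of the nonlinearity that would prevent the argument from closing as written. In the framework of \cite{AV19_QSEE_1} the bilinear estimate must be carried out in the \emph{complex} interpolation scale $X_\beta=[X_0,X_1]_\beta=\Hs^{2\beta-\s,(q,2)}$, not in the real-interpolation (Besov) trace space, and for a quadratic nonlinearity the critical index is $\beta=1-\tfrac12\tfrac{1+\a}{p}$, not the trace index $1-\tfrac{1+\a}{p}$ that you use. With $\a=\a_{\crit}$ this gives $\beta=\tfrac12+\tfrac{\s}{4}+\tfrac{1}{2q}$, hence $X_\beta=\Hs^{1+1/q-\s/2,(q,2)}$; this is the space in which the paper actually proves
\[
\|w(v)\partial_z v'\|_{X_0}\lesssim \|v\|_{X_\beta}\|v'\|_{X_\beta}
\]
(Lemma \ref{l:nonlinearities}, Substeps 1a--1b), via the auxiliary exponent $r$ defined by $\tfrac{2}{r}=\s+\tfrac{2}{q}$ and the embeddings $X_\beta\hookrightarrow H^{1,2r}_{x,y}(L^2_z)\cap L^{2r}_{x,y}(H^1_z)$. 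The (sub)criticality condition $\beta\le 1-\tfrac12\tfrac{1+\a}{p}$ is then what unwinds to $\tfrac{1}{p}+\tfrac{1}{q}+\tfrac{\s}{2}\le 1$. Your identification of the trace space $\Bs^{2/q}_{(q,2),p}$ for the initial datum is correct, but that space is not the one entering the Lipschitz bound for $F$; the two $\beta$'s are distinct objects in the abstract theory.
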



Note that $\frac{2}{1-\s}>2$ in case $\s>0$. In particular, the second condition in \eqref{eq:assumptions_critical_setting} allows for some range of $q\geq 2$, cf.\ Assumption \ref{ass:primitive}\eqref{it:integrability_parameter_primitive}.
The space of initial data $\Bs^{2/q}_{(q,2),p}(\T^3)$ is \emph{critical} for the stochastic PEs as discussed in Subsection \ref{ss:scaling_intro}. For different values of $(q,p)$ we have, by Sobolev embeddings (see e.g.\ Corollary \ref{cor:sob_emb_besov}),
$$
\Bs^{2/q}_{(q,2),p}(\T^3)\embed \Bs^{2/q_1}_{(q_1,2),p_1}(\T^3) \ \ \text{ provided }\ \ 
q_1\in [ q,\infty) , \ p_1\in [p,\infty).
$$
By letting $\delta\uparrow \frac{1}{2}$ in the above, the second  and third conditions in \eqref{eq:assumptions_critical_setting} are satisfied if $q<4$ and $p$ large. Hence we may consider initial data with smoothness $>\frac{1}{2}$.
The possibility of choosing $\delta$ large is also crucial for allowing \emph{rough} transport noise. Indeed, by Assumption \ref{ass:primitive}\eqref{it:ass_primitive2}, the choice of $\delta$ is related to the regularity of $(\sigma_{n})_{n\geq 1}$: The higher the value of $\delta$, rougher the noise. In case $(\sigma)_{n\geq 1}$ are chosen so that they reproduce the 2d Kolmogorov's spectrum of turbulence then one has to choose $\delta\in (\frac{1}{3},\frac{1}{2})$, cf.\ Example \ref{ex:Kolgomorov}. It is interesting to note that, in the latter situation, the usual case $\delta=0$ analyzed in the previous works (see e.g.\ \cite{Primitive1,BS21,Debussche_2012}) is not allowed.

The following result ensures (local) continuity with respect to the initial data.
Hence Theorem \ref{t:local} and Proposition \ref{prop:continuity} yield that \eqref{eq:primitive} is \emph{local well-posed} in $\Bs^{2/q}_{(q,2),p}$. 

\begin{proposition}[Local continuity]
\label{prop:continuity}
Let the assumptions of Theorem \ref{t:local} be satisfied and let $(v,\tau)$ be the $(p,\a_{\crit},\s,q)$-solution to \eqref{eq:primitive}. Fix $\vartheta\in [0,\frac{1}{2})$. Then there exist $C_0,K_{\vartheta},T_0,\varepsilon_0>0$ and stopping times $\tau_0,\tau_1\in (0,\tau]$ a.s.\ for which the following assertion holds:

For each $v_0'\in L^0_{\F_0}(\O;\Bs^{2/q}_{(q,2),p})$ with $\E\|v_0-v_0'\|_{B^{2/q}_{(q,2),p}}^p\leq \varepsilon_0$,  the $(p,\a_{\crit},\s,q)$-solution $(v',\tau')$ to \eqref{eq:primitive} with initial data $v_0'$ has the property that there exists a stopping time $\tau_0'\in (0,\tau']$ a.s.\ such that, for all $t\in [0,T_0]$ and $\ell>0$, 
\begin{align}
\label{eq:continuity0}
\E \Big[\one_{\{\tau_0>t\}} \|v\|_{H^{\vartheta,p}(0,t,w_{\a_{\crit}};H^{2-\s-2\vartheta,(q,2)})}^p\Big]&\leq K_{\vartheta},\\
\label{eq:continuity1}
\E \Big[\one_{\{\tau_0\wedge \tau_0'>t\}} \|v-v'\|_{H^{\vartheta,p}(0,t,w_{\a_{\crit}};H^{2-\s-2\vartheta,(q,2)})}^p\Big]&\leq K_{\vartheta}\E\|v_0-v_0'\|_{B^{2/q}_{(q,2),p}}^p,\\
\label{eq:continuity2}
\P(\tau_0\wedge \tau_0'\leq t)\leq C_0\Big[\E\|v_0-v_0'\|_{B^{2/q}_{(q,2),p}}^p & + \P(\tau_1\leq t)\Big],
\end{align}
where in case $p=2$ and $\vartheta>0$, the space $H^{\vartheta,p}(0,t,w_{\a_{\crit}};H^{2-\s-2\vartheta,(q,2)})$ has to be replaced by $C([0,t];H^{1-\s})$.
\end{proposition}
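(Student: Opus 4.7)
The plan is to reduce Proposition \ref{prop:continuity} to the abstract continuous dependence result for critical semilinear SEEs of \cite{AV19_QSEE_1}, applied to the reformulation \eqref{eq:primitive2} of the primitive equations. That reformulation casts \eqref{eq:primitive2} as a semilinear SEE on the base space $\Hs^{-\s,(q,2)}(\T^3)$, whose trace space in the maximal $L^p(w_{\a_{\crit}})$-regularity setting is precisely $\Bs^{2/q}_{(q,2),p}(\T^3)$. The existence part of Theorem \ref{t:local} is proved by a contraction argument in this setting, and the continuity statements of Proposition \ref{prop:continuity} are the natural by-product; my plan is to extract them explicitly.

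First, I would fix $R>0$ large (possibly random in $\omega$ through $\|v_0\|_{\Bs^{2/q}_{(q,2),p}}$, after a standard localization reducing to the case $\|v_0\|_{\Bs^{2/q}_{(q,2),p}}\leq R$) and define the stopping time
$$
\tau_1 = \inf\Bigl\{t\in [0,\tau) : \|v\|_{L^p(0,t,w_{\a_{\crit}};\Hs^{2-\s,(q,2)})} + \|v\|_{C([0,t];\Bs^{2/q}_{(q,2),p})} > R\Bigr\}\wedge \tau,
$$
setting $\tau_0 = \tau_1\wedge T_0$ for a small $T_0>0$ to be chosen below; since $v$ belongs to both spaces appearing in the infimum by \eqref{eq:reg_critical1}--\eqref{eq:reg_critical2}, one has $\tau_1>0$ a.s. Then \eqref{eq:continuity0} follows from the resulting uniform bound on $v$ on $[0,\tau_0]$, the SEE \eqref{eq:primitive2} used to control $\partial_t v$ in $\Hs^{-\s,(q,2)}$, and the mixed-derivative embedding
$$
L^p(w_{\a_{\crit}};\Hs^{2-\s,(q,2)})\cap H^{1,p}(w_{\a_{\crit}};\Hs^{-\s,(q,2)})\embed H^{\vartheta,p}(w_{\a_{\crit}};\Hs^{2-\s-2\vartheta,(q,2)})
$$
valid for $\vartheta\in [0,\tfrac12)$.

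For \eqref{eq:continuity1}, let $(v',\tau')$ denote the $(p,\a_{\crit},\s,q)$-solution produced by Theorem \ref{t:local} with datum $v_0'$, and put $\tau_0'=\tau_0\wedge \tau'$. The difference $V=v-v'$ solves on $[0,\tau_0']$ a linear SEE with initial value $v_0-v_0'$ and deterministic source of schematic form
$$
-\p\bigl[(v\cdot \nabla_{x,y})V + (V\cdot \nabla_{x,y})v' + w(V)\partial_z v + w(v')\partial_z V\bigr] + [\text{Lipschitz lower order}],
$$
together with a stochastic source collecting $\p[g_n(\cdot,v)-g_n(\cdot,v')]_{n\geq 1}$ and the linear operators $\p\fts(\cdot)V$ and $\p(\sigma_n\cdot\nabla) V$. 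Applying stochastic maximal $L^p$-regularity for the linear principal part jointly with the anisotropic bilinear and pointwise multiplier estimates used in the proof of Theorem \ref{t:local} (cf.\ Lemma \ref{l:pointwise}), I would obtain
$$
\|V\|_{\mathcal{E}(\tau_0')}\leq C_1\|v_0-v_0'\|_{\Bs^{2/q}_{(q,2),p}} + C(R)\,T_0^{\eta}\,\|V\|_{\mathcal{E}(\tau_0')},
$$
where $\mathcal{E}(\tau_0')=L^p(0,\tau_0',w_{\a_{\crit}};\Hs^{2-\s,(q,2)})\cap H^{1,p}(0,\tau_0',w_{\a_{\crit}};\Hs^{-\s,(q,2)})$ and $\eta>0$ arises from the time-weighted integration of the subcritical remainder in the bilinear estimate. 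Choosing $T_0$ so small that $C(R)T_0^{\eta}\leq \tfrac12$ absorbs the last term, and \eqref{eq:continuity1} follows after taking $p$-th moments and applying the displayed embedding.

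Finally, \eqref{eq:continuity2} follows from the set inclusion $\{\tau_0\wedge\tau_0'\leq t\}\subseteq \{\tau_1\leq t\}\cup \{\tau'<\tau_0\}$ combined with \eqref{eq:continuity1} and Chebyshev's inequality: on $\{\tau_1>t\}$ the solution $v$ stays within a ball of radius $R$, and smallness of $\E\|v_0-v_0'\|_{\Bs^{2/q}_{(q,2),p}}^p\leq \varepsilon_0$ forces $v'$ to remain within a slightly larger ball, so $\tau'>\tau_0$ except on a set of probability $\lesssim \varepsilon_0$. The main obstacle throughout is the \emph{criticality} of $\Bs^{2/q}_{(q,2),p}$ for the PEs: the bilinear terms $(v\cdot\nabla_{x,y})v$ and $w(v)\partial_z v$ have no scaling margin in this space, so the decisive gain $T_0^{\eta}$ cannot come from a naive H\"older-in-time argument but only from the singular weight $w_{\a_{\crit}}$ via the refined critical maximal regularity estimates of \cite{AV19_QSEE_1,AV19_QSEE_2} --- this is precisely the role played by the choice $\a_{\crit}=p(1-\tfrac{1}{q}-\tfrac{\delta}{2})-1$.
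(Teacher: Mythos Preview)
Your overall plan---deduce the proposition from the abstract local continuous dependence result for critical semilinear SEEs in \cite{AV19_QSEE_1}, after having verified its hypotheses via Lemmas~\ref{l:nonlinearities} and~\ref{l:smr}---is exactly what the paper does. The paper does not spell out a separate argument for Proposition~\ref{prop:continuity}; it is meant to follow, together with Theorem~\ref{t:local}, from \cite[Theorem~4.8]{AV19_QSEE_1} once the nonlinear and stochastic maximal regularity estimates are in place.

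That said, the detailed sketch you give for \eqref{eq:continuity1} contains a genuine gap. In the present setting the weight $\a_{\crit}=p(1-\tfrac{1}{q}-\tfrac{\delta}{2})-1$ is chosen so that the nonlinearity is \emph{exactly} critical in the sense of \cite{AV19_QSEE_1} (equality in \cite[eq.~(4.2)]{AV19_QSEE_1}). In that regime there is \emph{no} factor $T_0^{\eta}$ available: the embedding of the maximal regularity space into $L^{r}(w_{\a_{\crit}};X_\beta)$ is sharp, with no time margin to exploit by H\"older. Your final paragraph recognizes this but the proposed fix (``the gain comes from the weight $w_{\a_{\crit}}$'') is not correct either---note that in the admissible case $p=q=2$, $\delta=0$ one has $\a_{\crit}=0$ and there is no weight at all. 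The mechanism actually used in \cite{AV19_QSEE_1} is different: one works in the \emph{zero-trace} part of the maximal regularity space, writing $v=S(\cdot)v_0+\tilde v$ with $\tilde v(0)=0$, and uses that $\|\tilde v\|_{L^{r}(0,t,w_{\a_{\crit}};X_\beta)}\to 0$ as $t\downarrow 0$ (this is where the weight, when present, helps via \cite[Proposition~2.7]{AV19_QSEE_1}). The contraction constant in the estimate for $V=v-v'$ is then controlled by $\|v\|+\|v'\|$ measured in a norm that is \emph{small} on $[0,T_0]$, not merely bounded by $R$; this is what replaces your $C(R)T_0^{\eta}$ and what makes the absorption possible. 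Similarly, your argument for \eqref{eq:continuity2} needs the same refinement: the smallness that forces $v'$ to stay in a slightly larger ball on $\{\tau_1>t\}$ again comes from this zero-trace smallness, not from a power of $T_0$.
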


Note that 
\eqref{eq:continuity1} ensures $v'\to v$ provided $v_0'\to v_0$ on $\{\tau_0\wedge \tau_0'>t\}$, while \eqref{eq:continuity2} shows that the latter set has large probability as $t\downarrow 0$ since $\tau_1>0$ a.s. The key point in \eqref{eq:continuity2} is that the RHS\eqref{eq:continuity2} is independent of $v_0'$, although $\tau_0'$ depends on the choice of $v_0'$.

For later convenience, let us remark that, by \cite[Theorem 1.2]{ALV23},  in the case $p>2$ the estimate \eqref{eq:continuity1} also holds with $H^{\vartheta,p}(0,t,w_{\a_{\crit}};H^{2-\s-2\vartheta,(q,2)})$ replaced by $C([0,t];B^{2/q}_{(q,2),p})$ and $C_{\a_{\crit}/p}((0,t];B^{2-\s-2/p}_{(q,2),p})$ (where $C_{\mu}((0,t];Y)$ is the set of continuous maps on $v:(0,t]\to Y$ such that $\sup_{r\in (0,t]} [r^{\mu}\|v(r)\|_{Y}]<\infty$).

Next we discuss regularization results for stochastic PEs \eqref{eq:primitive}.  

\begin{theorem}[Instantaneous regularization I]
\label{t:regularity1}
Let the assumptions of Theorem \ref{t:local} be satisfied and let $(v,\tau)$ be the $(p,\a_{\crit},\s,q)$-solution to \eqref{eq:primitive}. 
Let $\g$ be as in Assumption \ref{ass:primitive}\eqref{it:ass_primitive3}. Then $(v,\tau)$ instantaneously regularizes in time and space: 
\begin{align}
\label{eq:inst_reg1}
v&\in H^{\vartheta,r}_{\loc}(0,\tau; H^{1+\alpha-2\vartheta,r}(\T^3;\R^2))\text{ a.s.\ for all }\vartheta\in [0,\tfrac{1}{2})
,\, \alpha< \g',  \, r \in ( 2,\infty),\\
\label{eq:inst_reg2}
v& \in C^{\mu,\nu}_{\loc}((0,\tau)\times \T^3;\R^2) \text{ a.s.\ for all }\mu\in [0,\tfrac{1}{2}),\,  \nu\in (0,1+\g'),
\end{align}
where $\g'\stackrel{{\rm def}}{=}1\wedge \g$.
\end{theorem}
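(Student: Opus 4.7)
The plan is to apply the bootstrap machinery of~\cite{AV19_QSEE_2}, specialised to the anisotropic critical scale introduced in Subsection~\ref{ss:scaling_intro}. Starting from the $(p,\a_{\crit},\s,q)$-solution given by Theorem~\ref{t:local} with the regularity \eqref{eq:reg_critical1}--\eqref{eq:reg_critical2}, the argument splits into two stages. First, I would use that all spaces $B^{2/q}_{(q,2),p}$ share Sobolev index $-\tfrac12$ to upgrade the integrability parameters $(p,q)$ without leaving the critical class; a compatibility statement in the spirit of Corollary~\ref{cor:compatibility} guarantees that the solution is not changed by this relabelling. Then, once $(p,q)$ are large, the trace theory for weighted parabolic spaces together with \eqref{eq:reg_critical1} shows that for a set of $t_0\in(0,\tau)$ of full measure one has $v(t_0)\in B^{2-\s-2/p}_{(q,2),p}$; since $2-\s-2/p>2/q$ for $p$ large, this is a strictly \emph{subcritical} initial datum, so that restarting the SPDE from $t_0$ places us in a setting where spatial smoothness can be iterated.

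From $t_0$ I would iteratively apply stochastic maximal $L^p$-regularity to the linear part $\partial_t v-\p[\nabla\cdot(a\cdot\nabla v)+\fts(\cdot) v]-\sum_n\p[(\sigma_n\cdot\nabla)v]\,\dot\beta^n$, treating the remaining drifts $(v\cdot\nabla_{x,y})v$, $w(v)\partial_z v$, $\ftg(\cdot,v)$, $f(\cdot,v,\nabla v)$ and the noise part $(g_n(\cdot,v))_{n\geq 1}$ as lower-order forcing. The gain of one derivative at each step is measured via the paraproduct estimate of Lemma~\ref{l:pointwise}: for $\alpha<\g$ one has $\|(\sigma_n\cdot\nabla) v\|_{H^{\alpha,r}(\ell^2)}\lesssim\|\sigma\|_{C^{\g}(\ell^2)}\|v\|_{H^{1+\alpha,r}}$, and similarly for $a\cdot\nabla v$, so the linearised bootstrap closes at smoothness level $1+\alpha$ provided one iterates through a dyadic sequence. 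The ceiling $\alpha<\g'=1\wedge\g$ is sharp: above $\g$ the H\"older norm of $\sigma$ is no longer available, and above smoothness one the bilinear map $(v,v)\mapsto (v\cdot\nabla_{x,y})v+w(v)\partial_z v$ saturates under the same paraproduct estimates. Finitely many iterations then produce \eqref{eq:inst_reg1} for every $r\in(2,\infty)$, $\vartheta\in[0,\tfrac12)$ and $\alpha<\g'$.

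The H\"older regularity \eqref{eq:inst_reg2} is deduced from \eqref{eq:inst_reg1} by combining the Sobolev embedding $H^{1+\alpha-2\vartheta,r}(\T^3)\embed C^{\nu}(\T^3)$ for $\nu<1+\alpha-2\vartheta-3/r$ with the temporal embedding $H^{\vartheta,r}(I;X)\embed C^{\vartheta-1/r}(\overline I;X)$ on intervals $I$ compactly contained in $(0,\tau)$, and letting $r\to\infty$, $\vartheta\uparrow\tfrac12$, $\alpha\uparrow\g'$. The hard part of the argument is the smoothness iteration: at every level of the \emph{anisotropic} scale $H^{s,(q,2)}$ one has to verify that the transport map $v\mapsto((\sigma_n\cdot\nabla)v)_{n\geq 1}$ still lies in the stochastic maximal $L^p$-regularity class of the linearised problem \emph{without} losing the time weight $\a$ that was used to absorb the critical initial datum, and one must do this book-keeping of $(\a,\s,q,p)$ jointly in the horizontal and vertical variables, where $\sigma_{n,x},\sigma_{n,y}$ are $z$-independent by Assumption~\ref{ass:primitive}\eqref{it:ass_primitive4} while $\sigma_{n,z}$ is not. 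Controlling this anisotropic propagation of smoothness is what distinguishes the present argument from the isotropic one in~\cite{GGHHK20_analytic} and is what forces the roof $1\wedge\g$ in place of $\g$ alone.
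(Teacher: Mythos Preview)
Your approach is essentially the paper's: apply the bootstrap machinery of \cite[Section~6]{AV19_QSEE_2} via Proposition~\ref{prop:local}, exploiting the scaling-consistent embedding $H^1\embed B^{2/q}_{(q,2),p}$ to transfer the $(2,0,0,2)$-solution into an $(p_\star,\a_\star,\s_\star,q_\star)$-setting with $p_\star,q_\star>2$. The paper carries this out explicitly in Steps~1--3 of the proof (the claim~\eqref{eq:claim_regularization_integrability_2}), proving the required compatibility statement \eqref{eq:tau_star_equal_tau_proof_regularity} directly via the Serrin-type criterion \eqref{eq:blow_up_star} and the anisotropic embedding chain \eqref{eq:v_vstar_regularity_bootstrap}; note that you cannot literally invoke Corollary~\ref{cor:compatibility}, since its proof uses the present theorem.

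Your explanation for the cap $\g'=1\wedge\g$ is, however, incorrect. The bilinear map $(v,v)\mapsto(v\cdot\nabla_{x,y})v+w(v)\partial_z v$ does \emph{not} saturate at smoothness one: Lemma~\ref{l:nonlinearities_high_order} (in particular \eqref{eq:nonlinearity_in_high_order_spaces}) shows it extends to $H^{s,q}$ for all $s\geq 0$ once the trace space is subcritical. What actually blocks the bootstrap at $s>0$ under Assumption~\ref{ass:primitive} alone is the lower-order data: $b$ and $(b_{0,n})_{n\geq 1}$ are merely in $L^\infty$ by item~\eqref{it:ass_primitiveb_b0}, and $f,g$ carry only Lipschitz dependence with $L^\infty$ coefficients in $\x$ by items~\eqref{it:ass_primitive7}--\eqref{it:ass_primitive8}, so $(b\cdot\nabla)v\in H^{s,q}$ and $f(\cdot,v,\nabla v)\in H^{s,q}$ fail for $s>0$. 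This is exactly why Theorem~\ref{t:regularity2} imposes the additional hypotheses~\eqref{it:reg_f_highreg1}--\eqref{it:reg_f_highreg3} on $(f,g,b,b_0)$ to lift the cap from $1\wedge\g$ to $\g$.
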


As suggested by the presence of the transport type term $(\sigma_n\cdot \nabla) v$ in \eqref{eq:primitive}, $\nabla v$ is expected to be at most as regular as $\sigma_n$. Hence, \eqref{eq:inst_reg1} and \eqref{eq:inst_reg2} show a sub-optimal gain of regularity in the  H\"older scale. It is not clear whether \eqref{eq:inst_reg1} (resp.\ \eqref{eq:inst_reg2}) also holds in the optimal case $\eta=\g'$ (resp.\ $\nu=1+\g'$) if $\g<1$. 
For future convenience, let us note that, \eqref{eq:inst_reg2} implies $C^1$-regularization of $v$, i.e.\
\begin{equation}
\label{eq:regularity_C1}
v\in C((0,\tau)\times \T^3;\R^2)\ \text{  a.s.} \ \ \ \text{ and } \ \ \   \nabla v\in C((0,\tau)\times \T^3;\R^{2\times 3})\ \text{ a.s. }
\end{equation}
Under additional assumptions on $(f,g,b,b_0)$, Theorem \ref{t:regularity1} also holds with $\g'$ is replaced by $\g$, see Theorem \ref{t:regularity2} below.

Before going further, let us point out the following interesting consequence of the above results which ensures that the solutions to \eqref{eq:primitive} provided in Theorem \ref{t:local} do not depend on the specific choice of the parameters.

\begin{corollary}[Compatibility]
\label{cor:compatibility}
Suppose that Theorem \ref{t:local} can be applied for two different sets of parameters $(p_1,\s_1,q_1)$ and  $(p_2,\s_2,q_2)$. Let $(v_1,\tau_1)$ and $(v_2,\tau_2) $ be the corresponding solutions. Then $\tau_1=\tau_2$ a.s.\ and $v_1=v_2$ a.e.\ on $[0,\tau_1)\times \O$.
\end{corollary}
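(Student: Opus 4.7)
Let $(v_i,\tau_i)$ denote the $(p_i,\a_{i,\crit},\s_i,q_i)$-solution to \eqref{eq:primitive} given by Theorem \ref{t:local}, so in particular $v_0\in \Bs^{2/q_1}_{(q_1,2),p_1}\cap \Bs^{2/q_2}_{(q_2,2),p_2}$. The plan is to show that $(v_1,\tau_1)$ is also a \emph{local} $(p_2,\a_{2,\crit},\s_2,q_2)$-solution to \eqref{eq:primitive} in the sense of Definition \ref{def:solution}\eqref{it:def_sol1}. Once this is granted, the maximality clause in Definition \ref{def:solution}\eqref{it:def_sol2} forces $\tau_1\le\tau_2$ a.s.\ and $v_1=v_2$ a.e.\ on $[0,\tau_1)\times\O$; swapping the roles of the indices yields the reverse inequality and the conclusion.

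The task therefore reduces to exhibiting a localizing sequence $(\tau_j)_{j\ge 1}$ with $\tau_j\uparrow \tau_1$ along which $v_1$ satisfies the three integrability requirements of Definition \ref{def:solution}\eqref{it:def_sol1} with parameters $(p_2,\a_{2,\crit},\s_2,q_2)$; the integral identity itself is automatic, since \eqref{eq:primitive2} is the same equation in either setting. I would split $[0,\tau_j]=[0,s]\cup[s,\tau_j]$ for a small deterministic $s>0$. On $[s,\tau_j]$ the weight $w_{\a_{2,\crit}}$ is comparable to $1$, and Theorem \ref{t:regularity1} places $v_1$ in $H^{\vartheta,r}_{\loc}(0,\tau_1;H^{1+\alpha-2\vartheta,r}(\T^3;\R^2))$ for essentially arbitrary $\vartheta\in[0,\tfrac12)$, $\alpha<\g'$ and $r\in(2,\infty)$. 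Anisotropic Sobolev and Besov embeddings (Subsection \ref{ss:notation} and Appendix \ref{app:anisotropic}), together with Assumption \ref{ass:primitive} and Lemma \ref{l:pointwise}, then yield all of the required $L^{p_2}(s,\tau_j,w_{\a_{2,\crit}};\Hs^{2-\s_2,(q_2,2)})$-type bounds on $v_1$, on the drift, and on the noise coefficients appearing in the equation.

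The behavior on the short initial interval $[0,s]$ is more delicate and is the main obstacle: one must transfer the weighted regularity $v_1\in L^{p_1}(0,\tau_{1,j},w_{\a_{1,\crit}};\Hs^{2-\s_1,(q_1,2)})$ produced by Theorem \ref{t:local} into the $(p_2,q_2,\s_2)$-scale. The key structural fact that should make this possible is that each $\a_{i,\crit}$ is chosen exactly so that $\Bs^{2/q_i}_{(q_i,2),p_i}$ is the trace space of the corresponding weighted maximal regularity space at $t=0$, and all such trace spaces share the same anisotropic Sobolev index $-\tfrac12$ (cf.\ Subsection \ref{ss:scaling_intro}). Combined with the continuity $v_1\in C([0,\tau_1);\Bs^{2/q_1}_{(q_1,2),p_1})$ and the hypothesis $v_0\in\Bs^{2/q_2}_{(q_2,2),p_2}$, one can interpolate between the two weighted parabolic scales on $[0,s]$, after which the nonlinear and noise terms are controlled by the same multilinear estimates employed in the proof of Theorem \ref{t:local}. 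Choosing $s$ small and letting $\tau_j\uparrow \tau_1$ completes the verification.
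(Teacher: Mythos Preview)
Your approach has a real gap on the initial interval $[0,s]$. The claim that ``one can interpolate between the two weighted parabolic scales on $[0,s]$'' is not an argument: there is in general no embedding between $L^{p_1}(0,s,w_{\a_{1,\crit}};H^{2-\s_1,(q_1,2)})$ and $L^{p_2}(0,s,w_{\a_{2,\crit}};H^{2-\s_2,(q_2,2)})$ in either direction. If, say, $q_2>q_1$, passing to higher horizontal integrability costs derivatives, and the weights do not compensate because the spatial Sobolev indices $2-\s_i-2/q_i$ of the top spaces are not ordered. The only information about $v_1$ near $t=0$ comes from \eqref{eq:reg_critical1}--\eqref{eq:reg_critical2} in the $(p_1,q_1,\s_1)$-scale; Theorem~\ref{t:regularity1} acts strictly on $(0,\tau_1)$ and says nothing at $t=0$, and the fact that the trace spaces $\Bs^{2/q_i}_{(q_i,2),p_i}$ share Sobolev index $-\tfrac12$ does not by itself transfer \emph{path} regularity from one weighted maximal-regularity scale to the other. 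Note also that the one place in the paper where your strategy \emph{is} carried out --- Steps~1--3 around \eqref{eq:tau_star_equal_tau_proof_regularity} in the proof of Theorem~\ref{t:regularity1} --- runs only one way (from the $(p_\star,q_\star)$-setting down to the $(2,2)$-setting), precisely because the needed embeddings hold in that direction and not the other.

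The paper's route avoids the short-time obstacle by an approximation argument, which is why Proposition~\ref{prop:continuity} is listed as an ingredient. One takes smooth $v_0^{(n)}\to v_0$ in both trace spaces; for such data the mechanism behind \eqref{eq:tau_star_equal_tau_proof_regularity} identifies each $(p_i,\a_{i,\crit},\s_i,q_i)$-solution with the $(2,0,0,2)$-solution, whence $v_1^{(n)}=v_2^{(n)}$ and $\tau_1^{(n)}=\tau_2^{(n)}$. Proposition~\ref{prop:continuity}, applied separately in the two settings, then passes this to the limit and gives $v_1=v_2$ on the common interval; equality of the lifetimes follows from the blow-up criterion. Your sketch never invokes Proposition~\ref{prop:continuity}, and that is exactly the missing piece.
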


The proof of the above result follows almost verbatim from the proof of \cite[Proposition 3.5]{AV22_localRD} by using Proposition \ref{prop:continuity} and Theorem \ref{t:regularity1}.  Hence, we omit the proof for brevity.

The proofs of Theorems \ref{t:local}, \ref{t:regularity1}, and Proposition \ref{prop:continuity} are given in Section \ref{s:proofs_local}.

\subsection{Serrin's type blow-up criteria and high-order regularity}
\label{ss:serrin}
Below we formulate blow-up criteria for solutions to \eqref{eq:primitive} provided by Theorem \ref{t:local}. Roughly speaking, blow-up criteria ensure that if $\P(\tau<\infty)>0$, then the norm of $v$ in a certain space explodes in finite time. As commented in \cite[Subsection 2.3]{AV22_localRD}, on the one hand, it is desirable to formulate blow-up criteria by using the largest function spaces possible and, on the other hand, such spaces cannot be too rough as at least the nonlinearities in the SPDE have to be well-defined. From this perspective, the critical setting is the optimal one as it the weakest for local well-posedness (cf.\ \cite[Subsection 2.2]{PSW18_critical}), and in particular for the well-definiteness of the nonlinearities.
 
The following $L^p(L^q)$-type blow-up criteria can be thought of as an analog of the Serrin-type blow-up criteria for stochastic PEs. 

\begin{theorem}[Serrin's blow-up criteria]
\label{t:serrin}
Let the assumptions of Theorem \ref{t:local} be satisfied and let $(v,\tau)$ be the $(p,\a_{\crit},\s,q)$-solution to \eqref{eq:primitive}, where $\a_{\crit}\stackrel{{\rm def}}{=} p(1-\frac{1}{q}-\frac{\delta}{2})-1$.
Assume that Assumption \ref{ass:primitive}$(p_{\zero},\s_{\zero},q_{\zero})$ also holds where 
\begin{equation}
\label{eq:assumptions_critical_setting_0}
\begin{aligned}
\text{ either }\ &\big[ q_0=p_0=2 \text{ and }\s_0=0\big],\\
 \text{ or }\  & \Big[
 \s_0\in \Big(0,\frac{1}{2}\Big),\   q_0\in \Big(\frac{2}{2-\s_0},\frac{2}{1-\s_0}\Big) \text{ and }\
 \frac{1}{p_0}+\frac{1}{q_0}+\frac{\delta_0}{2}\leq 1\Big], 
\end{aligned}
\end{equation}
Set $\mu_{\zero} \stackrel{{\rm def}}{=} \displaystyle{\frac{2}{q_{\zero}}+\frac{2}{p_{\zero}}}$. 
Then the following hold for all $0<s<T\leq \infty$:
\begin{enumerate}[{\rm(1)}]
\item\label{it:serrin1} $\displaystyle{\P\Big(s<\tau<T,\,\|v\|_{L^{p_{\zero}}(s,\tau;H^{\mu_{\zero},(q_{\zero},2)}(\T^3;\R^2))}<\infty\Big)=0}$.
\vspace{0.1cm}  
\item\label{it:serrin2} $\displaystyle{\P\Big(s<\tau<T,\,\sup_{t\in [s,\tau)}\|v(t)\|_{B^{\lambda_{\zero}}_{(q_{\zero},2),\infty}(\T^3;\R^2)}<\infty\Big)=0}$ for all $\displaystyle{\lambda_0\in \Big(\frac{2}{q_{\zero}}}, 2\wedge (1+ \g)\Big)$.
\end{enumerate}
\end{theorem}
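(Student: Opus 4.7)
The plan is a contradiction argument in the spirit of the Agresti-Veraar framework for blow-up in critical spaces. For part \eqref{it:serrin1}, assume $\P(\mathcal{A}) > 0$ where $\mathcal{A} \stackrel{\rm def}{=} \{s < \tau < T,\ \|v\|_{L^{p_0}(s,\tau;H^{\mu_0,(q_0,2)})} < \infty\}$. By the instantaneous regularization of Theorem \ref{t:regularity1}, $v(s) \in \Bs^{2/q_0}_{(q_0,2),p_0}(\T^3)$ almost surely, so Theorem \ref{t:local} in its time-shifted form (with parameters $(p_0,\s_0,q_0)$ and initial datum $v(s)$) produces a local solution which by Corollary \ref{cor:compatibility} agrees with $v$ on the overlap. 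It therefore suffices to prove the criterion in the $(p_0,\s_0,q_0)$-setting, where the task becomes: deduce from the Serrin bound that $v$ extends continuously up to $\tau$ in the critical space $\Bs^{2/q_0}_{(q_0,2),p_0}$, contradicting the standard critical blow-up criterion (which ensures that $\tau<\infty$ forces $\|v(t)\|_{\Bs^{2/q_0}_{(q_0,2),p_0}}$ to fail to converge as $t\uparrow\tau$).

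The bootstrap is where the work lies. Viewing $v$ on $[s,\tau)$ as the solution to \eqref{eq:primitive2} with linear part
\begin{equation*}
\dd v = \p[\nabla\cdot(a\cdot\nabla v) + (b\cdot\nabla)v + \fts v]\,\dd t + \textstyle\sum_n \p[(\sigma_n\cdot\nabla)v]\,\dd\beta_t^n
\end{equation*}
and semilinear forcing $F(v)=\p[-(v\cdot\nabla_{x,y})v - w(v)\partial_z v + \ftg(\cdot,v) + f(\cdot,v,\nabla v)]$, I invoke stochastic maximal $L^{p_0}$-regularity for the linear equation (available under Assumption \ref{ass:primitive}\eqref{it:ass_primitive3} with $\g > 1 - \s_0$) to upgrade a bound on $\|F(v)\|_{L^{p_0}(s,t;H^{-\s_0,(q_0,2)})}$ to a trajectorial bound of $v\in C([s,t];\Bs^{2/q_0}_{(q_0,2),p_0})$. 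The nonlinear estimate that closes the loop is, schematically,
\begin{equation*}
\|(v\cdot\nabla_{x,y})v\|_{H^{-\s_0,(q_0,2)}} + \|w(v)\partial_z v\|_{H^{-\s_0,(q_0,2)}} \lesssim \|v\|_{H^{\mu_0,(q_0,2)}}\,\|v\|_{\Bs^{2/q_0}_{(q_0,2),p_0}},
\end{equation*}
proved via the anisotropic product rule of Lemma \ref{l:pointwise}. The scaling $\mu_0 = 2/q_0 + 2/p_0$ is precisely what makes $L^{p_0}$-integration against the Serrin norm close, and a Gronwall iteration on a sequence of stopping times on which the Serrin norm is small yields the required uniform bound on $\mathcal{A}$.

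Part \eqref{it:serrin2} reduces to \eqref{it:serrin1} by anisotropic Sobolev embedding: for $\lambda_0 \in (2/q_0, 2\wedge(1+\g))$ pick $\widetilde{p}_0 < \infty$ large so that $\widetilde{\mu}_0 := 2/q_0 + 2/\widetilde{p}_0 < \lambda_0$, whence $\Bs^{\lambda_0}_{(q_0,2),\infty} \embed H^{\widetilde{\mu}_0,(q_0,2)}$; a uniform bound on $\|v(t)\|_{\Bs^{\lambda_0}_{(q_0,2),\infty}}$ becomes an $L^{\widetilde{p}_0}$-in-time bound in the Serrin class, and part \eqref{it:serrin1} applies with parameters $(\widetilde{p}_0,\widetilde{\s}_0,q_0)$ (choosing $\widetilde{\s}_0$ slightly larger than $\s_0$ so that $\g > 1-\widetilde{\s}_0$ and $\frac{1}{\widetilde{p}_0}+\frac{1}{q_0}+\frac{\widetilde{\s}_0}{2}\leq 1$ both hold). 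The main obstacle is the bilinear estimate above: the term $w(v)\partial_z v$ is delicate because $w(v)(t,x,y,z)=-\int_0^z\nabla_{x,y}\cdot v(t,x,y,z')\,\dd z'$ inherits only $L^\infty_z$-control of a horizontal derivative of $v$, so the $(q_0,2)$-anisotropy is essential and isotropic product rules would not suffice; this, together with the sharp stochastic maximal regularity input at low-H\"older transport coefficients $\g > 1-\s_0$, is the technical core I expect to be most involved.
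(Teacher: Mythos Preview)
Your reduction of part \eqref{it:serrin2} to part \eqref{it:serrin1} via the embedding $B^{\lambda_0}_{(q_0,2),\infty}\hookrightarrow H^{\widetilde{\mu}_0,(q_0,2)}$ is exactly what the paper does (cf.\ the remarks below the theorem statement and \eqref{eq:elementary_emb1}--\eqref{eq:elementary_emb2}); the extra parameter $\widetilde{\s}_0$ is not needed, since enlarging $p_0$ alone already keeps all constraints in \eqref{eq:assumptions_critical_setting_0} intact.

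For part \eqref{it:serrin1} your route is genuinely different from the paper's. The paper proceeds by direct invocation of the abstract Serrin criterion \cite[Theorem 4.11]{AV19_QSEE_2}, whose hypotheses were already checked through Lemmas~\ref{l:nonlinearities} and~\ref{l:smr}; this immediately yields \eqref{eq:blow_up_normal} in the $(p,\s,q)$-setting, and the transfer to the $(p_0,\s_0,q_0)$-setting is delegated to the extrapolation machinery of \cite[Theorem 2.10]{AV22_localRD} (equivalently \cite[Lemma 6.10]{AV19_QSEE_2}). Your proposal instead re-derives the content of that abstract theorem by hand: restart at time $s$ using Theorem~\ref{t:regularity1} and Corollary~\ref{cor:compatibility}, then close a Gronwall-type loop via SMR plus a bilinear estimate. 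This is a legitimate and more self-contained strategy, at the cost of reproducing work already encapsulated in the cited black box.

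One technical correction: the bilinear estimate you write does \emph{not} follow from Lemma~\ref{l:pointwise}, which handles products with one factor in a H\"older class $C^\g$. The right input is Lemma~\ref{l:nonlinearities}, giving $\|b_1(v,v)\|_{X_0}\lesssim \|v\|_{X_\beta}^2$, combined with the interpolation $\|v\|_{X_\beta}\lesssim \|v\|_{B^{2/q_0}_{(q_0,2),p_0}}^{1/2}\|v\|_{H^{2-\s_0,(q_0,2)}}^{1/2}$ (cf.\ \eqref{eq:interpolation_B_H}). Note also that in the critical case $\frac{1}{p_0}+\frac{1}{q_0}+\frac{\s_0}{2}=1$ one has $\mu_0=2-\s_0$, so your $H^{\mu_0,(q_0,2)}$ is actually $X_1$; with this identification your displayed inequality is exactly what the combination of Lemma~\ref{l:nonlinearities} and \eqref{eq:interpolation_B_H} delivers on the diagonal, which is all the Gronwall step needs.
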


Due to Theorem \ref{t:regularity1} and \eqref{eq:assumptions_critical_setting_0} the norms in \eqref{it:serrin1}-\eqref{it:serrin2} are well-defined as $s>0$. 

Note that the space-time Sobolev index of the space $L^{p_0}(H^{\mu_0,(q_0,2)})$ in \eqref{it:serrin1} is given by $-\frac{2}{p_0}+\mu_0-\frac{2}{q_0}-\frac{1}{2}=-\frac{1}{2}$ and coincide with the one used in for the initial data in Theorem \ref{t:local}. 
Arguing as below Theorem \ref{t:local}, by letting $\delta_0\downarrow\frac{1}{2}$, Theorem \ref{t:serrin} provides blow-up criteria in spaces with smoothness  $>\frac{1}{2}$. 

Note that as a special case of \eqref{it:serrin1} we have: For all $0<s<T\leq \infty$
\begin{equation}
\label{eq:serrin}
\P\Big(s<\tau<T,\,\|v\|_{L^{p_0}(s,\tau;H^{1,(q_0,2)}(\T^3;\R^2))}<\infty\Big)=0\ \ \text{ provided }\ \frac{2}{p_0}+\frac{2}{q_0}=1.
\end{equation}
Besides the analogy of \eqref{eq:serrin} with the well-known Serrin's criteria for the Navier-Stokes equations (see e.g.\ \cite[Theorem 11.2]{LePi}), another motivation behind the name of Theorem \ref{t:serrin} is that \eqref{it:serrin1} is a direct consequence of the abstract Serrin criterion of \cite[Theorem 4.11]{AV19_QSEE_2} which, in the case of stochastic Navier-Stokes equations, leads to an extension of the Serrin's criteria to the stochastic setting \cite[Theorem 2.9]{AV21_NS}.
Finally, let us note that Theorem \ref{t:serrin}\eqref{it:serrin2} follows from \eqref{it:serrin1} by choosing $p_0$ so large that $\mu_0=\frac{2}{q_0}+\frac{2}{p_0}<\lambda_0$ and therefore $B^{\lambda_0}_{(q_0,2),\infty}\embed H^{\mu_0,(q_0,2)}$ due to the elementary embeddings \eqref{eq:elementary_emb1}-\eqref{eq:elementary_emb2}.

The following high-order regularity result complements Theorem \ref{t:regularity1}.

\begin{theorem}[Instantaneous regularization II]
\label{t:regularity2}
Let the assumptions of Theorem \ref{t:local} be satisfied and let $(v,\tau)$ be the $(p,\a_{\crit},\s,q)$-solution to \eqref{eq:primitive}. 
Let $\g$ be as in Assumption \ref{ass:primitive}\eqref{it:ass_primitive3}. Suppose that:
\begin{enumerate}[{\rm(1)}]
\item\label{it:reg_f_highreg1} $(f,g)$ is independent of $\x\in \T^3$.
\item\label{it:reg_f_highreg2} a.e.\ on $\R_+\times \O$ and  $n\geq 1$, the mappings $(\xi,\eta)\mapsto f(\cdot,\xi,\eta)$ and $\xi\mapsto g_n(\cdot,\xi)$ are $C^{\lceil \g\rceil}$ and $C^{\lceil \g+1\rceil}$, respectively; moreover for all $N_1\geq 1$ there exists $C_{N_1}>0$ such that, for all $|\xi|,|\eta|\leq N_1$ and a.e.\ on $\R_+\times \O$,
\begin{equation*}
\sum_{j=1}^{\lceil \g\rceil} \big(|\nabla^j_{\xi} f(\cdot,\xi,\eta)|+ |\nabla^j_{\eta} f(\cdot,\xi,\eta)|\big)+ 
\sum_{j=1}^{\lceil \g+1 \rceil}\|(\nabla^j_{\xi} g_n(\cdot,\xi))_{n\geq 1}\|_{\ell^2} \leq C_{N_1}.
\end{equation*}
\item\label{it:reg_f_highreg3} There exists $N_2>0$ such that,  for a.a.\ $(t,\om)\in \R_+\times \O$,
 \begin{equation*}
 \|b(t,\om,\cdot)\|_{C^{\g-1}(\T^3;\R^3)}+
 \|(b_{0,n}(t,\om,\cdot))_{n\geq 1}\|_{C^{\g-1}(\T^3;\ell^2)}\leq N_2 .
 \end{equation*}
\end{enumerate}
Then $(v,\tau)$ instantaneously regularizes in time and space:
\begin{align}
\label{eq:inst_reg21}
v&\in H^{\vartheta,r}_{\loc}(0,\tau; H^{1+\beta-2\vartheta,r}(\T^3;\R^2)) \text{ a.s.\ for all }  \vartheta\in [0,\tfrac{1}{2}),\, \beta<\g,\,  r \in ( 2,\infty),\\
\label{eq:inst_reg22}
v& \in C^{\vartheta_1,\vartheta_2}_{\loc}((0,\tau)\times \T^3;\R^2) \text{ a.s.\ for all }\vartheta_1\in [0,\tfrac{1}{2}), \, \vartheta_2\in (0,1+\g).
\end{align}
\end{theorem}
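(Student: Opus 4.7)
The plan is to bootstrap the regularity already produced by Theorem~\ref{t:regularity1}. Since $\g'=1\wedge \g$, when $\g\le 1$ Theorem~\ref{t:regularity1} already yields both conclusions, so we may assume $\g>1$. As in the passage from \eqref{eq:inst_reg1} to \eqref{eq:inst_reg2}, the H\"older statement \eqref{eq:inst_reg22} will follow from the Bessel-potential statement \eqref{eq:inst_reg21} by anisotropic Sobolev embeddings (letting $r\uparrow\infty$ and $\vartheta\uparrow\tfrac12$). Thus the core of the proof is \eqref{eq:inst_reg21}.

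The strategy follows the abstract higher-order regularization scheme of \cite[Section~6]{AV19_QSEE_2} that also underlies Theorem~\ref{t:regularity1}. Fix $s\in(0,\tau)$ and $\beta\in(1,\g)$. By Theorem~\ref{t:regularity1}, a.s.\ the trace $v(s)$ lies in $H^{1+\alpha,r}$ for every $\alpha<1$ and every $r\in(2,\infty)$. I would regard $v|_{[s,\tau)}$ as a candidate $(p_1,0,\s_1,q_1)$-solution of~\eqref{eq:primitive} at elevated smoothness, with parameters chosen so that $1-\s_1$ is slightly smaller than $\beta$ (so the solution lives essentially in an $H^{1+\beta,r}$-scale), $q_1\ge 2$, and $p_1$ large. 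The admissibility condition $\g>1-\s_1$ from Assumption~\ref{ass:primitive}\eqref{it:ass_primitive3} becomes $\s_1>1-\g$, compatible with $\s_1\approx 1-\beta<0$ since $\beta>1$.

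The new hypotheses \eqref{it:reg_f_highreg1}--\eqref{it:reg_f_highreg3} are exactly what the fixed-point step requires at the elevated smoothness. The $\x$-independence of $f,g$ together with the $C^{\lceil\g\rceil}/C^{\lceil\g+1\rceil}$ smoothness in \eqref{it:reg_f_highreg2} give Moser-type composition estimates mapping $f(v,\nabla v)$ and $g_n(v)$ into $H^{\beta-1,r}$ and $H^{\beta,r}(\ell^2)$ respectively; the $C^{\g-1}$ regularity of $b,b_0$ combined with Lemma~\ref{l:pointwise} controls $(b\cdot\nabla)v$ and the pressure corrections $\ftg,\fts$ at the same level. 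Stochastic maximal $L^{p_1}$-regularity for the linearization on the higher scale is inherited from the scale-invariance of \eqref{it:ass_primitive2} and the pointwise-multiplier action of $\sigma_n\in C^\g$ on $H^{1+\beta,r}$, which is valid precisely because $\beta<\g$. Running local well-posedness at the level $(p_1,0,\s_1,q_1)$ with (random) initial time $s$ produces a local solution $(v_1,\tau_1)$ with $\tau_1\le\tau$, and Corollary~\ref{cor:compatibility} identifies $v_1\equiv v$ on $[s,\tau_1)$. To conclude $\tau_1=\tau$ a.s., I would invoke the Serrin-type criterion Theorem~\ref{t:serrin}\eqref{it:serrin2} at a suitably chosen lower-regularity level $(p_0,\s_0,q_0)$: the sup-norm of $v$ in $B^{\lambda_0}_{(q_0,2),\infty}$ is already finite on $[s,\tau)$ by Theorem~\ref{t:regularity1}, which forces $\tau_1=\tau$. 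Letting $\beta\uparrow\g$ along a countable sequence then gives \eqref{eq:inst_reg21}.

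The hard part will be keeping the admissibility window of Assumption~\ref{ass:primitive}$(p_1,\s_1,q_1)$ and the critical condition \eqref{eq:assumptions_critical_setting} open while pushing $\beta\uparrow\g$. With $\s_1\approx 1-\beta$ (possibly negative), the constraint $\frac{1}{p_1}+\frac{1}{q_1}+\frac{\s_1}{2}\le 1$ is always satisfiable by choosing $p_1,q_1$ large, exploiting the anisotropic scaling of $B^{2/q}_{(q,2),p}$ from Subsection~\ref{ss:scaling_intro} and the embedding $B^{2/q}_{(q,2),p}\hookrightarrow B^{2/q_1}_{(q_1,2),p_1}$. The remaining ingredients---Moser composition, stochastic maximal regularity on the $H^{\beta,r}$-scale, and the Sobolev-to-H\"older step leading to \eqref{eq:inst_reg22}---amount to technical but routine index bookkeeping mirroring the proof of Theorem~\ref{t:regularity1}.
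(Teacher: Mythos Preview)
Your overall bootstrap strategy is the right one, but the way you route it through the paper's own machinery breaks at a concrete point. The $(p,\a,\s,q)$-solution concept of Definition~\ref{def:solution} is only set up under Assumption~\ref{ass:primitive}$(p,\s,q)$, which requires $\s\in[0,1)$; moreover the critical window \eqref{eq:assumptions_critical_setting} forces $\s\in[0,\tfrac12)$. Your choice $\s_1\approx 1-\beta<0$ therefore lies outside the admissible range, so there is no ``$(p_1,0,\s_1,q_1)$-solution'' to speak of, Corollary~\ref{cor:compatibility} (which explicitly assumes both parameter triples satisfy the hypotheses of Theorem~\ref{t:local}) does not apply, and Theorem~\ref{t:serrin} is stated only for the original critical solution, not for a hypothetical high-order one. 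The last paragraph of your proposal, where you reassure yourself that the constraint $\tfrac{1}{p_1}+\tfrac{1}{q_1}+\tfrac{\s_1}{2}\le 1$ is ``always satisfiable,'' is precisely where this shows: that inequality is easy with $\s_1<0$, but it is not the binding constraint---the parameter range for $\s$ is.

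The paper sidesteps this by abandoning both the critical setting and the anisotropic spaces for the high-order step. It proves a dedicated nonlinearity estimate, Lemma~\ref{l:nonlinearities_high_order}, in \emph{isotropic} spaces $X_j=\Hs^{2j+s,q}$ under the \emph{subcritical} condition $2\tfrac{1+\a}{p}+\tfrac{d}{q}<1+s$, which guarantees that the trace space $\Xap=\Bs^{2+s-2(1+\a)/p}_{q,p}$ embeds into $C^{1+s+\varepsilon}$. In that regime the bilinear terms $(v\cdot\nabla_{x,y})v$ and $w(v)\partial_z v$ are trivially locally Lipschitz from $\Xap$ to $X_0$, and the Moser-type composition bounds you mention for $f,g$ become straightforward. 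Stochastic maximal $L^p$-regularity at this level is exactly Lemma~\ref{l:smr} with $s<0$ (this is why that lemma is formulated with $s\in(-\infty,1)$, and why condition~\eqref{it:reg_f_highreg3} on $b,b_0$ matches Assumption~\ref{ass:primitive_linear}\eqref{it:ass_primitive_linear5} for negative $s$). With these two ingredients in hand the proof is declared a straightforward adaptation of \cite[Theorem~2.7]{AV21_NS}, where the identification $v_1=v$ and the extension $\tau_1=\tau$ are carried out via the abstract blow-up criteria of \cite[Section~6]{AV19_QSEE_2} applied directly to the subcritical high-order problem---not via Corollary~\ref{cor:compatibility} or Theorem~\ref{t:serrin}.
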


In particular, Theorem \ref{t:regularity2} provides high-order regularity of $v$ at the expense of additional regularity assumptions on $(f,g,b,b_0)$. The $\x$-independence condition of \eqref{it:reg_f_highreg1} can be weakened to smoothness conditions. We omit this for brevity.

\subsection{Global well-posedness}
\label{ss:global}
In this subsection, we state the main result of the current work, namely the global well-posedness of stochastic PEs \eqref{eq:primitive}. To this end, we need the following additional smoothness assumption on $a$ and $\sigma$.

\begin{assumption}
\label{ass:global}
Let $a=(a_{\eta,\xi})_{\eta,\xi\in \{x,y,z\}}$ and $ \sigma_n=(\sigma_{n,\xi})_{\xi\in \{x,y,z\}}$ be as in Assumption \ref{ass:primitive}.
 There exists $K>0$ such that for a.a.\ $(t,\om)\in \R_+\times \O$ and for all $n\geq 1$, $\eta\in \{x,y\}$ we have
\begin{align*}
 \partial_{z}a_{z,\eta}(t,\om,\cdot),\, 
\partial_z a_{\eta,z}(t,\om,\cdot),\,
   \partial_z\sigma_{n,z}(t,\om,\cdot)&\in L^{\infty}(\T^3)  \text{ and}
\\
 \|\partial_{z}a_{\eta,z}(t,\om,\cdot)\|_{L^{\infty}(\T^3)}
 +
 \|\partial_{z}a_{z,\eta}(t,\om,\cdot)\|_{L^{\infty}(\T^3)}\qquad &\\
  +
 \big\|(\partial_{z}\sigma_{n,z}(t,\om,\cdot))_{n\geq 1}\big\|_{L^{\infty}(\T^3;\ell^2)}
 &\leq K.
\end{align*}
\end{assumption}

In the above the partial derivative $\partial_z $ is understood in the distributional sense.

Assumption \ref{ass:global} is automatically satisfied in case $a_{z,\eta},a_{z,\eta}$ and $\sigma_{n,z}$ are $z$-independent for 
$\eta\in \{x,y\}$.
As recalled in Subsection \ref{ss:global_intro}, the latter situation fits the physical derivation of the stochastic PEs via stochastic hydrostatic approximation \cite[Remarks 2.2 and 2.3]{Primitive2}. It is important that in Assumption \ref{ass:global} there are no additional regularity assumptions on the horizontal variables. In particular, Assumption \ref{ass:global} is satisfied in case $\sigma$ is as in Example \ref{ex:Kolgomorov}, and therefore it is a 2d field reproducing the Kolmogorov spectrum.
We expect that the integrability requirements in Assumption \ref{ass:primitive} can be weakened (e.g.\ $L^4_{x,y}(L^8_z)$ instead of $L^{\infty}$ seems sufficient). As we are not aware of physical motivations, we do not pursue this here.

The following ensure that the stochastic PEs \eqref{eq:primitive} are \emph{global well-posed} in $\Bs^{2/q}_{(q,2),p}$.

\begin{theorem}[Global existence of unique solutions]
\label{t:global}
Let Assumptions \ref{ass:primitive}$(p,\s,q)$ and \ref{ass:global} be satisfied. 
Assume that $(p,\s,q)$ satisfy 
\eqref{eq:assumptions_critical_setting} and set $\a=\a_{\crit}\stackrel{{\rm def}}{=} p(1-\frac{1}{q}-\frac{\s}{2})-1$. Let $(v,\tau)$ be the (unique) $(p,\s,\a_{\crit},q)$-solution provided by Theorem \ref{t:local}. 
Then $(v,\tau)$ is \emph{global}, i.e.\  $\tau=\infty$ a.s. In particular \eqref{eq:reg_critical1}-\eqref{eq:reg_critical2} hold with $\tau=\infty$. Moreover, the following hold:
\begin{itemize}
\item $\partial_z v \in L^2_{\loc}(0,\infty;H^1)$ a.s.
\item 
\eqref{eq:inst_reg1}-\eqref{eq:inst_reg2} hold with $\tau=\infty$.
\item \eqref{eq:inst_reg21}-\eqref{eq:inst_reg22} hold with $\tau=\infty$ if the assumptions \eqref{it:reg_f_highreg1}-\eqref{it:reg_f_highreg3} of Theorem \ref{t:regularity2} are satisfied.
\end{itemize}
\end{theorem}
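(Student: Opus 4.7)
The overall strategy is to invoke the Serrin-type blow-up criterion of Theorem~\ref{t:serrin}\eqref{it:serrin1} and establish a global a.s.\ a priori bound $\sup_{t\in[s,T\wedge\tau)}\|v(t)\|_{H^1}^2+\int_s^{T\wedge\tau}\|v\|_{H^2}^2\,\mathrm{d}r<\infty$ for every $s>0$ and $T<\infty$. By the instantaneous regularization of Theorem~\ref{t:regularity1} we have $v(s)\in H^1(\T^3;\R^2)$ a.s.\ for every $s>0$, so it suffices to work on $[s,\tau\wedge T]$ starting from an $H^1$-initial datum. Once the $H^1$-bound is in hand, interpolation combined with the anisotropic Sobolev embedding $H^{3/2}\embed H^{\mu_0,(q_0,2)}$ (valid for instance for the admissible triple $p_0=4$, $q_0=3$, $\delta_0=\tfrac12$, $\mu_0=\tfrac76$ satisfying \eqref{eq:assumptions_critical_setting_0}) yields $v\in L^{p_0}(s,\tau\wedge T;H^{\mu_0,(q_0,2)})$, which by \eqref{it:serrin1} forces $\P(s<\tau<T)=0$. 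Letting $s\downarrow 0$ and $T\uparrow\infty$ gives $\tau=\infty$ a.s., and the remaining regularity claims follow by reapplying Theorems~\ref{t:regularity1}--\ref{t:regularity2}.

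The core of the proof is therefore a stochastic Cao--Titi type $H^1$-estimate. An It\^o computation for $\|v\|_{L^2}^2$ together with the parabolicity condition Assumption~\ref{ass:primitive}\eqref{it:ass_primitive2} (which absorbs the quadratic It\^o correction), the cancellation $\langle(v\cdot\nabla_{x,y})v+w(v)\partial_z v,v\rangle_{L^2}=0$ coming from the barotropic incompressibility \eqref{eq:incompressibility}, and a stochastic Gronwall argument produce the $L^2$-bound. For the next layer, the decisive observation is that in the isothermal model $\partial_z p=0$, so differentiating the $v$-equation in $z$ annihilates the horizontal pressure gradient $\nabla_{x,y}p$. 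Consequently $\partial_z v$ satisfies a parabolic SPDE without pressure; applying It\^o to $\|\partial_z v\|_{L^2}^2$ yields, schematically,
\begin{equation*}
\mathrm{d}\|\partial_z v\|_{L^2}^2+\nu\|\nabla\partial_z v\|_{L^2}^2\,\mathrm{d}t\lesssim\big(1+(1+\|\nabla\bar v\|_{L^2}^2)\|\partial_z v\|_{L^2}^4\big)\,\mathrm{d}t+\mathrm{d}M_t,
\end{equation*}
with $M$ a local martingale.

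The cubic term on the right is the classical Cao--Titi anisotropic nonlinearity, handled by decomposing $v=\bar v+\widetilde v$ with $\bar v=\int_{\T_z}v\,\mathrm{d}z$, performing horizontal $L^2$--$L^4$-Ladyzhenskaya interpolation, and using $\|\widetilde v\|_{L^2}\lesssim\|\partial_z v\|_{L^2}$. The barotropic part $\|\nabla\bar v\|_{L^2}$ is controlled by a \emph{separate} $H^1$-energy estimate for $\bar v$, which obeys a two-dimensional Navier--Stokes-like SPDE forced by $\widetilde v$; such 2D estimates are standard and yield a polynomial-in-$\|\partial_z v\|_{L^2}$ bound on $\|\nabla\bar v\|_{L^2}$. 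A pathwise/stochastic Gronwall closure then produces the desired a.s.\ global bound on $\|v\|_{L^\infty(s,T;H^1)}\cap L^2(s,T;H^2)$, and the claim $\partial_z v\in L^2_{\loc}(0,\infty;H^1)$ is a direct consequence.

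The main technical obstacle is the interplay between the $\partial_z$-differentiation and the \emph{rough} transport noise. By Assumption~\ref{ass:primitive}\eqref{it:ass_primitive4} the horizontal components of $\sigma_n$ and $a$ are $z$-independent, so the commutators reduce to $[\partial_z,(\sigma_n\cdot\nabla)]v=(\partial_z\sigma_{n,z})\partial_z v$ and an analogous expression for $[\partial_z,\nabla\cdot(a\cdot\nabla)]v$; Assumption~\ref{ass:global} is tailored precisely so that these remaining $\partial_z$-coefficients $\sigma_{n,z},a_{z,\eta},a_{\eta,z}$ are essentially bounded. The resulting It\^o correction $\tfrac12\sum_n\|\partial_z((\sigma_n\cdot\nabla)v)\|_{L^2}^2$ then splits into a principal coercive piece absorbable via~\eqref{it:ass_primitive2} and a lower-order remainder bounded by $\|\partial_z v\|_{L^2}^2$. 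Justifying this computation rigorously for $(p,\delta,q)$-solutions, rather than smooth ones, while respecting the mere $C^{\g}$-regularity of the horizontal noise with $\g$ possibly below $1$, is the delicate step, and is expected to proceed via a smoothing/approximation argument supplemented by the local continuity of Proposition~\ref{prop:continuity}.
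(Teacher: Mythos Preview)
Your overall scheme---Serrin criterion plus energy estimates from $s>0$ where Theorem~\ref{t:regularity1} gives $v(s)\in H^1$---matches the paper, as does the $\partial_z v$-estimate via $[\partial_z,(\sigma_n\cdot\nabla)]v=(\partial_z\sigma_{n,z})\partial_z v$ (rendered harmless by Assumptions~\ref{ass:primitive}\eqref{it:ass_primitive4} and~\ref{ass:global}). The gap is the barotropic $H^1$-estimate and the ensuing $L^2(s,\tau;H^2)$-bound: this step works only when $\g>1$, i.e.\ in the classical subcase $p=q=2$, $\s=0$ of \eqref{eq:assumptions_critical_setting}. In the main regime $\s\in(0,\tfrac12)$ the theorem allows $\g\in(\tfrac12,1)$, and there the It\^o correction in an energy identity for $\|\nabla_{x,y}\overline v\|_{L^2}^2$ contains $\sum_n\|\nabla_{x,y}\p_{x,y}[(\sigma_n\cdot\nabla_{x,y})\overline v]\|_{L^2}^2$; the commutator $[\partial_j,(\sigma_n\cdot\nabla_{x,y})]\overline v=(\partial_j\sigma_n\cdot\nabla_{x,y})\overline v$ for $j\in\{x,y\}$ is uncontrollable because $\partial_j\sigma_n$ is only a $C^{\g-1}$-distribution. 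No structural hypothesis saves the horizontal derivatives of $\sigma$. The paper records this obstruction explicitly (see \eqref{eq:transport_noise_mapping}--\eqref{eq:transport_noise_mapping_2} and the paragraph preceding Lemma~\ref{l:energy_estimate}) and proves \emph{no} estimate on $\overline v$ beyond the $L^2$-balance; in particular $v\in L^2_{\loc}H^2$ is neither claimed nor available.

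The paper closes differently: Lemma~\ref{l:strong_energy_estimate} applies stochastic maximal $L^{p_0}$-regularity in the anisotropic space $H^{-\s,(q,2)}$ (with $\tfrac{1}{p_0}+\tfrac{1}{q}+\tfrac{\s}{2}=1$, so the weight is $\a=0$) to bound $\|v\|_{L^{p_0}(s,\tau\wedge T;H^{2-\s,(q,2)})}$ directly. The bilinear term is estimated as in \eqref{eq:nonlinearity_proof_global} by $\|v\|_{H^{1,2r}_{x,y}(L^2_z)}\|v\|_{L^{2r}_{x,y}(H^1_z)}$; the first factor interpolates between $B^{2/q}_{(q,2),p_0}$ and $H^{2-\s,(q,2)}$ (see \eqref{eq:interpolation_B_H}) and is absorbed via a stochastic Gronwall argument, while the second is controlled \emph{solely} by the $\partial_z v\in L^\infty L^2\cap L^2 H^1$ estimate through the anisotropic embedding \eqref{eq:embedding_global_proof_anisotropic}. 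The Serrin criterion is then invoked with $(p_0,\s_0,q_0)=(p_0,\s,q)$. In short, your $\partial_z v$-estimate is the right a priori input, but it must feed the anisotropic $L^{p_0}(H^{2-\s,(q,2)})$-machinery of Lemma~\ref{l:strong_energy_estimate} rather than an $H^1$-closure that is unavailable for rough noise.
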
 

As usual, for a Banach space $Y$, we say that $\xi_n\to \xi$ in probability in $Y$ provided $\lim_{n\to \infty}\P(\|\xi-\xi_n\|_{Y}\geq \varepsilon)=0$ for all $\varepsilon>0$.

\begin{theorem}[Continuous dependence on the initial data]
\label{t:continuous_dependence}
%
Let the assumptions of Theorem \ref{t:global} be satisfied and set $\a=\a_{\crit}\stackrel{{\rm def}}{=} p(1-\frac{1}{q}-\frac{\s}{2})-1$.
Let $v_0,(v^{(n)}_0)_{n\geq 1}\subseteq L^0_{\F_0}(\O;\Bs^{2/q}_{(q,2),p}(\T^3))$ be such that  $v_0^{(n)}\to v_0$ in probability in $\Bs^{2/q}_{(q,2),p}(\T^3)$. Let $v^{(n)},v$ be the global $(p,\s,\a_{\crit},q)$-solution of \eqref{eq:primitive} with initial data $v^{(n)}_0$ and $v_0$ provided by Theorem \ref{t:global}, respectively. 
Then
$$
v^{(n)} \to v \ \text{ in probability in } \ C([0,T];\Bs^{2/q}_{(q,2),p}(\T^3))\ \text{ for all }\ T<\infty.
$$
\end{theorem}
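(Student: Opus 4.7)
My approach is to bootstrap the local continuity of Proposition \ref{prop:continuity} to continuity on any fixed interval $[0,T]$ by iterating in time, using that $v$ is global (Theorem \ref{t:global}) and hence has bounded orbit in $B^{2/q}_{(q,2),p}$ on $[0,T]$ almost surely. First I would reduce to a.s.\ convergence via the subsubsequence principle: convergence in probability of $v^{(n)}$ to $v$ follows once every subsequence admits a further subsequence along which $v^{(n)} \to v$ a.s.\ in $C([0,T]; B^{2/q}_{(q,2),p})$, so I may assume $v_0^{(n)} \to v_0$ a.s.\ in $B^{2/q}_{(q,2),p}$ and aim for a.s.\ convergence along a further subsequence.

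Second, I would localise on bounded orbits. Fix $T, \eta > 0$; since $v \in C([0,T]; B^{2/q}_{(q,2),p})$ a.s.\ by Theorem \ref{t:global} and $v_0^{(n)} \to v_0$ a.s., there exists $M > 0$ with $\P(\Omega_M) \geq 1 - \eta$, where
\begin{equation*}
\Omega_M = \Big\{ \sup_{t\in[0,T]} \|v(t)\|_{B^{2/q}_{(q,2),p}} \leq M \Big\} \cap \Big\{\sup_{n}\|v_0^{(n)}\|_{B^{2/q}_{(q,2),p}} \leq M\Big\}.
\end{equation*}
Reexamining the proofs of Theorem \ref{t:local} and Proposition \ref{prop:continuity} — which run a contraction-mapping argument in a weighted stochastic maximal regularity space — one extracts a \emph{uniform} local stability statement: for any $s \geq 0$ and any $\F_s$-measurable initial datum $\xi$ with $\|\xi\|_{B^{2/q}_{(q,2),p}} \leq M$, there are a deterministic $h = h(M) > 0$ and constants $C_0(M), K_\vartheta(M), \varepsilon_0(M) > 0$ such that analogues of \eqref{eq:continuity0}--\eqref{eq:continuity2} hold on $[s, s+h]$ with these parameters. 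The existence time depends only on the critical-space norm of $\xi$, which is the defining feature of the abstract framework in \cite{AV19_QSEE_1, AV19_QSEE_2}.

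Third, I would iterate on the partition $t_k = k h$, $k=0,1,\dots,N$, of $[0,T]$ with $h = h(2M)$. Induction on $k$: on the inductive event — contained in $\Omega_M$ and in the event that $\|v^{(n)}(t_k)\|_{B^{2/q}_{(q,2),p}} \leq 2M$ — the uniform local stability applied with initial data $v(t_k)$ and $v^{(n)}(t_k)$ yields
\begin{equation*}
\E\Big[\one_{E_k^n}\sup_{t\in[t_k, t_{k+1}]} \|v^{(n)}(t) - v(t)\|_{B^{2/q}_{(q,2),p}}^p\Big] \leq K(M)\, \E \|v^{(n)}(t_k) - v(t_k)\|_{B^{2/q}_{(q,2),p}}^p,
\end{equation*}
for a measurable event $E_k^n$ whose complement is controlled via the analogue of \eqref{eq:continuity2}. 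A union bound over the $N = \lceil T/h\rceil$ steps, together with the a.s.\ convergence at each $t_k$ extracted along a diagonal subsequence, then gives $\sup_{t\in[0,T]}\|v^{(n)}(t) - v(t)\|_{B^{2/q}_{(q,2),p}} \to 0$ a.s.\ on $\Omega_M$; letting $\eta \downarrow 0$ delivers convergence in probability on all of $\Omega$.

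The main obstacle is the uniform local stability in Step 2: Proposition \ref{prop:continuity} as stated depends on $v_0$-specific random times $\tau_0, \tau_1$, so the construction has to be revisited to ensure that the existence time and the stability constants depend only on an upper bound for the critical-space norm of the initial datum, and that the whole scheme can be restarted from an arbitrary $\F_s$-measurable initial condition. This uniformity is in the spirit of \cite[Proposition 3.5]{AV22_localRD} and inherent to the stochastic maximal regularity framework invoked in the proof of Theorem \ref{t:local}; with it in hand, the remaining iteration is routine.
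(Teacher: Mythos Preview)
Your iteration scheme rests on a claim that is false in the critical setting: there is \emph{no} deterministic lower bound $h=h(M)$ on the local existence time depending only on the critical-space norm bound $M$. This is precisely what distinguishes critical from subcritical problems. In the abstract framework of \cite{AV19_QSEE_1,AV19_QSEE_2} the local existence time for data in the critical trace space depends on the profile of the datum (through the smallness of a certain weighted norm), not merely on its size; this is why Proposition~\ref{prop:continuity} only provides random stopping times $\tau_0,\tau_1$ rather than a deterministic $h$. Your acknowledgement of this as ``the main obstacle'' is correct, but your proposed resolution --- that the uniformity ``is inherent to the stochastic maximal regularity framework'' --- is simply wrong for critical data. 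Without it, the partition $t_k=kh$ cannot be set up, and the inductive step breaks down because there is no way to guarantee that the local stability estimate applies on each subinterval with uniform constants.

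The paper's proof avoids iteration entirely. It splits $[0,T]$ into $[0,t]$ and $[t,T]$ with a single small $t>0$. On $[0,t]$ it uses Proposition~\ref{prop:continuity} directly. On $[t,T]$ it uses a stability estimate (Lemma~\ref{l:estimate_difference}) obtained by applying stochastic maximal $L^p$-regularity to the equation for $V=v-v^{(n)}$ and closing via the stochastic Gronwall lemma; the Gronwall weight is controlled by the strong energy estimate of Lemma~\ref{l:strong_energy_estimate}, which is where the global well-posedness machinery enters essentially. The stronger norms ($H^1$, $B^{2/q}_{(q,2),p_0}$) appearing on the right-hand side at time $t$ are handled by exploiting the weighted setting with a larger exponent $p_1>p$ and the instantaneous regularization $C_{\a/p}((0,t];B^{2-\s-2/p_1}_{(q,2),p_1})$ noted below Proposition~\ref{prop:continuity}. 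One then sends $L\to\infty$ and $t\downarrow 0$. The energy estimates, not a uniform local existence time, are what propagate the control from small to large times.
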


The proofs of Theorems \ref{t:global} and \ref{t:continuous_dependence} are given in Subsections \ref{ss:proof_global} and \ref{ss:proof_continuous_dependence}, respectively.
As one may expect, the above results are consequence of \emph{energy estimates} for $(p,\a_{\crit},\s,q)$-solutions to \eqref{eq:primitive}, which can be found in Lemma \ref{l:strong_energy_estimate}, \ref{l:main_intermediate_estimate_wt} and \ref{l:energy_estimate}.
In particular, the key step in the proof of Theorems \ref{t:global} and \ref{t:continuous_dependence} is to exploit 
Assumption \ref{ass:global}  to obtain $\partial_z v\in L^2_{\loc}(0,\tau;H^1)$ a.s.\ and certain bounds for such quantity, see 
Lemma \ref{l:main_intermediate_estimate_vz}. This is consistent with the results of \cite{Ju17}, which ensures that  uniqueness for deterministic PEs holds provided $\partial_z v\in L^2_{\loc}(0,\tau;H^1)\cap C([0,\tau);L^2)$. However, our arguments differ from the one of \cite{Ju17}, as in Step 3 of \cite[Theorem 4.1]{Ju17} the author uses the global well-posedness of PEs in case of sufficiently smooth initial data (cf.\ the discussion in Subsection \ref{ss:comparison_deterministic}). 

As a by-product we obtain the following extension of \cite[Theorem 1.5]{GHKVZ14}.

\begin{corollary}[Feller property]
\label{cor:Feller}
Let the assumptions of Theorem \ref{t:global} be satisfied and let $\a=\a_{\crit}\stackrel{{\rm def}}{=} p(1-\frac{1}{q}-\frac{\s}{2})-1$. Set 
$$
(\mathcal{P}_t\varphi)(v_0)\stackrel{{\rm def}}{=}\E[\varphi(v(t))]\quad  \text{ where }\quad  \varphi\in C_{{\rm b}}(\Bs^{2/q}_{(q,2),p}(\T^3)),  \ v_0\in \Bs^{2/q}_{(q,2),p}(\T^3),
$$
and $v$ is the global $(p,\s,\a_{\crit},q)$-solution to \eqref{eq:primitive} with initial data $v_0$ provided by Theorem \ref{t:global}. Then 
$$
\mathcal{P}_t: C_{{\rm b}}(\Bs^{2/q}_{(q,2),p}(\T^3))\to C_{{\rm b}}(\Bs^{2/q}_{(q,2),p}(\T^3))\  \text{ continuously}.
$$
\end{corollary}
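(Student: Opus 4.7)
The plan is to deduce the Feller property directly from the continuous dependence result of Theorem \ref{t:continuous_dependence} together with the bounded convergence theorem. Boundedness of $\mathcal{P}_t \varphi$ is immediate: for every $v_0\in \Bs^{2/q}_{(q,2),p}(\T^3)$ we have $|(\mathcal{P}_t\varphi)(v_0)|\leq \E|\varphi(v(t))|\leq \|\varphi\|_{C_{\rm b}}$, so the only point to check is continuity.

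Fix $t\geq 0$ and a sequence $v_0^{(n)}\to v_0$ in $\Bs^{2/q}_{(q,2),p}(\T^3)$; in particular the convergence holds in probability. Let $v^{(n)}$ and $v$ denote the corresponding global $(p,\a_{\crit},\s,q)$-solutions provided by Theorem \ref{t:global}. By Theorem \ref{t:continuous_dependence} applied with $T=t$, the convergence
\[
v^{(n)}\to v\quad \text{in probability in }\ C([0,t];\Bs^{2/q}_{(q,2),p}(\T^3))
\]
holds; evaluating at time $t$ (which is a continuous operation) gives $v^{(n)}(t)\to v(t)$ in probability in $\Bs^{2/q}_{(q,2),p}(\T^3)$.

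Since $\varphi:\Bs^{2/q}_{(q,2),p}(\T^3)\to \R$ is continuous and bounded, $\varphi(v^{(n)}(t))\to \varphi(v(t))$ in probability, and the sequence $(\varphi(v^{(n)}(t)))_{n\geq 1}$ is uniformly bounded by $\|\varphi\|_{C_{\rm b}}$. Hence, by the bounded convergence theorem (or, equivalently, Vitali's convergence theorem applied to the uniformly integrable family above),
\[
(\mathcal{P}_t\varphi)(v_0^{(n)})=\E[\varphi(v^{(n)}(t))]\ \longrightarrow\ \E[\varphi(v(t))]=(\mathcal{P}_t\varphi)(v_0).
\]
This establishes continuity of $\mathcal{P}_t\varphi$ on $\Bs^{2/q}_{(q,2),p}(\T^3)$ and, combined with the boundedness observation, yields $\mathcal{P}_t\varphi\in C_{\rm b}(\Bs^{2/q}_{(q,2),p}(\T^3))$.

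There is no serious obstacle here: the whole content of the corollary is packaged inside Theorem \ref{t:continuous_dependence}, which supplies convergence in probability of the solution flow in the critical Besov space. Once that is in hand, the Feller property is a one-line consequence of bounded convergence.
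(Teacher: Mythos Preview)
Your proof is correct and matches the paper's intent: the corollary is stated there as a by-product of Theorem~\ref{t:continuous_dependence} without an explicit proof, and your argument---convergence in probability of $v^{(n)}(t)$ from continuous dependence, followed by bounded convergence for $\varphi(v^{(n)}(t))$---is precisely the standard derivation. The operator continuity $\|\mathcal{P}_t\varphi\|_{C_{\rm b}}\leq \|\varphi\|_{C_{\rm b}}$ is already contained in your boundedness remark, so nothing is missing.
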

%
%
%

\subsection{Open problems}
\label{ss:open_problems}
In the author's opinion, the following are the major open problems concerning the global well-posedness of PEs with transport noise:
\begin{enumerate}[{\rm(a)}]
\item\label{it:rough_noise_open} The very rough regime $\g\leq \frac{1}{2}$.
\item\label{it:boundary_conditions} 
Natural boundary conditions: PEs \eqref{eq:primitive} on $\T^2\times [0,\frac{1}{2}]$ complemented with
$$
v=0\  \text{ on }\ \T^2_{x,y}\times\{0\} \quad \text{ and }\quad 
\partial_zv=0\ \text{ on }\ \T^2_{x,y}\times\{\tfrac{1}{2}\}.
$$
\end{enumerate}

\eqref{it:rough_noise_open}: Our global well-posedness result of Theorem \ref{t:global} covers the case $\g>\frac{1}{2}$. This follows by combining the restrictions $\g>1-\s$ and $\s<\frac{1}{2}$ in Assumption \ref{ass:primitive}\eqref{it:ass_primitive3} and Theorem \ref{t:local}, respectively. However, as we will show in Proposition \ref{prop:local} below, \emph{local} well-posedness of \eqref{eq:primitive} holds also if $\g<\frac{1}{2}$. However, in the latter case, the `critical spaces' coming from the application of \cite{AV19_QSEE_1,AV19_QSEE_2} have regularity strictly greater than the one considered in Theorem \ref{t:local} (i.e.\ with Sobolev index $>-\frac{1}{2}$). Therefore, it is not clear how to extend the proof of Theorems \ref{t:global} and \ref{t:continuous_dependence} to such situations, as in the latter results we essentially use the well-posedness in spaces of Sobolev index $-\frac{1}{2}$.

\eqref{it:boundary_conditions}: From a  modeling point of view, the natural boundary conditions arise by interpreting $\T^2_{x,y}\times\{0\}$ and $\T^2_{x,y}\times\{\tfrac{1}{2}\}$ as the `bottom' and the `top' of the ocean, respectively. 
In contrast to the deterministic case (see e.g.\ \cite{Kukavica_2007,HK16}), the stochastic PEs with natural boundary conditions are much more complicated. To see this, recall that in the study of linear parabolic SPDEs with transport noise, it is known that compatibility conditions on the transport noise coefficients are needed to obtain estimates in sufficiently smooth Sobolev spaces, see e.g.\ \cite{Du20,Fla90} and \cite[Subsection 6.3]{AV19_QSEE_1}. Counterexamples to regularity estimates, if such compatibility conditions are violated, can be found in 
\cite{Krylov03} and \cite[Examples 1.2 and 1.3]{Du20}. 
However, looking at \eqref{eq:primitive2}, the presence of the hydrostatic Helmholtz projection $\p$ in stochastic perturbation shows that such compatibility conditions do not have a straightforward extension to the (linearized) PEs. Moreover, due to the presence of the integral operator $\int_{\T_z}\cdot\,\dd z$ in the definition of $\p$ in \eqref{eq:def_p_q}, it is also not clear how to use weights to avoid compatibility conditions (cf.\ \cite{Kim04a,Kry94b} for the case of parabolic SPDEs).

Finally, let us highlight that periodic boundary conditions considered here are also physically relevant. Indeed, considering the ocean dynamics far from the bottom and on the top of its boundary, then it is also correct from a modeling point of view to consider no-flux boundary conditions on the `bottom' and `top', i.e. 
\begin{equation}
\label{eq:no_flux_bc}
\partial_zv=0 \ \text{ on }\ \T^2_{x,y}\times\{0,\tfrac{1}{2}\}.
\end{equation}
Surprisingly enough, no-flux boundary conditions \eqref{eq:no_flux_bc} are a special case of the periodic boundary conditions. Indeed, one can perform an \emph{even} reflection of $v$ around $\{z=\frac{1}{2}\}$ and obtain the PEs \eqref{eq:primitive} on $\T^3$. The key observation is even reflections does not alter the nonlinearities in \eqref{eq:primitive} (indeed $w(v)\partial_z v $ is preserved due to $\int_{0}^{1/2} \nabla_{x,y}\cdot v (\cdot,z)\,\dd z=0$ which is the analog of \eqref{eq:no_flux_bc} on $\T^2_{x,y}\times (0,\frac{1}{2})$).

\section{Local well-posedness -- Proof of Theorems \ref{t:local}}
\label{s:proofs_local}
Below we write $H^{s,(q,\zeta)}$ instead of $H^{s,(q,\zeta)}(\T^3;\R^2)$, if no confusion seems likely. A similar convention is employed for $\Hs^{s,(q,\zeta)}=\Hs^{s,(q,\zeta)}(\T^3)$ and similar.

\subsection{Local well-posedness and the proof of Theorem \ref{t:local}}
\label{ss:proof_local}
We begin by proving a generalization of Theorem \ref{t:local}. 
Below we use an additional parameter $\zeta$ for ruling the integrability in the vertical direction. This parameter will be useful in the proof of high-order regularity results such as \eqref{eq:inst_reg1}-\eqref{eq:inst_reg2}.
To keep track of the additional parameter, we define 
$(p,\a,\s,q,\zeta)$-solution to \eqref{eq:primitive} as in Definition \ref{def:solution} by replacing the integrability exponent in the vertical direction $2$ by $\zeta\in [2,\infty)$. In particular 
$(p,\a,\s,q,2)$-solutions coincide with 
$(p,\a,\s,q)$-solutions.

\begin{proposition}[Local existence and uniqueness]
\label{prop:local}
Let Assumption \ref{ass:primitive}$(p,\s,q)$ be satisfied and fix $\zeta\in [2,\infty)$. 
Suppose that $q>\frac{2}{2-\s}$ and $(p,\a)$ satisfy of one of the following conditions:
\begin{align}
\label{eq:local_1_assumption_general1}
 \Big[\frac{1+\a}{p}+\frac{1}{q}+\frac{\delta}{2}\leq 1 \  \ \text{ and } \ \  q<\frac{2}{\s}\Big],&\\
\label{eq:local_1_assumption_general2}
 \Big[\frac{1+\a}{p}+\delta\leq 1 \ \  \text{ and } \ \  q\geq \frac{2}{\s}\Big].&
\end{align}
Then for each 
$
v_0\in L^0_{\F_0}(\O;\Bs^{2-\s-2\frac{1+\a}{p}}_{(q,\zeta),p}),
$
 \eqref{eq:primitive} has a (unique) $(p,\a,\s,q,\zeta)$-solution
$(v,\tau)$ such that  a.s.\ $\tau>0$  and
\begin{align}
\label{eq:reg_critical11}
v&\in  H^{\vartheta,p}_{\loc}([0,\tau),w_{\a};\Hs^{2-\s-2\vartheta,(q,\zeta)})\ \text{ for all }\vartheta\in [0,\tfrac{1}{2}),\\
\label{eq:reg_critical12}
v&\in C([0,\tau);\Bs^{2-\s-2\frac{1+\a}{p}}_{(q,\zeta),p});
\end{align}
where in the case $p=2$, then \eqref{eq:reg_critical11} holds only for $\vartheta=0$.
\end{proposition}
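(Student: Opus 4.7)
The plan is to recast the reformulated system \eqref{eq:primitive2} as an abstract semilinear stochastic evolution equation
\begin{equation*}
dv + A(t)v\,dt = F(t,v)\,dt + (B(t)v + G(t,v))\,d\Br_{\ell^2}(t), \qquad v(0)=v_0,
\end{equation*}
on the Banach couple $(\Xs_0,\Xs_1) = (\p\,\Hs^{-\s,(q,\zeta)}, \p\,\Hs^{2-\s,(q,\zeta)})$, where $A(t)v = -\p[\nabla\cdot(a\cdot\nabla v) + (b\cdot\nabla)v + \fts v]$ contains the linear Stratonovich correction, $B(t)v = (\p[(\sigma_n\cdot\nabla)v])_{n\geq 1}$ is the transport noise, and $F$, $G$ collect the remaining nonlinear drift $-(v\cdot\nabla_{x,y})v - w(v)\partial_z v + \ftg(\cdot,v) + f(\cdot,v,\nabla v)$ and diffusion $(g_n(\cdot,v))_{n\geq 1}$, respectively. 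Then I would invoke the abstract local well-posedness theorem in critical spaces from \cite{AV19_QSEE_1}.

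The first main task is to establish stochastic maximal $L^p(w_\a)$-regularity for the pair $(A,B)$ acting between $\Xs_1$ and $\Xs_0$. On the anisotropic space $\Hs^{-\s,(q,\zeta)}$ this follows by combining the boundedness of $\p$, the parabolicity hypothesis in Assumption \ref{ass:primitive}\eqref{it:ass_primitive2}, and the pointwise multiplier bound (Lemma \ref{l:pointwise}) that under $\g>1-\s$ the map $v\mapsto ((\sigma_n\cdot\nabla)v)_n$ sends $\Hs^{2-\s,(q,\zeta)}$ into $\Hs^{1-\s,(q,\zeta)}(\ell^2)$. The low-order perturbations $(b\cdot\nabla)$ and $\fts$ are handled by Assumption \ref{ass:primitive}\eqref{it:ass_primitiveb_b0} together with the mapping properties of $\qq$. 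The relevant functional-analytic input concerning stochastic maximal regularity on anisotropic $L^{(q,\zeta)}$-scales is drawn from Appendix \ref{app:smr}. By anisotropic trace theory in weighted spaces (see Appendix \ref{app:anisotropic} and \cite{MV14,ALV23}), the trace space of $L^p(\R_+,w_\a;\Xs_1)\cap W^{1,p}(\R_+,w_\a;\Xs_0)$ is precisely $\p\Bs^{2-\s-2(1+\a)/p}_{(q,\zeta),p}$, matching the initial-data hypothesis.

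The main obstacle is the nonlinear estimates for $F$ and $G$ of the critical type required by \cite{AV19_QSEE_1}, i.e.\ bilinear bounds
\begin{equation*}
\|F(v)-F(v')\|_{\Xs_0} \lesssim (1+\|v\|_{\Xs_\beta}+\|v'\|_{\Xs_\beta})\|v-v'\|_{\Xs_\beta}
\end{equation*}
for an appropriate intermediate space $\Xs_\beta$, and similarly for $G$ into $\gamma(\ell^2;\Xs_{1/2})$. The quadratic terms $(v\cdot\nabla_{x,y})v$ and $w(v)\partial_z v$ are the delicate ones: for the second one crucially uses that $w(v)=-\int_0^z \nabla_{x,y}\cdot v\,dz'$ only involves horizontal derivatives, so the $z$-integration gains vertical regularity and the natural integrability is the anisotropic one $(q,\zeta)$ with the horizontal exponent $q$ carrying the scaling. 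Anisotropic H\"older inequalities combined with Sobolev embeddings between the spaces $\Hs^{s,(q,\zeta)}$ (for which I would use the estimates assembled in Appendix \ref{app:anisotropic}) produce the two alternative parameter regimes \eqref{eq:local_1_assumption_general1}--\eqref{eq:local_1_assumption_general2}: the first corresponds to the regime $q<2/\s$ where anisotropic products must be handled carefully in $z$, the second to $q\geq 2/\s$ where the embedding $\Hs^{2-\s,(q,\zeta)}\embed L^\infty$ is available directly. The lower-order term $\ftg(\cdot,v)$ is linear in $g_n(\cdot,v)$ composed with $\qq$ and satisfies a Lipschitz bound in $\Hs^{-\s,(q,\zeta)}$ by Assumption \ref{ass:primitive}\eqref{it:ass_primitiveb_b0},\eqref{it:ass_primitive8}; the semilinearity $f(\cdot,v,\nabla v)$ and the diffusion $g_n(\cdot,v)$ are controlled from Assumption \ref{ass:primitive}\eqref{it:ass_primitive7}--\eqref{it:ass_primitive8} after Sobolev embedding.

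With the linear theory and the critical nonlinear estimates available, the abstract local existence and uniqueness theorem of \cite{AV19_QSEE_1} applies and yields a unique $(p,\a,\s,q,\zeta)$-solution $(v,\tau)$ with $\tau>0$ a.s.\ and the weighted regularity \eqref{eq:reg_critical11}, while \eqref{eq:reg_critical12} is the continuous trace embedding of this intersection into the trace space (which degenerates to $C([0,\tau);H^{1-\s})$ in the Hilbertian endpoint $p=q=2$). The condition $q>2/(2-\s)$ ensures $2-\s-2(1+\a)/p\geq 2/q-\s$ so the trace space embeds into the intermediate space $\Xs_\beta$ used in the nonlinear estimates, making the critical setting of \cite{AV19_QSEE_1} consistent.
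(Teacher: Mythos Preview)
Your plan is essentially the paper's proof: recast \eqref{eq:primitive2} as an abstract semilinear SEE on $(X_0,X_1)=(\Hs^{-\s,(q,\zeta)},\Hs^{2-\s,(q,\zeta)})$, verify stochastic maximal $L^p$-regularity for $(A,B)$ (Lemma~\ref{l:smr}), prove a bilinear estimate for $F,G$ with a single index $\beta$ (Lemma~\ref{l:nonlinearities}), and invoke \cite[Theorem~4.8]{AV19_QSEE_1}.

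Two of your heuristics deserve correction before you write the details. First, the case split $q<2/\s$ versus $q\ge 2/\s$ is not about an $L^\infty$ embedding of $X_1$; in both cases one estimates $\|w(v)\partial_z v'\|_{H^{-\s,(q,\zeta)}}$ via $L^r_{x,y}(L^\zeta_z)\hookrightarrow H^{-\s,(q,\zeta)}$ with $2/r=\s+2/q$, and the split governs whether the horizontal Sobolev embedding $X_\beta\hookrightarrow H^{1,2r}_{x,y}(L^\zeta_z)\cap L^{2r}_{x,y}(H^{1,\zeta}_z)$ forces $\beta=\tfrac12+\tfrac{\s}{4}+\tfrac{1}{2q}$ or only $\beta=\tfrac12+\tfrac{\s}{2}$; the conditions \eqref{eq:local_1_assumption_general1}--\eqref{eq:local_1_assumption_general2} are then precisely the (sub)criticality inequality $\beta\le 1-\tfrac12\tfrac{1+\a}{p}$ in each regime. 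Second, the hypothesis $q>2/(2-\s)$ is not a trace-space condition but simply ensures $r>1$, so that the duality $L^r_{x,y}\hookrightarrow H^{-\s,q}_{x,y}$ used in the bilinear estimate is available.
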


Before going into the proof of the above result we show that Theorem \ref{t:local} is a special case of Proposition \ref{prop:local}.
 
\begin{proof}[Proof of Theorem \ref{t:local}]
Recall that $\a=\a_{\crit}=p(1-\frac{\delta}{2}-\frac{1}{q})-1$. Note that $\a\in [0,\frac{p}{2}-1)$ and $q<\frac{2}{\s}$ due to the assumptions $q<\frac{2}{1-\s}$ and $\s<\frac{1}{2}$ in Theorem \ref{t:local}.
Now we can apply Proposition \ref{prop:local} to $\a=\a_{\crit}$, $\zeta=2$ and $(p,\s,q)$ as in Assumption \ref{ass:primitive}. To conclude, it remains to note 
$$
2-\s-2\frac{1+\a_{\crit}}{p}=\frac{2}{q}
\qquad\Longrightarrow \qquad
\Bs^{2-\s-2\frac{1+\a_{\crit}}{p}}_{(q,2),p}=\Bs^{2/q}_{(q,2),p}.
$$
Hence the space for the initial data coincide with the one in Theorem \ref{t:local}.
\end{proof}

The proof of Proposition \ref{prop:local} relies on the theory of critical spaces for stochastic evolution equations developed in \cite{AV19_QSEE_1,AV19_QSEE_2}. To this end, we introduce some notation. For $(p,\delta,q)$ as in Assumption \ref{ass:primitive} and $\zeta\in [2,\infty)$, let 
\begin{equation}
\label{eq:choice_X0X1}
\begin{aligned}
X_0&=\Hs^{-\s,(q,\zeta)}, \qquad \ \  X_1=\Hs^{2-\s,(q,\zeta)}, &\\ 
X_{\vartheta}&=[X_0,X_1]_{\vartheta}=\Hs^{-\s+2\vartheta,(q,\zeta)}\ \ \text{ for }  \ \vartheta\in (0,1);&
\end{aligned}
\end{equation}
where we used Lemma \ref{l:anisotropic_B}\eqref{it:anisotropic_B2} and \cite[Lemma 1.51]{DK13_mixed_order} to compute $[X_0,X_1]_{\vartheta}$.

Finally, on $\R_+\times \O$ and for all $v\in X_1$, we set
\begin{equation}
\label{eq:def_ABFG}
\begin{aligned}
A(\cdot) v&= -\nabla\cdot(a\cdot \nabla v )- (b\cdot\nabla) v+\fts(\cdot)v, \\  
B(\cdot)v&= (\p[(\sigma_n\cdot\nabla) v])_{n\geq 1},\\
F(\cdot,v)&= \p[-(v\cdot\nabla_{x,y})v -w(v)\partial_z v +\ftg(\cdot,v)+ f(\cdot,v)],\\
G(\cdot,v)&= (\p[g_n(\cdot,v)])_{n\geq 1},
\end{aligned}
\end{equation}
where $w(v)$ and $(\fts,\ftg)$ are as in \eqref{eq:def_w} and \eqref{eq:def_f_tilde2}-\eqref{eq:def_f_tilde1}, respectively.

With the above notation the stochastic PEs \eqref{eq:primitive} can be rewritten as a stochastic evolution equation on $X_0$:
\begin{equation}
\label{eq:SEE}
\left\{
\begin{aligned}
\dd v + A(t)v\, \dd t &= F(t,v)\,\dd t + (B(t)v+G(t,v))\,\dd \Br_{\ell^2}(t),\quad t\in\R_+,\\
 v(0)&=v_0.
\end{aligned}
\right.
\end{equation}
where $\Br_{\ell^2}$ is the $\ell^2$-cylindrical Brownian motion induced by $(\beta^n)_{n\geq 1}$, see \eqref{eq:def_Br}. 
To apply the main results of \cite{AV19_QSEE_1} to \eqref{eq:SEE} we need the following ingredients:
\begin{itemize}
\item Estimates of the nonlinearities $(F,G)$ in $X_{\vartheta}$-spaces.
\item Stochastic maximal $L^p(L^q)$-regularity estimates for the linearized problem. 
\end{itemize}

The above points are addressed in Lemmas \ref{l:nonlinearities} and \ref{l:smr} of Subsection \ref{sss:local_estimates_nonlinearity} and \ref{sss:local_smr}, respectively.
The proof of Proposition \ref{prop:local} is given in Subsection \ref{sss:proof_local} below.

\subsection{Estimates of the nonlinearities}
\label{sss:local_estimates_nonlinearity}
We begin with the following lemma which relates $X_{\vartheta}=H^{-\s+2\vartheta,(q,\zeta)}$ with iterated Bessel-potential ones. Throughout this subsection we use the following shorthand notation $H^{r,q}_{x,y}(H^{t,\zeta}_z)$ for  $H^{r,q}(\T^2_{x,y};H^{t,\zeta}(\T_z))$. 

We begin by stating the following special case of Lemma \ref{l:anisotropic_B}\eqref{it:anisotropic_B3}.

\begin{lemma}
\label{l:iterated_H}
Let $q,\zeta\in (1,\infty)$ and $s,t\geq 0$. Then 
$$
H^{s+t,(q,\zeta)}\embed H^{r,q}_{x,y}(H^{t,\zeta}_z) \quad \text{ and }\quad
 H^{-t,q}_{x,y}(H^{-s,\zeta}_z) 
\embed 
H^{-t-s,(q,\zeta)}.
$$
\end{lemma}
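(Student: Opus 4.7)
The plan is essentially to read this off as the stated special case of the general anisotropic result in Lemma \ref{l:anisotropic_B}\eqref{it:anisotropic_B3} (with the three-variable splitting $\T^3=\T^2_{x,y}\times \T_z$ collapsed into the two-block splitting used here). So my job reduces to outlining the Fourier-multiplier argument that underlies the appendix lemma, specialized to the situation of two blocks of variables with exponents $q$ and $\zeta$.

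First I would reduce the two displayed embeddings to a single multiplier bound. Identify $H^{s+t,(q,\zeta)}$ with $(1-\Delta)^{-(s+t)/2}L^{(q,\zeta)}$ and the iterated space $H^{s,q}_{x,y}(H^{t,\zeta}_z)$ with $(1-\Delta_{x,y})^{-s/2}(1-\Delta_z)^{-t/2}L^{(q,\zeta)}$. Thus the first embedding is equivalent to the boundedness on $L^{(q,\zeta)}(\T^3)$ of the Fourier multiplier operator with symbol
\begin{equation*}
m(\xi,\eta)=\frac{(1+|\xi|^2)^{s/2}(1+|\eta|^2)^{t/2}}{(1+|\xi|^2+|\eta|^2)^{(s+t)/2}},\qquad (\xi,\eta)\in \Z^2\times \Z,
\end{equation*}
where the elementary inequality $(1+|\xi|^2)^{s/2}(1+|\eta|^2)^{t/2}\leq (1+|\xi|^2+|\eta|^2)^{(s+t)/2}$, valid for $s,t\geq 0$, shows that $m$ is uniformly bounded pointwise. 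Once the first embedding is established, the second follows by duality: since $(L^{(q,\zeta)})'=L^{(q',\zeta')}$ and the Bessel-potential/iterated scales respect duality, one applies the first embedding with $(q,\zeta)$ replaced by $(q',\zeta')$ and transposes.

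For the multiplier bound itself, the main step is to verify that $m$ satisfies an (anisotropic) Mikhlin--H\"ormander condition with respect to the product structure $\T^2_{x,y}\times \T_z$, i.e.\ that mixed derivatives $\partial^\alpha_\xi \partial^\beta_\eta m$ decay like $|\xi|^{-|\alpha|}|\eta|^{-|\beta|}$ for multi-indices $\alpha,\beta$ up to the required order. This is a direct calculation: each $\xi$-derivative or $\eta$-derivative of $m$ produces a factor gaining the corresponding decay because all three factors in $m$ are homogeneous-like in $(|\xi|,|\eta|)$ away from the origin. One can then apply the operator-valued/iterated Marcinkiewicz--Mikhlin multiplier theorem on $L^q(\T^2_{x,y};L^\zeta(\T_z))$ (both $q,\zeta\in(1,\infty)$, so each factor is a UMD space), performing the multiplier transform first in $\eta$ uniformly in $\xi$ and then in $\xi$ with values in the bounded operators on $L^\zeta(\T_z)$.

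The only delicate point is the $\mathcal{R}$-boundedness needed for the operator-valued Mikhlin step, which is standard because $L^\zeta(\T_z)$ is UMD and the symbol family has smooth, uniformly bounded $\eta$-derivatives. This is the part where the appendix does the real work; in the main text I would simply invoke Lemma \ref{l:anisotropic_B}\eqref{it:anisotropic_B3} with the three groups of coordinates $(x,y,z)$ bundled into two groups $(x,y)$ and $z$ carrying exponents $q$ and $\zeta$, and with anisotropy orders $s$ and $t$, to conclude both embeddings at once.
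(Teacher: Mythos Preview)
Your proposal is correct and matches the paper's own treatment: the paper states this lemma without proof, simply declaring it a special case of Lemma~\ref{l:anisotropic_B}\eqref{it:anisotropic_B3}, which is exactly what you do in your final paragraph. Your reduction to the Fourier multiplier $m(\xi,\eta)=(1+|\xi|^2)^{s/2}(1+|\eta|^2)^{t/2}(1+|\xi|^2+|\eta|^2)^{-(s+t)/2}$ and the duality argument for the second embedding also coincide with the appendix proof of Lemma~\ref{l:anisotropic_B}\eqref{it:anisotropic_B3}.

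The only genuine methodological difference lies in how the multiplier is shown to act boundedly on the mixed-norm space $L^{(q,\zeta)}$. You invoke the operator-valued (iterated) Mikhlin theorem on $L^q(\T^2_{x,y};L^\zeta(\T_z))$, relying on the UMD property of $L^\zeta$ and verifying $\mathcal R$-boundedness of the $\eta$-symbol family. The paper instead uses the scalar weighted Mikhlin theorem together with Rubio de Francia extrapolation to pass from one-weight $L^r(w)$-bounds to the anisotropic $L^{\q}$-bound. Both routes are standard and yield the same conclusion; the paper's extrapolation argument avoids any $\mathcal R$-boundedness verification, while your approach is more self-contained in that it stays within the vector-valued harmonic analysis framework without importing weight theory.
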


Below $\g(\ell^2,X_{1/2})$ denotes the space of $\g$-radonifying operators from $\ell^2$ to $X_{1/2}$, see e.g.\ \cite[Chapter 9]{Analysis2} for details. Their use is motivated by their role in stochastic integration and maximal $L^p$-regularity, see e.g.\ \cite{NVW1}, \cite[Section 7]{NVW13} and \cite[Section 3]{AV19_QSEE_1}. 
In the following we employ the Fubini-type result:
\begin{equation}
\label{eq:gamma_identification}
\g(\ell^2,H^{s,(q,\zeta)})=H^{s,(q,\zeta)}(\ell^2) \quad \text{ for all }\ s\in\R, \ q\in (1,\infty).
\end{equation}
The identification \eqref{eq:gamma_identification} is a consequence of \cite[Theorem 9.3.6]{Analysis2} and the fact that 
$$
(1-\Delta)^{s/2}:H^{s,(q,\zeta)}\to L^{(q,\zeta)}\ \ \text{is an isomoprhism}.$$

\begin{lemma}
\label{l:nonlinearities}
Let $(p,\s,q)$ and  $X_{\theta}$ be as in Assumption \ref{ass:primitive} and in \eqref{eq:choice_X0X1}, respectively. Fix $\zeta	\in [2,\infty)$ and suppose that $q>\frac{2}{2-\s}$.
Set
\begin{equation*}
\beta=
\left\{\begin{aligned}
&\frac{1}{2}+\frac{\delta}{4}+\frac{1}{2q}, \quad &\text{ if }&\ q<\frac{2}{\delta},\\
&\frac{1}{2}+\frac{\delta}{2}, \quad &\text{ if }&\ q\geq \frac{2}{\delta}.
\end{aligned}\right.
\end{equation*}
Then, for all $v,v'\in X_1$,
\begin{align*}
\|F(\cdot,v)\|_{X_0}&\lesssim (1+\|v\|_{X_{\beta}})\|v\|_{X_{\beta}},\\
\|F(\cdot,v)-F(\cdot,v')\|_{X_0}&\lesssim (1+\|v\|_{X_{\beta}}+\|v'\|_{X_{\beta}})\|v-v'\|_{X_{\beta}},\\
\|G(\cdot,v)\|_{\g(\ell^2,X_{1/2})}&\lesssim (1+\|v\|_{X_{\beta}})\|v\|_{X_{\beta}},\\
\|G(\cdot,v)-G(\cdot,v')\|_{\g(\ell^2,X_{1/2})}&\lesssim (1+\|v\|_{X_{\beta}}+\|v'\|_{X_{\beta}})\|v-v'\|_{X_{\beta}}.
\end{align*}
\end{lemma}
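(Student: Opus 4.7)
\textbf{Proof plan for Lemma \ref{l:nonlinearities}.} The nonlinearity $F$ contains four contributions: the two transport terms $(v\cdot\nabla_{x,y})v$ and $w(v)\partial_z v$, the Stratonovich corrector $\ftg(\cdot,v)$, and the forcing $f(\cdot,v)$. The first step is to rewrite the transport pair in divergence form using the identity $\partial_z w(v)=-\nabla_{x,y}\cdot v$, which is immediate from \eqref{eq:def_w}:
\begin{equation*}
(v\cdot\nabla_{x,y})v+w(v)\partial_z v=\nabla_{x,y}\cdot(v\otimes v)+\partial_z\bigl(w(v)\,v\bigr).
\end{equation*}
Since $\p$ is bounded on $H^{-\s,(q,\zeta)}$ and $\partial_{x_i}\colon H^{1-\s,(q,\zeta)}\to H^{-\s,(q,\zeta)}$ is bounded, controlling the transport part in $X_0$ reduces to product estimates for $v\otimes v$ and $w(v)\,v$ in $H^{1-\s,(q,\zeta)}$; the factor $w(v)$ has the same regularity as $v$, because $\nabla_{x,y}\cdot v$ loses one horizontal derivative, which is restored by the integration in $z$ appearing in \eqref{eq:def_w}.

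Next, I would establish the bilinear bound $\|v_1 v_2\|_{H^{1-\s,(q,\zeta)}}\lesssim \|v_1\|_{X_\beta}\|v_2\|_{X_\beta}$ by a fractional Leibniz rule on the mixed-norm scale $H^{s,(q,\zeta)}$ (cf.\ Appendix \ref{app:anisotropic}), combined with a Sobolev embedding for $X_\beta=H^{-\s+2\beta,(q,\zeta)}$. The dichotomy for $\beta$ reflects two regimes. When $q\geq 2/\delta$ the choice $\beta=\tfrac12+\tfrac{\delta}{2}$ yields $2\beta-\s=1-\s+\delta>2/q+1/\zeta$ (using $\s<\tfrac12$ and $\zeta\geq 2$), so $X_\beta\embed L^\infty$ and the product estimate is immediate from Hölder combined with the Leibniz rule in $H^{1-\s,(q,\zeta)}$. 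When $q<2/\delta$ the sharper exponent $\beta=\tfrac12+\tfrac{\delta}{4}+\tfrac{1}{2q}$ is dictated by the Sobolev embedding $X_\beta\embed H^{1-\s,(2q,\zeta)}\cap L^{(2q,\zeta)}$, placing the two factors of the product respectively in these spaces and recombining by the anisotropic Kato--Ponce rule. The hypothesis $q>\tfrac{2}{2-\s}$ ensures $2\beta>\s$, so all target smoothness exponents are positive and the embedding chain is non-degenerate.

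The corrector $\ftg(\cdot,v)$ is of strictly lower order: by the definition \eqref{eq:def_f_tilde2}, the $C^\g$-bound on $\sigma_n$ and the $L^\infty(\ell^2)$-bound on $b_{0,n}$ from Assumption \ref{ass:primitive}, together with Lemma \ref{l:pointwise} and the boundedness of $\qq,\p$ on mixed Bessel-potential spaces, give $\|\ftg(\cdot,v)\|_{X_0}\lesssim \|(g_n(\cdot,v))_{n\geq 1}\|_{H^{1-\s,(q,\zeta)}(\ell^2)}$. Via the Fubini-type identification \eqref{eq:gamma_identification}, the latter quantity also controls $\|G(\cdot,v)\|_{\g(\ell^2,X_{1/2})}=\|(\p[g_n(\cdot,v)])_{n\geq 1}\|_{H^{1-\s,(q,\zeta)}(\ell^2)}$. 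Applying the chain rule to $g_n(\cdot,v)$ and using Assumption \ref{ass:primitive}\eqref{it:ass_primitive7}--\eqref{it:ass_primitive8} together with the same embedding of $X_\beta$ into a Hölder multiplier of $H^{1-\s,(q,\zeta)}(\ell^2)$ yields the bound $(1+\|v\|_{X_\beta})\|v\|_{X_\beta}$. The forcing $f(\cdot,v)$ is handled identically but without the derivative term, and is lower order. Finally, the Lipschitz estimates for $F(\cdot,v)-F(\cdot,v')$ and $G(\cdot,v)-G(\cdot,v')$ follow by the standard polarization $v\otimes v-v'\otimes v'=(v-v')\otimes v+v'\otimes(v-v')$ and the Lipschitz assumptions on $f,g,\nabla_\x g,\nabla_\xi g$.

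The principal obstacle is the bilinear estimate for $\nabla_{x,y}\cdot(v\otimes v)+\partial_z(w(v)v)$ in the critical regime $q<2/\delta$: determining the sharpest admissible $\beta$ requires carefully matching the fractional Leibniz rule and the anisotropic Sobolev embedding for $X_\beta$, and any slack here would either demand more regularity from the noise (larger $\g$) or lift the space of initial data $B^{2/q}_{(q,2),p}$ out of the critical scale discussed in Subsection \ref{ss:scaling_intro}.
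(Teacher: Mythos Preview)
Your divergence-form strategy is a natural first instinct, but it runs into two concrete obstructions that the paper's non-divergence approach sidesteps.

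First, the claim that ``$w(v)$ has the same regularity as $v$, because $\nabla_{x,y}\cdot v$ loses one horizontal derivative, which is restored by the integration in $z$'' is false in the scale $H^{s,(q,\zeta)}$. The operator $\phi\mapsto\int_0^{z}\phi(\cdot,z')\,\dd z'$ is bounded on $L^{\zeta}_z$ but does not gain a full derivative in the isotropic-smoothness norm; it only improves vertical integrability. Concretely, from $v\in X_{\beta}$ you get $w(v)\in H^{-\s+2\beta-1,q}_{x,y}(L^{\infty}_z)$, not $w(v)\in X_{\beta}$. This already breaks the plan to estimate $w(v)\,v$ in $H^{1-\s,(q,\zeta)}$ by treating the two factors symmetrically.

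Second, even for the term $v\otimes v$, your proposed Kato--Ponce step $H^{1-\s,(2q,\zeta)}\times L^{(2q,\zeta)}\to H^{1-\s,(q,\zeta)}$ cannot hold as written: in the vertical variable the product of two $L^{\zeta}_z$ functions lands in $L^{\zeta/2}_z$, not $L^{\zeta}_z$. Any workable bilinear estimate here must place one factor in $L^{\infty}_z$. Relatedly, in your Case~2 the embedding $X_{\beta}=H^{1,(q,\zeta)}\hookrightarrow L^{\infty}$ requires $1>2/q+1/\zeta$; with $\zeta=2$ and $q=2/\s$ this reads $1>\s+\tfrac12$, i.e.\ $\s<\tfrac12$, which the lemma does not assume (it allows $\s\in[0,1)$).

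The paper avoids all of this by staying in non-divergence form and using only H\"older plus Sobolev in iterated spaces. With $r$ defined by $\tfrac{2}{r}=\s+\tfrac{2}{q}$ (so $L^{r}_{x,y}(L^{\zeta}_z)\hookrightarrow H^{-\s,(q,\zeta)}$), one writes
\[
\|w(v)\partial_z v'\|_{X_0}\lesssim \|w(v)\|_{L^{2r}_{x,y}(L^{\infty}_z)}\|\partial_z v'\|_{L^{2r}_{x,y}(L^{\zeta}_z)}
\lesssim \|\nabla v\|_{L^{2r}_{x,y}(L^{\zeta}_z)}\|v'\|_{L^{2r}_{x,y}(H^{1,\zeta}_z)},
\]
and similarly for $(v\cdot\nabla_{x,y})v'$ via $\|v\|_{L^{2r}_{x,y}(L^{\infty}_z)}\lesssim\|v\|_{L^{2r}_{x,y}(H^{1,\zeta}_z)}$. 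The value of $\beta$ is then read off from the embedding $X_{\beta}\hookrightarrow H^{1,2r}_{x,y}(L^{\zeta}_z)\cap L^{2r}_{x,y}(H^{1,\zeta}_z)$ via Lemma~\ref{l:iterated_H}; the two cases for $\beta$ correspond to whether this embedding is governed by the Sobolev index ($q<2/\s$) or by the trivial inclusion $L^{q}_{x,y}\hookrightarrow L^{2r}_{x,y}$ ($q\geq 2/\s$). No fractional Leibniz rule is needed, and the anisotropy of $w(v)$ is handled automatically because one only ever needs $w(v)\in L^{2r}_{x,y}(L^{\infty}_z)$, which follows from $|w(v)|\le\int_{\T_z}|\nabla_{x,y}\cdot v|\,\dd z$.
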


\begin{proof}
For exposition convenience, we divide the proof into two steps. In both cases we prove the estimate for the differences, as  $\|F(\cdot,0)\|_{X_0}<\infty$ and $\|G(\cdot,0)\|_{\g(\ell^2,X_{1/2})}<\infty$ by Assumption \ref{ass:primitive}\eqref{it:ass_primitive7}.

\emph{Step 1: Estimates for $F$}. For $v,v'\in X_1$, we set
$$
B_0(v,v')=-\p[(v\cdot\nabla_{x,y})v'] \quad \text{ and }\quad B_1(v,v')=-\p[w(v)\partial_z v'].
$$
Note that $F(\cdot,v)= b_0(v,v)+b_1(v,v)+\p[f(\cdot,v)]$. Since $f$ is globally Lipschitz by Assumption \ref{ass:primitive}, it is enough to estimate $b_0$ and $b_1$.
We begin by looking at $b_1$ as its analysis will lead to all the restrictions of the current lemma.

\emph{Substep 1a: $\|b_1(v,v)\|_{X_0}\lesssim\|v\|_{X_{\beta}}\|v'\|_{X_{\beta}}$ for all $v,v'\in X_1$.}
Let $r$ be either [$\frac{2}{r}=\delta+\frac{2}{q}$ if $\s>0$] or [$r=q$ otherwise]. Note that, if $\s>1$, then $q>\frac{2}{2-\s}$ and therefore $r\in (1,\infty)$. By Lemma \ref{l:iterated_H} and the choice of $r$, we have $
L^r_{x,y}(L^{\zeta}_z)\embed H^{-\s,(q,\zeta)}$. Thus, for all $v,v\in X_1$,
\begin{align*}
\|b_1(v,v')\|_{X_0}
&\lesssim \|w(v)\partial_z v'\|_{L^r_{x,y}(L^{\zeta}_z)}\\
&\lesssim\|w(v)\|_{L^{2r}_{x,y}(L^{\infty}_z)}\|\partial_z v'\|_{L^{2r}_{x,y}(L^{\zeta}_z)}\\
&\stackrel{(i)}{\lesssim}\|\nabla  v\|_{L^{2r}_{x,y}(L^{\zeta}_z)}\|\partial_z v'\|_{L^{2r}_{x,y}(L^{\zeta}_z)}\\
&\lesssim \|v\|_{H_{x,y}^{1,2r}(L^{\zeta}_z)}\|v'\|_{L_{x,y}^{2r}(H^{1,\zeta}_z)}\\
&\stackrel{(ii)}{\lesssim} \|v\|_{X_{\beta}}\|v'\|_{X_{\beta}},
\end{align*}
where in $(i)$ we use \eqref{eq:def_w}. For $(ii)$, we distinguish two cases:
\begin{itemize}
\item \emph{Case 1: $q<\frac{2}{\s}$}. In this case $\beta=\frac{1}{2}+\frac{\s}{4}+\frac{1}{2q}<1$ satisfies $-\s+2\beta>1$ and $-\s+2\beta-\frac{2}{q}=1-\frac{2}{2r}$. Thus Lemma \ref{l:iterated_H} and the Sobolev embeddings yield
\begin{align*}
X_{\beta}
&\embed H_{x,y}^{-\s+2\beta,q}(L^{\zeta}_z)\cap H^{-\s+2\beta-1,q}_{x,y}(H^{1,\zeta}_z)\\
&\embed H_{x,y}^{1,2r}(L^{\zeta}_z)\cap L^{2r}_{x,y}(H^{1,\zeta}_z).
\end{align*}
\item \emph{Case 2: $q\geq \frac{2}{\s}$}. In this situation $q\geq 2r$ and $-\s+2\beta=1$. The conclusion now follows as in Step 1 by using the trivial embedding $L^q_{x,y}\embed L^{2r}_{x,y}$.
\end{itemize}  

\emph{Substep 1b: $\|b_0(v,v)\|_{X_0}\lesssim\|v\|_{X_{\beta}}\|v'\|_{X_{\beta}}$ for all $v,v'\in X_1$.}
Let $r\in (1,\infty)$ be as in Substep 1a. 
Again, by $L^r_{x,y}(L^{\zeta}_z)\embed H^{-\s,(q,\zeta)}$, for all $v,v'\in X_1$,
\begin{align*}
\|b_0(v,v')\|_{X_0}
&\lesssim \|(v\cdot\nabla_{x,y})v'\|_{L^{r}_{x,y}(L^{\zeta}_z)}\\
&\lesssim \|v\|_{L^{2r}_{x,y}(L^{\infty}_z)}\| \nabla_{x,y}v'\|_{L^{2r}_{x,y}(L^{\zeta}_z)}\\
&\lesssim \| v\|_{L^{2r}_{x,y}(H^{1,\zeta}_z)}\| v'\|_{H^{1,2r}_{x,y}(L^{\zeta}_z)},
\end{align*}
where in the last step we used that $H^{1,\zeta}_z\embed L^{\infty}_z$. The conclusion follows from the above estimate with the one of $
\|b_1(v,v')\|_{X_0}$ in Substep 1a.

\emph{Substep 1c: $\|f(\cdot,v)-f(\cdot,v')\|_{X_0}\lesssim \|v-v'\|_{X_{\beta}}$}. Follows from Assumption \ref{ass:primitive}\eqref{it:ass_primitive8}, the arguments used in Substeps 1a and 1b, and the fact that $X_{\beta}\embed H^{1,(q,\zeta)}$ as $\beta\geq \frac{1}{2}+\frac{\s}{2}$ by construction.

\emph{Step 2: Estimate for $G$}. We claim that $\|G(\cdot,v)-G(\cdot,v')\|_{\g(\ell^2,X_{1/2})}\lesssim \|v-v'\|_{X_{\beta}}$. The latter follows from Assumption \ref{ass:primitive}\eqref{it:ass_primitive8},  \eqref{eq:gamma_identification}, the chain rule and $X_{\beta}\embed H^{1,(q,\zeta)}$ as in Substep 1c.
\end{proof}

\subsection{Stochastic maximal $L^p(L^q)$-regularity for the linearized problem}
\label{sss:local_smr}
In this subsection we investigate maximal $L^p$-regularity estimates for the following linearization of \eqref{eq:primitive}:
\begin{equation}
\label{eq:primitive_linear_zero}
\left\{
\begin{aligned}
		&\partial_t v   
		=\nabla\cdot(a\cdot\nabla v) + (b\cdot\nabla)v + \fts(\cdot) v + f	\\
		&\qquad \qquad \qquad \qquad 
		+\sum_{n\geq 1}\big(\p [(\sigma_{n}\cdot\nabla) v]+ g_{n}\big)\, \dot{ \beta}_t^n, &\text{ on }&\Tor^3,\\
&v(0,\cdot)=0,&\text{ on }&\Tor^3.
\end{aligned}
\right.
\end{equation}
where $\fts$ is as in \eqref{eq:def_f_tilde1} and, for $T\in (0,\infty)$,
\begin{align}
\label{eq:f_g_assumptions_1}
f&\in L^p((0,T)\times \O,w_{\a};\Hs^{-s,(q,\zeta)}),\\
\label{eq:f_g_assumptions_2}
g=(g_n)_{n\geq 1}&\in L^p((0,T)\times \O,w_{\a};\Hs^{1-s,(q,\zeta)}(\ell^2)),
\end{align}
and the parameters $(p,s,\a,q)$ satisfy
\begin{equation}
\label{eq:parameters_smr}
\begin{aligned} 
s\in (-\infty,1) \ \text{ and either }& \ [p=q=\zeta=2\text{ and }\a=0] \\
\text{ or }&\ \Big[ q,\zeta\in [2,\infty),  \ p\in (2,\infty)\text{ and } \a\in [0,\tfrac{p}{2}-1)\Big].
\end{aligned}
\end{equation}
Here we also consider the case $s<0$ which is needed to prove Theorem \ref{t:regularity2}.

We study \eqref{eq:primitive_linear_zero} under the following assumption.

\begin{assumption} Let $h\in [-1,\infty)$ and assume that  the following hold.
\label{ass:primitive_linear}
\begin{enumerate}[{\rm(1)}]
\item\label{it:ass_primitive_linear1} For all $n\geq 1$ the following mapping are $\Progress\otimes \Borel(\T^3)$-measurable
\begin{align*}
 a=(a_{\eta,\xi})_{\eta,\xi\in \{x,y,z\}}&:\R_+\times \O\times \T^3\to \R^{3\times 3},\\
b=(b_{\xi})_{\xi\in \{x,y,z\}},\ \sigma_n=(\sigma_{n,\xi})_{\xi\in \{x,y,z\}}&:\R_+\times \O\times \T^3\to \R^3,\\
 b_{0,n}&:\R_+\times \O\times \T^3\to \R.
\end{align*}
\item\label{it:ass_primitive_linear2} For all $n\geq 1$, the mappings
$$
(a_{\eta,\xi})_{\eta,\xi\in \{x,y\}},  (\sigma_{n,\xi})_{\xi\in \{x,y\}}   \text{ are independent of }z\in \T_z.
$$
\item\label{it:ass_primitive_linear3} There exist $M,\g>0$ such that $\g> 1-s$, and for a.a.\ $(t,\om)\in  \R_+\times \O$,
$$
\|a(t,\om,\cdot)\|_{C^{\g}(\Tor^3;\R^{3\times 3})}
+\|(\sigma_n(t,\om,\cdot))_{n\geq 1}\|_{C^{\g}(\Tor^3;\ell^2)}\leq M   .
$$
\item\label{it:ass_primitive_linear4} There exists $\nu\in (0,1)$ such that for all $\lambda=(\lambda_{\xi})_{\xi\in \{x,y,z\}}\in \R^3$
\begin{equation*}
\sum_{\eta,\xi\in \{x,y,z\}} \Big(a_{\eta,\xi}-\frac{1}{2}\sum_{n\geq 1} \sigma_{n,\eta} \sigma_{n,\xi}\Big) 
\lambda_{\eta}\lambda_{\xi}  \geq  \ellip|\lambda|^2 \ \ \text{ a.e.\ on }\R_+\times \O.
\end{equation*}
 \item\label{it:ass_primitive_linear5} There exists $N>0$ such that,  for a.a.\ $(t,\om)\in \R_+\times \O$,
 \begin{align*}
 \|b(t,\om,\cdot)\|_{L^{\infty}(\T^3;\R^3)}+
 \|(b_{0,n}(t,\om,\cdot))_{n\geq 1}\|_{L^{\infty}(\T^3;\ell^2)}&\leq N \ \ \text{ if }s\leq 0,\\
 \|b(t,\om,\cdot)\|_{C^{\g-1}(\T^3;\R^3)}+
 \|(b_{0,n}(t,\om,\cdot))_{n\geq 1}\|_{C^{\g-1}(\T^3;\ell^2)}&\leq N\ \  \text{ if }s>0.
 \end{align*}
\end{enumerate}
\end{assumption}

Strong solutions $v\in L^p((0,T)\times \O,w_{\a};\Hs^{-s,(q,\zeta)})$ to \eqref{eq:primitive_linear_zero} can be defined similarly to Definition \ref{def:solution} by considering the integrated version of \eqref{eq:primitive_linear_zero}.

For a brief overview of stochastic maximal $L^p$-regularity estimates for parabolic problems, we refer to \cite[Subsection 3.2]{AV19_QSEE_1}.  The parabolic version of the result below is given in Theorem \ref{t:smr_anisotropic} in Appendix \ref{app:smr}.

\begin{lemma}
\label{l:smr}
Assume that Assumption \ref{ass:primitive_linear} and  \eqref{eq:parameters_smr} hold. 
Then for all progressively measurable processes as in \eqref{eq:f_g_assumptions_1}-\eqref{eq:f_g_assumptions_2} 
there exists a unique strong and progressively measurable solution $v\in L^p((0,T)\times \O,w_{\a};\Hs^{2-s,(q,\zeta)})$ to \eqref{eq:primitive_linear_zero} such that for all $\vartheta\in [0,\frac{1}{2})$ one has $v\in L^p(\O,H^{\vartheta,p}(0,T,w_{\a};\Hs^{2-s-2\vartheta,(q,\zeta)}))$ and 
\begin{align}
\label{eq:estimate_max_reg_theta_v}
\E\|v\|_{H^{\vartheta,p}(0,T,w_{\a};H^{2-s-2\vartheta,(q,\zeta)})}^p 
&\lesssim_{\vartheta,p} 
\E\|f\|_{L^p(0,T,w_{\a};H^{-s,(q,\zeta)})}^p\\
\nonumber
&\quad + \E\|g\|_{L^p(0,T,w_{\a};H^{1-s,(q,\zeta)}(\ell^2))}^p,
\end{align}
where in the case $p=2$ and $\vartheta>0$ the space $H^{\vartheta,p}(0,T,w_{\a};H^{2-s-2\vartheta,(q,\zeta)})$ is replaced by
$C([0,T];H^{1-s,q})$. 
\end{lemma}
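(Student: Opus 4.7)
The plan is to reduce Lemma \ref{l:smr} to the abstract parabolic stochastic maximal $L^p$-regularity result Theorem \ref{t:smr_anisotropic} (stated in Appendix \ref{app:smr}), which handles parabolic stochastic systems in divergence form on anisotropic Bessel potential spaces under the standard stochastic parabolicity condition. The key is to recognize that \eqref{eq:primitive_linear_zero} fits into this abstract framework despite (i) the appearance of the hydrostatic Helmholtz projection $\p$ on the noise and (ii) the second-order drift correction $\fts(\cdot)v$.

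First I would treat the \emph{principal} part of the system, setting $b=0$, $f=0$, $g=0$, so that the equation reduces to second-order drift $\nabla\cdot(a\cdot\nabla v) + \fts(\cdot)v$ and noise $\p[(\sigma_n\cdot\nabla)v]$. The It\^o correction associated with the projected transport noise is formally $\frac{1}{2}\sum_{n}\p\big[(\sigma_n\cdot\nabla)\,\p[(\sigma_n\cdot\nabla)v]\big]$; expanding $\p = I - \qq$ and invoking the explicit formula for $\fts$ in \eqref{eq:def_f_tilde1} would identify $\fts(\cdot)v$ as precisely the $\qq$-contribution that must be added (weighted by $h$) so that the effective second-order symbol becomes $a_{\eta,\xi} - \frac{1}{2}\sum_n \sigma_{n,\eta}\sigma_{n,\xi}$ modulo genuinely lower-order remainders. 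Consequently, the stochastic parabolicity assumption required by Theorem \ref{t:smr_anisotropic} reduces exactly to Assumption \ref{ass:primitive_linear}\eqref{it:ass_primitive_linear4}, and applying that theorem yields the $\vartheta=0$ version of \eqref{eq:estimate_max_reg_theta_v} for the principal part.

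Second, the lower-order drift $(b\cdot\nabla)v$ is absorbed by perturbation. For $s\leq 0$, Assumption \ref{ass:primitive_linear}\eqref{it:ass_primitive_linear5} gives $b\in L^\infty$ and pointwise multiplication with Sobolev embedding yields that $v\mapsto (b\cdot\nabla)v$ is bounded $H^{2-s,(q,\zeta)}\to H^{-s,(q,\zeta)}$ with some fractional gain; for $s>0$, the H\"older condition $b\in C^{\gamma-1}$ with $\gamma>1-s$ is needed so that the product is well defined in the negative Bessel scale via Lemma \ref{l:pointwise}. The subcritical order of the perturbation then allows a standard short-time Neumann series around the solution operator provided by Step 1, followed by iteration on $[0,T]$. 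The terms $f$ and $(g_n)_{n\geq 1}$ are inhomogeneous and enter directly on the right-hand side of the SMR estimate. The temporal regularity $\vartheta\in(0,\tfrac{1}{2})$ for $p>2$ follows from the $\vartheta=0$ estimate by the weighted parabolic trace theory in the power-weight scale (using $\alpha<\tfrac{p}{2}-1$) together with the $\gamma$-radonifying estimate \eqref{eq:gamma_identification} for the stochastic convolution; the endpoint substitute $C([0,T];H^{1-s,q})$ in the case $p=2$ is the classical Hilbert-space It\^o trace embedding.

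The main obstacle is the algebraic verification in the first step: the projection $\p$ does not commute with multiplication by $\sigma_n$, so computing the It\^o correction of the projected transport noise produces a family of commutator terms. One must show that the compensation provided by the specific form of $\fts$ (parametrized by $h\in[-1,\infty)$) cancels the principal part of these commutators, leaving only remainders of order strictly below two, so that the effective second-order symbol is precisely the coercive tensor controlled by Assumption \ref{ass:primitive_linear}\eqref{it:ass_primitive_linear4}. The delicate point is that $\fts$ is second-order, hence cannot be handled purely as a perturbation; its role is structural, ensuring that the projected system sits inside the parabolic framework of Theorem \ref{t:smr_anisotropic} for every admissible value of $h$.
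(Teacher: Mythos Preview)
Your treatment of the lower-order term $(b\cdot\nabla)v$ and of the inhomogeneities is fine and matches the paper. The gap is in your first step: you cannot reduce the projected system directly to Theorem \ref{t:smr_anisotropic} in the way you describe. That theorem requires the noise operator to be of the pure transport form $(b_n\cdot\nabla)u$, whereas in \eqref{eq:primitive_linear_zero} the noise is $\p[(\sigma_n\cdot\nabla)v]$ with the nonlocal hydrostatic projection. Your claim that the second-order drift $\fts$ compensates the $\qq$-part of the projected noise so that the effective symbol becomes $a-\tfrac12\,\sigma\otimes\sigma$ cannot hold uniformly in $h$: by \eqref{eq:def_f_tilde1}, $\fts$ is proportional to $h$, so for $h=0$ it vanishes identically, yet the projection on the noise is still present and Assumption \ref{ass:primitive_linear}\eqref{it:ass_primitive_linear4} is unchanged. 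There is no It\^o-correction algebra that removes $\p$ here; the obstruction is structural, not symbolic.

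The paper's proof takes a different route. After perturbing away $b$ and $b_{0,n}$ (as you do), it invokes the method of continuity, so that only an a priori estimate is needed, and then \emph{splits $v$ into barotropic and baroclinic modes} $\overline{v}=\int_{\T_z}v\,\dd z$ and $\wt{v}=v-\overline{v}$. Since $\qq f$ is always $z$-independent, $\p$ acts as the identity on baroclinic functions and as the 2D Helmholtz projection $\p_{x,y}$ on barotropic ones. Thus $\wt{v}$ satisfies a parabolic SPDE \emph{without} projection (eq.\ \eqref{eq:primitive_linear_wtv}), to which Theorem \ref{t:smr_anisotropic} applies directly, while $\overline{v}$ satisfies a 2D Stokes-type SPDE (eq.\ \eqref{eq:primitive_linear_overlinev}), handled by the stochastic Stokes SMR result \cite[Theorem 3.2]{AV21_NS}. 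The $z$-independence hypotheses in Assumption \ref{ass:primitive_linear}\eqref{it:ass_primitive_linear2} are exactly what make this splitting produce only lower-order coupling terms between the two modes (Step 1 of the paper's proof), which are then absorbed by interpolation and a short-time argument. The role of $\fts$ is not the one you assign it: after the splitting, its barotropic part $\fover$ is absorbed into the 2D Stokes operator, and the residual $\fwt$ (see \eqref{eq:def_fwt}) is shown to be strictly lower order.
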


Note that the choice $\zeta=2$ is allowed. 
As the proof below shows we may replace $(0,T)$ by $(0,\tau)$ where $\tau$ is a stopping time taking value in a compact set of $[0,\infty)$.
By \cite[Proposition 3.10]{AV19_QSEE_1}, the above result also implies stochastic maximal $L^p$-regularity estimates for solutions to \eqref{eq:primitive_linear_zero} with non-trivial initial data $u(0)\in L^p_{\F_0}(\O;\Bs^{2-s-2\frac{1+\a}{p}}_{(q,\zeta),p})$.

The proof of Lemma \ref{l:smr} simplifies in the case we assume $z$-independence of $\sigma_{n,z}$, as in the condition \eqref{eq:ass_intro_2} of Theorem \ref{t:global_intro}, instead of using Assumption \ref{ass:global}.
Indeed, in the latter case, there are no coupling terms between the barotropic and baroclinic evolutions \eqref{eq:primitive_linear_overlinev}-\eqref{eq:primitive_linear_wtv}.

\begin{proof}
Let $T<\infty$ be fixed. 
Let us begin by noticing that, by Assumption \ref{ass:primitive_linear}\eqref{it:ass_primitive_linear5}, 
\begin{align*}
\|(b\cdot\nabla)v \|_{H^{-s,(q,\zeta)}}+ 
\Big\|\sum_{n\geq 1}b_{0,n} \qq[(\sigma_n\cdot\nabla)v ] \Big\|_{H^{-s,(q,\zeta)}}
&\lesssim
\left\{
\begin{aligned}
& \|v\|_{H^{1,(q,\zeta)}} & \text{ if } &s\geq 0,\\
& \|v\|_{H^{2-s,(q,\zeta)}} & \text{ if } & s< 0,
\end{aligned}
\right.
\end{align*}
Indeed, in the case $s\geq 0$ (reps.\ $s<0$), the above follows from $L^{(q,\zeta)}\embed H^{-\s,(q,\zeta)}$ (resp.\ Lemma \ref{l:pointwise}). 
Hence the above operators are lower order, and by the perturbation result of \cite[Theorem 3.2]{AV_torus} it is enough to consider the case $b\equiv b_{0,n}\equiv 0$.

Note that in the case $\sigma\equiv 0$ and $a\equiv\mathrm{Id}$, then the content of Lemma \ref{l:smr} is a special case of Theorem \ref{t:smr_anisotropic}. Hence, arguing as in \cite[Subsection 5.2]{AV_torus} (cf.\ Lemma 5.4 there), due to the method of continuity, it is enough to show an a priori estimate of the form 
\begin{align}
\label{eq:primitive_linear_estimate}
\E\|v\|_{L^{p}(t,t+t_{\star},w_{\ell}^{t};H^{2-s,(q,\zeta)})}^p &\leq C_{0}\big(
\E\|f\|_{L^p(t,t+t_{\star},w_{\ell}^{t};H^{-s,(q,\zeta)})}^p\\
\nonumber
&\quad\quad + \E\|g\|_{L^p(t,t+t_{\star},w_{\ell}^{t};H^{1-s,(q,\zeta)}(\ell^2))}^p\big),
\end{align}
where $\ell\in \{0,\a\}$ and $w_{\ell}^t (\tau)\stackrel{{\rm def}}{=}|\tau-t|^{\ell}$, and $v\in L^p((0,T)\times \O,w_{\a};\Hs^{2-s,(q,\zeta)})$ is a strong solutions to \eqref{eq:primitive_linear_zero} with initial condition at time $t\geq 0$ and on a time interval $[t,(t+t_{\star})\wedge T]$:
\begin{equation}
\label{eq:primitive_linear}
\left\{
\begin{aligned}
		&\partial_t v   =\nabla (a\cdot \nabla v)+ \fts(\cdot)v +f +\sum_{n\geq 1}\big(\p [(\sigma_{n}\cdot\nabla) v]+ g_{n}\big)\, \dot{ \beta}_t^n, &\text{ on }&\Tor^3,\\
&v(t,\cdot)=0,&\text{ on }&\Tor^3.
\end{aligned}
\right.
\end{equation}
To extend the results of \cite{AV_torus}, it is important that $t_{\star}$ and the constant $K_0$ in \eqref{eq:primitive_linear_estimate} depend only on the parameters appearing in Assumption \ref{ass:primitive_linear}, i.e.\ elements of the set $\set \stackrel{{\rm def}}{=}\{p,q,\s,\a,M,\g\}$. To emphasize this we will write $C=C(\set)>0$ if the constant $C$ depends only on parameters in $\set$.

The idea now is to split $v$ into barotropic and baroclinic modes, i.e.\ 
\begin{equation*}
\overline{v}\stackrel{{\rm def}}{=}\fint_{\T_z} v(\cdot,z)\,\dd z \quad \text{ and } \quad
\wt{v}\stackrel{{\rm def}}{=} v-\overline{v}.
\end{equation*}
Recall that $\overline{\cdot}$ commutes with $\p$, i.e.\ $\overline{\p f}=\p_{x,y}[\, \overline{f} \,]$ for all $f\in L^2(\T^3;\R^2)$ where $\p$ and $\p_{x,y}$ are Helmholtz type projections, see Subsection \ref{ss:projections}. 
By  Assumption \ref{ass:primitive_linear}\eqref{it:ass_primitive_linear2}, one can check that $\overline{v}$ and $\wt{v}$ solve
\begin{equation}
\label{eq:primitive_linear_overlinev}
\left\{
\begin{aligned}
		&\partial_t \overline{v}   =\sum_{\eta,\xi\in \{x,y\}} \p[(\partial_{\eta}(a_{\eta,\xi} \partial_{\xi}\overline{v})
		+ \fover 	\overline{v}]
		+ \sum_{\eta\in \{x,y\}} \partial_{\eta}(\overline{a_{\eta,z}\partial_z v}) +\overline{f}	\\
		&\qquad \qquad\qquad\qquad\ \
		 +\sum_{n\geq 1}\big(\p [(\sigma_{n}\cdot\nabla_{x,y}) \overline{v}]+ \overline{\sigma_{n,z}\partial_z v}+ \overline{g}_{n}\big)\, \dot{ \beta}_t^n, &\text{on }&\Tor^3,\\
&\overline{v}(s,\cdot)=0,&\text{on }&\Tor^3,
\end{aligned}
\right.
\end{equation}
and 
\begin{equation}
\label{eq:primitive_linear_wtv}
\left\{
\begin{aligned}
		&\partial_t \wt{v}   =\nabla (a\cdot\nabla \wt{v})+\fwt v + 
		\sum_{\eta\in \{x,y\}}\big[ \partial_{z}(a_{z,\eta}\partial_{\eta} \overline{v})  
		-\partial_{\eta}(\overline{a_{\eta,z}\partial_z v}) \big]
		\\
		&\qquad \qquad\qquad\qquad
		\ \ +\wt{f} +\sum_{n\geq 1}\big((\sigma_{n}\cdot\nabla) \wt{v}-\overline{\sigma_{n,z}\partial_z v}+ \wt{g}_{n}\big)\, \dot{ \beta}_t^n, &\text{on }&\Tor^3,\\
&\wt{v}(s,\cdot)=0,&\text{on }&\Tor^3,
\end{aligned}
\right.
\end{equation}
respectively; where
\begin{align}
\nonumber
\fover\overline{v} &\stackrel{{\rm def}}{=} -\frac{h}{2}\sum_{n\geq 1}\sum_{\eta\in \{x,y\}} \partial_{\eta}( \qq_{x,y} [(\sigma_n\cdot\nabla)\overline{v}] \sigma_{n,\eta}),\\
\label{eq:def_fwt}
\fwt v&\stackrel{{\rm def}}{=} -\frac{h}{2}\sum_{n\geq 1} \qq [(\sigma_n\cdot\nabla) v] \partial_z\sigma_{n,z}.
\end{align}
We now split the proof of \eqref{eq:primitive_linear_estimate} into two steps.

\emph{Step 1: Let $M,\g>0$ be as in Assumption \ref{ass:primitive}. Then there exist $\varepsilon=\varepsilon(\set)>0$  and a constant $C_1=C_1(\set)>0$ such that, a.e.\ on $\R_+\times \O$ and for all $v\in H^{2-s,(q,\zeta)}$,}
\begin{align*}
\sup_{\eta\in \{x,y\}}\|\partial_{\eta}(\overline{a_{\eta,z}\partial_z v}) \|_{H^{-s,(q,\zeta)}} +
\|(\overline{\sigma_{n,z}\partial_z v})_{n\geq 1}\|_{H^{1-s,q}(\ell^2)}
&\leq C_1 \|v\|_{H^{2-s-\varepsilon,(q,\zeta)}}^p,\\
\|\fwt v\|_{H^{-s,(q,\zeta)}}&\leq C_1 \|v\|_{H^{2-s-\varepsilon,(q,\zeta)}}^p.
\end{align*}

We start by proving the first estimate of Step 1. Below we only consider the estimate of $(\overline{\sigma_{n,z}\partial_z v})_{n\geq 1}$, the other follows similarly.
Fix $\eta\in (0,\g-1+\s)$ and $\g_0\in (1-\s,\g)$ such that $\g_0+\varepsilon<\g$. Note that 
$$
\overline{\sigma_{n,z} \partial_z v}=\overline{\, [(1-\partial^2_z)^{\varepsilon/2} \sigma_{n,z}] [(1-\partial^2_z)^{-\varepsilon/2} \partial_z v] \, }.
$$
By the pointwise multiplication result of Lemma \ref{l:pointwise} we obtain
\begin{align*}
\|(\overline{\sigma_{n,z} \partial_z v})_{n\geq 1}\|_{H^{1-s,q}(\ell^2)}
&\lesssim \|(\, [(1-\partial^2_z)^{\varepsilon/2} \sigma_{n,z}][ (1-\partial^2_z)^{-\varepsilon/2}\partial_z v]\, )_{n\geq 1}\|_{H^{1-s,(q,\zeta)}(\ell^2)}\\
&\lesssim \|((1-\partial^2_z)^{\varepsilon/2} \sigma_{n,z})_{n\geq 1} \|_{C^{\g_0}(\ell^2)}
\|(1-\partial^2_z)^{-\varepsilon/2} \partial_z v\|_{H^{1-s,(q,\zeta)}}\\
&\lesssim \|(\sigma_{n,z})_{n\geq 1} \|_{C^{\g}(\ell^2)}
\| v\|_{H^{2-s-\varepsilon,(q,\zeta)}}\\
&\lesssim_M
\| v\|_{H^{2-s-\varepsilon,(q,\zeta)}}
\end{align*}
where the last step follows from Assumption \ref{ass:primitive}.

To prove the second estimate of Step 1, let us first focus on the case $s\leq 0$. Recall that $L^{q}_{x,y}(H^{-s,\zeta})\embed H^{-s,(q,\zeta)}$ by Lemma \ref{l:anisotropic_B}\eqref{it:anisotropic_B3}. Thus, since $\qq [(\sigma_n\cdot\nabla)v]$ is $z$-independent by \eqref{eq:def_p_q}, we have
\begin{align*}
\| \fwt (\cdot)v\|_{H^{-s,(q,\zeta)}}
\lesssim \Big\|\|( \qq [(\sigma_n\cdot\nabla)v] )_{n \geq 1}\|_{\ell^2} \|(\partial_z \sigma_{n,z})_{n\geq 1}\|_{H^{-s,\zeta}_z(\ell^2)}\Big\|_{L^q_{x,y}}
\lesssim \|\nabla v\|_{H^{1,(q,\zeta)}}
\end{align*}
where in the last step we used that, by Assumption \ref{ass:primitive}\eqref{it:ass_primitive3} we have $\g>1-s$, 
$$
\sup_{ \T^2_{x,y}} \|(\partial_z \sigma_{n,z})_{n\geq 1}\|_{H^{-s,\zeta}_z(\ell^2)}\lesssim 
\sup_{ \T^2_{x,y}}  \|( \sigma_{n,z})_{n\geq 1}\|_{C^{\g}_z(\ell^2)}\leq M.
$$
Hence the second estimate of Step 1 follows as $s< 1$. The case $s\geq 0$ is analogue.

\emph{Step 2: Conclusion}. 
The stochastic maximal $L^p$-regularity estimate of \cite[Theorem 3.2]{AV21_NS} applied to \eqref{eq:primitive_linear_overlinev} and Step 1, we get, for all $t_1\in (t,T)$ and $\ell\in \{0,\a\}$,
\begin{align*}
\E\|\overline{v}\|_{L^{p}(t,t_1,w_{\ell}^t;H^{2-\s,q})}^p 
&\leq \overline{C}_2\big(  
\E\|v\|_{L^{p}(t,t_1,w_{\ell}^t;H^{2-s-\varepsilon,(q,\zeta)})}^p \\
&\ +\E\|f\|_{L^p(t,t_1,w_{\a};H^{-s,(q,\zeta)})}^p
+ \E\|g\|_{L^p(t,t_1,w_{\a};H^{1-s,(q,\zeta)}(\ell^2))}^p\big)
\end{align*}
where $\overline{C}_2=\overline{C}_2(\set)>0$.

By Theorem \ref{t:smr_anisotropic} applied to \eqref{eq:primitive_linear_overlinev} there exists $\wt{C}_{2}=\wt{C}_2(\set)>0$ such that
\begin{align*}
\E\|\overline{v}\|_{L^{p}(t,t_1,w_{\ell}^t;H^{2-s,q})}^p 
&\leq \wt{C}_2\big(  
\E\|\overline{v}\|_{L^{p}(t,t_1,w_{\ell}^t;H^{2-s,q})}^p +
\E\|v\|_{L^{p}(t,t_1,w_{\ell}^t;H^{2-s-\varepsilon,(q,\zeta)})}^p \\
&\ +\E\|f\|_{L^p(t,t_1,w_{\ell}^t;H^{-s,(q,\zeta)})}^p
+ \E\|g\|_{L^p(t,t_1,w_{\ell}^t;H^{1-s,(q,\zeta)}(\ell^2))}^p\big).
\end{align*}
Hence, putting together the previous estimates and using $v=\overline{v}+\wt{v}$, we have obtained the existence of a constant $C_2=C_2(\set)$ such that
\begin{align*}
\E\|v\|_{L^{p}(t,t_1,w_{\ell}^t;H^{2-s,(q,\zeta)})}^p 
&\leq C_2\big(  
\E\|v\|_{L^{p}(t,t_1,w_{\ell}^t;H^{2-s-\varepsilon,(q,\zeta)})}^p \\
&\ +\E\|f\|_{L^p(t,t_1,w_{\ell}^t;H^{-s,(q,\zeta)})}^p
+ \E\|g\|_{L^p(t,t_1,w_{\ell}^t;H^{1-s,(q,\zeta)}(\ell^2))}^p\big)
\end{align*}
By interpolation we have $\|v\|_{H^{2-s-\varepsilon,(q,\zeta)}}\leq \frac{1}{2C_2^{1/p}}\|v\|_{H^{2-s,(q,\zeta)}}+K \|v\|_{H^{-s,(q,\zeta)}}$. Hence the above and \cite[Proposition 2.5(2)]{AV_torus} yields the existence of a constant  $R_{|t_1-t|}(\set)>0$ satisfying $\lim_{r\downarrow 0}R_r=0$ and
\begin{align*}
\E\|v\|_{L^{p}(t,t_1,w_{\ell}^t;H^{2-s,(q,\zeta)})}^p 
&\leq 2C_2  R_{|t_1-t|} 
\E\|v\|_{L^{p}(t,t_1,w_{\ell}^t;H^{-s,(q,\zeta)})}^p \\
&+ 2C_2\big(\E\|f\|_{L^p(t,t_1,w_{\ell}^t;H^{-s,(q,\zeta)})}^p
+ \E\|g\|_{L^p(t,t_1,w_{\ell}^t;H^{1-s,(q,\zeta)}(\ell^2))}^p\big)
\end{align*}
Now \eqref{eq:primitive_linear_estimate} follows from the previous estimates by choosing $t_{\star}=t_{\star}(\set)>0$ such that $R_{t_{\star}}\leq (4K_2)^{-1}$.
\end{proof}

\subsection{Proof of Proposition \ref{prop:local}}
\label{sss:proof_local}

\begin{proof}[Proof of Proposition \ref{prop:local}]
As discussed before \eqref{eq:SEE}, the stochastic PEs \eqref{eq:primitive} can be recast as a stochastic evolution equation \eqref{eq:SEE} with the choice \eqref{eq:choice_X0X1}-\eqref{eq:def_ABFG}.    
The claim of Proposition \ref{prop:local} therefore follows from \cite[Theorem 4.8]{AV19_QSEE_1} (see also \cite[Remark 5.6]{AV19_QSEE_2}),
where the assumptions are satisfied due to Lemmas \ref{l:nonlinearities} and \ref{l:smr}. In particular, let us note that the conditions \eqref{eq:local_1_assumption_general1}-\eqref{eq:local_1_assumption_general2} are equivalent to the (sub)criticality conditions \cite[eq.\ (4.2)-(4.3)]{AV19_QSEE_1}, i.e.\ $\beta\leq 1-\frac{1}{2}\frac{1+\a}{p}$ where $\beta$ is as in Lemma \ref{l:nonlinearities}.
\end{proof}

\section{Blow-up criteria and regularity -- Proofs of Theorems \ref{t:regularity1}, \ref{t:serrin} and \ref{t:regularity2}}
The aim of this section is to prove Theorems \ref{t:regularity1}, \ref{t:serrin} and \ref{t:regularity2}. The proofs rely on the blow-up criteria and regularization phenomena proven in \cite[Sections 4 and 6]{AV19_QSEE_2} for stochastic evolution equations of the form \eqref{eq:SEE}. The results of \cite{AV19_QSEE_2} have been already applied to stochastic Navier-Stokes equations in \cite[Subsections 2.3 and 2.4]{AV21_NS} and reaction-diffusion equations in \cite[Subsections 2.2 and 2.3]{AV22_localRD} to obtain similar results for solutions of the corresponding problems. As in \cite{AV21_NS,AV22_localRD},  
 criticality plays a fundamental role and as discussed in Remark \ref{r:regularity_necessary_anisotropic} the anisotropic setting in the spatial variable is fundamental to prove  Theorem \ref{t:regularity1} for the well-known case $p=q=2$ and $\s=1$ analyzed in \cite{Primitive2,Primitive1,BS21,Debussche_2012}.  
 
 \subsection{Proof of Theorem \ref{t:regularity1}}
\label{ss:proof_regularity1}

\begin{proof}[Proof of Theorem \ref{t:regularity1}]
Let $(v,\tau)$ be $(p,\s,\a_{\crit},q)$-solution to \eqref{eq:primitive}. 
 It is convenient to divide the proof into two cases: 
\begin{enumerate}[{\rm(a)}]
\item\label{it:reg_p_equal_2} $q=p=2$ and $\s=1$.
\item\label{it:reg_p_grater_than_2} $\displaystyle{\s\in \Big(0,\frac{1}{2}\Big)}$,  $\displaystyle{ q\in \Big(\frac{2}{2-\s},\frac{2}{1-\s}\Big)} $ and $\displaystyle{
 \frac{1}{p}+\frac{1}{q}+\frac{\delta}{2}\leq 1}$.
\end{enumerate}
Note that the cases \eqref{it:reg_p_equal_2} and \eqref{it:reg_p_grater_than_2} cover the conditions in 
\eqref{eq:assumptions_critical_setting}.
Due to Lemmas \ref{l:nonlinearities}, \ref{l:smr} and the results in Appendix \ref{app:anisotropic} (see in particular Lemma \ref{l:anisotropic_B}\eqref{it:anisotropic_B4} and Corollary \ref{cor:sob_emb_besov}), the proof of Theorem \ref{t:regularity1} in the case \eqref{it:reg_p_grater_than_2} is very similar to the proof of  Part (B) of the proof of \cite[Proposition 3.1]{AV22_localRD}, therefore we only give some details for the case \eqref{it:reg_p_equal_2}. 
More precisely, we prove the following claim:

There exist $\s_{\star}\in (0,\frac{1}{2})$, $q_{\star}\in (\frac{2}{2-\s_{\star}},\frac{2}{1-\s_{\star}})$ and $p_{\star}>2$ independent of $(v_0,v)$ such that 
\begin{equation}
 \label{eq:claim_regularization_integrability_2}
 \frac{1}{p_{\star}}+\frac{1}{q_{\star}}+\frac{\delta_{\star}}{2}< 1\quad \text{and}\quad
v\in \bigcap_{\vartheta\in [0,1/2)} H^{\vartheta,p_{\star}}_{{\rm loc}} (0,\tau;H^{2-\s_{\star}-2\vartheta,(q_{\star},2)}) \ \text{ a.s.}
\end{equation}
Once \eqref{eq:claim_regularization_integrability_2} is proved, then to obtain the claim of Theorem \ref{t:regularity1} in the case \eqref{it:reg_p_equal_2}, it is enough to repeat the argument of Part (B) in the proof of \cite[Proposition 3.1]{AV22_localRD} or equivalently applying \cite[Lemma 6.10]{AV19_QSEE_2} and the case \eqref{it:reg_p_grater_than_2} of Theorem \ref{t:regularity1} (cf.\ the proof of \cite[Proposition 7.2]{AV22_localRD}).

Hence it remains to prove \eqref{eq:claim_regularization_integrability_2}. 
For future convenience, let us recall that the $(2,1,0,2)$-solution $(v,\tau)$ to \eqref{eq:primitive} satisfies:
\begin{equation}
\label{eq:claim_regularization_integrability_2_reg0}
v\in L^2_{\loc}([0,\tau);H^2)\cap C([0,\tau);H^1)\  \text{ a.s.\ }
\end{equation} 
To prove \eqref{eq:claim_regularization_integrability_2}, we adapt the arguments in the proof of \cite[Proposition 6.8]{AV19_QSEE_1} to the current situation. Unfortunately, the latter result is not directly applicable as if $\s_{\star}>0$, then $q_{\star}>\frac{2}{2-\s_{\star}}>2$. Hence it is \emph{not} possible to modify the spatial smoothness (i.e.\ from $2$ in \eqref{eq:claim_regularization_integrability_2_reg0} to $2-\s_{\star}$ in \eqref{eq:claim_regularization_integrability_2}) while keeping the integrability fixed (in contrast with the case of Navier-Stokes equations, see Part (C) in the proof of \cite[Theorem 4.1]{AV21_NS}).

To begin, let us note that Assumption \ref{ass:primitive}$(2,1,2)$ holds as we are in the case \eqref{it:reg_p_equal_2}. Therefore, Assumption \ref{ass:primitive}$(p,\s,q)$ also holds for all $\s\in (0,\frac{1}{2})$ and $q,p\in (2,\infty)$. Fix $\s_{\star}\in (0,\frac{1}{2})$ and $q_{\star}\in (\frac{2}{2-\s_{\star}},\frac{2}{1-\s_{\star}})$ for which there exists $p_{\star}\in (2,4)$ satisfying \eqref{eq:claim_regularization_integrability_2} (e.g.\ by choosing $\s_{\star}=\frac{3}{8}$ and $q_{\star}=\frac{8}{3}$).

Let us begin by noticing that Corollary \ref{cor:sob_emb_besov} implies $H^1\embed B^{2/q_{\star}}_{(q_{\star},2),p_{\star}}$ as $q_{\star},p_{\star}>2$. Thus, Theorem \ref{t:local} ensures the existence of a $(p_{\star},\s_{\star},\a_{\star},q_{\star})$-solution  $(v_{\star},\tau_{\star})$ to \eqref{eq:primitive} where $\a_{\star}\stackrel{{\rm def}}{=}p_{\star}(1-\frac{1}{q_{\star}}-\frac{\delta_{\star}}{2})-1>0$ and $v_{\star}$ satisfies
\begin{equation}
\label{eq:claim_regularization_integrability_2_reg0_star}
v_{\star}\in \bigcap_{\vartheta\in [0,1/2)} H^{\vartheta,p_{\star}}([0,\tau_{\star}),w_{\a_{\star}};H^{2-\s_{\star}-2\vartheta,(q_{\star},2)})\text{ a.s. }
\end{equation}
As in \cite[Proposition 6.8]{AV19_QSEE_1}, now the idea is to show 
\begin{equation}
\label{eq:tau_star_equal_tau_proof_regularity}
\tau_{\star}= \tau\  \text{ a.s.}  \ \ \ \text{and} \ \ \  v_{\star}=v \ \text{ a.e.\ on }[0,\tau_{\star})\times \O.
\end{equation}
Indeed, if \eqref{eq:tau_star_equal_tau_proof_regularity}, then \eqref{eq:claim_regularization_integrability_2} follows from \eqref{eq:claim_regularization_integrability_2_reg0_star}. We now split the proof of \eqref{eq:tau_star_equal_tau_proof_regularity} into three steps.

\emph{Step 1: Let $(v_{\star},\tau_{\star})$ be $(p_{\star},\s_{\star},\a_{\star},q_{\star})$-solution  $(v_{\star},\tau_{\star})$ to \eqref{eq:primitive} as above. Then }
\begin{align}
\label{eq:v_star_regularity_1}
v_{\star}
&\in L^2_{\loc}([0,\tau_{\star});H^1),\\
\label{eq:v_star_regularity_2}
(v_{\star}\cdot\nabla_{x,y})v_{\star},\  w(v_{\star})\partial_z v_{\star}
&\in L^2_{\loc}([0,\tau_{\star});L^2).
\end{align} 

Proof of \eqref{eq:v_star_regularity_1}: Note that $L^{p_{\star}}(0,t;w_{\a_{\star}})\embed L^2(0,t)$ for all $t<\infty$ as  $\frac{1}{2}>\frac{1+\a_{\star}}{p_{\star}}$ due to $q_{\star}<\frac{2}{1-\s_{\star}}$ and the H\"older inequality. Hence, as $\s_{\star}<1$, \eqref{eq:v_star_regularity_1} follows from \eqref{eq:claim_regularization_integrability_2_reg0_star}.

Proof of \eqref{eq:v_star_regularity_2}: The estimates in Step 1 of Lemma \ref{l:nonlinearities} and Lemma \ref{l:iterated_H} imply 
\begin{align*}
\|( \phi\cdot\nabla_{x,y})\phi\|_{L^2} +\| w(\phi)\partial_z \phi\|_{L^2}
\lesssim \|\phi\|_{H^{1,(4,2)}}^2\   \text{ for all }\phi\in H^{1,(4,2)}.
\end{align*}
Hence to prove \eqref{eq:v_star_regularity_2} it is enough to show that \eqref{eq:claim_regularization_integrability_2_reg0_star} implies
\begin{equation}
\label{eq:v_star_L4}
v_{\star}\in L^4_{\loc}([0,\tau_{\star});H^{1,(4,2)})\text{ a.s. }
\end{equation}
To this end, note that for all $t<\infty$ and $\vartheta_{\star}\stackrel{{\rm def}}{=}\frac{3}{2}-\frac{2}{q_{\star}}+\delta_{\star}<\frac{1}{2}$, 
\begin{align*}
H^{\vartheta_{\star},p_{\star}}(0,t,w_{\a_{\star}};H^{2-\vartheta_{\star},(q_{\star},2)})
\stackrel{(i)}{\embed }
H^{\vartheta_{\star},p_{\star}}(0,t,w_{\a_{\star}};H^{1,(4,2)})
\stackrel{(ii)}{\embed }
L^4(0,t;H^{1,(4,2)}),
\end{align*}
where $(i)$ and $(ii)$ follow from the above choice of the parameters as well as from Lemma \ref{l:anisotropic_B}\eqref{it:anisotropic_B4} and \cite[Proposition 2.7]{AV19_QSEE_1}, respectively.
Therefore \eqref{eq:v_star_L4} follows from \eqref{eq:claim_regularization_integrability_2_reg0_star} as $\vartheta_{\star}<\frac{1}{2}$.

\emph{Step 2: 
$
v_{\star}\in C([0,\tau_{\star});H^1)\cap L^2_{\loc}([0,\tau_{\star});H^2)
$, and moreover 
\begin{equation}
\label{eq:tau_star_equal_to_v}
\tau_{\star}\leq \tau\  \text{ a.s.}  \ \ \ \text{and} \ \ \  v_{\star}=v \ \text{ a.e.\ on }[0,\tau')\times \O.
\end{equation}
}
By Step 1 and Lemma \ref{l:nonlinearities}, 
\begin{align*}
-(v_{\star}\cdot\nabla_{x,y})v_{\star} - w(v_{\star})\partial_z v_{\star}
+\ftg(\cdot,v_{\star})+f(\cdot,v_{\star},\nabla v_{\star}) &\in L^2_{\loc}([0,\tau_{\star});L^2),\\
(g_n(\cdot,v_{\star}))_{n\geq 1} &\in L^2_{\loc}([0,\tau_{\star});H^{1}(\ell^2)).
\end{align*}
Due to \eqref{eq:claim_regularization_integrability_2_reg0_star}, by Lemma \ref{l:smr} and repeating the argument in Step 1 of \cite[Proposition 6.8]{AV19_QSEE_2} we obtain $v_{\star}\in C([0,\tau_{\star});H^1)\cap L^2_{\loc}([0,\tau_{\star});H^2)$. Therefore, due to Definition \ref{def:solution}, $(v_{\star},\tau_{\star})$ is a \emph{local} $(2,1,0,2)$-solution to \eqref{eq:primitive}. Now \eqref{eq:tau_star_equal_to_v} follows from the maximality of $(2,1,0,2)$-solution to \eqref{eq:primitive} (cf.\ Definition \ref{def:solution}\eqref{it:def_sol2}).

\emph{Step 3: \eqref{eq:tau_star_equal_tau_proof_regularity} holds.} 
Due to \eqref{eq:tau_star_equal_to_v}, it remains to prove $\P(\tau_{\star}<\tau)=0$. As in \cite[Step 2 of Proposition 6.8]{AV19_QSEE_2}, we employ a blow-up criterion for $(v_{\star},\tau_{\star})$. More precisely, by Lemma \ref{l:nonlinearities} and \cite[Theorem 4.11]{AV19_QSEE_2}, for all $T\in (0,\infty)$,
\begin{equation}
\label{eq:blow_up_star}
\P\big( \tau_{\star}<T,\, \|v_{\star}\|_{L^{p_{\star}}(0,\tau_{\star};H^{\mu_{\star},(q_{\star},2)})}<\infty\big)=0 \ \  \text{ where }\ \ \mu_{\star}\stackrel{{\rm def}}{=}
\frac{2}{q_{\star}}+\frac{2}{p_{\star}},
\end{equation}
and we used that $[H^{-\s_{\star},(q_{\star},2)},H^{2-\s_{\star},(q_{\star},2)}]_{1-\frac{\a_{\star}}{p}}= H^{\mu_{\star},(q_{\star},2)}$ by Lemma \ref{l:anisotropic_B}\eqref{it:anisotropic_B2} and $2-\s_{\star}-2\frac{\a_{\star}}{p_{\star}}=\mu_{\star}$.

Note that, by Step 2 and \eqref{eq:claim_regularization_integrability_2_reg0}, a.s.\ on $\{\tau_{\star}<\tau\}$,
\begin{align}
\label{eq:v_vstar_regularity_bootstrap}
v_{\star} 
= v 
&\in L^2(0,\tau_{\star};H^2)\cap L^{\infty}(0,\tau_{\star};H^1)  & &\\ 
\nonumber
& \embed L^{p_{\star}}(0,\tau_{\star};H^{1+2/p_{\star}})&&\text{(Interpolation)}\\
\nonumber
& \embed L^{p_{\star}}(0,\tau_{\star};H^{\mu_{\star},(q_{\star},2)})&&\text{(Sobolev emb.\ -- Lemma \ref{l:anisotropic_B}\eqref{it:anisotropic_B4})}. 
\end{align}
Hence
\begin{align*}
\P(\tau_{\star}<\tau)
=\ &\lim_{T\uparrow \infty}
\P(\tau_{\star}<T,\, \tau_{\star}<\tau)\\
\stackrel{\eqref{eq:v_vstar_regularity_bootstrap}}{=}&\lim_{T\uparrow \infty} 
\P\big(\tau_{\star}<T,\, \tau_{\star}<\tau, \, \|v_{\star}\|_{L^{p_{\star}}(0,\tau_{\star};H^{\mu_{\star},(q_{\star},2)}}<\infty\big)\\
\leq\  & \limsup_{T\uparrow \infty}  \P\big(\tau_{\star}<T, \, \|v_{\star}\|_{L^{p_{\star}}(0,\tau_{\star};H^{\mu_{\star},(q_{\star},2)}}<\infty\big)\stackrel{\eqref{eq:blow_up_star}}{=}0.
\end{align*}
Hence, Step 2 and the above imply that \eqref{eq:tau_star_equal_tau_proof_regularity} holds.
\end{proof}

\begin{remark}[Necessity of $\x$-anisotropy for instantaneous regularization]
\label{r:regularity_necessary_anisotropic}
The $\x$-anisotropy of function spaces used in Theorem \ref{t:local} plays a central role in the first regularity improvement   \eqref{eq:claim_regularization_integrability_2_reg0_star} of the $(2,1,0,2)$-solution $(v,\tau)$ to \eqref{eq:primitive}.
The latter is visible in the last the embedding of 
\eqref{eq:v_vstar_regularity_bootstrap}, which is \emph{not} true if one replaces $H^{\mu_{\star},(q_{\star},2)}$ by $H^{\mu_{\star},(q_{\star},\zeta)}$ for any $\zeta>2$. In particular, the isotropic setting $\zeta=q_{\star}>2$ does not work for improving the regularity of $(2,1,0,2)$-solutions, which have already been studied but for which Theorem \ref{t:regularity1} seems new, see e.g.\ \cite{Primitive2,Primitive1,BS21,Debussche_2012}. 
\end{remark}

\subsection{Proof of Theorems  \ref{t:serrin} and  \ref{t:regularity2}}
\label{ss:proof_serrin_regularity}
We begin with Theorem  \ref{t:serrin}. As the proof is similar to the one of 
\cite[Theorem 2.10]{AV22_localRD} we only provide a sketch.

\begin{proof}[Proof of Theorem  \ref{t:serrin} -- Sketch]
As commented below the statement of Theorem \ref{t:serrin}, \eqref{it:serrin2} is a consequence of \eqref{it:serrin1}. Hence we only prove the latter. 
As recalled in \eqref{eq:blow_up_star}, by \cite[Theorem 4.11]{AV19_QSEE_2} any $(p,\s,\a_{\crit},q)$-solution $(v,\tau)$ to \eqref{eq:primitive} with $\a_{\crit}=p(1-\frac{1}{q}-\frac{\delta}{2})-1$ satisfies, for all $T\in (0,\infty)$,
\begin{equation}
\label{eq:blow_up_normal}
\P\big( \tau<T,\, \|v\|_{L^{p}(0,\tau;H^{\mu,(q,2)})}<\infty\big)=0 \ \  \text{ where }\ \ \mu =
\frac{2}{q}+\frac{2}{p}.
\end{equation}
Now the extrapolation of \eqref{eq:blow_up_normal} to the $(p_0,\s_0,q_{0})$-setting follows by Theorem \ref{t:serrin} as in the proof of
\cite[Theorem 2.10]{AV22_localRD} (or equivalently \cite[Lemma 6.10]{AV19_QSEE_2}).
\end{proof}

It remains to prove Theorem \ref{t:regularity2}. The proof follows as the one of \cite[Theorem 2.7]{AV21_NS} and therefore again we only provide a sketch. Parallel to \cite[Lemma 4.3]{AV21_NS}, we extend Lemma \ref{l:nonlinearities} to the case of high-order smoothness $s>0$.

\begin{lemma}
\label{l:nonlinearities_high_order}
Let the conditions \eqref{it:reg_f_highreg1}-\eqref{it:reg_f_highreg3} of Theorem \ref{t:regularity2} be satisfied. 
Let $F $ and $G$ be as in \eqref{eq:def_ABFG}. Assume that $p,q\in (2,\infty)$, $s\geq 0$ and $\a\in [0,\frac{p}{2}-1)$ satisfy 
\begin{equation}
\label{eq:condition_high_order_regularity}
2\frac{1+\a}{p}+\frac{d}{q}<1+s.
\end{equation}
Moreover,
 set 
$X_{j}=\Hs^{2+s,q}$ for $j\in \{0,1\}$ and $\Xap=(X_0,X_1)_{1-\frac{1+\a}{p},p}=\Bs^{2+s-2\frac{1+\a}{p}}_{q,p}$. Then, for all $n\geq 1$ and $v,v'\in X_1$ such that $\|v\|_{\Xap},\|v'\|_{\Xap}\leq n$,
\begin{align*}
\|F(\cdot,v)\|_{X_0}&\lesssim_n \|v\|_{\Xap},\\
\|F(\cdot,v)-F(\cdot,v')\|_{X_0}&\lesssim_n \|v-v'\|_{\Xap},\\
\|G(\cdot,v)\|_{\g(\ell^2,X_{1/2})}&\lesssim_n \|v\|_{\Xap},\\
\|G(\cdot,v)-G(\cdot,v')\|_{\g(\ell^2,X_{1/2})}&\lesssim_n \|v-v'\|_{\Xap}.
\end{align*}
\end{lemma}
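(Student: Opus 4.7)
The proof parallels Lemma \ref{l:nonlinearities} but targets the higher-smoothness pair $X_0 = H^{s,q}$, $X_1 = H^{2+s,q}$ (so $X_{1/2}=H^{1+s,q}$), and rests on two ingredients: (i) the embedding $\Xap \hookrightarrow C^1(\T^3;\R^2)$ that follows from the subcriticality assumption \eqref{eq:condition_high_order_regularity}, and (ii) tame product and Nemytskii composition estimates in $H^{s,q}$. For (i), note that $\Xap = B^{2+s-2(1+\a)/p}_{q,p}$ has Sobolev index $2+s-2\tfrac{1+\a}{p}-\tfrac{3}{q}>1$ by \eqref{eq:condition_high_order_regularity}, so Besov--H\"older embeddings give $\Xap \hookrightarrow C^1$, whence $\|v\|_{L^\infty}+\|\nabla v\|_{L^\infty}\lesssim_n 1$ for $\|v\|_{\Xap}\le n$; the same Sobolev index comparison yields $\Xap \hookrightarrow H^{1+s,q}$, so $\|v\|_{H^{1+s,q}}\lesssim \|v\|_{\Xap}$.

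With this $L^\infty$-control at hand, each summand of $F(\cdot,v)$ is handled by the Kato--Ponce (Moser) tame estimate
\begin{equation*}
\|\phi\psi\|_{H^{s,q}}\lesssim \|\phi\|_{L^\infty}\|\psi\|_{H^{s,q}}+\|\phi\|_{H^{s,q}}\|\psi\|_{L^\infty}.
\end{equation*}
Applied to $(v\cdot\ph)v$, this gives $\lesssim \|v\|_{L^\infty}\|\nabla v\|_{H^{s,q}}+\|v\|_{H^{s,q}}\|\nabla v\|_{L^\infty}\lesssim_n \|v\|_{\Xap}$; the same argument handles $w(v)\partial_z v$, since $w(v)=-\int_0^z\ph\cdot v\,\dd z'$ inherits the $L^\infty\cap H^{s,q}$-bounds of $\nabla v$. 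The Stratonovich corrector $\fts(\cdot)v$ is a linear combination of $\nabla\cdot(\qq[(\sigma_n\cdot\nabla)v]\otimes\sigma_n)$ and $b_{0,n}\,\qq[(\sigma_n\cdot\nabla)v]$; here Lemma \ref{l:pointwise} (pointwise multiplication by $C^\g$-coefficients acting on $H^{s,q}$) combined with the $\ell^2$-bound of Assumption \ref{ass:primitive}\eqref{it:ass_primitive3} and the $C^{\g-1}$-bound of \eqref{it:reg_f_highreg3} reduces matters to $\|v\|_{H^{1+s,q}}\lesssim\|v\|_{\Xap}$. The Nemytskii terms $f(\cdot,v,\nabla v)$, $\ftg(\cdot,v)$, $(g_n(\cdot,v))_{n\ge 1}$ are disposed of by standard composition estimates in Bessel-potential spaces (cf.\ \cite[Section 5.5]{Tri95}), applicable because \eqref{it:reg_f_highreg1}--\eqref{it:reg_f_highreg2} supply $C^{\lceil \g\rceil}$- and $C^{\lceil \g+1\rceil}$-regularity of $f$, $g_n$ with derivatives bounded on balls, while Step 1 provides the necessary $L^\infty$-bound on $v$ and $\nabla v$. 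Boundedness of $\p$ and $\qq$ on $H^{s,q}$, inherited from Subsection \ref{ss:projections} as Fourier multipliers, absorbs all outer projections.

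The Lipschitz statements follow by the fundamental theorem of calculus, writing, e.g.,
\begin{equation*}
f(\cdot,v,\nabla v) - f(\cdot,v',\nabla v') = \int_0^1 \bigl[\nabla_\xi f(\vartheta v+(1-\vartheta)v')\cdot(v-v') + \nabla_\eta f(\cdots)\cdot\nabla(v-v')\bigr]\,\dd\vartheta,
\end{equation*}
and applying the same tame/composition bounds termwise. The estimate for $G$ in $\g(\ell^2,X_{1/2}) = H^{1+s,q}(\ell^2)$ (via \eqref{eq:gamma_identification}) is the identical argument applied one derivative higher, which is precisely why \eqref{it:reg_f_highreg2} demands one extra derivative of smoothness on the $g_n$.

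\textbf{Main obstacle.} The subtlest point is controlling $\fts(\cdot)v$: it carries \emph{two} derivatives of $v$ multiplied by the merely $C^\g$-coefficient $\sigma$, with the product forced to land in $H^{s,q}$. The multiplier regularity must therefore exceed $1-s$, which is exactly the compatibility $\g>1-s$ encoded in Assumption \ref{ass:primitive}\eqref{it:ass_primitive3}; without it the high-order estimate for $\fts$ fails outright. Everything else is routine bookkeeping, and the $\x$-isotropy of $H^{s,q}$ in place of the anisotropic spaces of Lemma \ref{l:nonlinearities} actually simplifies the multiplication estimates.
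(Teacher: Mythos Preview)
Your overall strategy—exploit the Sobolev embedding $\Xap\hookrightarrow C^{1+\varepsilon}$ granted by \eqref{eq:condition_high_order_regularity}, then control each summand of $F$ by tame product/composition estimates—matches the paper's approach. Two points deserve comment.

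First, you misplace $\fts$. In \eqref{eq:def_ABFG} the linear Stratonovich corrector $\fts(\cdot)v$ sits in $A$, not in $F$; only the lower-order nonlinear piece $\ftg(\cdot,v)$ enters $F$. Your discussion of $\fts$ (including the ``main obstacle'' paragraph and the appeal to $\g>1-s$) is therefore extraneous to this lemma: $\fts$ is absorbed by the stochastic maximal regularity of Lemma~\ref{l:smr}, not by the nonlinearity bounds here. The actual term $\ftg$ is of Nemytskii-plus-multiplier type and does fall to the argument you sketch, so your proof survives, but the emphasis is wrong.

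Second, the paper singles out $w(v)\partial_z v'$ as the genuinely delicate term, and this is where your write-up is thinnest. Your claim that $w(v)=-\int_0^z\nabla_{x,y}\cdot v\,\dd z'$ ``inherits the $H^{s,q}$-bounds of $\nabla v$'' is correct but hinges on the hydrostatic divergence-free constraint $\int_{\T_z}\nabla_{x,y}\cdot v(\cdot,z)\,\dd z=0$ built into $v\in\Hs^{\cdot,q}$: without it $w(v)$ is not periodic in $z$, and the distributional identity $\partial_z w(v)=-\nabla_{x,y}\cdot v$ fails on $\T^3$. The paper makes this dependence explicit by first embedding $\Xap\hookrightarrow C^{1+s+\varepsilon}$ and then reducing, via bilinear complex interpolation, to the integer-order estimate $\|w(v)\partial_z v'\|_{H^{k,q}}\lesssim\|v\|_{C^{k+1}}\|v'\|_{C^{k+1}}$, where one can differentiate directly and invoke $\partial_z w(v)=-\nabla_{x,y}\cdot v$. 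Your Kato--Ponce route is a valid alternative, but you should flag the divergence-free ingredient rather than leave it implicit.
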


\begin{proof}
To prove the lemma is it enough to show that 
\begin{equation}
\label{eq:nonlinearity_in_high_order_spaces}
\|w(v)\partial_z v'\|_{H^{s,(q,\zeta)}}\lesssim \|v\|_{\Xap}\|v'\|_{\Xap}\ \ \text{ for all }v,v'\in\Xap.
\end{equation}
The remaining terms can be estimated as in \cite[Lemma 4.3]{AV21_NS}.

To prove \eqref{eq:nonlinearity_in_high_order_spaces} note that, by Sobolev embeddings,  \eqref{eq:condition_high_order_regularity} implies 
$$
\Xap \embed C^{1+s+\varepsilon} \ \  \text{ where }\ \ \varepsilon=2\frac{1+\a}{p}+\frac{d}{q}-1>0.
$$
Hence, by Lemma \ref{l:anisotropic_B}\eqref{it:anisotropic_B2} and complex bilinear interpolation (see e.g.\ \cite[Theorem 4.4.4]{BeLo}), to prove \eqref{eq:nonlinearity_in_high_order_spaces}, it is enough to show that for all $k\in \N_{\geq 1}$
$$
\|w(v)\partial_z v'\|_{H^{k,(q,\zeta)}}\lesssim \|v\|_{C^{k+1}}\|v'\|_{C^{k+1}},  \ \text{ if }\ \  \int_{\T_z} \nabla_{x,y}\cdot \phi (\cdot,z)\,\dd z\equiv 0 \text{ for } \phi\in \{v,v'\}.
$$
The previous readily follows from the fact that $\partial_z (w(v))=-\nabla_{x,y}\cdot v $ in $\D'(\T^3)$ as 
$\int_{\T_z} \nabla_{x,y}\cdot v (\cdot,z)\,\dd z=0$.
\end{proof}

\begin{proof}[Proof of Theorem  \ref{t:regularity2} -- Sketch]
Due to Lemmas \ref{l:smr} with $s<0$ and \ref{l:nonlinearities_high_order}, the proof of Theorem \ref{t:regularity2} is a straightforward adaptation of the one of \cite[Theorem 2.7]{AV21_NS}.
\end{proof}

\section{Global well-posedness --  Proofs of Theorems \ref{t:global} and  \ref{t:continuous_dependence}}
\label{s:global_proofs}
As usual in parabolic (S)PDEs, global well-posedness follows by combining energy estimates and blow-up criteria such as Theorem \ref{t:serrin}. The following is well suited to obtain Theorems \ref{t:global} and \ref{t:continuous_dependence}.

\begin{lemma}[Strong energy estimate]
\label{l:strong_energy_estimate}
Let the Assumptions of Theorem \ref{t:global} be satisfied. Let $(v,\tau)$ be the $(p,\a,\s,q)$-solution provided by Theorem \ref{t:local}. Let $\pz\in (2,\infty)$ be such that $ \frac{1}{\pz}+\frac{1}{q}+\frac{\delta}{2}= 1$. 
Then, for all $0<s<T<\infty$,
\begin{equation}
\label{eq:energy_estimate1}
\sup_{t\in [s,\tau\wedge T)}\|v(t)\|_{B^{2/q}_{(q,2),\pz}}^{\pz}+\int_{s}^{\tau\wedge T} \|v(t)\|^{\pz}_{H^{2-\s,(q,2)}}\,\dd t <\infty \ \ \text{ a.s. }
\end{equation}
More precisely, there exists a constant $C_0>0$ independent of $(s,v_0)$ such that, for all $\lambda>e$, $L>0$ and $ \Gamma\in \F_s $ satisfying $\tau>s$ a.s.\ on $\Gamma$,  
it holds that
\begin{align}
\label{eq:energy_estimate2_vz}
\P\Big(\Gamma\cap \Big\{
\sup_{r\in [s,\tau\wedge T)}\|v(t)\|_{B^{2/q}_{(q,2),\pz}}^{\pz}
+\int_{s}^{\tau\wedge T} \|v(t)\|^{\pz}_{H^{2-\s,(q,2)}}\,\dd t \geq \lambda\Big\}\Big)&\\
\nonumber
\leq \frac{C_0}{\log\log(\lambda)} \Big(1+\E\big[\one_{\Gamma} \|v(s)\|_{B^{2/q}_{(q,2),\pz}}^{\pz}\big]+
\E\big[\one_{\Gamma} \|v(s)\|_{H^1}^{4}\big]\Big)&.
\end{align}
\end{lemma}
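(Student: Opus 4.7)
The argument is a bootstrap from an $H^1$-energy estimate up to the critical space $B^{2/q}_{(q,2),\pz}$. Note that the relation $\frac{1}{\pz}+\frac{1}{q}+\frac{\s}{2}=1$ is exactly $\a_{\crit}(\pz,\s,q)=0$, so $B^{2/q}_{(q,2),\pz}$ is the trace space associated with the \emph{unweighted} regularity class $L^{\pz}_t(H^{2-\s,(q,2)})$. The plan combines three ingredients: (i) a vertical-derivative bound $\partial_zv\in L^2_{\loc}(H^1)$ enabled by Assumption \ref{ass:global}; (ii) a Cao--Titi--Ju style $H^1$-energy estimate built on (i); (iii) stochastic maximal $L^{\pz}$-regularity (Lemma \ref{l:smr}) to promote the $L^\infty_tH^1\cap L^2_tH^2$ control to the critical scale.

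For (i), on the stopped interval $[s,\tau\wedge T)$ I would differentiate \eqref{eq:primitive2} in $z$: by Assumption \ref{ass:global} the emerging cross-terms $\partial_za_{z,\eta}$, $\partial_za_{\eta,z}$, $\partial_z\sigma_{n,z}$ are uniformly bounded, so the equation for $\partial_zv$ is a parabolic transport SPDE with bounded lower-order coefficients, and It\^o on $\|\partial_zv\|_{L^2}^2$ yields
\[
\E\sup_{r\leq t}\|\partial_zv(r)\|_{L^2}^2+\E\int_s^t\|\partial_zv(r)\|_{H^1}^2\,dr\lesssim 1+\E\|\partial_zv(s)\|_{L^2}^2,
\]
which is the content of Lemma \ref{l:main_intermediate_estimate_vz}. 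For (ii), It\^o applied to $\|v\|_{H^1}^2$ with absorption of the stochastic corrections via parabolicity (Assumption \ref{ass:primitive}\eqref{it:ass_primitive2}), combined with the hydrostatic-nonlinearity bound
\[
|\langle (v\cdot\nabla_{x,y})v+w(v)\partial_zv,\Delta v\rangle|\lesssim \|\partial_zv\|_{H^1}^2\|v\|_{H^1}^2+\tfrac{1}{2}\|v\|_{H^2}^2,
\]
leads, after Gr\"onwall in the \emph{random} factor $\int_s^{\cdot}\|\partial_zv\|_{H^1}^2\,dr$, to an iterated exponential bound of $\|v\|_{L^\infty_tH^1\cap L^2_tH^2}$ in terms of $\|v(s)\|_{H^1}$. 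The power $4$ in \eqref{eq:energy_estimate2_vz} emerges from this iteration and corresponds to Lemmas \ref{l:main_intermediate_estimate_wt} and \ref{l:energy_estimate}.

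Step (iii) then treats \eqref{eq:primitive2} as a linear SPDE with forcing $F_v=-(v\cdot\nabla_{x,y})v-w(v)\partial_zv+\ftg(\cdot,v)+f(\cdot,v,\nabla v)$ and noise $G_v=(g_n(\cdot,v))_{n\geq 1}$, to which Lemma \ref{l:smr} with $\a=0$ applies. The subcritical estimates of Lemma \ref{l:nonlinearities}, together with the $L^\infty_tH^1\cap L^2_tH^2$ control from (ii) and anisotropic Sobolev embeddings, yield $F_v\in L^{\pz}_tH^{-\s,(q,2)}$ and $G_v\in L^{\pz}_tH^{1-\s,(q,2)}(\ell^2)$, and Lemma \ref{l:smr} then gives \eqref{eq:energy_estimate1} with continuity in $C_tB^{2/q}_{(q,2),\pz}$ by the trace embedding. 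For the tail \eqref{eq:energy_estimate2_vz}, one tracks the random constants through all three steps to obtain, on a favorable event, a bound of the form $\exp\bigl(C(1+\|v(s)\|_{H^1}^4)+C\mathcal{M}\bigr)$ with $\mathcal{M}$ a martingale exponent whose exponential is $L^1$-bounded (via BDG and the $H^1\cap H^2$ bounds); a Chebyshev inequality at the level of the iterated logarithm (one $\log$ absorbs the double-exponential deterministic part, the second absorbs the martingale via an exponential tail estimate) converts the a.s.\ finiteness into the $\frac{1}{\log\log\lambda}$ decay.

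\textbf{Main obstacle.} The central difficulty is the \emph{stochastic} Cao--Titi--Ju estimate of step (ii): the It\^o correction generated by the transport noise $(\sigma_n\cdot\nabla)v$ cancels only partially with the Stratonovich-type corrector $\fts(\cdot)v$, and only the parabolicity condition (Assumption \ref{ass:primitive}\eqref{it:ass_primitive2}) allows what remains to be absorbed into the deterministic diffusion at the $H^1$-level. Equally delicate is step (i): without Assumption \ref{ass:global} the cross-differentiation in $z$ of $(\sigma_n\cdot\nabla)v$ would produce terms of full differential order in $\partial_zv$ that cannot be absorbed, so Assumption \ref{ass:global} is precisely what keeps the $\partial_zv$-equation parabolic and closes the bootstrap.
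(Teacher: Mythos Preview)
Your step (ii) is the fatal gap. You propose to apply It\^o to $\|v\|_{H^1}^2$ and obtain an $L^\infty_tH^1\cap L^2_tH^2$ bound, but this is precisely what the rough-noise regime $\g<1$ forbids: the It\^o correction contains $\sum_n\|(\sigma_n\cdot\nabla)v\|_{H^1}^2$, and absorbing it into $\|v\|_{H^2}^2$ via parabolicity requires $(\sigma_n)_{n\geq 1}\in W^{1,\infty}(\ell^2)$ (cf.\ \eqref{eq:transport_noise_mapping}). For $\g\in(\tfrac12,1)$ this fails, and no $L^2_tH^2$ estimate is available. The paper is built exactly to avoid this obstruction; see the discussion in Subsection \ref{ss:comparison_deterministic} and Remark \ref{r:necessity_anisotropic}.

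The paper's proof never touches $H^2$. It works directly at the critical scale via a stochastic Gronwall argument: maximal $L^{\pz}$-regularity (Lemma \ref{l:smr} with $\a=0$) is applied between arbitrary stopping times $\eta\leq\xi$, and the bilinear term is split as
\[
\|w(v)\partial_zv\|_{H^{-\s,(q,2)}}^{\pz}+\|(v\cdot\nabla_{x,y})v\|_{H^{-\s,(q,2)}}^{\pz}\lesssim \|v\|_{H^{1,2r}_{x,y}(L^2_z)}^{\pz}\|v\|_{L^{2r}_{x,y}(H^1_z)}^{\pz},
\]
with $r=\frac{2q}{\s q+2}$. The first factor is controlled by the interpolation $\|v\|_{H^{1,2r}_{x,y}(L^2_z)}\lesssim\|v\|_{B^{2/q}_{(q,2),\pz}}^{1/2}\|v\|_{H^{2-\s,(q,2)}}^{1/2}$ (this is \eqref{eq:interpolation_B_H}), so after Young the $H^{2-\s}$ piece is absorbed on the left and one is left with a Gronwall inequality whose random coefficient is $\int\|v\|_{L^{2r}_{x,y}(H^1_z)}^{2\pz}$. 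The latter is bounded via the anisotropic embedding $L^\infty_tL^2\cap L^2_tH^1\hookrightarrow L^{2\pz}_t(L^{2r}_{x,y}(L^2_z))$ applied to $v$ \emph{and} $\partial_zv$ separately---this is where Lemma \ref{l:main_intermediate_estimate_vz} enters, and is all the vertical regularity one needs. Your step (i) is also oversimplified: the $\partial_zv$ balance does not close on its own (the convective term produces $\int|\wt v|^2|\nabla\wt v|^2$), and only the coupled system with $\|\wt v\|_{L^4}^4$ in Lemma \ref{l:main_intermediate_estimate_wt} yields the tail bound \eqref{eq:energy_estimate_vz}; this is why one gets $1/\log\lambda$ there and $1/\log\log\lambda$ in \eqref{eq:energy_estimate2_vz}, not an expectation estimate.
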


Note that the choice of $\pz\in (2,\infty)$ is possible as $q\in (\frac{2}{2-\s},\frac{2}{1-\s})$. 
Moreover, Theorem \ref{t:regularity1} and $s>0$ ensure that all the norms in \eqref{eq:energy_estimate2_vz} are well-defined (note that, if $p>p_0$, then $B^{2/q}_{(q,2),\pz}\not\subseteq B^{2/q}_{(q,2),p}$). 

The estimate \eqref{eq:energy_estimate1} follows from \eqref{eq:energy_estimate2_vz}. To see this, let
\begin{equation}
\label{eq:Gamma_L_smooth}
\Gamma_L=\big\{\tau>s,\,  \|v(s)\|_{B^{2/q}_{(q,2),\pz}}+ \| v(s)\|_{H^1}\leq L\big\}\in \F_s \text{ for some }L\geq 1.
\end{equation} 
Note that $\lim_{L\to \infty}\P(\{\tau>s\}\setminus \Gamma_L)=0$ by Theorem \ref{t:regularity1}. Thus, taking $\Gamma=\Gamma_L$ in \eqref{eq:energy_estimate2_vz}, \eqref{eq:energy_estimate1} follows by  
letting $\lambda\to \infty$ and afterwards $L\uparrow \infty$.

This section is organized as follows. In Subsection \ref{ss:proof_global} we prove Theorem \ref{t:global} by using \eqref{eq:energy_estimate1}. In Subsection \ref{ss:proof_main_estimate_conclusion} we prove Lemma \ref{l:strong_energy_estimate} assuming an a priori estimate $\partial_z v$ whose proof is postponed in Section \ref{s:energy_estimate_proof}. Finally, in Subsection \ref{ss:proof_continuous_dependence} we derive Theorem \ref{t:continuous_dependence} from the tail estimate of \eqref{eq:energy_estimate2_vz}.

\subsection{Proof of Theorem \ref{t:global}}
\label{ss:proof_global}

\begin{proof}[Proof of Theorem \ref{t:global}]
Let $(v,\tau)$ be the $(p,\a_{\crit},\s,q)$-solution to \eqref{eq:primitive} provided by Theorem \ref{t:local}. 
Recall that $\tau>0$ a.s. Let $\pz\in (2,\infty)$ be such that $\frac{1}{\pz}+\frac{1}{q}+\frac{\s}{2}=1$. Then, for all $0<s<T<\infty$,
\begin{align*}
\P(s<\tau<T)\stackrel{(i)}{=}\P\Big(s<\tau<T,\ \int_{s}^{\tau\wedge T} \|v(t)\|^{\pz}_{H^{2-\s,(q,2)}}\,\dd t <\infty \Big)\stackrel{(ii)}{=}0
\end{align*}
where in $(i)$ we used the energy inequality \eqref{eq:energy_estimate1} of Lemma \ref{l:energy_estimate} and in $(ii)$ the blow-up criterion of Theorem \ref{t:serrin}\eqref{it:serrin1} with $(p_0,\s_0,q_0)=(\pz,\s,q)$.

By letting $T\uparrow \infty$, we obtain $\P(s<\tau<\infty)=0$ for all $s>0$. Since $\tau>0$ a.s.\ by Theorem \ref{t:local}, sending $s\downarrow 0$ we get $\P(\tau<\infty)=0$ as desired.
\end{proof}

\subsection{Proof of Lemma \ref{l:strong_energy_estimate}}
\label{ss:proof_main_estimate_conclusion}
The proof of Lemma \ref{l:strong_energy_estimate} is based on the following intermediate estimate which will be proven in Section \ref{s:energy_estimate_proof}, see Lemma \ref{l:main_intermediate_estimate_wt} there. 

\begin{lemma}[Intermediate estimate]
\label{l:main_intermediate_estimate_vz}
Let the assumptions of Lemma \ref{l:strong_energy_estimate} be satisfied. 
Let $(v,\tau)$ be as in Lemma \ref{l:strong_energy_estimate} and fix $T\in (0,\infty)$. 
Then 
\begin{equation}
\label{eq:regularity_vz_L2}
\partial_z v \in L^2_{\loc}(0,\tau;H^1) \text{ a.s.\ }
\end{equation}
and there exists a constant $c_0>0$ such that, for all  $j\in \{0,1\}$, $\lambda>1$, $s\in (0,T]$ and $ \Gamma \in \F_s $ such that $\tau>s$ a.s.\ on $\Gamma$, it holds that 
\begin{align}
\label{eq:energy_estimate_vz}
\P\Big(\Gamma\cap \Big\{ \sup_{t\in [s,\tau\wedge T)}\|\partial_z^j v(t)\|^2_{L^2}+ \int_s^{\tau\wedge T}\|\partial_z^j v(t)\|^2_{H^1}\,\dd t \geq \lambda \Big\}\Big)&\\
\nonumber
\leq  \frac{c_0}{\log(\lambda)}\big(1+
\E\big[\one_{\Gamma}\| v(s)\|_{H^1}^{4}\big]\big)&.
\end{align}
\end{lemma}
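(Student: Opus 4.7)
The argument would proceed by passing to the equation satisfied by $u_z := \partial_z v$ and treating it as the solution of an auxiliary parabolic SPDE from which the pressure has been eliminated. This elimination is available because, in the isothermal setting of Section \ref{s:statement}, the hydrostatic balance $\partial_z p = -\theta$ with $\theta\equiv 0$ forces $\partial_z(\ph p)\equiv 0$, so the pressure disappears when one differentiates the first line of \eqref{eq:primitive2} in $z$. Using $\partial_z w(v) = -\ph\cdot v$ from \eqref{eq:def_w}, I would derive (formally) the SPDE
\begin{equation*}
\dd u_z = \bigl[\nabla\cdot(a\cdot\nabla u_z) + (\ph\cdot v)u_z -(v\cdot\ph)u_z - w(v)\partial_z u_z -(u_z\cdot\ph)v + \Rsec\bigr]\,\dd t + \sum_{n\geq 1}\bigl[(\sigma_n\cdot\nabla)u_z+\Rsec_n\bigr]\,\dd\beta^n_t,
\end{equation*}
where $\Rsec,\Rsec_n$ collect the commutators $[\partial_z, a\cdot\nabla]$, $[\partial_z, \sigma_n\cdot\nabla]$ and the $z$-derivatives of the lower-order drift/noise. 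Assumption \ref{ass:global} is used precisely here, in combination with Assumption \ref{ass:primitive}\eqref{it:ass_primitive4} (which forces $\partial_z\sigma_{n,x}=\partial_z\sigma_{n,y}=0$), to bound $\Rsec,\Rsec_n$ as subcritical perturbations in $L^\infty$ through $\partial_z\sigma_{n,z}$, $\partial_z a_{z,\eta}$ and $\partial_z a_{\eta,z}$. The easy case $j=0$ of \eqref{eq:energy_estimate_vz} reduces to the standard $L^2$-energy estimate for $v$ itself and requires none of this machinery.

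For the essential case $j=1$, I would apply It\^o's formula to $\|u_z\|_{L^2}^2$, rigorously justified by Galerkin truncation of $(\sigma_n)_{n\geq 1}$ and the stochastic maximal regularity of Lemma \ref{l:smr}. The computation rests on three algebraic facts: (i) the drift transport cancels, because the full three-dimensional velocity $u=(v,w)$ is divergence-free and hence $\int[(v\cdot\ph)+w(v)\partial_z]u_z\cdot u_z\,\dd\x = 0$; (ii) the It\^o correction generated by the noise $(\sigma_n\cdot\nabla)u_z$ is absorbed by the parabolic part by means of the structural bound in Assumption \ref{ass:primitive}\eqref{it:ass_primitive2}; (iii) the dangerous stretching term is controlled by an anisotropic Cao--Titi/Ladyzhenskaya inequality of the form
\begin{equation*}
\Bigl|\int_{\T^3}(u_z\cdot\ph)v\cdot u_z\,\dd\x\Bigr| \leq \tfrac{\ellip}{4}\|\nabla u_z\|_{L^2}^2 + C\,\|v\|_{H^1}^2\|v\|_{H^2}^2\,\|u_z\|_{L^2}^2,
\end{equation*}
obtained by interpolating $L^4_{x,y}L^2_z$ between $L^2_{x,y}L^2_z$ and $H^1_{x,y}L^2_z$ and using the one-dimensional embedding $H^1_z\embed L^\infty_z$. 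Combining these estimates on a standard stopping-time reduction yields the differential inequality
\begin{equation*}
\dd\|u_z\|_{L^2}^2 + \tfrac{\ellip}{2}\|\nabla u_z\|_{L^2}^2\,\dd t \leq C\bigl(1+\|v\|_{H^1}^2\|v\|_{H^2}^2\bigr)\|u_z\|_{L^2}^2\,\dd t + \dd M_t,
\end{equation*}
with $M_t$ a local martingale whose quadratic variation is controlled by the same right-hand side; this already delivers the qualitative claim \eqref{eq:regularity_vz_L2}.

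To convert the previous inequality into the quantitative tail bound \eqref{eq:energy_estimate_vz} with the $1/\log\lambda$ rate, I would apply an Osgood-type stochastic Gronwall lemma to $Y_t := e+\|u_z(t)\|_{L^2}^2+\int_s^t\|\nabla u_z\|_{L^2}^2\,\dd r$. Differentiating $\log Y_t$ and exploiting $\dd\langle Y\rangle_t \lesssim Y_t^2\,\dd t$ one obtains $\dd\log Y_t \leq C(1+\|v\|_{H^1}^2\|v\|_{H^2}^2)\,\dd t + \dd\widetilde M_t$ with $\widetilde M$ a martingale of bounded exponential moments, so Markov's inequality gives $\P(\sup_{t\leq \tau\wedge T}\log Y_t \geq \log\lambda)\lesssim (\log\lambda)^{-1}$ times the expectation of $\int_s^{\tau\wedge T}(1+\|v\|_{H^1}^2\|v\|_{H^2}^2)\,\dd t$. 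The latter is bounded on $\Gamma$ by $C(1+\E[\one_\Gamma\|v(s)\|_{H^1}^4])$ through the standard $H^1$-energy identity for \eqref{eq:primitive} combined with Cauchy--Schwarz ($\sup\|v\|_{H^1}^2 \cdot \int\|v\|_{H^2}^2\,\dd t \leq (\sup\|v\|_{H^1}^2)\cdot(C+\sup\|v\|_{H^1}^2)$), which accounts exactly for the quartic term $\|v(s)\|_{H^1}^4$ on the right of \eqref{eq:energy_estimate_vz}. The main obstacle I anticipate is the rigorous justification of the equation for $u_z$ when $\sigma_n$ is only $C^\gamma$-regular with $\gamma\in(\tfrac12,1)$: distributional differentiation in $z$ of the stochastic transport term is not a priori legitimate, and one has to exploit in a sharp way that only $\partial_z\sigma_{n,z}$ enters (by Assumption \ref{ass:primitive}\eqref{it:ass_primitive4}) and that this single component is bounded by Assumption \ref{ass:global}, either by mollification and passage to the limit using the a priori estimate above, or by commuting $\partial_z$ past the hydrostatic projection at the Galerkin level.
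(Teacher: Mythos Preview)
Your approach has a genuine gap at the step where you control the Gronwall weight. You write that
\[
\int_s^{\tau\wedge T}\bigl(1+\|v\|_{H^1}^2\|v\|_{H^2}^2\bigr)\,\dd t
\]
is bounded on $\Gamma$ by $C(1+\E[\one_\Gamma\|v(s)\|_{H^1}^4])$ via the ``standard $H^1$-energy identity''. But in the rough regime $\g\in(\tfrac12,1)$ of Assumption~\ref{ass:primitive}\eqref{it:ass_primitive3} no such identity is available: applying It\^o to $\|v\|_{H^1}^2$ requires $((\sigma_n\cdot\nabla)v)_{n\geq 1}\in H^1(\ell^2)$ for $v\in H^2$, which fails for $\sigma\in C^{\g}$ with $\g<1$ (this is exactly the obstruction discussed around \eqref{eq:transport_noise_mapping} in the introduction). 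Consequently there is no a priori control of $\int\|v\|_{H^2}^2\,\dd t$ to feed into your Cao--Titi stretching bound, and the argument does not close. The paper is explicit about this: see the paragraph below Lemma~\ref{l:main_intermediate_estimate_wt}, where it is noted that strong estimates for the barotropic mode $\overline{v}$ (i.e.\ $L^2_tH^2_{\x}$) are \emph{not available due to the roughness of the noise}.

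The paper circumvents the use of $\|v\|_{H^2}$ altogether. In Step~1 of the proof of Lemma~\ref{l:main_intermediate_estimate_wt} the stretching contribution in the $\partial_z v$-balance is estimated not by $\|v\|_{H^1}^2\|v\|_{H^2}^2\|\partial_z v\|_{L^2}^2$ but by $\big\||\wt{v}||\nabla\wt{v}|\big\|_{L^2}^2$, where $\wt{v}=v-\overline{v}$ is the baroclinic mode. This forces one to run, \emph{simultaneously}, the It\^o formula for $\|\wt{v}\|_{L^4}^4$ (Step~2), whose right-hand side in turn contains a small multiple of $\|\nabla\partial_z v\|_{L^2}^2$. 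Neither estimate closes by itself; the coupled combination $\tfrac{1}{2C_1}\eqref{eq:estimate_vz_proof_inequality}+\eqref{eq:estimate_wt_proof_inequality}$ does (Step~3), and the resulting Gronwall weight is $\nn(r)\eqsim 1+\|v\|_{H^1}^2$, which \emph{is} controlled by the $L^2$-energy balance of Lemma~\ref{l:energy_estimate}. This is why the statement of Lemma~\ref{l:main_intermediate_estimate_wt} carries the extra quantities $\|\wt v\|_{L^4}^4$ and $\big\||\wt v||\nabla\wt v|\big\|_{L^2}^2$, and why the $H^1\embed L^4$ embedding appears when deducing Lemma~\ref{l:main_intermediate_estimate_vz} from it.

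A secondary point: for the qualitative claim \eqref{eq:regularity_vz_L2} you propose Galerkin truncation, but the paper instead uses vertical difference quotients $\diff_r v$ (Subsection~\ref{ss:intermezzo}), exploiting the $C^1$-regularity already obtained in \eqref{eq:regularity_C1}. This avoids the delicate justification you anticipate in your last paragraph, since the discrete Leibniz rule \eqref{eq:Leibnitz_rule_discrete} together with Assumption~\ref{ass:global} places the commutator terms directly in $L^2_tH^{-1}_{\x}$.
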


Note that \eqref{eq:regularity_vz_L2} and \eqref{eq:regularity_C1} ensure that the LHS\eqref{eq:energy_estimate_vz} and RHS\eqref{eq:energy_estimate_vz} are well-defined, respectively.
In the case $j=0$ the estimate \eqref{eq:energy_estimate_vz} can be improved, see Lemma \ref{l:energy_estimate} below.

\begin{proof}[Proof of Lemma \ref{l:strong_energy_estimate}]
Let $\delta\in [0,\frac{1}{2})$ and $q\in (\frac{2}{2-\s},\frac{2}{1-\s})$ be as in Theorem \ref{t:global}. 
Below we focus on the case $\s<0$ as the case $\s=0$ follows as in the proof of \cite[Proposition 5.1]{Primitive1}.
Let
$p_0	$ be defined via the relation 
\begin{equation}
\label{eq:choose_p0_global}
\frac{1}{\pz}+\frac{1}{q}+\frac{\delta}{2}=1.
\end{equation}
As above, $\pz\in (2,\infty)$ due to $q\in (\frac{2}{2-\s},\frac{2}{1-\s})$.
Next let us fix $0<s<T<\infty$. Without loss of generality, we may assume that $\Gamma$ satisfies
$$
\E\big[\one_{\Gamma} \|v(s)\|_{B^{2/q}_{(q,2),\pz}}^{\pz}\big]+
\E\big[\one_{\Gamma} \|v(s)\|_{H^1}^{4}\big]<\infty.
$$
In particular, $\|v(s)\|_{B^{2/q}_{(q,2),\pz}}<\infty$ a.s.\ on $\Gamma$. 
To proceed further we need a localization argument. More precisely, for all $j\geq 1$ define the stopping time
\begin{equation}
\label{eq:localization_strong_estimate}
\tau_j\stackrel{{\rm def}}{=}
\left\{
\begin{aligned}
&\inf\Big\{t\in [s,\tau)\,:\, 
\big(\|v(t)\|_{B^{2/q}_{(q,2),\pz}}+ 
\|v\|_{L^{p_0}(s,t;H^{2-\s,(q,2)}}\big)\geq j
\Big\} &  &\text{on }\Gamma,\\
&s & &\text{on }\O\setminus\Gamma.
\end{aligned}
\right.
\end{equation}

Next, we conclude the proof of Lemma \ref{l:strong_energy_estimate} by using the stochastic Grownall lemma of \cite[Lemma A.1]{AV_variational}. To this end, let $\eta,\xi$ be two stopping time such that $0\leq \eta\leq \xi\leq \tau_j$ a.s.\ for some $j\geq 1$. 
By the stochastic maximal $L^{p}$-regularity estimate of Lemma \ref{l:smr} and \cite[Proposition 3.12]{AV19_QSEE_1}  (here we are also using that $\a_{\crit}=p_0(1-\frac{1}{q}-\frac{\s}{2})=0$ by \eqref{eq:choose_p0_global}) 
ensure the existence of a constant $C>0$ independent of $(\eta,\xi,j,\Gamma,v_0)$ such that 
\begin{align}
\label{eq:proof_global_smr_gronwall}
&\E\sup_{t\in [\eta,\xi]}\one_{\Gamma}\|v(t)\|_{B^{2/q}_{(q,2),\pz}}^{\pz}
+\E\int_{\eta}^{\xi}\one_{\Gamma}\|v\|_{H^{2-\s,(q,2)}}^{\pz}\,\dd t\\
\nonumber
&\leq C\Big(1+\E\big[\one_{\Gamma} \|v(s)\|_{B^{2/q}_{(q,2),\pz}}^{\pz}\big]\Big)
\\
\nonumber
&+ C \E\int_{\eta}^{\xi} \one_{\Gamma}\underbrace{\big(\|\ftg(\cdot,v)\|_{H^{-\delta,(q,2)}}^{\pz}
+ \|f(\cdot,v,\nabla v)\|_{H^{-\delta,(q,2)}}^{\pz}
+ \|g(\cdot,v)\|_{H^{1-\delta,(q,2)}(\ell^2)}^{\pz}\big)}_{I_1\stackrel{{\rm def}}{=}}\,\dd t  \\
\nonumber
&+ C \E\int_{\eta}^{\xi}\one_{\Gamma}\underbrace{
\big(\|(v\cdot\nabla_{x,y})v\|_{H^{-\delta,(q,2)}}^{\pz}
+\|w(v)\partial_z v\|_{H^{-\delta,(q,2)}}^{\pz}\big)}_{I_2\stackrel{{\rm def}}{=}}\,\dd t.
\end{align}
Arguing as in Substep 1c and Step 2 of Lemma \ref{l:nonlinearities}, by interpolation, Young's inequality and the fact that 
$\s<1$ we have, for some constant $C_1,C_2$ independent of $(\eta,\xi,j,\Gamma,v_0)$ and a.e.\ on $(0,\tau)\times \O$,
\begin{equation}
\label{eq:estimate_I1_global}
I_1\leq C_1( 1+ \|v\|_{H^{1,(q,2)}}^{\pz} )\leq \frac{1}{4C} \|v\|_{H^{2-\delta,(q,2)}}^{\pz}
+
 C_2 (1+\|v\|_{H^{-\delta,(q,2)}}^{\pz}).
\end{equation}
The estimate for $I_2$ is slightly more involved. Indeed, the estimates in Substeps 1a-1b of Lemma \ref{l:nonlinearities} yield, a.e.\ on $(0,\tau)\times \O$, 
\begin{align}
\label{eq:nonlinearity_proof_global}
I_2
 \lesssim \|v\|_{H^{1,2r}_{x,y} (L_z^2)}^{\pz}\| v\|_{L^{2r}_{x,y}(H^1_z)}^{\pz},
\end{align}
where $r\stackrel{{\rm def}}{=} \frac{2q}{\s q+2}$ (as we assumed $\s>0$). Note that $r>1$ as $q>\frac{2}{2-\s}$, cf.\ \eqref{eq:assumptions_critical_setting}.
From the definition of anisotropic Besov spaces via interpolation (see Subsection \ref{ss:notation}), we obtain
\begin{align}
\label{eq:interpolation_B_H}
&(B^{2/q}_{(q,2),\pz},H^{2-\s,(q,2)})_{1/2,1}\\
\nonumber
&\qquad \stackrel{(i)}{=}
(L^{(q,2)},H^{2-\s,(q,2)})_{\vartheta,1}\qquad \big[\text{with $\vartheta=(\tfrac{1}{q}+1-\tfrac{\s}{2})/(2-\s)\in (0,1)$}\big]\\ 
\nonumber
&\qquad \stackrel{(ii)}{\embed}
[L^{(q,2)},H^{2-\s,(q,2)}]_{\vartheta}\\
\nonumber
&\qquad \stackrel{(iii)}{= }H^{1/q+1-\s/2,(q,2)}\\
\nonumber
&\qquad \stackrel{(iv)}{\embed} H^{1/q+1-\s/2,q}_{x,y}(L^2_z)
\stackrel{(v)}{\embed} H^{1,2r}_{x,y}(L^2_z)
\end{align}
where in $(i)$ we used the reiteration theorem for real interpolation \cite[Theorem 3.5.3]{BeLo}, in $(ii)$ \cite[Theorem 4.7.1]{BeLo},  in $(iii)$ Lemma \ref{l:anisotropic_B}\eqref{it:anisotropic_B2}, and in $(iv)$ Lemma \ref{l:iterated_H}. Finally, $(v)$ follows from the Sobolev embeddings as 
$$
-\frac{1}{q}+1-\frac{\s}{2}=1-\frac{2}{2r}.
$$
The chain of embeddings \eqref{eq:interpolation_B_H} in particular yield 
$$
\|\phi\|_{H^{1,2r}_{x,y}(L^2_z)}\lesssim \|\phi\|_{B^{2/q}_{(q,2),\pz}}^{1/2}\|\phi\|_{H^{2-\s,(q,2)}}^{1/2}
$$ 
whenever the RHS is finite. Combining the above with \eqref{eq:nonlinearity_proof_global}, we obtain
\begin{equation}
\label{eq:estimate_I2_global}
I_2\leq
 C_{3} \|v\|_{B^{2/q}_{(q,2),\pz}}^{\pz}\|v \|_{L^{2r}_{x,y}(H^1_z)}^{2\pz}+ \frac{1}{4C} \|v\|_{H^{2-\s,(q,2)}}^{\pz},
\end{equation}
where $C$ is as in \eqref{eq:proof_global_smr_gronwall} and $C_3$ is independent of $(\eta,\xi,j,\Gamma,v_0)$.

Using the estimates \eqref{eq:estimate_I1_global} and \eqref{eq:estimate_I2_global} in \eqref{eq:proof_global_smr_gronwall}, for some constant $K=K(C,C_1,\dots,C_3)$,
\begin{align}
\label{eq:grownall_strong_application}
&\E\sup_{t\in [\eta,\xi]}\one_{\Gamma}\|v(t)\|_{B^{2/q}_{(q,2),\pz}}^{\pz}
+\E\int_{\eta}^{\xi}\one_{\Gamma} \|v\|_{H^{2-\s,(q,2)}}^{\pz}\,\dd t\\
\nonumber
&\quad\quad 
\leq K\Big(1+\E\big[\one_{\Gamma} \|v(s)\|_{B^{2/q}_{(q,2),\pz}}^{\pz}\big]\Big) \\
\nonumber
 &\quad\quad
 + K \E\int_{\eta}^{\xi} \one_{\Gamma}\Big[\big(1+  \| v \|_{L^{2r}_{x,y}(H^1_z)}^{2\pz}\big)
 \big(1+ \|v\|_{B^{2/q}_{(q,2),\pz}}^{\pz}\big)\Big]\,\dd t.
\end{align}

We claim that there exists a constant $K_0>0$  independent of $(\eta,\xi,j,\Gamma,v_0)$ such that, for all $\lambda>e$,
\begin{equation}
\label{eq:partialz_v_anisotropic_space}
\P\Big(\Gamma\cap \Big\{\int_s^{\tau}\| v(t) \|^{2\pz}_{L^{2r}_{x,y}(H^1_z)}\,\dd t\geq \lambda \Big\}\Big)
\leq  \frac{K_0}{\log(\lambda)} \Big(1+
\E\big[\one_{\Gamma} \|v(s)\|_{H^1}^{4}\big]\Big).
\end{equation}
Next we first show how \eqref{eq:partialz_v_anisotropic_space} implies \eqref{eq:energy_estimate2_vz}. The proof of \eqref{eq:partialz_v_anisotropic_space} is given at the end of this proof.
The stochastic Gronwall lemma of \cite[Lemma A.1]{AV_variational} applied to \eqref{eq:grownall_strong_application} (with stopping times $s\leq \eta\leq \xi\leq \tau_j$ a.s.\ for some $j\geq 1$) yield, for all $R,\lambda$ large,
\begin{align*}
&\P\Big(
\Gamma\cap \Big\{
\sup_{r\in [s,\tau_j\wedge T)}\|v(t)\|_{B^{2/q}_{(q,2),\pz}}^{\pz}
+\int_{s}^{\tau_j\wedge T} \|v\|^{\pz}_{H^{2-\s,(q,2)}}\,\dd t \geq \lambda \Big\}
\Big)\\
&\ \ \leq 8K \frac{e^{8K R} }{\lambda}   \Big(1+\E\big[\one_{\Gamma} \|v(s)\|_{B^{2/q}_{(q,2),\pz}}^{\pz}\big]\Big)
+
\P\Big(\Gamma\cap \Big\{\int_{s}^{\tau_j}\| v \|_{L^{2r}_{x,y}(H^1_z)}^{2\pz}\,\dd t\geq \frac{R}{2K} \Big\}\Big)\\
&\ \ \lesssim_{K_0,K}  
\Big(\frac{e^{8K R} }{\lambda}   
+ \frac{1}{\log R}\Big)\Big(1+\E\big[\one_{\Gamma} \|v(s)\|_{B^{2/q}_{(q,2),\pz}}^{\pz}\big]+
\E\big[\one_{\Gamma} \|v(s)\|_{H^1}^{4}\big]\Big).
\end{align*}
where in the last step we used \eqref{eq:partialz_v_anisotropic_space} and $\tau_j\leq \tau$. Since the constants on the RHS of the above inequality are independent of $j\geq 1$, we may let $j\uparrow \infty$ and obtain the above estimate with $\tau_j$ replaced by $\tau$. Hence, choosing $R=\frac{1}{8K}\log(\frac{\lambda}{\log(\lambda)})$ for $\lambda$ large, one obtains \eqref{eq:energy_estimate2_vz}.

To conclude the proof it remains to show \eqref{eq:partialz_v_anisotropic_space}. To this end, we employ Lemma \ref{l:main_intermediate_estimate_vz}. To begin, note that, for all $t\in (0,\infty)$,
\begin{align}
\label{eq:embedding_global_proof_anisotropic}
L^{\infty}(0,t;L^2)\cap L^2(0,t;H^1)
&\stackrel{(i)}{\embed} L^{2\pz}(0,t;H^{1/\pz})\\
\nonumber
&\stackrel{(ii)}{\embed} L^{2\pz}(0,t;L^{(2r,2)})=L^{2\pz}(0,t;L^{2r}_{x,y}(L^2_z))
\end{align}
where in $(i)$ follows from the H\"older inequality, and $(ii)$ from Lemma \ref{l:anisotropic_B}\eqref{it:anisotropic_B4} and
$$
\frac{1}{\pz}-1\stackrel{\eqref{eq:choose_p0_global}}{=}-\frac{\s}{2}-\frac{1}{q}=-\frac{1}{r}, \quad \text{ as }\quad 
r=\frac{2q}{\s q+2}.
$$
Hence \eqref{eq:partialz_v_anisotropic_space} follows from the embedding \eqref{eq:embedding_global_proof_anisotropic} and Lemma \ref{l:main_intermediate_estimate_vz}. The embedding constant in \eqref{eq:embedding_global_proof_anisotropic} does not depend on $t>0$ as $(i)$ follows from the H\"older inequality. Hence, as commented below \eqref{eq:partialz_v_anisotropic_space}, this completes the proof of 
 Lemma \ref{l:strong_energy_estimate}.
\end{proof}

\begin{remark}[Necessity of the  $\x$-anisotropy for global well-posedness]
\label{r:necessity_anisotropic}
Similar to Remark \ref{r:regularity_necessary_anisotropic}, the central role of the $\x$-isotropic function spaces used in Theorem \ref{t:local} (and therefore in Lemma  \ref{l:strong_energy_estimate}) 
is visible from the last two embeddings of \eqref{eq:embedding_global_proof_anisotropic}.
Indeed, the embedding $
L^{\infty}(0,t;L^2)\cap L^2(0,t;H^1)\embed L^{2\pz}(0,t;L^{2r}_{x,y}(L^{\zeta}_z))$ holds if and only if $\zeta\leq 2$.
\end{remark}

\subsection{Proof of Theorem \ref{t:continuous_dependence}}
\label{ss:proof_continuous_dependence}
To prove Theorem \ref{t:continuous_dependence}, as in \cite[Section 6]{AV23_reactionII}, we combine the energy estimate \eqref{eq:energy_estimate2_vz} at time $s>0$ and the local continuity w.r.t.\ the initial data of Proposition \ref{prop:continuity}.

The following is a version of \cite[Lemma 6.7]{AV23_reactionII} adapted to our situation.

\begin{lemma}[Stability estimate]
\label{l:estimate_difference}
Let the assumptions of Theorem \ref{t:continuous_dependence} be satisfied and fix $0<s<T<\infty$. 
For $v_0,v_0'\in L^p(\O;\Bs^{2/q}_{(q,2),p})$, let $v$ and $v'$ be the global $(p,\a_{\crit},q,\s)$-solution provided by Theorem \ref{t:global} with initial data $v_0$ and $v_0'$, respectively. 
Then there exist a constant $C_s$ and a map $\psi_s:\R_+\to \R_+$ satisfying $\displaystyle{\lim_{L\to \infty}\psi_s (L)=0}$, both independent of $v_0,v_0'$ such that, for all $\Gamma\in \F_s$ for which $\tau>s$ a.s.\ on $\Gamma$ and $\ell,L\geq 1$,
\begin{align*}
&\P\Big(\Gamma \cap \Big\{\sup_{t\in [s,T]}\|v(t)-v'(t)\|_{B^{2/q}_{(q,2),p_0}}^{p_0}\geq \ell  \Big\}\Big) \\
&\leq C_s\frac{e^{C_s L}}{\ell}\E\big[\one_{\Gamma}\|v(s)-v'(s)\|_{B^{2/q}_{(q,2),p_0}}^{p_0}\big]
+\psi_s (L)\big[1+ \norm_{v,v'}(s)\big],
\end{align*}
where $p_0\in (2,\infty)$ satisfies $\frac{1}{p_0}+\frac{1}{q}+\frac{\delta}{2}=1$ and 
\begin{equation}
\label{eq:def_N_mathcal_proof_continuity}
\norm_{v,v'}(s) \stackrel{{\rm def}}{=}\max_{\phi\in \{v,v'\}}
\E\Big[\one_{\Gamma}\big( \|\phi(s)\|_{B^{2/q}_{(q,2),p_0}}^{p_0}+\|\phi(s)\|_{H^1}^{4}\big)\Big].
\end{equation}
\end{lemma}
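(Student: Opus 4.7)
The plan is to combine the strong energy estimate of Lemma \ref{l:strong_energy_estimate} with a stochastic Gr\"onwall argument applied to $u\stackrel{{\rm def}}{=}v-v'$. The energy estimate will control the event on which either solution develops a large norm (with probability decaying in $L$); on the complementary event $u$ solves a linear SPDE whose coefficients are bounded in terms of $L$, and stochastic maximal $L^{p_0}$-regularity (Lemma \ref{l:smr}) will propagate the bound on $\E[\one_{\Gamma}\|u(s)\|^{p_0}_{B^{2/q}_{(q,2),p_0}}]$ to the supremum over $[s,T]$ with a factor of order $e^{C_sL}$.

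To make this precise, I would introduce the stopping time
$$
\mu_L\stackrel{{\rm def}}{=}\inf\Big\{t\in[s,T]\,:\,\max_{\phi\in\{v,v'\}}\Big[\sup_{r\in[s,t]}\|\phi(r)\|^{p_0}_{B^{2/q}_{(q,2),p_0}}+\int_s^t\|\phi(r)\|^{p_0}_{H^{2-\s,(q,2)}}\,\dd r\Big]\geq L\Big\},
$$
with the convention $\inf\emptyset\stackrel{{\rm def}}{=}T$ on $\Gamma$ and $\mu_L\stackrel{{\rm def}}{=}s$ on $\O\setminus\Gamma$, which is well-defined thanks to the instantaneous regularization of Theorem \ref{t:regularity1}. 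Lemma \ref{l:strong_energy_estimate} applied to both $v$ and $v'$ then yields
$$
\P(\Gamma\cap\{\mu_L<T\})\leq \psi_s(L)\,[1+\norm_{v,v'}(s)]
$$
with $\psi_s(L)\downarrow 0$ as $L\to\infty$ (in fact $\psi_s(L)\lesssim(\log\log L)^{-1}$), which accounts for the second term in the claim.

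On the stochastic interval $[s,\mu_L]$, the bilinear primitive nonlinearity splits as
$$
(v\cdot\nabla_{x,y})v-(v'\cdot\nabla_{x,y})v'=(u\cdot\nabla_{x,y})v+(v'\cdot\nabla_{x,y})u,
$$
and likewise $w(v)\partial_z v-w(v')\partial_z v'=w(u)\partial_z v+w(v')\partial_z u$, while $\ftg$, $f$, and $g_n$ produce Lipschitz remainders through Assumption \ref{ass:primitive}\eqref{it:ass_primitive8}. Inserting these decompositions into the stochastic maximal $L^{p_0}$-regularity estimate of Lemma \ref{l:smr} (with nontrivial initial datum $u(s)$ via \cite[Proposition 3.10]{AV19_QSEE_1}), and repeating the bilinear bookkeeping of Substeps 1a--1b of Lemma \ref{l:nonlinearities} with the two factors now asymmetrically distributed between $u$ and $\{v,v'\}$, one arrives at an inequality of the form
\begin{align*}
&\E\sup_{t\in[\eta,\xi]}\one_{\Gamma}\|u(t)\|^{p_0}_{B^{2/q}_{(q,2),p_0}}+\E\int_{\eta}^{\xi}\one_{\Gamma}\|u\|^{p_0}_{H^{2-\s,(q,2)}}\,\dd t\\
&\qquad\leq K\,\E[\one_{\Gamma}\|u(s)\|^{p_0}_{B^{2/q}_{(q,2),p_0}}]+K\,\E\int_{\eta}^{\xi}\one_{\Gamma}\Phi(t)\,\|u(t)\|^{p_0}_{B^{2/q}_{(q,2),p_0}}\,\dd t
\end{align*}
for stopping times $s\leq\eta\leq\xi\leq\mu_L$, where $\Phi$ is a nonnegative process bounded, via the interpolation embedding \eqref{eq:interpolation_B_H}, by $\|v\|^{p_0}_{H^{2-\s,(q,2)}}+\|v'\|^{p_0}_{H^{2-\s,(q,2)}}$ times a supremum already monitored by $\mu_L$. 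By construction $\int_s^{\mu_L}\Phi(t)\,\dd t$ is dominated by a deterministic polynomial in $L$, and the stochastic Gr\"onwall lemma \cite[Lemma A.1]{AV_variational} yields
$$
\E\sup_{t\in[s,\mu_L]}\one_{\Gamma}\|u(t)\|^{p_0}_{B^{2/q}_{(q,2),p_0}}\leq C_s\, e^{C_sL}\,\E[\one_{\Gamma}\|u(s)\|^{p_0}_{B^{2/q}_{(q,2),p_0}}],
$$
after absorbing the polynomial into the exponential (and correspondingly adjusting $\psi_s$). Combining the two ingredients via Chebyshev's inequality on the decomposition $\Gamma=(\Gamma\cap\{\mu_L=T\})\cup(\Gamma\cap\{\mu_L<T\})$ delivers the claimed estimate. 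The hard part will be the bilinear bookkeeping for the difference equation: one must verify that the cross terms $(u\cdot\nabla_{x,y})v$, $w(v')\partial_z u$, and their symmetric counterparts admit $H^{-\s,(q,2)}$-estimates of exactly the same structural form as in Lemma \ref{l:nonlinearities}, with all factors carrying $v$ or $v'$ being controllable by quantities monitored by $\mu_L$, so that the Gr\"onwall constant depends on $(v,v')$ only through the deterministic bound $L$ and not on the initial data themselves.
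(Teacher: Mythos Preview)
Your proposal is correct and follows essentially the same route as the paper. Both arguments derive a Gr\"onwall-type inequality for $V=v-v'$ via stochastic maximal $L^{p_0}$-regularity (Lemma \ref{l:smr}) combined with the bilinear difference estimate of Lemma \ref{l:nonlinearities}, use the interpolation \eqref{eq:interpolation_B_H} to express the Gr\"onwall coefficient as a product $\|\phi\|_{B^{2/q}_{(q,2),p_0}}^{p_0}\|\phi\|_{H^{2-\s,(q,2)}}^{p_0}$ for $\phi\in\{v,v'\}$, and then control the tail of the latter via Lemma \ref{l:strong_energy_estimate}.

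The only organisational difference is that you front-load a stopping time $\mu_L$ making the Gr\"onwall integral deterministically bounded (so that a classical iteration yields an expectation bound, followed by Chebyshev), whereas the paper applies the stochastic Gr\"onwall lemma \cite[Lemma A.1]{AV_variational} directly and lets its output produce both the exponential factor and the tail probability simultaneously. Your variant is slightly more elementary; the paper's is slightly more streamlined. Note also that what you flag as ``the hard part'' (the asymmetric bilinear bookkeeping for the cross terms) is already contained in the second and fourth displayed estimates of Lemma \ref{l:nonlinearities}, so no new work is needed there.
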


\begin{proof}
To short-hand the formulas below, we use the notation introduced in \eqref{eq:choice_X0X1}-\eqref{eq:def_ABFG}. In particular, the stochastic PEs \eqref{eq:primitive} are seen as a stochastic evolution equation of the form \eqref{eq:SEE} on the Banach space $\Hs^{-\s,(q,2)}$.
Without loss of generality we assume that $\Gamma$ is such that $\E\big[\one_{\Gamma}\|v(s)-v'(s)\|_{B^{2/q}_{(q,2),p_0}}^{p_0}\big]<\infty$ and $\norm_{v,v'}(s)<\infty$.

The process $V\stackrel{{\rm def}}{=}\one_{\Gamma}( v-v')$ satisfies, on $[s,\infty)$,
\begin{equation}
\label{eq:SEE_V}
\left\{
\begin{aligned}
\dd V &+ A(t) V\, \dd t = \one_{\Gamma}( F(t,
\one_{\Gamma}v)-F(t,
\one_{\Gamma}v'))\,\dd t\\
& + (B(t)V+\one_{\Gamma}[G(t,
\one_{\Gamma}v)-G(t,
\one_{\Gamma}v')])\,\dd \Br_{\ell^2}(t),\quad t\in\R_+,\\
 V(s)&=\one_{\Gamma}(v(s)-v'(s)).
\end{aligned}
\right.
\end{equation}
where we also used that
$
\one_{\Gamma}F(\cdot,\phi)= \one_{\Gamma} F(\cdot,\one_{\Gamma}\phi)
$ and $
\one_{\Gamma}G(\cdot,\phi)= \one_{\Gamma} G(\cdot,\one_{\Gamma}\phi)$ a.e.\ on $\R_+\times \O$ for $\phi\in \{v,v'\}$. 

As for Lemma \ref{l:strong_energy_estimate}, we prove the claim by applying the stochastic Gronwall lemma of \cite[Lemma A.1]{AV_variational}. To this end, for all $j\geq 1$, we set 
$$
\tau_j\stackrel{{\rm def}}{=}
\inf\Big\{t\in [s,T]\,:\, 
\max_{\phi\in \{v,v'\}}\big(\|\phi(t)\|_{B^{2/q_0}_{(q_0,2),p_0}}+ 
\|\phi\|_{L^{p_0}(s,t;B^{2/q_0}_{(q_0,2),p_0})}\big)\geq j
\Big\} .
$$
Now let $\eta,\xi$ be stopping times satisfying $s\leq \eta\leq \xi\leq \tau_j$ a.s.\ for some $j\geq 1$. Arguing as in the proof of Lemma \ref{l:strong_energy_estimate}, by the stochastic Gronwall lemma \cite[Lemma A.1]{AV_variational}, one can readily check that the claim of Lemma \ref{l:estimate_difference} follows from the following assertion: 

There exist constants $K_s,\lambda_s>0$ independent of $(j,\eta,\xi,\Gamma)$ such that 
\begin{align}
\label{eq:grownall_V_continuity}
\E\sup_{t\in [s,T]}\one_{\Gamma} \|V(t)\|_{B^{2/q}_{(q,2),p_0}}^{p_0}
&+ \E\int_{s}^T \one_{\Gamma} \|V(t)\|_{H^{2-\s,(q,2)}}^{p_0}\,\dd t\\
\nonumber
& \leq K_s \E\big[\one_{\Gamma}\|v^{(1)}(s)-v^{(2)}(s)\|_{B^{2/q}_{(q,2),p_0}}^{p_0}\big]\\
\nonumber
&+ K_s \E\int_{\eta}^{\xi}\one_{\Gamma} \big(1+\JJ_{v,v'}(t)\big) \|V(t)\|_{B^{2/q}_{(q,2),p_0}}^{p_0}\,\dd t
\end{align}
where $\JJ_{v,v'}$ is a random variable satisfying with paths in $L^1(s,T)$ and
\begin{equation}
\label{eq:tail_estimate_J_gronwall_continuity}
\P\Big(\int_s^{T}\JJ_{v,v'}(t)\, \dd t \geq \lambda\Big)\leq  \frac{K_s}{\log(\log(\lambda))}\norm_{v,v'}(s) \ \ \text{ for all }\lambda \geq \lambda_s.
\end{equation}

\emph{Step 1: Proof of \eqref{eq:grownall_V_continuity} with}
\begin{equation}
\label{eq:def_J_gronwlall_continuity}
\JJ_{v,v'} (t)\stackrel{{\rm def}}{=}\one_{\Gamma}\big(\|v(t)\|^{2p_0}_{H^{1/q+1-\s/2,(q,2)}}
+\|v'(t)\|^{2p_0}_{H^{1/q+1-\s/2,(q,2)}}\big).
\end{equation}
As in \eqref{eq:choice_X0X1} in this step we let $X_{\vartheta}=\Hs^{2-\s+2\vartheta,(q,2)}$ for $\vartheta\in (0,1)$. 
Let $\beta=\frac{1}{2}+\frac{\delta}{4}+\frac{1}{2q}$ be as in Lemma \ref{l:nonlinearities} (recall that the condition $q<\frac{2}{1-\s}\leq \frac{2}{\s}$ as \eqref{eq:assumptions_critical_setting} is in force). By $(i)$-$(ii)$ in \eqref{eq:interpolation_B_H}  and $2-\s+2\beta=1/q+1-\s/2$, we obtain 
\begin{equation}
\label{eq:interpolation_estimate_phi_continuity}
\|\phi\|_{X_{\beta}}=\|\phi\|_{H^{1/q+1-\s/2,(q,2)}}\lesssim \|\phi\|_{B^{2/q}_{(q,2),p_0}}^{1/2}\|\phi\|_{H^{2-\s,(q,2)}}^{1/2}.
\end{equation}
Hence, by Lemma \ref{l:nonlinearities} we have, a.e.\ on $\R_+\times \O$,
\begin{align*}
&\|F(\cdot,\one_{\Gamma}v)-F(\cdot,\one_{\Gamma}v')\|_{X_0}
+\|G(\cdot,\one_{\Gamma}v)-G(\cdot,\one_{\Gamma}v')\|_{\g(\ell^2;X_{1/2})}\\
&\lesssim  (1+\one_{\Gamma}\|v\|_{X_{\beta}}+\one_{\Gamma}\|v'\|_{X_{\beta}})\|V\|_{X_{\beta}}\\
&\leq C_{\eta} (1+\one_{\Gamma}\|v\|_{X_{\beta}}^2+\one_{\Gamma}\|v'\|_{X_{\beta}}^2)\|V\|_{B^{2/q}_{(q,2),p_0}}+ \eta\|V\|_{X_1},
\end{align*}
where $\eta>0$ is arbitrary.

Now the claim follows by arguing as below \eqref{eq:proof_global_smr_gronwall}, by applying the stochastic maximal $L^{p}$-regularity estimate of Lemma \ref{l:smr} and \cite[Proposition 3.12]{AV19_QSEE_1} with weight $\a=0$ to \eqref{eq:SEE_V} and absorbing the term $\eta\E\int_{\eta}^{\xi}\one_{\Gamma}\|V(t)\|_{X_1}^{p_0}\,\dd t$ by choosing $\eta>0$ sufficiently small (depending only on the constant in the estimate of Lemma \ref{l:smr}).

\emph{Step 2: Proof of \eqref{eq:tail_estimate_J_gronwall_continuity} with $\JJ_{v,v'}$ as in \eqref{eq:def_J_gronwlall_continuity}}.
By \eqref{eq:interpolation_estimate_phi_continuity} and the H\"older inequality, 
$$
L^{\infty}(0,t;B^{2/q}_{(q,2),p_0})\cap
L^{p_0}(0,t;H^{2-\s,(q,2)})\embed 
L^{2p_0}(0,t;H^{1/q+1-\s/2,(q,2)})\ \text{ for }\ t>0
$$
with embedding constant independent of $t$.
Hence \eqref{eq:tail_estimate_J_gronwall_continuity} follows from the above estimate and \eqref{eq:energy_estimate2_vz} in Lemma \ref{l:strong_energy_estimate}.
\end{proof}

Now we are ready to prove Theorem \ref{t:continuous_dependence}. The strategy is roughly as follows. Firstly, we decompose $[0,T]$ into $[0,t]$ and $[t,T]$ where $t>0$ is small. In the first interval, we use Proposition \ref{prop:continuity}, which already gives continuity in $[0,t]$. Secondly, we apply Lemma \ref{l:estimate_difference} to obtain an estimate depending only on differences of solutions at time $t\sim 0$. At this stage, one is tempted to use again  Proposition \ref{prop:continuity} since $t\sim 0$. However, the RHS\eqref{eq:def_N_mathcal_proof_continuity} contains norms which are (typically) \emph{stronger} than the one estimated in Proposition \ref{prop:continuity} (e.g.\ in case $\a_{\crit}=0$). To circumvent this problem, we use the extra flexibility coming from the \emph{weighted} setting and the instantaneous regularization of weighted functions (cf.\ \cite[Theorem 1.2]{ALV23} and the text below Proposition \ref{prop:continuity}).

\begin{proof}[Proof of Theorem \ref{t:continuous_dependence}]
The proof is similar to the one of \cite[Theorem 6.2]{AV23_reactionII}. 
Let us fix a sequence $(v_{0,n})_{n\geq 1}$ such that $v_{0,n}\to v_0$ in probability in $\Bs^{2/q}_{(q,2),p}$.
Arguing as in \cite[Theorem 6.2, Step 1]{AV23_reactionII}, it is enough to consider a sequence such that $\sup_{n\geq 1}\|v_{0,n}\|_{B^{2/q}_{(q,2),p}}\leq R$ a.s.\ for some deterministic constant $R>0$.

Next we collect some useful facts. Firstly, by Proposition \ref{prop:continuity} (see also the comments below it) and the Chebyshev inequality, there exist constants $T_0,K_0,N_0>0$ and stopping times $\tau_0,\tau_1\in (0,\tau]$ a.s.\ for which the following holds: 
For all $t\in (0,T_0]$ and $n\geq N_0$ there exists a stopping time $\tau^{(n)}_0$ such that, for all $\ell>0$,
\begin{align}
\label{eq:continuity1_proof}
\P\Big(\sup_{r\in [0,t]}\|v(r)-v^{(n)}(r)\|_{B^{2/q}_{(q,2),p}}\geq \ell,\ \tau_0\wedge \tau_0^{(n)}\geq t\Big)\leq 
\frac{K_0}{\ell^p}\E\|v_0-v_0^{(n)}\|_{B^{2/q}_{(q,2),p}}^p,&\\
\label{eq:continuity2_proof}
\P(\tau_0\wedge \tau_0^{(n)}\leq t)\leq K_0 \big[\E\|v_0-v_0^{(n)}\|_{B^{2/q}_{(q,2),p}}^p+\P(\tau_1\leq t)\big].&
\end{align}

Secondly, we exploit the weighted setting and the instantaneous regularization of weighted function spaces see e.g.\ \cite[Theorem 1.2]{ALV23}. To this end, let $p_1\in (p\vee 4,\infty)$ so large that 
\begin{equation}
\label{eq:embedding_r_large}
B^{2-\s-2/p_1}_{(q,2),p_1}\embed B^{2/q}_{(q,2),p_0} \quad \text{ and }\quad
B^{2-\s-2/p_1}_{(q,2),p_1}\embed H^1,
\end{equation}
where $p_0\in (2,\infty)$ satisfies $ \frac{1}{p_0}+\frac{1}{q}+\frac{\delta}{2}=1$ (and hence $p_0\leq p$). Since $q\geq 2$ and  $\s<1$, the choice of $p_1$ for which \eqref{eq:embedding_r_large} holds is a consequence of  \eqref{eq:elementary_emb0}-\eqref{eq:elementary_emb1}. 

Since $B^{2/q}_{(q,2),p}\embed B^{2/q}_{(q,2),p_1}$ by \eqref{eq:elementary_emb0} as $p_1\geq p$,  we can apply Theorem \ref{t:local} and Proposition \ref{prop:continuity} with $p$ replaced by $p_1$. Hence, the estimates \eqref{eq:continuity0}-\eqref{eq:continuity2} (see also the comments below Proposition \ref{prop:continuity}) with $p$ replaced by $p_1$ imply the existence of constants $K_1,T_1,N_1>0$ and stopping times $\lambda_0,\lambda_1\in (0,\tau]$ a.s.\ for which the following is satisfied:

For all $t\in (0,T_1]$ and $n\geq N_1$ there exists a stopping time $\lambda^{(n)}_0$ such that, for all $\ell> 0$,
\begin{align}
\label{eq:continuity1_proof2}
\E\Big[\one_{\{\lambda_0>t\}}\sup_{r\in [0,t]}\big(r^{\alpha}\|v(r)\|_{B^{2-\s-2/p_1}_{(q,2),p_1}}^{p_1}\big)\Big]
\leq 
K_1,&\\
\label{eq:continuity2_proof2}
\E \Big[ \one_{\{\lambda_0\wedge \lambda_0^{(n)}>t\}}
\sup_{r\in [0,t]}\big(r^{\alpha}\|v(r)-v^{(n)}(r)\|_{B^{2-\s-2/p_1}_{(q,2),p_1}}^{p_1}\big) 
\Big]\qquad \qquad\qquad&\\
\nonumber
\leq 
K_1\E\|v_0-v_0^{(n)}\|_{B^{2/q}_{(q,2),p_1}}^{p_1},&\\
\label{eq:continuity3_proof2}
\P(\lambda_0\wedge \lambda_0^{(n)}\leq t)\leq K_1 \big[\E\|v_0-v_0^{(n)}\|_{B^{2/q}_{(q,2),p_1}}^{p_1}+\P(\lambda_1\leq t)\big],&
\end{align}
where $\alpha\stackrel{{\rm def}}{=} p_1(1-\frac{1}{q}-\frac{\delta}{2})-1>0$ as $p_1>p\geq p_0$ and $ \frac{1}{p_0}+\frac{1}{q}+\frac{\delta}{2}=1$.

Let us remark that we also used Corollary \ref{cor:compatibility} which ensures that the solutions provided by Theorem \ref{t:global} with $p$ replaced by $p_1$ are the same as the original one.

Now we may turn to the main proof. Set
$$
\mu_0\stackrel{{\rm def}}{=} \tau_0\wedge \lambda_0 \quad \text{ and }\quad 
\mu_0^{(n)}\stackrel{{\rm def}}{=} \tau_0^{(n)}  \ \text{ for  }\ n\geq 1. 
$$
Using that $B^{2/q}_{(q,2),p_0}\embed B^{2/q}_{(q,2),p}$ contractively as $p_0\leq p$  we have, for all $t\in (0,T_0\wedge T_1]$, $n\geq N_0\vee N_1$ and $\ell>0$,
\begin{align}
\label{eq:main_estimate_continuity}
&\P\Big(\sup_{r\in [0,T]}\|v(r)-v^{(n)}(r)\|_{B^{2/q}_{(q,2),p}}\geq \ell\Big)\\
\nonumber
&\qquad\quad\leq \P(\tau_0\wedge \tau_0^{(n)}\leq t)+\P(\lambda_0\wedge \lambda_0^{(n)}\leq t)\\
\nonumber
&\qquad\quad+\P\Big(\sup_{r\in [0,t]}\|v(r)-v^{(n)}(r)\|_{B^{2/q}_{(q,2),p}}\geq \ell,\ \mu_0\wedge \mu_0^{(n)}> t\Big)\\
\nonumber
&\qquad\quad+\P\Big(\sup_{r\in [t,T]}\|v(r)-v^{(n)}(r)\|_{B^{2/q}_{(q,2),p_0}}\geq \ell,\ \mu_0\wedge \mu_0^{(n)}> t\Big)\\
\nonumber
&\qquad\qquad\quad\quad\leq K_0 \P(\tau_1\leq t)+K_1\P(\lambda_1\leq t)\\
\nonumber
&\qquad\qquad\quad\quad+2 (1+\ell^{-p})K_0\E\|v_0-v_0^{(n)}\|_{B^{2/q}_{(q,2),p}}^{p}
+K_1\E\|v_0-v_0^{(n)}\|_{B^{2/q}_{(q,2),p_1}}^{p_1}\\
\nonumber
&\qquad\qquad\quad\quad+\underbrace{\P\Big(\sup_{r\in [t,T]}\|v(r)-v^{(n)}(r)\|_{B^{2/q}_{(q,2),p_0}}\geq \ell,\ \mu_0\wedge \mu_0^{(n)}> t\Big)}_{\mathcal{R}^{(n)}\stackrel{{\rm def}}{=}}
\end{align}
where in the last step we used \eqref{eq:continuity1_proof}-\eqref{eq:continuity2_proof} and \eqref{eq:continuity3_proof2}.

To estimate the remainder $\mathcal{R}^{(n)}$ we apply Lemma \ref{l:estimate_difference}. Indeed, from the latter result applied to $\Gamma= \{\mu_0\wedge \mu_0^{(n)}> t\}\in \F_s$ (note that $\tau>s $ a.s.\ on $\Gamma$ as $\mu_0\leq \tau_0\leq \tau$ a.s.), we obtain for all $L\geq 1$ (recall that $p_1\geq p_0$ and  $\norm_{v,v^{(n)}}$ is as in \eqref{eq:def_N_mathcal_proof_continuity})
\begin{align*}
\mathcal{R}^{(n)} 
&\leq C_s \frac{e^{C_s L}}{\ell^{p_0}}\E\big[\one_{\Gamma}\|v(t)-v^{(n)}(t)\|_{B^{2/q}_{(q,2),p_0}}^{p_0}\big]
+\psi_t (L)\big[1+ \norm_{v,v^{(n)}}(t)\big]\\
&\stackrel{\eqref{eq:embedding_r_large}}{\lesssim} C_s \frac{e^{C_s L}}{\ell^{p_0}}
\Big(\E[\one_{\Gamma}\|v(t)-v^{(n)}(t)\|_{B^{2-\s-2/p_1}_{(q,2),p_1}}^{p_1}]\Big)^{p_0/p_1}
+\psi_t (L)\big[1+\norm_{v,v^{(n)}}(t)\big]\\
&\stackrel{\eqref{eq:continuity2_proof2}}{\leq} C_s\frac{e^{C_s L} t^{-\alpha}}{\ell^{p_0}}
\Big(\E[\|v_0-v^{(n)}_0\|_{B^{2/q}_{(q,2),p_1}}^{p_1}]\Big)^{p_0/p_1}
+\psi_t (L)\big[1+ \norm_{v,v^{(n)}}(t)\big].
\end{align*}
Similarly, by using \eqref{eq:embedding_r_large}-\eqref{eq:continuity2_proof2}, $p\geq 4$ and the second embedding in \eqref{eq:embedding_r_large}, we have   
$
\norm_{v,v^{(n)}}(t)\lesssim  t^{-\alpha}.
$
Hence, using the previous estimates in \eqref{eq:main_estimate_continuity}, for all $\ell>0$,
$$
\limsup_{n\to \infty}
\P\Big(\sup_{r\in [0,T]}\|v(r)-v^{(n)}(r)\|_{B^{2/q}_{(q,2),p}}\geq \ell\Big)
\lesssim   \P(\tau_1\leq t)+\P(\lambda_1\leq t)+\psi_t (L) (1+t^{-\alpha}) .
$$
Letting $L\to \infty$ and using $\displaystyle{\lim_{L\to \infty}}\psi_t(L)\to 0$ (cf.\ Lemma \ref{l:estimate_difference}), the claim of Theorem \ref{t:continuous_dependence} then follows by taking $t\downarrow 0$ as $\tau_1\wedge \lambda_1>0$ a.s.\ by construction.
\end{proof}

\section{Energy estimates -- Proof of Lemma \ref{l:main_intermediate_estimate_vz}}
\label{s:energy_estimate_proof}
The aim of this section is to prove Lemma \ref{l:main_intermediate_estimate_vz}. Its proof is based on the well-known decomposition into baroclinic and barotropic modes (see e.g.\ \cite{CT07,HH20_fluids_pressure}). More precisely, the barotropic mode $\overline{v}$ and baroclinic mode $\wt{v}$ are given by:
\begin{equation}
\label{eq:def_ubar_utilde}
\overline{v}\stackrel{{\rm def}}{=} \int_{\T_z} v(\cdot,z)\,\dd z \qquad \text{ and }\qquad 
\wt{v}\stackrel{{\rm def}}{=}v-\overline{v}.
\end{equation}

\begin{lemma}[Main energy estimate]
\label{l:main_intermediate_estimate_wt}
Let the Assumptions of Theorem \ref{t:global} be satisfied. Let $(v,\tau)$ be the $(p,\a,\s,q)$-solution provided by Theorem \ref{t:local}. Then
\begin{equation}
\label{eq:regularity_of_partialz}
\partial_z v \in L^2_{\loc}(0,\tau;H^1)\ \text{ a.s.\ }
\end{equation}
Moreover, for all $T\in (0,\infty)$, 
there exists $c_0>0$ such that for all $\lambda>1$, $s\in (0,T]$ and $\Gamma\in \F_s$ such that, $\tau>s$ a.s.\ on $\Gamma$,
\begin{align}
\label{eq:energy_estimate2}
&\P\Big(\Gamma\cap \Big\{\sup_{t\in [s,\tau\wedge T)} \X_{t}+ \int_{s}^{ \tau\wedge T}\Y_t\,\dd t \geq \lambda\Big\}\Big)\\
\nonumber
&\leq \frac{c_0}{\log(\lambda)} \Big(1+
\max_{j\in \{0,1\}}\E\big[\one_{\Gamma} \|\partial_z^j v(s)\|_{L^2}^{2}\big]+\E\big[\one_{\Gamma} \|\wt{v}(s)\|_{L^4}^{4}\big]\Big)
\end{align}
where 
\begin{align*}
\X_t &\stackrel{{\rm def}}{=}\|\partial_z v(t)\|_{L^2}^2+\|\wt{v}(t)\|_{L^4}^4,\\
\Y_t  &\stackrel{{\rm def}}{=}
 \|\partial_z v(t)\|_{H^1}^2+\big\||\wt{v}(t)||\nabla \wt{v}(t)|\big\|_{L^2}^2.
\end{align*}
\end{lemma}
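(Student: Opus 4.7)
\textbf{Proof proposal for Lemma \ref{l:main_intermediate_estimate_wt}.}

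The plan is to adapt the classical Cao--Titi strategy for deterministic primitive equations, which couples an $L^2$-energy estimate for $v_z \stackrel{{\rm def}}{=}\partial_z v$ with an $L^4$-energy estimate for the baroclinic mode $\wt{v}$, to the present stochastic setting with rough transport noise, and then to close the argument using a stochastic Gronwall lemma of logarithmic type. I will work on $[s,\tau\wedge T\wedge \sigma_n]$, where $(\sigma_n)_{n\geq 1}$ is a localizing sequence for which $(\X_t,\Y_t)$ are well-defined and integrable; by Theorem \ref{t:regularity1} (specifically \eqref{eq:regularity_C1}) and Theorem \ref{t:local}, such a sequence exists with $\sigma_n\uparrow \tau$. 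Once uniform estimates in $n$ are obtained, one passes $n\to\infty$ to get \eqref{eq:regularity_of_partialz} and \eqref{eq:energy_estimate2}.

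\emph{Step 1: Derivation of the component equations.} Differentiating \eqref{eq:primitive2} in $z$ and using $\partial_z \p = \p\partial_z$ on $z$-independent-mean integrands together with $\partial_z w(v)=-\nabla_{x,y}\cdot v$, I obtain an SPDE for $v_z$ whose leading nonlinearities are $(v\cdot\nabla_{x,y})v_z + (v_z\cdot\nabla_{x,y}) v + w(v)\partial_z v_z - (\nabla_{x,y}\cdot v)\,v_z$ and whose noise reads $(\p[(\sigma_n\cdot\nabla)v_z]+\textnormal{lower order})\dot\beta^n_t$; here Assumption \ref{ass:global} is essential, since it is precisely the boundedness of $\partial_z a_{z,\eta}$, $\partial_z a_{\eta,z}$ and $\partial_z \sigma_{n,z}$ that allows the $z$-commutator terms to stay in $L^\infty$ and not cost smoothness. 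Separately, the equation for $\wt{v}=v-\overline{v}$ is obtained as in \eqref{eq:primitive_linear_wtv}.

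\emph{Step 2: Stochastic energy identities.} I apply the It\^o formula in $L^2$ to $\|v_z\|_{L^2}^2$ and in $L^4$ to $\|\wt{v}\|_{L^4}^4$ (justified on $[s,\tau\wedge T\wedge \sigma_n]$ by Theorem \ref{t:regularity1}, which instantaneously regularizes $v$ to $C^{1,\nu}_{\loc}$). The $v_z$-test generates the usual Cao--Titi cancellations: $\int ((v\cdot\nabla_{x,y})v_z)\cdot v_z = \frac12 \int(\nabla_{x,y}\cdot v)|v_z|^2$ and $\int w(v)\partial_z v_z\cdot v_z = -\frac12\int(\nabla_{x,y}\cdot v)|v_z|^2$ cancel, leaving the hard term $-\int (v_z\cdot\nabla_{x,y})v\cdot v_z$ and the surviving $\frac12\int(\nabla_{x,y}\cdot v)|v_z|^2$. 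The stochastic contributions produce an It\^o correction $\frac12\sum_n \|\p[(\sigma_n\cdot\nabla)v_z]\|_{L^2}^2$, which, combined with $\int \p[\nabla\cdot(a\cdot\nabla v_z)]\cdot v_z$, is controlled from below by $\nu\|\nabla v_z\|_{L^2}^2$ thanks to the parabolicity Assumption \ref{ass:primitive}\eqref{it:ass_primitive2} (this is exactly where the extra $-\frac12\sum_n\sigma_{n,\eta}\sigma_{n,\xi}$ in the parabolicity condition absorbs the It\^o correction). The same is done for $\|\wt{v}\|_{L^4}^4$: the $L^4$-test isolates $\||\wt v||\nabla \wt v|\|_{L^2}^2$ on the left and produces the troublesome cubic-in-$\wt v$ terms on the right.

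\emph{Step 3: Coupled nonlinear bounds.} The surviving nonlinear terms are handled by the Cao--Titi anisotropic interpolation: $\int|v_z|^2|\nabla_{x,y}v|$ is split by Cauchy--Schwarz and the splitting $v=\overline{v}+\wt{v}$, so that the $\overline{v}$-part is estimated through the 2D Ladyzhenskaya inequality on $\T^2_{x,y}$ (noting $\overline{v}$ is $z$-independent, and using $\|\overline v\|_{H^1_{x,y}}\lesssim \|v_z\|_{L^2}+\|\wt v\|_{L^2}$ is \emph{not} needed; instead Poincar\'e in $z$ gives $\|\wt v\|_{L^2}\lesssim \|v_z\|_{L^2}$), while the $\wt{v}$-part is bounded by $\|\wt v\|_{L^4}\|v_z\|_{L^4}\|\nabla_{x,y}v_z\|_{L^2}$ via Gagliardo--Nirenberg. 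The $L^4$-tests on $\wt v$ are controlled symmetrically. After Young's inequality, one absorbs $\nu\|\nabla v_z\|^2$ and $\||\wt v||\nabla\wt v|\|_{L^2}^2$ into the left-hand side and is left with a term of the form $C(1+\|v_z\|_{L^2}^2+\|\wt v\|_{L^4}^4)(\|\wt v\|_{L^4}^2+\|\nabla\overline v\|_{L^2}^2)$, where the last factor is integrable in time on $[s,\tau\wedge T)$ by the previously obtained $H^1$-energy estimate (Lemma \ref{l:energy_estimate}, invoked as a hypothesis).

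\emph{Step 4: Stochastic Gronwall.} The stochastic integrals $\int(\cdot)\,\dd\Br_{\ell^2}$ are controlled through the Burkholder--Davis--Gundy inequality, which under the parabolicity assumption gives a bound like $\eta\int(\|\nabla v_z\|^2+\||\wt v||\nabla\wt v|\|_{L^2}^2)\,\dd r + C_\eta \int(\|v_z\|_{L^2}^2+\|\wt v\|_{L^4}^4)\cdot(\|\wt v\|_{L^4}^2+\|\nabla\overline v\|_{L^2}^2)\,\dd r$, and the $\eta$-term is absorbed. Taking the form $\dd \X_t + \Y_t\,\dd t \leq \JJ_t (1+\X_t)\,\dd t + \dd M_t$ with $\int_s^{\tau\wedge T}\JJ_t\,\dd t$ controlled by the $H^1$-energy estimate of Lemma \ref{l:energy_estimate} (which provides a tail bound in terms of $\E[\one_\Gamma\|v(s)\|_{H^1}^4]$), the stochastic Gronwall lemma \cite[Lemma A.1]{AV_variational} delivers \eqref{eq:energy_estimate2} with the required logarithmic dependence on $\lambda$. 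Finally, letting $n\to\infty$ yields \eqref{eq:regularity_of_partialz}.

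The main obstacle is Step 3: the nonlinear term $\int (v_z\cdot\nabla_{x,y})v\cdot v_z$ cannot be closed using $\|v_z\|_{L^2}^2$ alone, which is precisely the reason Cao--Titi introduced the coupled $\|\wt v\|_{L^4}^4$ test. Verifying that the Cao--Titi cancellations survive in the presence of the hydrostatic Helmholtz projection $\p$ acting on transport-type noise (in particular the coupling between barotropic and baroclinic modes generated by $\sigma_{n,z}$) and that Assumption \ref{ass:global} is exactly enough to keep these commutators under $L^\infty$-control, is the delicate part.
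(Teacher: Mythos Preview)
Your overall strategy (Cao--Titi coupled $L^2$-estimate for $\partial_z v$ and $L^4$-estimate for $\wt v$, then stochastic Gronwall) is exactly the paper's, and your identification of the delicate points (Assumption \ref{ass:global} for the $z$-commutators, the barotropic/baroclinic coupling terms) is correct. However, there is a genuine circularity in your argument that the paper takes care to avoid.

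You claim that the It\^o formula for $\|\partial_z v\|_{L^2}^2$ is ``justified on $[s,\tau\wedge T\wedge\sigma_n]$ by Theorem \ref{t:regularity1}'', and that a localizing sequence making $\Y_t$ integrable exists by \eqref{eq:regularity_C1}. This is precisely what fails in the rough-noise regime $\g<1$: Theorem \ref{t:regularity1} only gives $v\in C^{\mu,\nu}_{\loc}$ with $\nu<1+\g'<2$, hence $\nabla\partial_z v$ is \emph{not} known to lie in $L^2$ a priori, and the paper stresses this explicitly below the statement of the lemma (``\eqref{eq:regularity_of_partialz} \dots\ does \emph{not} follow from the instantaneous regularization result of Theorem \ref{t:regularity1}''). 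Without $\partial_z v\in L^2_{\loc}(0,\tau;H^1)$ you can neither make sense of $\int\Y_t\,\dd t$ nor apply the It\^o formula for $\|\partial_z v\|_{L^2}^2$ so as to produce the dissipation $\|\nabla\partial_z v\|_{L^2}^2$. Consequently your Step 2 already presupposes \eqref{eq:regularity_of_partialz}, and obtaining \eqref{eq:regularity_of_partialz} ``by letting $n\to\infty$'' at the end is circular.

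The paper breaks this circle by proving \eqref{eq:regularity_of_partialz} \emph{first}, via a difference quotient argument (Subsection \ref{ss:intermezzo}): one applies the It\^o formula to $\|\diff_r v\|_{L^2}^2$ with $\diff_r v=|r|^{-1}(v(\cdot+re_z)-v)$. Since $\diff_r v$ has the same regularity as $v$, this It\^o formula is justified using only \eqref{eq:regularity_C1}; the $C^1$-localization \eqref{eq:tau_j_localization_main_proof} and Assumption \ref{ass:global} then yield a bound on $\E\int_s^{\tau_j}\|\nabla\diff_r v\|_{L^2}^2\,\dd t$ uniform in $r$, and a Fatou/weak-limit argument gives $\partial_z v\in L^2_{\loc}(0,\tau;H^1)$. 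Only \emph{after} this step does the paper run the coupled energy argument (your Steps 2--4), now with the It\^o formula for $\|\partial_z v\|_{L^2}^2$ rigorously available. Your Steps 3--4 are otherwise in line with the paper's Subsection \ref{ss:main_intermediate_estimates}, where the new cross terms $A_1,A_2$ (from $\partial_z(a_{z,\eta}\partial_\eta\overline v)$ and $-\partial_\eta(\overline{a_{\eta,z}\partial_z v})$) are estimated as in \eqref{eq:A1_estimate}--\eqref{eq:A2_estimate}.
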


Note that \eqref{eq:regularity_of_partialz} it is used to make sense of $\int \Y_t\,\dd t$, since it does \emph{not} follow from the instantaneous regularization result of Theorem \ref{t:regularity1}. This is in contrast with the r.v.\ $\sup_t \X_t$ which is well-defined due to \eqref{eq:regularity_C1}.

Compared to Lemma \ref{l:main_intermediate_estimate_vz} we also estimate the baroclinic mode $\wt{v}$. As Step 1 in the proof of Lemma \ref{l:main_intermediate_estimate_wt} below shows, it seems not possible to derive a priori estimates for $\sup_t\|\partial_z v\|_{L^2_{\x}}$ and $\|\partial_z v\|_{L^2_t(H^1_{\x})}$ without proving $L^4$-type estimates for the baroclinic mode $\wt{v}$. 
Moreover, compared to usual strategy in the context of PEs (see e.g.\ \cite{CT07}, \cite[Subsection 1.4]{HH20_fluids_pressure}  or \cite[Subsection 5.2]{Primitive1}), we do not prove any estimate for the barotropic mode $\overline{v}$. The reason behind this is that the barotropic mode is typically estimated in strong norms (i.e.\ $\sup_t\|\overline{v}\|_{H^1_{\x}}$ and $\|\overline{ v}\|_{L^2_t(H^2_{\x})}$), which are not available due to the roughness of the noise (see the discussion around \eqref{eq:transport_noise_mapping}-\eqref{eq:transport_noise_mapping_2} in Subsection \ref{ss:global_intro}). Fortunately, as we have seen in Section \ref{s:global_proofs} such estimates are not needed to prove global well-posedness of the stochastic PEs \eqref{eq:primitive}.

The proofs of \eqref{eq:regularity_of_partialz} and \eqref{eq:energy_estimate2} are given in Subsections \ref{ss:intermezzo} and \ref{ss:main_intermediate_estimates}, respectively. 
Before proceeding further, we state the following $L^2$-energy estimate for solutions of the stochastic PEs \eqref{eq:primitive}. We do not provide a proof as it readily follows from the usual cancellation $\int_{\T^3} [(u\cdot\nabla)v]\cdot v\,\dd \x=0$ as $\nabla\cdot u=0$ (cf.\ 
\cite[Lemma 5.2]{Primitive1}). 

\begin{lemma}[$L^2$-energy estimate]
\label{l:energy_estimate}
Let the assumptions of Theorem \ref{t:local} be satisfied, and let $(v,\tau)$ be the $(p,\a_{\crit},\s,q)$-solution to \eqref{eq:primitive}. Then, for all $T\in (0,\infty)$ there exists $C>0$ such that, for all $s\in [0,T]$ and $\Gamma\in \F_s$ satisfying $\tau>s$ a.s.\ on $\Gamma$,
\begin{equation}
\label{eq:velocity_L2_balance_proof}
\E\Big[ \sup_{r\in [s,\tau\wedge T)} \one_{\Gamma} \|v(r)\|_{L^2}^2\Big] 
+\E\int_0^{\tau\wedge T}\one_{\Gamma} \|v(r)\|_{H^1}^2\,\dd r\leq C\big(1+\E[\one_{\Gamma}\|v(s)\|_{L^2}^2]\big).
\end{equation}
\end{lemma}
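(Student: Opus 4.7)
The plan is to apply the It\^o formula to $r\mapsto \|v(r)\|_{L^2}^2$ on a stochastic interval $[s,\tau_j]$ (with $\tau_j\nearrow\tau$ a localizing sequence), exploit the cancellation of the nonlinear transport term for the divergence-free reconstructed 3D velocity $u=(v,w(v))$, and combine the parabolic dissipation with the quadratic variation of the noise via Assumption \ref{ass:primitive}\eqref{it:ass_primitive2}.

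First I would check that the regularity provided by Theorem \ref{t:local} is enough for an $L^2$-It\^o formula. Indeed, since $\a_{\crit}<\tfrac{p}{2}-1$ (as $q<\tfrac{2}{1-\s}$), the weighted H\"older inequality gives $v\in L^2_{\loc}([0,\tau);H^{2-\s,(q,2)})\embed L^2_{\loc}([0,\tau);H^1)$, using $q\geq 2$, $\s<\tfrac{1}{2}$ and Lemma \ref{l:iterated_H}; and $v\in C([0,\tau);B^{2/q}_{(q,2),p})\embed C([0,\tau);L^2)$ again because $q\geq 2$. After a standard smoothing/localization argument, It\^o's formula yields
\begin{equation*}
\|v(r)\|_{L^2}^2-\|v(s)\|_{L^2}^2=\int_s^r 2\langle \mathrm{drift},v\rangle\,\dd\rho+\int_s^r\|\mathrm{noise}\|^2_{\g(\ell^2,L^2)}\,\dd\rho+M_r,
\end{equation*}
with $M$ a continuous local martingale. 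Since $v=\p v$ by \eqref{eq:incompressibility} and $\p$ is self-adjoint on $L^2$, the projections disappear from the inner products.

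Next, I would identify each contribution. Integration by parts gives the diffusion pairing $-2\int_{\T^3}(a\cdot\nabla v):\nabla v\,\dd\x$. The crucial cancellation is
\begin{equation*}
\int_{\T^3}\big[(v\cdot\nabla_{x,y})v+w(v)\partial_z v\big]\cdot v\,\dd\x=\int_{\T^3}[(u\cdot\nabla)v]\cdot v\,\dd\x=0,
\end{equation*}
which holds because $u=(v,w(v))$ satisfies $\nabla\cdot u=0$ on $\T^3$ together with $w(v)|_{z\in\{0,1\}}=0$. The remaining drift terms $(b\cdot\nabla)v$, $\fts(\cdot)v$, $\ftg(\cdot,v)$, $f(\cdot,v,\nabla v)$ are lower-order: using the boundedness of $b,b_0$ in Assumption \ref{ass:primitive}\eqref{it:ass_primitiveb_b0}, the definitions \eqref{eq:def_f_tilde2}-\eqref{eq:def_f_tilde1}, the Lipschitz bounds on $f,g$ in Assumption \ref{ass:primitive}\eqref{it:ass_primitive8}, and Young's inequality, they can be absorbed as $\varepsilon\|\nabla v\|_{L^2}^2+C_\varepsilon(1+\|v\|_{L^2}^2)$ for arbitrary $\varepsilon>0$. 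For the quadratic variation, the orthogonal projection $\p$ is an $L^2$-contraction so
\begin{equation*}
\sum_{n\geq 1}\big\|\p[(\sigma_n\cdot\nabla)v+g_n(\cdot,v)]\big\|_{L^2}^2\leq \sum_{n\geq 1}\int_{\T^3}|(\sigma_n\cdot\nabla)v|^2\,\dd\x+C(1+\|v\|_{L^2}^2),
\end{equation*}
with the remainder again Lipschitz-controlled. Then Assumption \ref{ass:primitive}\eqref{it:ass_primitive2} applied pointwise to $\lambda=\nabla v$ gives
\begin{equation*}
-2\int_{\T^3}(a\cdot\nabla v):\nabla v\,\dd\x+\sum_{n\geq 1}\int_{\T^3}|(\sigma_n\cdot\nabla)v|^2\,\dd\x\leq -2\nu\|\nabla v\|_{L^2}^2,
\end{equation*}
producing genuine dissipation.

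Choosing $\varepsilon<\nu$, the estimates combine to
\begin{equation*}
\|v(r)\|_{L^2}^2+\nu\int_s^r\|\nabla v\|_{L^2}^2\,\dd\rho\leq \|v(s)\|_{L^2}^2+C\int_s^r(1+\|v\|_{L^2}^2)\,\dd\rho+M_r
\end{equation*}
on $[s,\tau_j\wedge T]$. Multiplying by $\one_\Gamma$, taking expectations, applying the Burkholder--Davis--Gundy inequality to absorb $\E\sup_r|\one_\Gamma M_r|$ into $\tfrac{1}{2}\E\sup_r \one_\Gamma\|v(r)\|_{L^2}^2+C'\E\int_s^r\one_\Gamma\|v\|_{L^2}^2\,\dd\rho$ (using the linear growth of the noise in $v$), Gronwall's lemma, and finally Fatou's lemma as $j\to\infty$, yield \eqref{eq:velocity_L2_balance_proof}.

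I expect the only delicate point to be the rigorous justification of It\^o's formula in the $L^2$-setting for a solution originally built in an anisotropic negative-order space $H^{-\s,(q,2)}$; this is handled either by an approximation argument (mollifying the coefficients and initial data, applying It\^o's formula to the smooth approximants, and passing to the limit using the regularity bounds above) or by invoking an abstract It\^o formula for critical-space solutions. Once the formula is available, all the remaining estimates are algebraic and rely on the three standard ingredients: the $L^2$-cancellation for divergence-free fields, the self-adjointness of $\p$, and the parabolicity Assumption \ref{ass:primitive}\eqref{it:ass_primitive2} absorbing the Stratonovich-type correction $\fts$.
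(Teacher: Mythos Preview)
Your approach is correct and matches the paper's: the paper does not give a detailed proof and simply states that the estimate ``readily follows from the usual cancellation $\int_{\T^3}[(u\cdot\nabla)v]\cdot v\,\dd\x=0$ as $\nabla\cdot u=0$'', referring to \cite[Lemma 5.2]{Primitive1}; your It\^o--BDG--Gronwall outline is exactly that argument.

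One point to sharpen: the term $\fts(\cdot)v$ is \emph{not} lower order in the sense you claim. After integration by parts and using that $\qq$ is an $L^2$-orthogonal projection, one has $2\langle\fts(\cdot)v,v\rangle=h\sum_n\|\qq[(\sigma_n\cdot\nabla)v]\|_{L^2}^2$ up to the genuinely lower-order $b_{0,n}$-terms, and this is of the same order as $\|\nabla v\|_{L^2}^2$ and cannot be absorbed by Young's inequality with an arbitrary $\varepsilon$. The correct bookkeeping is to use the Pythagorean identity $\|\p f\|_{L^2}^2=\|f\|_{L^2}^2-\|\qq f\|_{L^2}^2$ in the quadratic variation rather than the contraction bound $\|\p f\|\leq\|f\|$; the resulting $-\sum_n\|\qq[(\sigma_n\cdot\nabla)v]\|_{L^2}^2$ then combines with the $\fts$-contribution and the parabolicity assumption closes the estimate as you intend. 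You already flag this mechanism in your final sentence, so the repair is local.
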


In the above we do not need Assumption \ref{ass:global}. 
Taking $s=0$ and $\Gamma=\O$, Lemma \ref{l:energy_estimate} coincides with the usual energy estimate for Navier-Stokes equations.

In light of Lemmas \ref{l:main_intermediate_estimate_wt} and \ref{l:energy_estimate}, we can prove Lemma \ref{l:main_intermediate_estimate_vz}.

\begin{proof}[Proof of Lemma \ref{l:main_intermediate_estimate_vz}]
The case $j=0$ of \eqref{eq:energy_estimate_vz} follows from Lemma \ref{l:energy_estimate} and the Chebyshev inequality. The case $j=1$ of \eqref{eq:energy_estimate_vz} is a consequence of Lemma \ref{l:main_intermediate_estimate_wt} and the embedding $H^1=H^1(\T^3)\embed L^4$.
\end{proof}

\subsection{Preparation to the proof of Lemma \ref{l:main_intermediate_estimate_wt}}
\label{ss:preparation_estimate}

Next we introduce a suitable localization for the $(p,\a_{\crit},\s,q)$-solution $(v,\tau)$ provided by Theorem \ref{t:local}. 
Throughout this section we fix $0<s<T<\infty$.
For each integer $j\geq 1$, we define the following stopping time
\begin{equation}
\label{eq:tau_j_localization_main_proof}
\tau_j \stackrel{{\rm def}}{=}
\left\{
\begin{aligned}
&\inf\{t\in [s,\tau)\,:\, \|v(t)-v(s)\|_{C^1(\T^3;\R^2)}\geq j\}\wedge T & &\text{ on }
\Gamma_j,\\
&s  & &\text{ otherwise.}
\end{aligned}
\right.
\end{equation}
where $\Gamma_j\stackrel{{\rm def}}{=} \Gamma\cap \big\{\tau>s,\, \|v(s)\|_{C^1}\leq j\big\}\in \F_s$.

By Theorem \ref{t:regularity1} it follows that
\begin{align}
\label{eq:approximation1}
\P(\Gamma\setminus ( \cup_{j}\Gamma_j))=1,&\\
\label{eq:approximation2}
\lim_{j\to \infty}\tau_j =\tau&\text{ a.s.\ on }\Gamma ,\\
\label{eq:approximation3}
\|(v,\nabla v)\|_{L^{\infty}([s,\tau_j )\times \T^3;\R^2\times \R^{2\times3 })}\lesssim j \  &\text{ a.s.\ for all }j\geq 1.
\end{align} 
Roughly speaking, the conditions \eqref{eq:approximation1}-\eqref{eq:approximation2} ensure that the stochastic intervals $([s,\tau_j)\times\Gamma_j)_{j\geq 1}$ covers the interval $[s,\tau)\times \Gamma$ up to a $\P\otimes \dd t$-null set. 

\subsection{Instantaneous regularization in the vertical direction} 
\label{ss:intermezzo}
The aim of this subsection is to prove \eqref{eq:regularity_of_partialz}. 
The latter result can be proven (even in an $L^p$-setting) by using the bootstrap results of  \cite[Section 6]{AV19_QSEE_2}. However, this requires working with spaces having anisotropic smoothness (more precisely, spaces of functions having more differentiability in the vertical direction than in the horizontal ones). Here we use a more standard argument using difference quotients. Let us stress that the latter strategy is possible as we have already bootstrapped some regularity in Theorem \ref{t:regularity1} using the results of \cite[Section 6]{AV19_QSEE_2}, which are essential to bootstrap regularity in the critical setting.

\begin{proof}[Proof of \eqref{eq:regularity_of_partialz} in Lemma \ref{l:main_intermediate_estimate_wt}]
Let $\tau_j$ be as in \eqref{eq:tau_j_localization_main_proof}. Below we prove that 
\begin{equation}
\label{eq:boundedness_L2_partialzv}
\E\int_s^{\tau_j} \one_{\Gamma} \|\nabla \partial_z v \|_{L^2}^2\,\dd t<\infty \ \ \text{ for all $s>0$ and $j\geq 1$}.
\end{equation}
This in particular implies 
 \eqref{eq:regularity_of_partialz}. 
  Here we prove \eqref{eq:boundedness_L2_partialzv} by using the difference quotient method and exploiting that $v|_{[s,\tau_j)\times \Gamma}$ is smooth, see \eqref{eq:approximation1}. 
For all $r\in[-\frac{1}{2},\frac{1}{2}]\setminus \{0\}$, consider the difference quotient operator in $z$, i.e.\ for all map $\phi:\T^3\to \R$,
$$
\diff_r \phi \stackrel{{\rm def}}{=} \frac{\tr_r \phi - \phi}{|r|} \quad \text{ where } \quad 
\tr_r \phi\stackrel{{\rm def}}{=}\phi(\cdot+r e_z), \ \ e_z=(0,0,1).
$$

Fix $s>0$ and $j\geq 1$. To prove \eqref{eq:boundedness_L2_partialzv}, 
it suffices to prove the existence of $C_0>0$ such that
\begin{equation}
\label{eq:bound_diff_uniformh}
\E \Big[\one_{\Gamma} 
\int_s^{\tau_j} \| \nabla \diff_r v(r)\|_{L^2}^2 \,\dd t\Big] \leq C_0 \ \text{ for all }\ r\in [-\tfrac{1}{2},\tfrac{1}{2}]\setminus \{0\}.
\end{equation}
 
\emph{Step 1: Sufficiency of \eqref{eq:bound_diff_uniformh}}. 
We begin this step with a preliminary observation. If $\phi\in L^2$ satisfies $C_{\phi}\stackrel{{\rm def}}{=}\liminf_{r\to \infty}\|\diff_r \phi\|_{L^2}<\infty$, then $\partial_z \phi\in L^2$ and $\|\partial_z \phi\|_{L^2}\leq C_{\phi}$. To see this, note that, for all $\psi\in \D(\T^3)$, 
$$
\Big|\int_{\T^3} \phi\partial_z \psi\,\dd \x\Big|\leq \liminf_{n\to \infty} 
\Big|\int_{\T^3} \phi( \diff_{-1/n} \psi)\,\dd \x\Big|
=\liminf_{n\to \infty} 
\Big|\int_{\T^3} (\diff_{1/n}\phi) \psi\,\dd \x\Big|\leq C_{\phi} \|\psi\|_{L^2}.
$$
Now $\partial_z \phi\in L^2$ and $\|\partial_z \phi\|_{L^2}\leq C_{\phi}$ follow from the Riesz representation theorem and the above inequality. Now we turn to the proof of the sufficiency of \eqref{eq:bound_diff_uniformh}. 
By Fatou's lemma, it follows that $\displaystyle{\liminf_{n\to \infty}\| \nabla \diff_{1/n} v(r)\|_{L^2}^2<\infty}$ a.e.\ on $\Gamma\times [s,\tau_j)$. Hence $\partial_z\nabla v\in L^2$ a.e.\ on $\Gamma\times [s,\tau_j)$ and satisfies $\E\int_s^{\tau_j}\one_{\Gamma} \| \nabla \partial_z v(r)\|_{L^2}^2 \,\dd t \leq C_0$ due to the preliminar observation.  This and the arbitrariness of $j\geq 1$ yield \eqref{eq:boundedness_L2_partialzv}.

\emph{Step 2: Proof of \eqref{eq:bound_diff_uniformh}}. 
To prove the above estimate, we obtain an SPDE for $\diff_r v$. To this end, let us recall the following Leibniz type rule:
For all maps $\psi,\phi:\T^3\to \R$, 
\begin{equation}
\label{eq:Leibnitz_rule_discrete}
\diff_r (\psi \phi)= (\diff_r \psi)\, \tr_r \phi + \psi \, \diff_r \phi\quad \text{ where } \ r\in  [-\tfrac{1}{2},\tfrac{1}{2}]\setminus \{0\}.
\end{equation}
Hence, one can readily check that $(\diff_r v |_{[s,\tau_j)\times \Gamma},\tau_j)$ is a local $(p,\a_{\crit},\s,q)$-solution to 
 \begin{align*}
		\partial_t \diff_r v   -\nabla\cdot(a\cdot \nabla \diff_r v )
		= F_r	+\sum_{n\geq 1} 
		\big[(\sigma_{n}\cdot\nabla) \diff_r v +   G_{n,r} \big]\, \dot{ \beta}_t^n\quad \text{ on }\Tor^3,
\end{align*} 
where
\begin{align*}
F_r&\stackrel{{\rm def}}{=}
\one_{[s,\tau_j)\times \Gamma}\big\{
\diff_r\big[ b(v,v)+\ftg(\cdot,v)  +f(\cdot,v,\nabla v)\big] +\nabla \cdot ([\diff_r a]\cdot \nabla [s_r v])\big\},\\
G_{n,r}&\stackrel{{\rm def}}{=}
\one_{[s,\tau_j)\times \Gamma}
\big\{g_n(\cdot,v)+([\diff_r \sigma_{n}]\cdot\nabla) [\tr_r v]\big\} , \quad \text{ for }n\geq1,
\end{align*}
 $\ft$ is defined in \eqref{eq:def_f_tilde} and $b(v,v)=- (v\cdot \nabla_{x,y}) v- w(v)\partial_z v$. Note that the term $\fts$ does not give any contribution as it is $z$-independent.

We claim that there exists $C_1=C_1(j)$ such that, for all 
$r\in [-\tfrac{1}{2},\tfrac{1}{2}]\setminus \{0\}$,
\begin{equation}
\label{eq:F_G_truncation_smooth}
\|F_r\|_{L^2((0,T)\times \O;H^{-1}) }
+\|(G_{n,r})_{n\geq 1}\|_{ L^2((0,T)\times \O;L^2(\ell^2))}\leq C_1.
\end{equation}
Note that, if \eqref{eq:F_G_truncation_smooth} holds, then \eqref{eq:bound_diff_uniformh} follows from the It\^o formula (see e.g.\ \cite[Theorem 4.2.5]{LR15}), the parabolicity condition of Assumption \ref{ass:primitive}\eqref{it:ass_primitive2} and a standard integration by parts argument.

In the remaining part of the proof, we show the validity of \eqref{eq:F_G_truncation_smooth}.
As the estimate for $(G_{n,r})_{n\geq 1}$ is analog to the one of $F_r$, we prove only the required estimate for the latter. We split the estimate for $F_r$ in several cases:
\begin{itemize}
\item
\emph{Proof of $\|\diff_r (b(v,v))\|_{L^2((s,\tau_j)\times \Gamma;H^{-1})}\leq C_1(j)$}. By \eqref{eq:def_w} and \eqref{eq:approximation3}, it follows that
$
b(v,v)= \nabla \cdot \big[ v\otimes (v,w(v))\big] \text{ a.e.\ on }(s,\tau_j)\times \Gamma. 
$
Hence
\begin{align*}
&\|\diff_r (b(v,v))\|_{L^2((s,\tau_j)\times \Gamma;H^{-1})}\\
&\quad \lesssim  
\|\diff_r(v\otimes v)\|_{L^2((s,\tau_j)\times \Gamma;L^2)}
+
\|\diff_r(w(v)v)\|_{L^2((s,\tau_j)\times \Gamma;L^2)}\\
&\quad \lesssim_{j}(1 +
\|\diff_r w(v)\|_{L^2((s,\tau_j)\times \Gamma;L^2)})
\end{align*}
where in the last step we used \eqref{eq:approximation3} and \eqref{eq:Leibnitz_rule_discrete}.

The boundedness of the maximal function in $L^2(\T_z)$ and \eqref{eq:def_w} yield
$$
\|\diff_r w(v)\|_{L^2((s,\tau_j)\times \Gamma;L^2)}\lesssim 
\|\nabla_{x,y} v\|_{L^2((s,\tau_j)\times \Gamma;L^2)}\stackrel{\eqref{eq:approximation3}}{\lesssim} j.
$$
\item 

\emph{$\|\nabla \cdot ([\diff_r a]\cdot \nabla [s_r v])\|_{L^2((s,\tau_j)\times \Gamma;H^{-1})}\leq C_1(j)$}. Note that, by Assumption \ref{ass:global} we have $\|\diff_r a\|_{L^2((0,T)\times \T^3;\R^{3\times 3})}\leq K_1$ where $K_1$ is independent of $r$. Thus
\begin{align*}
\|\nabla \cdot ([\diff_r a]\cdot \nabla [s_r v])\|_{L^2((s,\tau_j)\times \Gamma;H^{-1})}
\lesssim K_1\|\nabla v\|_{L^{\infty}((s,\tau_j)\times \Gamma\times \T^3)} \stackrel{\eqref{eq:approximation3}}{\lesssim}  K_1j .
\end{align*}

\item 
\emph{Estimate of the remaining terms of $F_r$}. The remaining terms can be estimated as in the previous items, where one also uses Assumption \ref{ass:primitive}\eqref{it:ass_primitive7}-\eqref{it:ass_primitive8} for the $f$-part and that 
$
\|\diff_r \phi\|_{H^{-1}}\lesssim \|\phi\|_{L^2}
$
for all $\phi\in L^2$.
\end{itemize}
This concludes the proof of Step 2.
\end{proof}

\subsection{Estimate of $\wt{v}$ and $\partial_z v$ -- Proof of \eqref{eq:energy_estimate2}}
\label{ss:main_intermediate_estimates}
We begin by collecting some useful facts. 
As explained in Subsections \ref{ss:reformulation}-\ref{ss:assumption_solution}, $(v,\tau)$ is a $(p,\a_{\crit},\s,q)$-solution to 
\begin{equation}
\label{eq:primitive_localized}
\left\{
\begin{aligned}
		&\partial_t v  		
		= \p\big[\nabla\cdot(a\cdot \nabla v)- (v\cdot \nabla_{x,y})v- w(v)\partial_z v\big]\\
		&\qquad \quad +\p \big[ \ftss v + F\big]
		+\sum_{n\geq 1} \p\big[ (\sigma_{n}\cdot\nabla) v+ G_n\big]\, \dot{ \beta}_t^n, &\text{on }&\Tor^3,\\
&v(\cdot,0)=v_0,&\text{on }&\Tor^3.
\end{aligned}
\right.
\end{equation}
where 
\begin{align*}
\ftss(\cdot) v &\stackrel{{\rm def}}{=} -\frac{h}{2}\sum_{n\geq 1} \nabla \cdot (\qq[(\sigma_n\cdot\nabla) v]\otimes \sigma_n),\\
F&\stackrel{{\rm def}}{=} (b\cdot\nabla) v+f(\cdot,v,\nabla v)+\ftg(\cdot,v) +b_{0,n} \qq[(\sigma_n\cdot\nabla) v],\\
G_n &\stackrel{{\rm def}}{=} g_{n}(\cdot,v).
\end{align*}

Next, we derive an SPDE for the barotropic mode $\wt{v}$. Arguing as in \eqref{eq:primitive_linear_wtv}, by the $z$-independence assumptions in Assumption \ref{ass:primitive}\eqref{it:ass_primitive4} and the fact that $\wt{\p [f]} = \wt{f}$ by \eqref{eq:def_p_q}, 
$(\wt{v}|_{[s,\tau)\times \O},\one_{\Gamma}\tau+ \one_{\O\setminus\Gamma} s)$ is a local $(p,\a_{\crit},\s,q)$-solution to 
\begin{equation}
\label{eq:primitive_vt}
\left\{
\begin{aligned}
		&\partial_t \wt{v}   =\nabla (a\cdot\nabla \wt{v})+\fwt v + 
		\sum_{\eta\in \{x,y\}}\big[ \partial_{z}(a_{z,\eta}\partial_{\eta} \overline{v})  
		-\partial_{\eta}(\overline{a_{\eta,z}\partial_z v}) \big]\\
		&\qquad \qquad \qquad \qquad \qquad
		- (\wt{v} \cdot \nabla_{x,y}) \wt{v} - w(\wt{v})\partial_z \wt{v}+ \mathcal{F}(\wt{v}) + \wt{F}\\
		&\qquad \qquad \qquad \qquad  \ \   
		  +\sum_{n\geq 1}\big(\p [(\sigma_{n}\cdot\nabla) \wt{v}]-\overline{\sigma_{n,z}\partial_z v}+ \wt{G}_{n}\big)\, \dot{ \beta}_t^n, &\text{on }&\Tor^3,\\
&\wt{v}(s,\cdot)=\one_{\Gamma}\wt{v}(s),&\text{on }&\Tor^3,
\end{aligned}
\right.
\end{equation}
where 
\begin{align*}
\fwt v&\stackrel{{\rm def}}{=} -\frac{h}{2}\sum_{n\geq 1} \qq [(\sigma_n\cdot\nabla) v] \partial_z\sigma_{n,z}
\qquad \text{(cf.\ \eqref{eq:def_fwt})},
\\
\mathcal{F}(\wt{v})& \stackrel{{\rm def}}{=} -(\overline{v}\cdot\nabla_{x,y} )\wt{v}-(\wt{v}\cdot\nabla_{x,y} )\overline{v}+ \overline{(\wt{v}\cdot\nabla_{x,y})\wt{v}}+\overline{(\nabla_{x,y}\cdot \wt{v})\wt{v}}.
\end{align*}
In deriving the SPDE \eqref{eq:primitive_vt} we also used that, due to \eqref{eq:incompressibility} and \eqref{eq:def_w}, $w(v)=w(\wt{v})$, $\partial_z v=\partial_z \wt{v}$ and integrating by parts
$$
\overline{(v\cdot\nabla_{x,y})v+w(v)\partial_z v} = 
(\overline{v}\cdot \nabla_{x,y})\overline{v}+\overline{(\wt{v}\cdot\nabla_{x,y})\wt{v}+(\nabla_{x,y}\cdot \wt{v})\wt{v}}.
$$

By Assumption \ref{ass:primitive}\eqref{it:ass_primitive7}-\eqref{it:ass_primitive8} and Lemma \ref{l:energy_estimate}, it readily follows that 
\begin{align}
\label{eq:def_nn}
\nn(r)
&\stackrel{{\rm def}}{=} (1+\| v(r)\|_{H^1}^2+\|F(r)\|_{L^2}^2+ \|G(r)\|_{H^1(\ell^2)}^2),\\
\label{eq:nn_estimate_proof}
\E \Big[\one_{\Gamma} \int_s^{\tau\wedge T}\one_{\Gamma} \nn(r) \,\dd r\Big] 
&\lesssim \big(1+\E\big[\one_{\Gamma}\|v(s)\|_{L^2}^2\big]\big),&
\end{align}
where the implicit constant in \eqref{eq:nn_estimate_proof} is independent of $\Gamma$ and $v_0$.

\begin{proof}[Proof of \eqref{eq:energy_estimate2} in Lemma \ref{l:main_intermediate_estimate_wt}]
The proof follows is similar to the one of \cite[Lemma 5.3]{Primitive1}. Therefore we only give some comments on the changes needed. 

Again, to prove \eqref{eq:energy_estimate2}, we use the stochastic Gronwall lemma of \cite[Lemma A.1]{AV_variational}. Therefore, let us fix two stopping times $\eta,\xi$ such that $s\leq \eta\leq \xi\leq \tau_j$ a.s.\ for some $j\geq 1$, where $\tau_j$ is as in \eqref{eq:tau_j_localization_main_proof}. It remains to prove the existence of a constant $R_0>0$, independent of $(\Gamma,j,\eta,\xi,v_0)$
 such that 
\begin{align}
\label{eq:gronwall_claim_intermediate_estimate}
&\E\sup_{r\in [\eta,\xi]}\one_{\Gamma}\big( \|\partial_z v(r)\|_{L^2}^2 + \|\wt{v}(r)\|_{L^4}^4\big)  \\
\nonumber
&\qquad \qquad
+ \E\int_{\eta}^{\xi}\one_{\Gamma}  (\|\partial_z v(r)\|_{H^1}^2+\big\||\wt{v}(r)||\nabla \wt{v}(r)|\big\|_{L^2}^2)\,\dd r\\
\nonumber
&\qquad\qquad\qquad\qquad \leq  R_0\Big(1+ \E[\one_{\Gamma}( \|\partial_z v(\eta)\|_{L^2}^2 + \|\wt{v}(\eta)\|_{L^4}^4)]\Big)\\
\nonumber
 &\qquad\qquad\qquad\qquad+R_0\E\int_{\eta}^{\xi}\one_{\Gamma}  \nn(r) \big(1+ \|\partial_z v(r)\|_{L^2}^2 + \|\wt{v}(r)\|_{L^4}^4\big)\,\dd r.
\end{align}

Indeed, if \eqref{eq:gronwall_claim_intermediate_estimate} holds, then \cite[Lemma A.1]{AV_variational} yields for all $\ell,L>0$
\begin{align*}
\P(\Gamma \cap \{\X_{\tau_j}\geq \ell\}) &
\leq  8R_0\frac{e^{8R_0 L}}{\ell} (1+\E[\one_{\Gamma} \|\partial_z v(s)\|_{L^2}^2 + \|\wt{v}(s)\|_{L^4}^4])\\
&\qquad\qquad + \P\Big(\Gamma\cap \Big\{\int_{s}^{\tau_j} \nn(r)\,\dd r \geq \frac{L}{2R_0}  \Big\}\Big)\\
&\stackrel{\eqref{eq:nn_estimate_proof}}{\lesssim_{R_0}}  \frac{e^{8R_0 L}}{\ell} 
(1+\E[\one_{\Gamma} \|\partial_z v(s)\|_{L^2}^2 + \|\wt{v}(s)\|_{L^4}^4])\\
&\qquad \qquad+ \frac{1}{L}(
1+\E\big[\one_{\Gamma}\|v(s)\|_{L^2}^2\big]),
\end{align*}
where $\X$ is as in the statement of Lemma \ref{l:main_intermediate_estimate_wt}.
Now,  \eqref{eq:energy_estimate2} follows by
taking $L=\frac{1}{8R_0}\log(\frac{\ell}{\log(\ell)})$ for $\ell$ large, and letting $j\to \infty$ in the previous.

Hence, it remains to prove \eqref{eq:gronwall_claim_intermediate_estimate}. We split the proof into three steps.

\emph{Step 1: (Estimates of $\partial_z v$). There exists $C_1\geq 1$ independent of $(\Gamma,j,\eta,\xi,v_0)$ such that}
\begin{align}
\label{eq:estimate_vz_proof_inequality}
\E \Big[\sup_{r\in [\eta,\xi]} \one_{\Gamma}\|\partial_z v(r)\|_{L^2}^2\Big]
&+\E\int_{\eta}^{\xi}\one_{\Gamma}\|\nabla \partial_z v (r)\|_{L^2}^2\,\dd r\\
\nonumber
& \leq C_1\big(1+\E[\one_{\Gamma} \|\partial_z v(\eta)\|_{L^2}^2]\big)\\
\nonumber
&+ C_1 \E\int_{\eta}^{\xi} \one_{\Gamma}\nn(r)(1+\|\partial_z v(r)\|_{L^2}^2)\,\dd r\\
\nonumber
& + C_1  \E\int_{\eta}^{\xi} \one_{\Gamma}\big\||\wt{v}(r)||\nabla \wt{v}(r)|\big\|_{L^2}^2\,\dd r.
\end{align}

The estimate \eqref{eq:estimate_vz_proof_inequality} follows as in \cite[Step 1, Lemma 5.3]{Primitive1} by applying the It\^o formula to $v\mapsto \|\partial_z v\|_{L^2}^2$. Note that the applications of the formula is allowed as \eqref{eq:regularity_of_partialz} holds. Here one also uses that $\E\int_{\eta}^{\xi}\one_{\Gamma}\|\nabla \partial_z v (r)\|_{L^2}^2\,\dd t<\infty$ by \eqref{eq:boundedness_L2_partialzv} and $\xi\leq \tau_j$ a.s.\ for some $j\geq1$. Note that the condition $(\partial_z \sigma_n)_{n\geq 1}\in L^{\infty}(\ell^2)$ in Assumption \ref{ass:global} can be used to estimate lower-order terms of the form $(\partial_z \sigma_{n}\cdot \nabla) v=(\partial_z \sigma_{n,z}) (\partial_z v)$, cf.\ the arguments between \cite[eq.\ (5.41)-(5.42)]{Primitive1}. Similar comments hold for the conditions $\partial_z a_{z,\eta},\partial_z a_{\eta,z}\in L^{\infty}$ in Assumption \ref{ass:global}, cf.\ \cite[Section 7]{Primitive1}.

\emph{Step 2: (Estimates of $\wt{v}$). Let $C_1$ be as in Step 1. Then there exists $C_2\geq 1$ independent of $(\Gamma,j,\eta,\xi,v_0)$ such that}
\begin{align}
\label{eq:estimate_wt_proof_inequality}
\E \Big[\sup_{r\in [\eta,\xi]} \one_{\Gamma}\|\wt{v}(r)\|_{L^4}^4\Big]
&+\E\int_{\eta}^{\xi}\one_{\Gamma}\big\||\wt{v}(r)||\nabla \wt{v}(r)|\big\|_{L^2}^2\,\dd t\\
\nonumber
 &\leq C_2\big(1+\E[\one_{\Gamma} \|\wt{ v}(\eta)\|_{L^4}^4]\big)\\
\nonumber
&+ C_2 \E\int_{\eta}^{\xi} \one_{\Gamma}\nn(r)(1+ \|\wt{v}(r)\|_{L^4}^4)\,\dd t \\
 \nonumber
&
+ \frac{1}{4C_1}\E\int_{\eta}^{\xi} \one_{\Gamma}\|\nabla\partial_z v\|_{L^2}^2\,\dd t.
\end{align}

As in the previous steps, the proof is analog to the one of \cite[Step 3, Lemma 5.3]{Primitive1} (see also \cite[Step 3, Subsection 1.4.3]{HH20_fluids_pressure}). We content ourselves to estimate the terms deriving from the terms $\partial_{z}(a_{z,\eta}\partial_{\eta} \overline{v}) $ and $-\partial_{\eta}(\overline{a_{\eta,z}\partial_z v})$, for which we need a different argument compared to \cite{Primitive1}. 

Recall that, in \cite[Step 3, Lemma 5.3]{Primitive1}, one applies the It\^o formula to $\wt{v}\mapsto \|\wt{v}\|_{L^4}^4$. Therefore, the above terms give rise to the contributions: For $\eta\in \{x,y\}$, 
\begin{align*}
A_1 &\stackrel{{\rm def}}{=}\E \int_{\eta}^{\xi}\one_{\Gamma}
\int_{\T^3} |\wt{v}|^2 \wt{v} \cdot \big[\partial_{z}(a_{z,\eta}\partial_{\eta} \overline{v}) \big]\,\dd x\dd t ,\\
A_2 &\stackrel{{\rm def}}{=}\E \int_{\eta}^{\xi}\one_{\Gamma}
\int_{\T^3} \partial_{\eta} \big[ |\wt{v}|^2 \wt{v}\big] \cdot \overline{a_{\eta,z}\partial_z v} \,\dd x\dd t.
\end{align*}

An inspection of the proof of \cite[Step 3, Lemma 5.3]{Primitive1} shows that to prove \eqref{eq:estimate_wt_proof_inequality} is it enough to obtain the following claim: 

For each $\nu_0>0$ there exists $C_{\nu_0}>0$ independent of $(\eta,\xi,j,v_0,\Gamma)$ such that 
\begin{align}
\label{eq:A1_A2_claim}
A_1 + A_2 
&\leq \nu_0 \E\int_{\eta}^{\xi} \big\||\wt{v}(r)| |\nabla \wt{v}(r)|\big\|_{L^2}^2\,\dd t \\
\nonumber
 &+ C_{\nu_0}\E\int_{\eta}^{\xi}\nn (r)\big(1+ \|\wt{v}(r)\|_{L^4}^4\big)\, \dd t.
\end{align}

To prove \eqref{eq:A1_A2_claim}, we split the arguments into three substeps. Below $\eta\in \{x,y\}$ and $\nu_0>0$ are fixed and $K$ is as in Assumption \ref{ass:global}.

\emph{Substep 2a: Estimate of $A_1$}. To begin note that $\partial_{z}(a_{z,\eta}\partial_{\eta} \overline{v}) =[\partial_{z}a_{z,\eta} ]\partial_{\eta} \overline{v} $ as $\overline{v}$ is $z$-independent. Hence, the H\"older inequality shows
\begin{align*}
\Big|\int_{\T^3} |\wt{v}|^2 \wt{v} \cdot \big[\partial_{z}(a_{z,\eta}\partial_{\eta} \overline{v}) \big]\,\dd x\Big|
&\leq \big\||\wt{v}|^3 \big\|_{L^2_{x,y}(L^1_z)} \|\partial_z a_{z,\eta}\|_{L^{\infty}} \|\partial_{\eta}\overline{v}\|_{L^2_{x,y}}.
\end{align*}
Note that the first term on the RHS of the above estimate can be estimated by:
\begin{align*}
\big\||\wt{v}|^3 \big\|_{L^2_{x,y}(L^{4/3}_z)}
&= \big\||\wt{v}|^2\big\|_{L^3_{x,y}(L^2_z)}^{3/2}		\\
& \stackrel{(i)}{\lesssim} 
\big\||\wt{v}|^2\big\|_{H^{1/3}_{x,y}(L^2_z)}^{3/2}\\
&\stackrel{(ii)}{\lesssim}  
\big\||\wt{v}|^2\big\|_{L^2}^{3/2}+ \big\||\wt{v}|^2\big\|_{L^2}\big\|\nabla |\wt{v}|^2\big\|_{L^2}^{1/2}\\
&\lesssim 
\|\wt{v}\|_{L^4}^{3}+ \|\wt{v}\|_{L^4}^2\big\||\wt{v}| |\nabla \wt{v}|\big\|_{L^2}^{1/2}
\end{align*}
where in $(i)$ we used Sobolev embeddings, and in $(ii)$ that $H^{1/3}\embed H^{1/3}_{x,y}(L^2_z)$ (see Lemma \ref{l:anisotropic_B}\eqref{it:anisotropic_B3}) and standard interpolation inequalities.

Since $\|\partial_{\eta}\overline{v}\|_{L^2_{x,y}}\lesssim \|\nabla v\|_{L^2}$,  the Young inequality and the above estimates yield, for all $\mu_0>0$,
\begin{align}
\label{eq:A1_estimate}
\Big|\int_{\T^3} |\wt{v}|^2 \wt{v} \cdot \big[\partial_{z}(a_{z,\eta}\partial_{\eta} \overline{v}) \big]\,\dd x\Big|
\leq \frac{\nu_0}{2}  \big\||\wt{v}| |\nabla \wt{v}|\big\|_{L^2}^2 + C_{\nu_0}' \|\nabla v\|_{L^2}^{4/3}\|\wt{v}\|_{L^4}^{8/3},
\end{align}
where $C_{\nu_0}'$ is independent of $(\eta,\xi,j,v_0,\Gamma)$.

\emph{Substep 2b: Estimate of $A_2$}. We begin with a preliminary observation. Note that, integrating by parts, $\overline{a_{\eta,z}\partial_z v}=-\overline{[\partial_z a_{\eta,z}] v}$. Hence, by the Sobolev embedding $H^{1}_{x,y}\embed L^4_{x,y}$ and Lemma \ref{l:anisotropic_B}\eqref{it:anisotropic_B3}, 
$$
\|\overline{a_{\eta,z}\partial_z v}\|_{L^4_{x,y}}\lesssim_K \|v\|_{L^4_{x,y}(L^2_z)}\lesssim \|v\|_{H^1}.
$$
Moreover, since $|\partial_{\eta} [ |\wt{v}|^2 \wt{v}] |\lesssim |\wt{v}|^2 |\nabla \wt{v}|$ pointwise, we have
\begin{align}
\label{eq:A2_estimate}
\Big|\int_{\T^3} \partial_{\eta} \big[ |\wt{v}|^2 \wt{v}\big] \cdot \overline{a_{\eta,z}\partial_z v} \,\dd x \Big|
&\leq \big\| |\wt{v}| |\nabla\wt{v}|\big\|_{L^2} \|\wt{v}\|_{L^4} \|\overline{[\partial_z a_{\eta,z}] v}\|_{L^4}\\
\nonumber
&\leq \frac{\nu_0}{2} \big\| |\wt{v}| |\nabla\wt{v}|\big\|_{L^2}^2 + C_{\nu_0}''\|v\|_{H^1}^2\|\wt{v}\|_{L^4}^2.
\end{align}
where $C_{\nu_0}''$ is independent of $(\eta,\xi,j,v_0,\Gamma)$.

\emph{Substep 2c: Proof of \eqref{eq:A1_A2_claim}}. It immediately follows from \eqref{eq:A1_estimate} and \eqref{eq:A2_estimate}, the definition of $\nn$ in \eqref{eq:def_nn}, and Young's inequality.

\emph{Step 3: Proof of \eqref{eq:gronwall_claim_intermediate_estimate}}. The estimate \eqref{eq:gronwall_claim_intermediate_estimate} follows by multiplying the equation \eqref{eq:estimate_vz_proof_inequality} by $\frac{1}{2C_1}$ and then summing the corresponding inequalities, i.e., more schematically
$$
\frac{1}{2C_1}\eqref{eq:estimate_vz_proof_inequality} + \eqref{eq:estimate_wt_proof_inequality} \Longrightarrow \eqref{eq:gronwall_claim_intermediate_estimate}.
$$

Indeed, the terms $\frac{1}{2}  \E\int_{\eta}^{\xi} \one_{\Gamma}\big\||\wt{v}||\nabla \wt{v}|\big\|_{L^2}^2\,\dd t$ and $\frac{1}{4C_1}\E\int_{\eta}^{\xi} \one_{\Gamma}\|\nabla \partial_z v\|_{L^2}^2\,\dd t$ appearing on the RHS of the estimate $
\frac{1}{2C_1}\eqref{eq:estimate_vz_proof_inequality} + \eqref{eq:estimate_wt_proof_inequality}$ can be absorbed on the LHS of the corresponding estimates, as on the LHS the terms $  \E\int_{\eta}^{\xi} \one_{\Gamma}\big\||\wt{v}||\nabla \wt{v}|\big\|_{L^2}^2\,\dd t$ and $\frac{1}{2C_1}\E\int_{\eta}^{\xi} \one_{\Gamma}\|\nabla \partial_z v\|_{L^2}^2\,\dd t$ are present. Note that the previous argument is valid also because $  \E\int_{\eta}^{\xi} \one_{\Gamma}\big\||\wt{v}||\nabla \wt{v}|\big\|_{L^2}^2\,\dd t<\infty$ and $\E\int_{\eta}^{\xi} \one_{\Gamma}\|\nabla \partial_z v\|_{L^2}^2\,\dd t<\infty$ since $s\leq \eta\leq \xi\leq \tau_j$ a.s.\ for some $j\geq 1$, cf.\ \eqref{eq:tau_j_localization_main_proof} and \eqref{eq:boundedness_L2_partialzv}.
\end{proof}

\section{The non--isothermal case}
\label{s:non_isothermal}
As announced below Theorem \ref{t:global_intro}, the results stated in Section \ref{s:statement} have a fairly straightforward extension to non-isothermal PEs of the form on the three-dimensional torus: 
\begin{equation}
\label{eq:primitive_temperature}
\left\{
\begin{aligned}
		&\partial_t v  -\big[\nabla\cdot(a\cdot \nabla v) + (b\cdot\nabla) v\big]\\
		&\qquad \qquad = \big[-\ph p - (v\cdot \nabla_{x,y})v- w\partial_z v+\ps\wt{p} \big] \\ 
		&\qquad \qquad \qquad\qquad\quad
				+\sum_{n\geq 1} [-\ph \wt{p}+ (\sigma_{n}\cdot\nabla) v]\, \dot{ \beta}_t^n, &\text{ on }&\Tor^3,\\
						&\partial_t \theta   -\nabla(d\cdot\nabla \theta)=
						\big[(k\cdot\nabla) \theta -(v\cdot\nabla_{x,y})\theta -w \partial_z \theta	\big]\\
						&\qquad \qquad \qquad\qquad\qquad\qquad\qquad\ \ 
						+\sum_{n\geq 1}(\nt_{n}\cdot\nabla) \theta\, \dot{ \beta}_t^n, \quad &\text{ on }&\Tor^3,\\
						&\partial_z p + \theta=0, \qquad  \partial_z \wt{p}_n=0,&\text{ on }&\Tor^3,\\
&\nabla_{x,y} \cdot v +\partial_z w=0,&\text{ on }&\Tor^3,\\
&v(0,\cdot)=v_0,\qquad \theta(0,\cdot)=\theta_0,&\text{ on }&\Tor^3.
\end{aligned}
\right.
\end{equation}
As in Section \ref{s:statement}, the above system is complemented with the boundary conditions $w|_{\T^2_{x,y}\times \{0,1\}}=0$ and
\begin{equation}
\label{eq:def_h_statement_temp}
\ps \wt{p}\stackrel{{\rm def}}{=}\frac{h}{2}\sum_{n\geq 1} 
\big[\nabla \cdot (\ph \wt{p}_n\otimes \sigma_n) + b_{0,n} \ph \wt{p}_n\big] \ \ \text{ where }\ \ h\in [ -1,\infty),
\end{equation} 
and $(a,b,b_0,d,k)$ are specified in Assumption \ref{ass:primitivetemp} below.
As commented below Corollary \ref{cor:stratonovich} (see also \cite[Section 8]{Primitive1}), the coefficients $(d,k)$ appear in case of transport noise in the Stratonovich form. As for \eqref{eq:primitive}, we can allow (globally) Lipschitz nonlinear lower order terms in \eqref{eq:primitive_temperature}. We omit this for brevity.


From a physical point of view, transport noise in the context of reaction-diffusion equations can be motivated via separation of scale, see e.g.\ in \cite[Subsection 1.3]{AV22_localRD}. Let us recall that transport noise also allows for maximum principles, see e.g.\ \cite[Subsection 2.4 and Appendix A]{AV22_localRD} and \cite[Section 4]{Kry13}, which is of physical interest as  $\theta$ represents the temperature.

\subsection{Reformulation, main assumption and $(p,\a,\s_0,\s_1,q)$-solutions}
As for \eqref{eq:primitive}, the problem \eqref{eq:primitive_temperature} can be reformulated only in terms of $(v,\theta)$. 
To begin, note that, integrating the hydrostatic balance $\partial_z p+\theta=0$, we obtain
$$
p(t,\x)= p_s(t,x,y) + \int_{0}^z \theta(t,x,y,z')\,\dd z' \ \ \text{ a.s.\ for all }t\in \R_+\text{ and }\x\in \T^3.
$$
Note that $p_s$ depends only on the horizontal variables, and therefore it plays the role of the surface pressure in \eqref{eq:primitive}. In light of the previous observations, the reformulation of \eqref{eq:primitive} is similar to the one of \eqref{eq:primitive} given in Subsection \ref{ss:reformulation}. Indeed, letting $\p, w(v)$ and $\ft$ be as in Subsection \ref{ss:reformulation}, one can readily check that \eqref{eq:primitive_temperature} is formally equivalent to the following generalization version of \eqref{eq:primitive2}:
\begin{equation}
\label{eq:primitive2_temp}
\left\{
\begin{aligned}
		&\partial_t v  		
		= \p\big[\nabla\cdot(a\cdot \nabla v)+(b\cdot\nabla) v- (v\cdot \nabla_{x,y})v- w(v)\partial_z v\big]\\
		&\  \  +\p\Big[\ft(\cdot,v) +\int_0^{\cdot}\ph \theta(\cdot,z')\,\dd z'\Big]
		+\sum_{n\geq 1} \p\big[ (\sigma_{n}\cdot\nabla) v\big]\, \dot{ \beta}_t^n, &\text{on }&\Tor^3,\\
		&\partial_t \theta
		=\big[ \nabla\cdot(d\cdot \nabla\theta)+ (k\cdot\nabla)\theta - (v\cdot \nabla_{x,y})\theta- w(v)\partial_z \theta\big]		\\
		&\qquad \qquad \qquad \qquad\qquad \qquad \qquad\qquad \ \  
		+\sum_{n\geq 1} (\nt_{n}\cdot\nabla) \theta\, \dot{ \beta}_t^n, &\text{on }&\Tor^3,\\
&v(0,\cdot)=v_0, \qquad \theta(0,\cdot)=\theta_0,&\text{on }&\Tor^3.
\end{aligned}
\right.
\end{equation}
We investigate the above reformulation of \eqref{eq:primitive_temperature} under the following conditions which extend the one in Assumption \ref{ass:primitive} to the current situation.

\begin{assumption} 
\label{ass:primitivetemp}
We say that Assumption \ref{ass:primitivetemp}$(p,\s_0,\s_1,q)$ holds if the following conditions are satisfied.
\begin{enumerate}[{\rm(1)}]
\item $h\in [-1,\infty)$.
\item $\delta_0\in [0,1)$, $\delta_1\in [0,2)$ and either $[\, p\in (2,\infty)\text{ and }q\in [2,\infty)\, ]$ or $[\, p=q=2\, ]$.
\item\label{it:primitivetemp1} For all $n\geq 1$ the following mapping are $\Progress\otimes \Borel(\T^3)$-measurable
\begin{align*}
 a=(a_{\eta,\xi})_{\eta,\xi\in \{x,y,z\}}, 
 d=(d_{\eta,\xi})_{\eta,\xi\in \{x,y,z\}}&:\R_+\times \O\times \T^3\to \R^{3\times 3},\\
b=(b_{\xi})_{\xi\in \{x,y,z\}},\  k=(k_{\xi})_{\xi\in \{x,y,z\}}	, &\\
 \sigma_n=(\sigma_{n,\xi})_{\xi\in \{x,y,z\}},\ \nt_n=(\nt_{n,\xi})_{\xi\in \{x,y,z\}}
&:\R_+\times \O\times \T^3\to \R^3,\\
 b_{0,n} =&:\R_+\times \O\times \T^3\to \R.
\end{align*}
\item\label{it:primitivetemp2} For all $n\geq 1$, the mappings
$$
(a_{\eta,\xi})_{\eta,\xi\in \{x,y\}},  (\sigma_{n,\xi})_{\xi\in \{x,y\}}   \text{ are independent of }z\in \T_z.
$$
\item\label{it:primitivetemp3} There exist $M,\g_0,\g_1>0$ such that $\g_0> 1-\s_0$, $\g_1>|1-\s_1|$, and for a.a.\ $(t,\om)\in  \R_+\times \O$,
\begin{align*}
\|a(t,\om,\cdot)\|_{C^{\g_0}(\Tor^3;\R^{3\times 3})}
+\|(\sigma_n(t,\om,\cdot))_{n\geq 1}\|_{C^{\g_0}(\Tor^3;\ell^2)}\leq M   ,&\\
\|d(t,\om,\cdot)\|_{C^{\g_1}(\Tor^3;\R^{3\times 3})}
+\|(\nt_n(t,\om,\cdot))_{n\geq 1}\|_{C^{\g_1}(\Tor^3;\ell^2)}\leq M   .&
\end{align*}

\item\label{it:primitivetemp4} There exists $\nu\in (0,1)$ such that for all $\lambda=(\lambda_{\xi})_{\xi\in \{x,y,z\}}\in \R^3$
\begin{align*}
\sum_{\eta,\xi\in \{x,y,z\}} \Big(a_{\eta,\xi}-\frac{1}{2}\sum_{n\geq 1} \sigma_{n,\eta} \sigma_{n,\xi}\Big) 
\lambda_{\eta}\lambda_{\xi}  \geq  \ellip|\lambda|^2 \ \ \text{ a.e.\ on }\R_+\times \O,&\\
\sum_{\eta,\xi\in \{x,y,z\}} \Big(d_{\eta,\xi}-\frac{1}{2}\sum_{n\geq 1} \nt_{n,\eta} \nt_{n,\xi}\Big) 
\lambda_{\eta}\lambda_{\xi}  \geq  \ellip|\lambda|^2 \ \ \text{ a.e.\ on }\R_+\times \O.&
\end{align*}
 \item\label{it:primitivetemp5} There exists $N>0$ such that,  for a.a.\ $(t,\om)\in \R_+\times \O$,
 \begin{equation*}
 \|b(t,\om,\cdot)\|_{L^{\infty}(\T^3;\R^3)}+
 \|(b_{0,n}(t,\om,\cdot))_{n\geq 1}\|_{L^{\infty}(\T^3;\ell^2)}\leq N .
 \end{equation*}
 \end{enumerate}
\end{assumption} 

Compared to Assumption \ref{ass:primitive}, the additional parameter $\delta_1$ rules the space regularity of $\theta$,  measured in spaces of the form $L^{p}_t (H_{\x}^{2-\s_1,(q,2)})$, while $\s_0$ plays the role of $\s$ in the isothermal case. Let us remark that no further restrictions on $\s_1$ are enforced below, while we shall assume that $\s_0<\frac{1}{2}$ as in Section \ref{s:statement}. In particular, if $\delta_1\geq 1$, then $\theta$ has regularity $\leq 1$ and in this scenario, the weak setting for the temperature equation is preferable (cf.\ \cite{Primitive1}). 
Hence, depending on $\s_1$, we rewrite the nonlinearity in the $\theta$-equation of \eqref{eq:primitive_temperature} as follows:
\begin{equation}
\label{eq:def_b_delta_one}
b_{\delta_1}(v,\theta)\stackrel{{\rm def}}{=}
\left\{
\begin{aligned}
&(v\cdot\nabla_{x,y})\theta +w(v)\partial_z \theta  \quad & \text{ if } \ &\delta_1 \in [\delta_0,1],\\
&\nabla_{x,y}(v\,\theta)+\partial_z(w(v)\,\theta)  \quad & \text{ if } \ &\delta_1 \in [1,1+\delta_0].
\end{aligned}
\right.
\end{equation}
The (formal) equivalence of $b_{\delta_1}(v,\theta)$ with $(v\cdot\nabla_{x,y})\theta +w(v)\partial_z \theta $ follows from $\nabla \cdot u=0$ where $u=(v,w(v))$.

Similar to Definition \ref{def:solution}, we define $(p,\a,\s_0,\s_1,q)$-solutions to \eqref{eq:primitive_temperature}.
Below $\Br_{\ell^2}$ is the $\ell^2$-cylindrical Brownian motion associated to $(\beta^n)_{n\geq 1}$, see \eqref{eq:def_Br}.
 
\begin{definition}[$(p,\a,\s_0,\s_1,q)$-solution]
\label{def:solution_temp}
Suppose that Assumption \ref{ass:primitivetemp}$(p,\s_0,\s_1,q)$ holds. Let $\a\in[0,\frac{p}{2}-1)$ if $p>2$ or $\a=0$ otherwise.
Let $\tau$ be a stopping time, and let $v:[0,\tau)\times \O\to \Hs^{2-\s_0,(q,2)}(\T^3)$, $\theta:[0,\tau)\times \O\to H^{2-\s_1,(q,2)}(\T^3)$ be a stochastic processes. 
\begin{enumerate}[{\rm(1)}]
\item We say that $((v,\theta),\tau)$ is a \emph{local $(p,\a,\s_0,\s_1,q )$-solution} to \eqref{eq:primitive_temperature} if the there exists a sequence of stopping times $(\tau_j)_{j\geq 1 }$ for which the following hold for all $j\geq 1$:
\begin{itemize}
\item $\tau_j\leq \tau$ a.s.;
\item $\one_{[0,\tau_j]} v$ is progressively measurable, and a.s.\ 
\begin{align*}
v\in L^p(0,\tau_j,w_{\a};H^{2-\s_0,(q,2)}), \qquad 
\theta\in L^p(0,\tau_j,w_{\a};H^{2-\s_1,(q,2)}),&\\
-(v\cdot\nabla_{x,y})v - w(v)\partial_z v+\ftg(\cdot,v)+\int_0^{\cdot}\ph \theta(\cdot,z) \,\dd z 
\in L^p(0,\tau_j,w_{\a};H^{-\s_0,(q,2)}),&\\
b_{\delta_1}(v,\theta)
\in L^p(0,\tau_j,w_{\a};H^{-\s_1,(q,2)});&
\end{align*}
\item for all $t\in [0,\tau_j]$ and a.s.\ 
\begin{align*}
v(t)-v_0 
&=\int_0^t  \p\Big[\nabla\cdot(a\cdot\nabla v)+ (b\cdot\nabla) v-(v\cdot\nabla_{x,y})v- w(v)\partial_z v\\
&\qquad \qquad \qquad\qquad \qquad
+\ft(\cdot,v)+\int_0^{\cdot}\ph \theta(\cdot,z') \,\dd z' \Big]\,\dd s \\
&\qquad \qquad\qquad \qquad \qquad 
+\int_0^t \one_{[0,\tau_j]} \big(\p[(\sigma_n\cdot\nabla) v\big]\big)_{n\geq 1}\,\dd \Br_{\ell^2}(s),\\
\theta(t)-\theta_0 
&=\int_0^t  \big[\nabla\cdot(d\cdot\nabla v)+ (k\cdot\nabla) v+ b_{\delta_1}(v,\theta) \big]\,\dd s \\
&\qquad \qquad \qquad\qquad \qquad 
+\int_0^t \one_{[0,\tau_j]} \big((\nt_n\cdot\nabla) \theta\big)_{n\geq 1}\,\dd \Br_{\ell^2}(s),
\end{align*}
\end{itemize}
where $w(v)$ and $\ft(\cdot,v), \ftg(\cdot,v) $ are as in \eqref{eq:def_w} and \eqref{eq:def_f_tilde}-\eqref{eq:def_f_tilde2}, respectively.
\item A local $(p,\a,\s_0,\s_1,q)$-solution  to \eqref{eq:primitive_temperature} is said to be a \emph{$(p,\a,\s_0,\s_1,q)$-solution} if any other local $(p,\a,\s_0,\s_1,q)$-solution $(v',\tau')$  to \eqref{eq:primitive_temperature} we have $\tau'\leq \tau$ a.s.\ and $v=v'$ a.e.\ on $[0,\tau')\times \O$.
\item A $(p,\a,\s_0,\s_1,q)$-solution $(v,\tau)$  to \eqref{eq:primitive_temperature} is said to be \emph{global} if $\tau=\infty$ a.s. 
\end{enumerate}
\end{definition}

\subsection{Statement of main results -- Non-isothermal case}
\label{ss:statement_non_isothermal}
In this subsection, we state some extensions of the results in Section \ref{s:statement}. We begin with the following local existence which extends Theorem \ref{t:local} to the current situation.

\begin{theorem}[Local existence and uniqueness in critical spaces -- Non-isothermal case]
\label{t:local_temp}
Assume that $\s_1\in [\s_0,1+\s_0]$ and 
\begin{equation}
\label{eq:assumptions_critical_setting_temp}
\begin{aligned}
\text{ either }\ &\big[ q=p=2 \text{ and }\s_0=0\big],\\
 \text{ or }\  & \Big[
 \s_0\in \Big(0,\frac{1}{2}\Big),\   q\in \Big(\frac{2}{2-\s_0},\frac{2}{1-\s_0}\Big) \text{ and }\
 \frac{1}{p}+\frac{1}{q}+\frac{\delta_0}{2}\leq 1\Big], 
\end{aligned}
\end{equation}
Let Assumption \ref{ass:primitive}$(p,\s_0,\s_1,q)$ be satisfied and set $\a=\a_{\crit}\stackrel{{\rm def}}{=} p(1-\frac{1}{q}-\frac{\delta_0}{2})-1$. 
Then for each 
\begin{equation}
\label{eq:regularity_data_temp}
v_0\in L^0_{\F_0}(\O;\Bs^{2/q}_{(q,2),p}(\T^3)) \quad \text{ and }\quad  \theta_0\in L^0_{\F_0}(\O;B^{2/q-\s_1+\s_0}_{(q,2),p}(\T^3))
\end{equation}
the problem \eqref{eq:primitive_temperature} has a (unique) $(p,\a_{\crit},\s,q)$-solution
$((v,\theta),\tau)$ such that  a.s.\ $\tau>0$  and for all $\eta\in [0,\frac{1}{2})$
\begin{align}
\label{eq:reg_critical1_temp}
(v,\theta)&\in  H^{\eta,p}_{\loc}([0,\tau),w_{\a_{\crit}};\Hs^{2-\s_0-2\eta,(q,2)}(\Tor^3)\times H^{2-\s_1-2\eta,(q,2)}(\T^3)) ,\\
\label{eq:reg_critical2_temp}
(v,\theta)&\in  C([0,\tau);\Bs^{2/q}_{(q,2),p}(\Tor^3)\times B^{2/q-\s_1+\s_0}_{(q,2),p}(\T^3)).
\end{align}
\end{theorem}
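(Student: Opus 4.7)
The plan is to extend Theorem \ref{t:local} by recasting the coupled system \eqref{eq:primitive2_temp} as a single stochastic evolution equation on the product Banach space $X_0 = \Hs^{-\s_0,(q,2)}(\T^3) \times H^{-\s_1,(q,2)}(\T^3)$ with domain $X_1 = \Hs^{2-\s_0,(q,2)}(\T^3) \times H^{2-\s_1,(q,2)}(\T^3)$, and then applying the abstract local well-posedness result \cite[Theorem 4.8]{AV19_QSEE_1} in the same critical way as in Proposition \ref{prop:local}. The unknown is $U = (v,\theta)$; the linear operators $A,B$ are chosen block-diagonal, with the $v$-block as in \eqref{eq:def_ABFG} and the $\theta$-block being $\theta \mapsto -\nabla\cdot(d\cdot\nabla \theta) - (k\cdot\nabla)\theta$ and $\theta \mapsto ((\nt_n\cdot\nabla)\theta)_{n\geq 1}$, respectively.

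First, I would absorb the pressure-temperature coupling $\int_0^z \ph \theta(\cdot,z')\,\dd z'$ into $A$ as an off-diagonal lower-order perturbation: the $z$-antiderivative gains one derivative in $z$, so this linear map sends $H^{2-\s_1,(q,2)}$ continuously into $H^{1-\s_1,(q,2)}$, and the latter embeds into $\Hs^{-\s_0,(q,2)}$ precisely under the standing hypothesis $\s_1 \leq 1+\s_0$; its differential order is then strictly less than $2$, so the perturbation result \cite[Theorem 3.2]{AV_torus} absorbs it into the principal part. Stochastic maximal $L^p(L^q)$-regularity for this (essentially block-diagonal) linearization then reduces to two scalar problems: the $v$-block is covered by Lemma \ref{l:smr}, and the $\theta$-block follows from Theorem \ref{t:smr_anisotropic} applied to $(d,\nt)$, using that Assumption \ref{ass:primitivetemp}\eqref{it:primitivetemp3} with $\g_1 > |1-\s_1|$ delivers the H\"older smoothness needed in both ranges $\s_1 \leq 1$ and $\s_1 > 1$ (in the latter case the weak form \eqref{eq:def_b_delta_one} of $b_{\s_1}$ must be used).

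Second, the nonlinear estimates of Lemma \ref{l:nonlinearities} must be extended to cover $b_{\s_1}(v,\theta)$. The target is a bilinear bound of the form
\[
\|b_{\s_1}(v,\theta)\|_{H^{-\s_1,(q,2)}} \lesssim \|v\|_{H^{-\s_0+2\beta,(q,2)}} \|\theta\|_{H^{-\s_1+2\beta',(q,2)}}
\]
with exponents $\beta,\beta' \in (0,1)$ satisfying the (sub)criticality balance $\beta+\beta' \leq 2 - \frac{1+\a_{\crit}}{p}$ in the sense of \cite[eq.~(4.2)--(4.3)]{AV19_QSEE_1}. For $\s_1 \leq 1$ the argument mirrors Substeps~1a--1b of Lemma \ref{l:nonlinearities}, using $L^r_{x,y}(L^2_z) \embed H^{-\s_1,(q,2)}$ with $\frac{2}{r} = \s_1+\frac{2}{q}$ together with anisotropic Sobolev embeddings and the identity $\partial_z w(v) = -\nabla_{x,y}\cdot v$; for $\s_1 > 1$ the divergence form in \eqref{eq:def_b_delta_one} allows one to transfer the extra derivative by duality onto the test function, reducing to an estimate of $v\theta$ in $L^r_{x,y}(L^2_z)$. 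Choosing $\beta = \frac{1}{2}+\frac{\s_0}{4}+\frac{1}{2q}$ as in Lemma \ref{l:nonlinearities} and $\beta' = \beta + \frac{\s_1-\s_0}{2}$ balances both cases, and the criticality condition reduces to $\frac{1}{p}+\frac{1}{q}+\frac{\s_0}{2} \leq 1$, i.e., exactly \eqref{eq:assumptions_critical_setting_temp}.

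The main obstacle is keeping the anisotropic scaling bookkeeping consistent under the asymmetric regularity setup in which $v$ and $\theta$ sit in spaces with different smoothness indices $\s_0,\s_1$, while simultaneously handling the nonlocal pressure coupling $\int_0^z \ph \theta$ and switching between strong and weak forms of $b_{\s_1}$; in particular the verification of the SMR estimate must be carried out in the anisotropic $L^{(q,2)}$-scale for both components with the right $z$-independence of $\sigma$, as in the proof of Lemma \ref{l:smr}. Once the two ingredients above are in place, \cite[Theorem 4.8]{AV19_QSEE_1} delivers the unique $(p,\a_{\crit},\s_0,\s_1,q)$-solution and the regularity \eqref{eq:reg_critical1_temp}--\eqref{eq:reg_critical2_temp}: with $\a_{\crit} = p(1-\tfrac{1}{q}-\tfrac{\s_0}{2})-1$, the trace identities $2-\s_0-2\tfrac{1+\a_{\crit}}{p} = \tfrac{2}{q}$ and $2-\s_1-2\tfrac{1+\a_{\crit}}{p} = \tfrac{2}{q}-\s_1+\s_0$ reproduce the critical data spaces $\Bs^{2/q}_{(q,2),p}$ for $v_0$ and $B^{2/q-\s_1+\s_0}_{(q,2),p}$ for $\theta_0$ of \eqref{eq:regularity_data_temp}, exactly as in the isothermal case.
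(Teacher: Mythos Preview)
Your overall strategy---recast \eqref{eq:primitive2_temp} as a stochastic evolution equation on the product space and verify SMR plus nonlinear estimates to invoke \cite[Theorem~4.8]{AV19_QSEE_1}---matches the paper's. But both ingredients differ from the paper in ways that matter.

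\textbf{Nonlinear estimate.} In the product-space framework, $U=(v,\theta)\in\XX_\vartheta$ means $v\in H^{-\s_0+2\vartheta,(q,2)}$ \emph{and} $\theta\in H^{-\s_1+2\vartheta,(q,2)}$ for a \emph{single} index $\vartheta$. Your choice $\beta'=\beta+(\s_1-\s_0)/2$ places $\theta$ in $H^{-\s_1+2\beta'}=H^{-\s_0+2\beta}$, i.e.\ in the $\XX_{\beta'}$-slot with $\beta'>\beta$, and the criticality condition then becomes $\beta+\beta'\leq 2-\tfrac{1+\a_{\crit}}{p}$. But $2\beta=2-\tfrac{1+\a_{\crit}}{p}$ already holds with equality at the critical weight, so your condition fails whenever $\s_1>\s_0$; the claim that it ``reduces to $\tfrac{1}{p}+\tfrac{1}{q}+\tfrac{\s_0}{2}\leq 1$'' is incorrect. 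The paper instead proves the sharper bound
\[
\|b_{\s_1}(v,\theta)\|_{H^{-\s_1,(q,2)}}\lesssim \|v\|_{H^{-\s_0+2\beta,(q,2)}}\,\|\theta\|_{H^{-\s_1+2\beta,(q,2)}}
\]
with the \emph{same} $\beta=\tfrac12+\tfrac{\s_0}{4}+\tfrac{1}{2q}$ for both factors, by interpolating between the endpoints $\s_1=\s_0$ (which is Lemma~\ref{l:nonlinearities}) and $\s_1=1+\s_0$ (where one estimates $v\theta$ and $w(v)\theta$ in $L^r_{x,y}(L^\zeta_z)$ directly). This yields exactly the criticality condition \eqref{eq:assumptions_critical_setting_temp}.

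\textbf{SMR and the pressure--temperature coupling.} The claim that ``the $z$-antiderivative gains one derivative in $z$'' is false on $\T_z$: unless $\int_{\T_z}\ph\theta(\cdot,z)\,\dd z=0$, the function $z\mapsto\int_0^z\ph\theta(\cdot,z')\,\dd z'$ is not periodic and hence does not lie in positive-order Sobolev spaces on $\T^3$ (this is precisely the obstruction noted in Remark~\ref{r:high_order_temp}). The correct bound is
\[
\Big\|\int_0^{\cdot}\ph\theta(\cdot,z')\,\dd z'\Big\|_{H^{-\s_0,(q,2)}}\lesssim\|\theta\|_{H^{1-\s_0,(q,2)}},
\]
obtained by passing through $H^{-\s_0,q}_{x,y}(L^2_z)$ and using only $L^2_z$-boundedness of the Volterra operator. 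With this, the coupling is lower order only for $\s_1<1+\s_0$; at the endpoint $\s_1=1+\s_0$ it is of top order and the perturbation argument via \cite[Theorem~3.2]{AV_torus} fails. The paper avoids this by placing the coupling as an upper-triangular entry of $\A$ and proving SMR via the method of continuity: at $\lambda=0$ the system decouples (Lemma~\ref{l:smr} for $v$, Theorem~\ref{t:smr_anisotropic} for $\theta$), and the a~priori bound at general $\lambda$ exploits the triangular structure---estimate $\theta$ first, then treat $\int_0^{\cdot}\ph\theta$ as a known forcing in the $v$-equation.
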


The choice $\s_1=1+\s_0$ leads to the weakest regularity assumption on $\theta_0$, namely $\theta_0\in B^{2/q-1}_{(q,2),p}(\T^3)$. However, such a choice is only possible if $\g_1>|1-\s_1|=\s_0$, see Assumption \ref{ass:primitivetemp}\eqref{it:primitivetemp3}. By Proposition \ref{prop:smoothness_Kraichnan} and $\s_0<\frac{1}{2}$ due to  \eqref{eq:assumptions_critical_setting_temp}, the latter is satisfied in case $(\nt_n)_{n\geq 1}$ reproduces the Kolmogorov spectrum, i.e.\ $\nt_n$ is enumeration of \eqref{eq:Kraich_1}-\eqref{eq:Kraich_2} with $d=3$ and $\alpha=\frac{4}{3}$.
For physical motivations, we refer to \cite{P91_Kolmogorov}.

The following is an extension of Theorem \ref{t:regularity1} to \eqref{eq:primitive_temperature}.

\begin{theorem}[Instantaneous regularization -- Non isothermal case]
\label{t:regularity_temp}
Let the assumptions of Theorem \ref{t:local_temp} be satisfied and let $((v,\theta),\tau)$ be the $(p,\a_{\crit},\s_0,\s_1,q)$-solution to \eqref{eq:primitive_temperature} where $\a_{\crit}\stackrel{{\rm def}}{=} p(1-\frac{1}{q}-\frac{\delta_0}{2})-1$. 
Let $(\g_0,\g_1)$ be as in Assumption \ref{ass:primitivetemp}\eqref{it:primitivetemp3}. Then $((v,\theta),\tau)$ instantaneously regularizes in time and space: \eqref{eq:inst_reg1}-\eqref{eq:inst_reg2} hold with $\g=\g_0$ and 
\begin{align}
\label{eq:inst_reg11}
\theta&\in  H^{\eta,r}_{\loc}(0,\tau; H^{1+\alpha_1-2\vartheta,r})
\text{ a.s.\ for all }\eta\in [0,\tfrac{1}{2}),\, \alpha_1<\g_1'
,\, r \in ( 2,\infty),\\
\label{eq:inst_reg22}
\theta& \in C^{\mu,\nu}_{\loc}((0,\tau)\times \T^3;\R^2) \text{ a.s.\ for all }\mu\in [0,\tfrac{1}{2}), \, \nu\in (0,1+\g_1'),
\end{align}
where $\g_1'\stackrel{{\rm def }}{=}1\wedge \g_1$.
\end{theorem}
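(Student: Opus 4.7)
The plan is to mirror the proof of Theorem \ref{t:regularity1}, with the PEs \eqref{eq:primitive_temperature} recast as a stochastic evolution equation for the pair $(v,\theta)$ on the product Banach space $X_0 = \Hs^{-\s_0,(q,\zeta)}\times H^{-\s_1,(q,\zeta)}$ with $X_1 = \Hs^{2-\s_0,(q,\zeta)}\times H^{2-\s_1,(q,\zeta)}$ (for appropriate auxiliary parameters $\zeta\geq 2$ introduced in the bootstrap). The couplings between the two equations play a subordinate role at the linear level: the hydrostatic pressure contribution $\int_0^{\cdot}\nabla_{x,y}\theta\,\dd z'$ in the $v$-equation is linear in $\theta$ and, thanks to the constraint $\s_1\leq 1+\s_0$, it maps $H^{1-\s_0,(q,\zeta)}$ into $H^{-\s_0,(q,\zeta)}$ (in particular, $H^{2-\s_1,(q,\zeta)}\embed H^{1-\s_0,(q,\zeta)}$), so it can be absorbed in the nonlinearity $F$ and treated as a lower-order perturbation by \cite[Theorem 3.2]{AV_torus}. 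Analogously, $b_{\delta_1}(v,\theta)$ in \eqref{eq:def_b_delta_one} admits the same quadratic estimates as the term $b(v,v)$ handled in Step 1 of Lemma \ref{l:nonlinearities}, modulo replacing one factor of $v$ by $\theta$.

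The two main technical ingredients then follow from the isothermal arguments. First, the analogue of Lemma \ref{l:nonlinearities} for the system reduces, by linearity in $\theta$ of the coupling in the $v$-equation and by the bilinear structure of $b_{\delta_1}$, to the pointwise multiplication and hydrostatic embedding estimates already used in Substeps 1a--1c there, which are not sensitive to replacing $v$ by $\theta$ in one of the two factors. Second, the stochastic maximal $L^p$-regularity estimate for the linearized system is obtained by combining Lemma \ref{l:smr} applied separately to the $v$- and $\theta$-equations: for the latter, the hypothesis $\g_1>|1-\s_1|$ in Assumption \ref{ass:primitivetemp}\eqref{it:primitivetemp3} (in the weak regime $\s_1\geq 1$ one rewrites the transport term in divergence form before applying Lemma \ref{l:smr} with $s=\s_1-1\geq 0$) guarantees both the parabolicity and the smoothness required to apply Theorem \ref{t:smr_anisotropic}. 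The $\theta$-equation does not involve the hydrostatic projection $\p$ nor baroclinic/barotropic decomposition, so its treatment is in fact simpler than the one carried out in the proof of Lemma \ref{l:smr}.

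With these two ingredients in place, the bootstrap argument of \cite[Section 6]{AV19_QSEE_2} applies verbatim to the product formulation: one first reproduces Part (B) of the proof of \cite[Proposition 3.1]{AV22_localRD} to raise the integrability $(p,q)$ for both components simultaneously while lowering the smoothness parameter from $(\s_0,\s_1)$ (the case $p=q=2$, $\s_0=0$ needs a preliminary step analogous to the claim \eqref{eq:claim_regularization_integrability_2} in the proof of Theorem \ref{t:regularity1}, which exploits the $L^2$-energy estimate of Lemma \ref{l:energy_estimate} together with an analogous energy estimate for $\theta$). Then \cite[Lemma 6.10]{AV19_QSEE_2} iterates the local regularization, yielding \eqref{eq:inst_reg1}--\eqref{eq:inst_reg2} with $\g=\g_0$ for $v$ and the announced estimates \eqref{eq:inst_reg11}--\eqref{eq:inst_reg22} with $\g_1'=1\wedge \g_1$ for $\theta$. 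The sub-optimal ceiling $1\wedge\g_i$ (rather than $\g_i$) for the H\"older/Sobolev smoothness gain originates, as in the isothermal case, from the presence of the transport terms $(\sigma_n\cdot\nabla)v$ and $(\nt_n\cdot\nabla)\theta$, and is sharp in this setting.

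The main obstacle I expect is purely bookkeeping: keeping track of the two independent scales of smoothness $(\s_0,\s_1)$ throughout the bootstrap and verifying that the subcriticality conditions \eqref{eq:local_1_assumption_general1}--\eqref{eq:local_1_assumption_general2} are preserved for \emph{both} components simultaneously at each iteration. No genuinely new difficulty arises, precisely because the coupling of the system is of lower order than the diagonal parabolic part, and the anisotropic $(q,2)$-scale plays the same critical role for $\theta$ as it does for $v$ (cf.\ Remark \ref{r:regularity_necessary_anisotropic}).
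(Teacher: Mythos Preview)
Your proposal is correct and follows essentially the same approach as the paper: the paper's proof simply says that, given the nonlinear estimates (Step~1) and the stochastic maximal $L^p$-regularity for the coupled linearized system (Step~2) established in the proof of Theorem~\ref{t:local_temp}, the argument of Theorem~\ref{t:regularity1} carries over verbatim.

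One small difference in packaging: you propose to absorb the hydrostatic coupling $\int_0^{\cdot}\nabla_{x,y}\theta\,\dd z'$ into the nonlinearity $F$ as a lower-order term, whereas the paper keeps it in the linear operator $\widehat{A}$ (see \eqref{eq:def_ABFG_temp}) and obtains SMR for the full system via the method of continuity, exploiting the triangular structure (first estimate $\theta$ by Theorem~\ref{t:smr_anisotropic}, then feed this into the $v$-equation via \eqref{eq:smr_theta_claim_implication}). Both routes are valid and use the same mapping estimate \eqref{eq:estimate_phi_integral_temp} together with $\s_1\leq 1+\s_0$. Also, a minor correction: the preliminary step for $p=q=2$ in the proof of Theorem~\ref{t:regularity1} does not rely on the $L^2$-energy estimate of Lemma~\ref{l:energy_estimate}; it uses directly the regularity $v\in L^2_{\loc}([0,\tau);H^2)\cap C([0,\tau);H^1)$ of the $(2,1,0,2)$-solution combined with SMR and the blow-up criterion \eqref{eq:blow_up_star}.
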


The extension of Theorem \ref{t:regularity2} to \eqref{eq:primitive_temperature} is not straightforward and requires some modifications. For brevity, we only provide some comments in the following 

\begin{remark}[High-order regularity -- non-isothermal case]
\label{r:high_order_temp}
To prove Theorem \ref{t:regularity2} to \eqref{eq:primitive_temperature}, we need to extend Lemma \ref{l:nonlinearities_high_order} to the current situation. However, note that $\big[(x,y,z)\mapsto \int_{0}^{z}\nabla_{x,y}\theta(x,y,z')\,\dd z'\big]\in H^{s,q}(\T^3;\R^2)$ with $s>\frac{3}{q}$ implies
\begin{equation}
\label{eq:null_gradient_mean_temperature}
\int_{\T_z}\nabla_{x,y}\theta(\cdot,z)\,\dd z=0\ \ \text{ on }\ \T^2_{x,y}.
\end{equation}
The above follows from the Sobolev embedding $H^{s,q}(\T^3)\subseteq C(\T^3)$ if $s>\frac{3}{q}$.

Let us note that \eqref{eq:null_gradient_mean_temperature} is satisfied if $\theta=\theta_{{\rm const}}+\theta_{{\rm odd}}$ whenever $\theta_{{\rm const}}\in \R$ and $\theta_{{\rm odd}}$ is odd w.r.t.\ $\{z=\frac{1}{2}\}$.
The odd part of $\theta$ is consistent with the derivation of the PEs with temperature (see \cite[eq.\ (1.4)]{PZ22}). Moreover, the oddness also arises via a reflection to treat the PEs \eqref{eq:primitive_temperature} on $\T^2_{x,y}\times [0,\frac{1}{2}]$ where additionally to \eqref{eq:no_flux_bc} we enforce the following boundary conditions for the temperature:
$$
\partial_z\theta=0 \ \text{ on }\ \T^2_{x,y}\times\{0\} \quad \text{ and }\quad 
\theta=0 \ \text{ on }\ \T^2_{x,y}\times\{\tfrac{1}{2}\}.
$$
\end{remark}

Finally, we state the analog of Theorem \ref{t:global} for the problem \eqref{eq:primitive_temperature}. 

\begin{theorem}[Global well-posedness -- Non isothermal case]
\label{t:global_temp}
Let the assumptions of Theorem \ref{t:local_temp} be satisfied. 
Let $((v,\theta),\tau)$ be the corresponding $(p,\a_{\crit},\s_0,\s_1,q)$-solution to \eqref{eq:primitive_temperature} with initial data as in \eqref{eq:regularity_data_temp} and $\a_{\crit}\stackrel{{\rm def}}{=}p(1-\frac{1}{q}-\frac{\s_0}{2})-1$. Let Assumption \ref{ass:global} be satisfied. Then 
$(v,\tau) \text{ is \emph{global} in time, i.e.\  $\tau=\infty$ a.s.}$

Moreover, for all sequences $(v_{0}^{(n)},\theta_{0}^{(n)})_{n\geq 1}$ of initial data as in \eqref{eq:regularity_data_temp} such that $(v_{0}^{(n)},\theta_{0}^{(n)})\to (v_0,\theta_0)$ in probability in $\Bs^{2/q}_{(q,2),p}(\T^3)\times B^{2/q-\s_1+\s_0}_{(q,2),p}(\T^3)$, we have, for all $T\in (0,\infty)$,
$$
(v^{(n)},\theta^{(n)}) \to (v,\theta) \ \text{ in probability in } \ C([0,T];\Bs^{2/q}_{(q,2),p}(\T^3)\times B^{2/q-\s_1+\s_0}_{(q,2),p}(\T^3)),
$$
where $(v^{(n)},\theta^{(n)})$ is the global $(p,\a,\s_0,\s_1,q)$-solution of \eqref{eq:primitive_temperature} with data $(v_0^{(n)},\theta_0^{(n)})$.
\end{theorem}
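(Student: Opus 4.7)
The plan is to follow the scheme used for Theorems \ref{t:global} and \ref{t:continuous_dependence} in the isothermal case, where global existence is derived from the combination of a Serrin--type blow-up criterion (the analog of Theorem \ref{t:serrin}) and an energy-type tail estimate in the spirit of Lemma \ref{l:strong_energy_estimate}. The reformulation \eqref{eq:primitive2_temp} shows that the coupling between $v$ and $\theta$ enters only through two explicit lower-order terms: the hydrostatic pressure contribution $\int_{0}^{\cdot}\nabla_{x,y}\theta(\cdot,z')\,\dd z'$ on the right-hand side of the $v$-equation, and the transport/advection term $b_{\delta_1}(v,\theta)$ on the right-hand side of the $\theta$-equation. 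The $\theta$-equation is \emph{linear} in $\theta$ (given $v$) and, in view of Assumption \ref{ass:primitivetemp}\eqref{it:primitivetemp4}, is of the same parabolic nature studied in Appendix \ref{app:smr}. Hence the abstract machinery of \cite{AV19_QSEE_1,AV19_QSEE_2}, together with a suitable adaptation of Lemmas \ref{l:nonlinearities} and \ref{l:smr} to the coupled system, yields the local theory, blow-up criterion, and instantaneous regularization stated as Theorems \ref{t:local_temp}--\ref{t:regularity_temp}.

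The first substantive step is an $L^2$--energy estimate for $\theta$. Since $u=(v,w(v))$ is divergence-free and vanishes on $\T^2_{x,y}\times\{0,1\}$, one has the cancellation $\int_{\T^3} b_{\delta_1}(v,\theta)\,\theta\,\dd\x=0$ irrespective of whether one uses the advective or conservative form in \eqref{eq:def_b_delta_one}. The It\^o formula applied to $\theta\mapsto \|\theta\|_{L^2}^2$ then provides, exactly as for Lemma \ref{l:energy_estimate}, a tail bound
\[
\E\Big[\sup_{r\in [s,\tau\wedge T)}\one_{\Gamma}\|\theta(r)\|_{L^2}^2\Big]+\E\int_{s}^{\tau\wedge T}\one_{\Gamma}\|\theta(r)\|_{H^1}^2\,\dd r\lesssim 1+\E\big[\one_{\Gamma}\|\theta(s)\|_{L^2}^2\big],
\]
using the parabolicity assumption on $(d,\nt)$ and treating $(k\cdot\nabla)\theta$ as a lower-order perturbation. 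Next, one proves the analog of Lemma \ref{l:main_intermediate_estimate_wt} for $\partial_z v$ and $\wt{v}$. The extra forcing $\int_0^z\nabla_{x,y}\theta\,\dd z'$ contributes, after applying $\partial_z$, simply the term $\nabla_{x,y}\theta$, which is \emph{linear} in $\theta$. Hence in the It\^o computation for $\tfrac{1}{2}\|\partial_z v\|_{L^2}^2$ one picks up an additional term $\int_{\T^3}\nabla_{x,y}\theta\cdot\partial_z v\,\dd\x$ that is easily absorbed via Cauchy--Schwarz into the dissipation $\|\nabla\partial_z v\|_{L^2}^2$ plus $\|\theta\|_{H^1}^2$; the latter is already controlled by the $L^2$--energy estimate for $\theta$ derived above. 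The $L^4$--estimate for $\wt v$ is affected identically: the new contribution $\int_{\T^3}|\wt v|^2\wt v\cdot\nabla_{x,y}\wt\theta\,\dd\x$ is split by H\"older and Young's inequality into a controlled piece of $\||\wt v||\nabla \wt v|\|_{L^2}^2$ plus $\|\wt v\|_{L^4}^4\cdot\|\theta\|_{H^1}^2$. This yields a coupled stochastic Gronwall inequality in which the role of $\nn(r)$ in \eqref{eq:def_nn} is played by a quantity that additionally involves $\|\theta(r)\|_{H^1}^2$, whose integrability is a consequence of the $\theta$--energy estimate.

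With these ingredients in place, the conclusion of global existence proceeds exactly as in Subsection \ref{ss:proof_main_estimate_conclusion} and the proof of Theorem \ref{t:global}: one invokes the Serrin-type blow-up criterion for the coupled system (extending Theorem \ref{t:serrin} via Lemmas \ref{l:nonlinearities} and \ref{l:smr} adapted to the pair $(v,\theta)$) and rules out finite-time blow-up by combining the strong energy estimate for $v$ (analog of \eqref{eq:energy_estimate1}, obtained via stochastic maximal $L^p$-regularity applied to \eqref{eq:primitive2_temp} together with the $\partial_z v$/$\wt v$-bounds and the $\theta$-bound) with the instantaneous regularization result Theorem \ref{t:regularity_temp}. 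The continuity statement is obtained by the argument of Subsection \ref{ss:proof_continuous_dependence}: local continuous dependence (the analog of Proposition \ref{prop:continuity}, which follows by the same abstract result \cite{AV19_QSEE_1}) handles a short initial time layer $[0,t]$, while on $[t,T]$ one uses the stability estimate which is the analog of Lemma \ref{l:estimate_difference}, applied to the differences $(v-v^{(n)},\theta-\theta^{(n)})$ and relying on the same tail estimate just described; the weighted instantaneous regularization for a larger integrability exponent $p_1$ then bridges the gap between the topologies, exactly as in the isothermal proof.

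The principal technical obstacle is the treatment of the cross-terms in the energy identity for $\partial_z v$: one must show that the contribution of $\nabla_{x,y}\theta$ can be dominated by dissipation and by already-controlled quantities, uniformly in the localization used to justify It\^o's formula. This is clean when $\s_1\in[\s_0,1]$, since then $\theta\in L^2_{\loc}(H^1)$ a.s.\ by the $\theta$-energy estimate; when $\s_1\in[1,1+\s_0]$, where $\theta$ is only handled in a weaker setting because of the conservative form chosen in \eqref{eq:def_b_delta_one}, one has to interpret the pressure coupling carefully and use the stochastic maximal regularity estimate of Lemma \ref{l:smr} for the $v$-equation with a right-hand side of the form $\nabla_{x,y}\theta \in H^{-\s_0,(q,2)}$; this requires the compatibility condition $\s_1 \le 1+\s_0$ enforced in Theorem \ref{t:local_temp}. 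Apart from this, the remaining steps are essentially the same as in Sections \ref{s:global_proofs} and \ref{s:energy_estimate_proof}.
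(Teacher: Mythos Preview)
Your overall scheme matches the paper's: derive the $L^2$-energy estimate for $\theta$, extend the intermediate estimate for $\partial_z v$ and $\wt v$ by absorbing the new coupling terms into the Gronwall machinery via $\|\theta\|_{H^1}^2$, then upgrade to a strong $L^{p_0}(H^{2-\s_0,(q,2)})$-estimate for $v$, and finally invoke the coupled Serrin criterion. That is exactly what the paper does in Steps~1--2 of its proof.

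There is, however, a genuine gap. The Serrin criterion for the coupled system reads
\[
\P\Big(s<\tau<T,\ \|v\|_{L^{p_0}(s,\tau;H^{2-\s_0,(q,2)})}+\|\theta\|_{L^{p_0}(s,\tau;H^{2-\s_1,(q,2)})}<\infty\Big)=0,
\]
so to rule out blow-up you must also control $\|\theta\|_{L^{p_0}(s,\tau;H^{2-\s_1,(q,2)})}$. Your plan only produces the $L^2$-energy estimate $\theta\in L^\infty(L^2)\cap L^2(H^1)$ and the strong estimate for $v$; instantaneous regularization does not yield an a~priori bound up to time $\tau$. The paper closes this gap with a third step: once the strong estimate for $v$ is available, apply stochastic maximal $L^{p_0}$-regularity to the $\theta$-equation, using the nonlinear estimate $\|b_{\s_1}(v,\theta)\|_{H^{-\s_1,(q,2)}}\lesssim \|v\|_{H^{-\s_0+2\beta,(q,2)}}\|\theta\|_{H^{-\s_1+2\beta,(q,2)}}$ together with the interpolation $(B^{2/q-\s_1+\s_0}_{(q,2),p_0},H^{2-\s_1,(q,2)})_{1/2,1}\hookrightarrow H^{-\s_1+2\beta,(q,2)}$, so that the $b_{\s_1}$-term becomes $\varepsilon\|\theta\|_{H^{2-\s_1,(q,2)}}+C_\varepsilon\|v\|_{H^{-\s_0+2\beta,(q,2)}}^2\|\theta\|_{B^{2/q-\s_1+\s_0}_{(q,2),p_0}}$. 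The factor $\|v\|_{H^{-\s_0+2\beta,(q,2)}}^{2p_0}$ is integrable (with the right tail bound) precisely thanks to the strong estimate for $v$ from Step~2, and the stochastic Gronwall lemma then yields the required tail estimate for $\theta$. This step is also indispensable for the continuity proof, since the analogue of Lemma~\ref{l:estimate_difference} for the pair $(v-v^{(n)},\theta-\theta^{(n)})$ needs the strong estimate for $\theta$ to bound the Gronwall weight $\JJ_{(v,\theta),(v^{(n)},\theta^{(n)})}$.

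A minor remark: your ``principal technical obstacle'' paragraph is off target. The distinction $\s_1\lessgtr 1$ in \eqref{eq:def_b_delta_one} affects only how one writes the advection of $\theta$; it plays no role in the $\partial_z v$ energy identity, where the new contribution is $\nabla_{x,y}\theta$ regardless and is controlled by $\|\theta\|_{H^1}$. The actual issue the paper addresses is handling the hydrostatic coupling $\int_0^{\cdot}\nabla_{x,y}\theta\,\dd z'$ in the $H^{-\s_0,(q,2)}$-norm at the strong-estimate stage, which it does via the embedding $L^\infty(L^2)\cap L^2(H^1)\hookrightarrow L^{p_0}(H^{1-\s_0,(q,2)})$ and \eqref{eq:pz_temp}.
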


Interestingly, no additional assumption on $(\nt_n)_{n\geq 1}$ are required for global well-posedness of \eqref{eq:primitive_temperature}. This is in accordance with the main result of \cite[Section 3]{Primitive1}.

The proofs of Theorems \ref{t:local_temp}, \ref{t:regularity_temp} and \ref{t:global_temp} are a straightforward generalization of the corresponding ones given in Section \ref{s:statement}, and are given in Subsection \ref{ss:proof_temp} below. 
Before going into the proofs,  we mention that one can also extend the following results to \eqref{eq:primitive_temperature}:
Local continuity (Proposition \ref{prop:continuity}), compatibility (Corollary \ref{cor:compatibility}), Serrin-type blow-up criteria (Theorem \ref{t:serrin}), and the Feller property (Corollary \ref{cor:Feller}). 
We omit the details for brevity. A Serrin-type blow-up criterion for \eqref{eq:primitive_temperature} is given in \eqref{eq:serrin_temp} below and is needed in the proof of Theorem \ref{t:global_temp}.

\subsection{Proof of the main results -- Non-isothermal case}
\label{ss:proof_temp}

\subsubsection{Proof of Theorems \ref{t:local_temp} and \ref{t:regularity_temp}} 
We begin by extending the proof of 
Proposition \ref{prop:local} to \eqref{eq:primitive_temperature}. Recall that the additional parameter $\zeta\geq 2$ is needed to prove Theorem \ref{t:regularity_temp}, cf.\  Subsection \ref{ss:proof_serrin_regularity}. 
To begin, 
we recast \eqref{eq:primitive_temperature} as a stochastic evolution equation on a suitable  $\XX_0=\Hs^{2-\s_0,(q,\zeta)}\times H^{2-\s_1,(q,\zeta)}$ for the unknown $\uu=(v,\theta)^{\top}$ of the form:
\begin{equation}
\label{eq:SEE_temp}
\left\{
\begin{aligned}
\dd \uu + \A(t)\uu\, \dd t &= \FF(t,\uu)\,\dd t + (\B(t)\uu+\GG(t,\uu))\,\dd \Br_{\ell^2}(t),\quad t\in\R_+,\\
 \uu(0)&=(v_0,\theta_0)^{\top},
\end{aligned}
\right.
\end{equation}
where $\zeta\in [2,\infty)$, $\Br_{\ell}$ is the $\ell^2$-cylindrical Brownian motion associated to $(\beta^n)$ as in \eqref{eq:def_Br} and, for $\uu\in \XX_1$,
\begin{equation}
\label{eq:def_ABFG_temp}
\begin{aligned}
\A(\cdot) \uu&= 
\begin{bmatrix}
\vspace{0.1cm}
-\p\big[\nabla\cdot(a\cdot \nabla v )+(b\cdot\nabla) v-\fts(\cdot)v\big] & \displaystyle{\int_{0}^{\cdot}\nabla_{x,y}\theta (\cdot,z)\,\dd z}\\
0 & -\nabla \cdot(d\cdot \nabla \theta) - (k\cdot \nabla)\theta 
\end{bmatrix}
, \\  
\B(\cdot)\uu&= 
\begin{bmatrix}
\vspace{0.1cm}
(\p[(\sigma_n\cdot\nabla) v])_{n\geq 1} & 0\\
0 & (\nt_n\cdot\nabla) \theta
\end{bmatrix},\\
\FF(\cdot,\uu)&= 
\begin{bmatrix}
\vspace{0.1cm}
\p[-(v\cdot\nabla_{x,y})v -w(v)\partial_z v ]\\
-b_{\delta_1}(v,\theta)
\end{bmatrix}, \qquad \qquad
\GG(\cdot,\uu)= 
\begin{bmatrix}
0\\
0
\end{bmatrix}.
\end{aligned}
\end{equation}
Finally, correspondingly to \eqref{eq:choice_X0X1}, we let 
\begin{equation*}
\XX_1\stackrel{{\rm def}}{=}\Hs^{2-\s_0,(q,\zeta)}\times H^{2-\s_1,(q,\zeta)}, \quad \text{and }\quad  
\XX_{\vartheta}\stackrel{{\rm def}}{=} [\XX_0,\XX_1]_{\vartheta} \ \text{ for }\ \vartheta\in (0,1).
\end{equation*}

Now, the proof of Theorems \ref{t:local_temp} and \ref{t:regularity_temp} follow the line of to the one of Theorems \ref{t:local} and \ref{t:regularity_temp} provided in Subsections \ref{ss:proof_local} and \ref{ss:proof_regularity1}, respectively. In the remaining part of this subsection, we only provide some comments on the main 
changes needed to adapt the arguments given there. More precisely, we have to extend to the current situation the nonlinear and linear estimates of Lemmas \ref{l:nonlinearities} and \ref{l:smr}.  

\begin{proof}[Proof of Theorem \ref{t:local_temp} -- Sketch]
We divide the proof into two steps.

\emph{Step 1: (Estimates of the nonlinearities). Lemma \ref{l:nonlinearities} holds with $(F,G,X_{\vartheta})$ replaced by $(\FF,\GG,\XX_{\vartheta})$ with $\beta$ as in Lemma \ref{l:nonlinearities} where $\delta=\delta_0$}. By Lemma \ref{l:nonlinearities}, to conclude this step, it is enough to show that 
\begin{equation}
\label{eq:claim_step_1_local_temp}
\|b_{\delta_1}(v,\theta)\|_{H^{-\s_1,(q,\zeta)}}\lesssim 
\|v\|_{\Hs^{-\s_0+2\beta,(q,\zeta)}}\|\theta\|_{H^{-\s_1+2\beta,(q,\zeta)}},
\end{equation}
where $\beta$ is as in Lemma \ref{l:nonlinearities}. Since $\s_1\in [\s_0,1+\s_0]$, by interpolation to prove \eqref{eq:claim_step_1_local_temp} it is enough to show that  
\begin{align}
\label{eq:claim_step_11_local_temp}
\|b_{j+\delta_0}(v,\theta)\|_{H^{-\s_1,(q,\zeta)}}&\lesssim \|v\|_{\Hs^{-\s_0+2\beta,(q,\zeta)}}
\|\theta\|_{H^{-j-\s_0+2\beta,(q,\zeta)}} \quad \text{ for }j\in \{0,1\}.
\end{align}
The case $j=1$ of \eqref{eq:claim_step_11_local_temp} follows from Lemma \ref{l:nonlinearities}. It remains to discuss the case $j=0$. Following the proof of Lemma \ref{l:nonlinearities}, we let $r\in (1,\infty)$ be either [$\frac{2}{r}=\delta+\frac{2}{q}$ if $\s<0$] or [$r=2$ if $\s=0$]. Since $L^{(q,\zeta)}\embed H^{-\s_0,(q,\zeta)}$ by Lemma \ref{l:anisotropic_B}\eqref{it:anisotropic_B4}, 
\begin{align*}
\|b_{1+\delta_0}(v,\theta)\|_{\XX_0}
&\lesssim \|v  \theta\|_{H^{-\s_0,(q,\zeta)}}+ 
\|w(v)\theta\|_{H^{-\s_0,(q,\zeta)}}\\
&\lesssim \|v  \theta\|_{L^{r}_{x,y}(L^{\zeta}_z)}+ 
\|w(v)\theta\|_{L^{r}_{x,y}(L^{\zeta}_z)}\\
&\leq \big(\|v\|_{L^{2r}_{x,y}(L^{\infty}_z)}+ 
\|w(v)\|_{L^{2r}_{x,y}(L^{\zeta}_z)}\big)\|\theta\|_{L^{2r}_{x,y}(L^{\zeta}_z)}\\	
&\lesssim \big(\|v\|_{L^{2r}_{x,y}(H^{1,\zeta}_z)}+ 
\|v\|_{W^{1,2r}_{x,y}(L^{\zeta}_z)}\big)\|\theta\|_{L^{2r}_{x,y}(L^{\zeta}_z)}.
\end{align*}
Arguing as in substeps 1a-1b in the proof of Lemma \ref{l:nonlinearities}, by Lemma \ref{l:anisotropic_B}\eqref{it:anisotropic_B4} one can check that the following estimates hold
\begin{align*}
\|v\|_{L^{2r}_{x,y}(H^{1,\zeta}_z)}+ 
\|v\|_{W^{1,2r}_{x,y}(L^{\zeta}_z)}&\lesssim \|v\|_{H^{-\s_0-2\beta,(q,\zeta)}},\\
\|\theta\|_{L^{2r}_{x,y}(L^{\zeta}_z)}&\lesssim \|\theta\|_{H^{-1-\s_0-2\beta,(q,\zeta)}},
\end{align*}
where $\beta$ is as in the statement of Lemma \ref{l:nonlinearities} with $\delta=\delta_0$.
The claim of Step 1 follows by collecting the previous estimates.

\emph{Step 2: (Stochastic maximal $L^p(L^q)$-regularity).
Let $\a\in [0,\frac{p}{2}-1)$, $\zeta\in [2,\infty)$ and $T\in (0,\infty)$.
Then for all progressively measurable processes
\begin{align*}
f=(f_v,f_{\theta})&\in L^p((0,T)\times \O,w_{\a};\Hs^{-\s_0,(q,\zeta)}\times H^{-\s_1,(q,\zeta)}),\\
g=(g_{v,n},g_{\theta,n})_{n\geq 1}&\in L^p((0,T)\times \O,w_{\a};\Hs^{1-\s_0,(q,\zeta)}(\ell^2)\times H^{1-\s_1,(q,\zeta)}(\ell^2)),
\end{align*}
there exists a unique strong solution $(v,\theta)\in L^p((0,T)\times \O,w_{\a};\Hs^{2-\s_0,(q,\zeta)}\times H^{2-\s_1,(q,\zeta)})$ to 
\begin{equation}
\label{eq:primitive_linear_zero_temp}
\left\{
\begin{aligned}
		&\partial_t v   
		=\nabla\cdot(a\cdot\nabla v) + (b\cdot\nabla)v + \fts(\cdot) v +\int_{0}^{\cdot}\nabla_{x,y}\theta (\cdot,z')\,\dd z' 	\\
		&\qquad \qquad \qquad \qquad\quad 
		+f_v+\sum_{n\geq 1}\big(\p [(\sigma_{n}\cdot\nabla) v]+ g_{v,n}\big)\, \dot{ \beta}_t^n, &\text{ on }&\Tor^3,\\
		&\partial_t \theta=\nabla (d\cdot\nabla \theta) + (k\cdot \nabla) \theta + f_{\theta}
		+ \sum_{n\geq 1}\big[(\nt_{n}\cdot\nabla) \theta+ g_{\theta,n}\big]\, \dot{ \beta}_t^n, &\text{ on }&\Tor^3,\\
&v(0,\cdot)=0,\qquad \theta(0,\cdot)=\theta_0, &\text{ on }&\Tor^3.
\end{aligned}
\right.
\end{equation}
Moreover, for all $\eta\in [0,\frac{1}{2})$, one has  
\begin{align*}
&\E\|(v,\theta)\|_{H^{\eta,p}(0,T,w_{\a};H^{2-\s_0-2\eta,(q,\zeta)}\times H^{2-\s_1-2\eta,(q,\zeta)})}^p \\
&\lesssim_{\eta,p} 
\E\|f\|_{L^p(0,T,w_{\a};H^{-\s_0,(q,\zeta)}\times H^{-\s_1,(q,\zeta)})}^p + \E\|g\|_{L^p(0,T,w_{\a};H^{1-\s_0,(q,\zeta)}(\ell^2)\times H^{1-\s_1,(q,\zeta)}(\ell^2))}^p,
\end{align*}
Finally, the above also holds true if $(0,T)$ is replaced by $(0,\tau)$ where $\tau$ is a stopping time taking values in $[0,T]$.
}
To prove the claim of Step 2 we use the method of continuity as in \cite[Proposition 3.13]{AV19_QSEE_2}. 
More precisely, let us consider the problem \eqref{eq:primitive_linear_zero_temp} where in the first SPDE the term
\begin{equation}
\label{eq:substitution_lambda}
\int_{0}^{\cdot}\nabla_{x,y}\theta (\cdot,z)\,\dd z \  \text{ is replaced by }\ \lambda 
\int_{0}^{\cdot}\nabla_{x,y}\theta (\cdot,z)\,\dd z\  \text{ where }  \ \lambda\in [0,1].
\end{equation}
Let $(v^{(\lambda)},\theta^{(\lambda)})$ be the strong solution of \eqref{eq:primitive_linear_zero_temp} in $L^p((0,T)\times \O,w_{\a};\Hs^{2-\s_0,(q,\zeta)}\times H^{2-\s_1,(q,\zeta)})$ with the above substitution. Now, in case $\lambda=0$, the statement of Step 2 is satisfied with $(v,\theta)$ replaced by $(v^{(0)},\theta^{(0)})$. Indeed, if $\lambda=0$, then the corresponding system of SPDEs is decoupled and therefore one can apply Lemma \ref{l:smr} and Theorem \ref{t:smr_anisotropic} to the first and second SPDE of the system, respectively. Now by the method of continuity \cite[Proposition 3.13]{AV19_QSEE_2} it remains to prove an a priori estimate for $(v^{(\lambda)},\theta^{(\lambda)})$ which is uniform in $\lambda$. To this end, we take advantage of the triangular structure of the system \eqref{eq:primitive_linear_zero_temp}. 

By Theorem \ref{t:smr_anisotropic}, a strong solution to the second SPDE in \eqref{eq:primitive_linear_zero_temp} satisfies
\begin{align}
\label{eq:smr_theta}
\E\|\theta^{(\lambda)}\|_{L^p(0,T,w_{\a};H^{2-\s_1,(q,\zeta)})}^p
&\lesssim 
\E\|f_{\theta}\|_{L^p(0,T,w_{\a};H^{-\s_1,(q,\zeta)})}^p\\ 
\nonumber
&+ \E\|g_{\theta}\|_{ L^p(0,T,w_{\a};H^{1-\s_1,(q,\zeta)}(\ell^2))}^p,
\end{align}
where $g_{\theta}\stackrel{{\rm def}}{=}(g_{\theta,n})_{n\geq 1}$ and the implicit constant is independent of $(\lambda,f,g)$.

Recall that $\s_1\leq 1+\s_0$. Hence, by \eqref{eq:smr_theta} and Lemma \ref{l:smr} applied to the first SPDE in \eqref{eq:primitive_linear_zero_temp}, it suffices to show that 
\begin{equation}
\label{eq:smr_theta_claim_implication}
\E\Big\|\int_{0}^{\cdot} \nabla_{x,y}\theta^{(\lambda)}(\cdot,z)\,\dd z\Big\|_{L^p(0,T,w_{\a};H^{-\s_0,(q,\zeta)})}^p
\lesssim
\E\|\theta^{(\lambda)}\|_{L^p(0,T,w_{\a};H^{1-\s_0,(q,\zeta)})}^p
\end{equation}
with implicit constant independent of $(\lambda,f,g)$. The prove the above,  note that, for all $\phi\in H^{1-\s_0,(q,\zeta)}$,
\begin{align}
\label{eq:estimate_phi_integral_temp}
\Big\|\int_{0}^{\cdot} \nabla_{x,y}\phi(\cdot,z)\,\dd z \Big\|_{H^{-\s_0,(q,\zeta)}}
&\stackrel{(i)}{\lesssim} 
\Big\|\int_{0}^{\cdot} \nabla_{x,y}\phi(\cdot,z)\,\dd z \Big\|_{H^{-\s_0,q}_{x,y}(L^\zeta_z)}\\
\nonumber
&\stackrel{(ii)}{\lesssim} \| \nabla_{x,y}\phi\|_{H^{-\s_0,q}_{x,y}(L^\zeta_z)}\\
\nonumber
&\lesssim \| \phi\|_{H^{1-\s_0,q}_{x,y}(L^\zeta_z)}\\
\nonumber
&\stackrel{(iii)}{\lesssim} \|\phi\|_{H^{1-\s_0,(q,\zeta)}},
\end{align}
where in $(i)$ and $(iii)$ we used Lemma \ref{l:anisotropic_B}\eqref{it:anisotropic_B3} as $\s_0\in [0,1)$, and in $(ii)$ that the integral operator $L^\zeta_z \ni\phi\mapsto \int_{0}^{\cdot} \phi(z) \,\dd z \in L^\zeta_z$ is bounded.
The previous estimate yields \eqref{eq:smr_theta_claim_implication}, and this concludes the proof of Step 2.
\end{proof}

\begin{proof}[Proof of Theorem \ref{t:regularity_temp}]
From Steps 1 and 2, the proof of Theorem \ref{t:regularity_temp} follows verbatim from the one of Theorem \ref{t:regularity1} in Subsection \ref{ss:proof_regularity1}.
\end{proof}

\subsubsection{Proof of Theorem \ref{t:global_temp}}
To prove Theorem \ref{t:global_temp} as a first step, we state the special case of the non-isothermal version of Theorem \ref{t:serrin}\eqref{it:serrin1}: 

Let $((v,\theta),\tau)$ be the $(p,\s_0,\s_1,\a_{\crit},q)$-solution of \eqref{eq:primitive_temperature} provided by \eqref{t:local_temp}.
If $\pz\in (2,\infty)$ satisfies 
\begin{equation}
\label{eq:pz_temp}
\frac{1}{\pz}+\frac{1}{q}+\frac{\s_0}{2}=1,
\end{equation}
then for all $0<s<T<\infty$ one has
\begin{equation}
\label{eq:serrin_temp}
\P\Big(s<\tau<T,\,\|v\|_{L^{\pz}(s,\tau;H^{2-\s_0,(q,2)})}+
\|\theta\|_{L^{\pz}(s,\tau;H^{2-\s_1,(q,2)})}<\infty\Big)=0.
\end{equation}
As for Theorem \ref{t:serrin}, the above follows from the Serrin criteria of \cite[Theorem 4.11]{AV19_QSEE_2} and Steps 1 and 2 of Theorem \ref{t:local_temp}. As in Theorem \ref{t:serrin}, one can prove a more general version of \eqref{eq:serrin_temp} for \eqref{eq:primitive_temperature}. However, we do not state it as it is not needed below.

\begin{proof}[Proof of Theorem \ref{t:global_temp} -- Sketch]
Let $((v,\theta),\tau)$ be the $(p,\s_0,\s_1,\a_{\crit},q)$-solution of \eqref{eq:primitive_temperature}. 
Since the proof is analogue to the one the corresponding results in the non-isothermal case in Subsection \ref{ss:global}, we only give a sketch of the proof. 

We divide it into four steps. Below $0<T<\infty$ is fixed. Let us begin by noticing that, as in \eqref{eq:regularity_of_partialz}, we have
$$
\partial_z v \in L^2_{\loc}(0,\tau;H^1) \text{ a.s.\ }
$$ 

\emph{Step 1: (Intermediate energy estimate). There exists $C_1$ such that for all, $s\in (0,T]$, $\lambda>1$ and $\Gamma\in \F_s$ satisfying $\tau>s$ a.s.\ on $\Gamma$,} 
\begin{align}
\label{eq:temperature_L2_balance_proof}
\E\Big[ \sup_{r\in [s,\tau\wedge T)} \one_{\Gamma} \|\theta(r)\|_{L^2}^2\Big] 
+\E\int_0^{\tau\wedge T}\one_{\Gamma} \|\theta(r)\|_{H^1}^2\,\dd r\qquad\qquad\qquad\quad &\\
\nonumber
\leq C_1\big(1+\E[\one_{\Gamma}\|v(s)\|_{L^2}^2]+\E[\one_{\Gamma}\|\theta(s)\|_{L^2}^2]\big),& \\
\label{eq:intermediate_estimate_proof_temp}
\max_{j\in \{0,1\}}
\P\Big(\Gamma\cap \Big\{\sup_{t\in [s,\tau\wedge T)} \|\partial_z^j v(s)\|_{L^2}^2
+ \int_{s}^{ \tau\wedge T}\|\partial_z^j v(s)\|_{H^1}^2\,\dd t \geq \lambda\Big\}\Big)\qquad\qquad &\\
\nonumber
\leq \frac{C_1}{\log(\lambda)} \big(1+\E\big[\one_{\Gamma} \|v(s)\|_{H^1}^{4}\big]
+\E\big[\one_{\Gamma} \|\theta(s)\|_{L^2}^{2}\big]\big).&
\end{align}
The estimate \eqref{eq:temperature_L2_balance_proof} follows as in \cite[Lemma 5.2]{Primitive1} by using the standard cancellation $\int_{\T^3} [(u\cdot\nabla)\theta]\,\theta\,\dd \x =0$. The latter complements the $L^2$-energy balance of Lemma \ref{l:energy_estimate} which also holds in this case, by replacing the RHS\eqref{eq:velocity_L2_balance_proof} by  
$\lesssim\big(1+\E[\one_{\Gamma}\|v(s)\|_{L^2}^2]+\E[\one_{\Gamma}\|\theta(s)\|_{L^2}^2]\big)$.
Thus, \eqref{eq:intermediate_estimate_proof_temp} follows as in Lemma \ref{l:main_intermediate_estimate_wt} with minor modifications.

\emph{Step 2: (Strong energy estimate). There exists $C_2$ such that for all, $s\in (0,T]$, $\lambda>e$ and $\Gamma\in \F_s$ satisfying $\tau>s$ a.s.\ on $\Gamma$,
\begin{align*}
&\P\Big(\Gamma\cap \Big\{\sup_{t\in [s,\tau\wedge T)}\|v(t)\|_{B^{2/q}_{(q,2),\pz}}^{\pz}
+ \int_{s}^{ \tau\wedge T}\|v(t)\|_{H^{2-\s_0,(q,2)}_{(q,2),\pz}}^{\pz} \,\dd t \geq \lambda\Big\}\Big)\\
&\leq \frac{C_2}{\log\log(\lambda)}\Big(1+\E[\one_{\Gamma}\|v(s)\|_{B^{2/q}_{(q,2),\pz}}^{\pz}] +\E[\one_{\Gamma}\|v(s)\|_{H^1}^4]+\E[\one_{\Gamma}\|\theta(s)\|_{L^2}^2]\Big)&
\end{align*}
where $\pz$ is as in \eqref{eq:pz_temp}.}
The estimate of Step 2 follows the line of the proof of Lemma \ref{l:strong_energy_estimate} using the estimates of Step 1. 
We content ourselves to check that from Step 1 it follows that, for all $\lambda> 1$,
\begin{align}
\label{eq:claim_temp_global}
\P\Big(\Gamma \cap \Big\{
\int_{s}^{\tau\wedge T}\Big\|
\int_{-h}^{\cdot}\nabla_{x,y} \theta(t,\cdot,z)\,\dd z\Big\|_{H^{-\s_0,(q,2)} }^{\pz}\, \dd t \geq \lambda\Big\} \Big)&\\
\nonumber
\lesssim \frac{1+\E\big[\one_{\Gamma}\big(\|v(s)\|^2_{L^2}+\|\theta(s)\|_{L^2}^2\big)\big]}{\log(\lambda)}&
\end{align}
where the implicit constant independent of $(s,\Gamma,v_0,\theta_0)$. 

By \eqref{eq:estimate_phi_integral_temp}, to obtain \eqref{eq:claim_temp_global} it is enough to prove that, for $\lambda>e$,
\begin{align*}
\P\Big(\Gamma \cap \Big\{
\int_{s}^{\tau\wedge T}\|\theta(t)\|_{H^{1-\s_0,q}_{x,y}(L^2_z) }^{\pz}\, \dd t\geq \lambda\Big\} \Big)
\lesssim \frac{1+\E\big[\one_{\Gamma}\big(\|v(s)\|_{L^2}^2+\|\theta(s)\|_{L^2}^2\big)\big]}{\log(\lambda)}.
\end{align*}
The above follows from \eqref{eq:temperature_L2_balance_proof} and the following embedding: For all $t\in (0,\infty]$
\begin{align*}
L^{\infty}(0,t;L^{2})\cap L^2(0,t;H^1)
&\embed L^{\pz}(0,t;H^{2/\pz})  & &\text{(Interpolation)}\\
&\embed L^{\pz}(0,t;H^{1-\s_0,(q,2)})& &\text{(Sobolev emb.\ and \eqref{eq:pz_temp}).}
\end{align*}
Hence \eqref{eq:claim_temp_global} is proved and this concludes the proof of Step 2.

\emph{Step 3: (Strong energy estimate). There exists $C_3$ such that for all $s\in (0,T]$, $\lambda>e^e$ and $\Gamma\in \F_s$ satisfying $\tau>s$ a.s.\ on $\Gamma$,
\begin{align*}
&\P\Big(\Gamma\cap \Big\{\sup_{t\in [s,\tau\wedge T)}\|\theta(t)\|_{B^{2/q-\s_1+\s_0}_{(q,2),\pz}}^{\pz}+ \int_{s}^{ \tau\wedge T}\|\theta(t)\|_{H^{2-\s_1,(q,2)}_{(q,2),\pz}}^{\pz}\,\dd t \geq \lambda\Big\}\Big)\\
&\leq \frac{C_3}{\log\log\log(\lambda)}\Big(1+\E[\one_{\Gamma}\|v(s)\|_{B^{2/q}_{(q,2),\pz}}^{\pz}] +\E[\one_{\Gamma}\|v(s)\|_{H^1}^4]\\
&\qquad \qquad\qquad \qquad\qquad
+\E[\one_{\Gamma}\|\theta(s)\|_{B^{2/q-\s_1+\s_0}_{(q,2),\pz}}^{\pz}]+\E[\one_{\Gamma}\|\theta(s)\|_{L^2}^2]\Big)
\end{align*}
where $\pz$ are as in \eqref{eq:pz_temp}.} As in the previous step, the proof follows the one of Lemma \ref{l:strong_energy_estimate} using the estimates of Steps 1-2, and replacing the use of stochastic maximal $L^p$-regularity estimate of Lemma \ref{l:smr} with the one of Step 1 in Theorem \ref{t:local_temp}. 
The only addition needed is to estimate the term
$$
\E\int_{\eta}^{\xi}\|b_{\delta_1}(v,\theta)\|_{H^{-\s_1,(q,2)}}^{\pz}\,\dd t .
$$
Here $b_{\delta_1}$ is as in \eqref{eq:def_b_delta_one}, $\eta,\xi$ are suitable stopping times such that $s\leq\eta\leq \xi\leq \tau_j$ a.s.\ for some $j\geq 1$ and
$$
\tau_j\stackrel{{\rm def}}{=}
\left\{
\begin{aligned}
&\inf\Big\{t\in [s,\tau)\,:\, 
\big(\|(v(t),\theta(t))\|_{B^{2/q}_{(q,2),\pz}\times B^{2/q-\delta_1+\delta_0}_{(q,2),\pz}}\\
&\qquad\qquad\qquad\quad + 
\|(v,\theta)\|_{L^{p_0}(s,t;H^{2-\s_0,(q,2)}\times H^{2-\s_1,(q,2)})}\big)\geq j
\Big\} &  &\text{on }\Gamma,\\
&s & &\text{on }\O\setminus\Gamma.
\end{aligned}
\right.
$$
cf.\ \eqref{eq:localization_strong_estimate}. 
By \eqref{eq:claim_step_1_local_temp} of Step 1 in the proof of Theorem \ref{t:local_temp} we have, a.e.\ on $(0,\tau)\times \O$,
$$
\|b_{\delta_1}(v,\theta)\|_{H^{-\s_1,(q,2)}}\lesssim 
\|v\|_{\Hs^{-\s_0+2\beta,(q,2)}}\|\theta\|_{H^{-\s_1+2\beta,(q,2)}}. 
$$
where $\beta=\frac{1}{2}+\frac{\s_0}{4}+\frac{1}{2q}$ (cf.\ Step 1 in the proof of Theorem \ref{t:local_temp}).
As in \eqref{eq:interpolation_B_H}, 
$$
(B^{2/q-\s_1+\s_0}_{(q,2),\pz},H^{2-\s_1,(q,2)})_{1/2,1}\embed H^{-\s_1+2\beta,(q,2)}.
$$
Hence, for all $\varepsilon>0$ and a.e.\ on $(0,\tau)\times \O$, 
$$
\|b_{\delta_1}(v,\theta)\|_{H^{-\s_1,(q,2)}}\leq \varepsilon \|\theta\|_{H^{2-\s_1,(q,2)}} + C_{\varepsilon}
\|v\|_{H^{-\s_0+2\beta,(q,2)}}^2\|\theta\|_{B^{2/q-\s_1+\s_0}_{(q,2),\pz}}. 
$$

Therefore, arguing as in the proof of Lemma \ref{l:strong_energy_estimate}, to prove the claim of Step 3 it is enough to check that, for all $\lambda>e$,
\begin{align}
\label{eq:last_claim_proof_temp_3}
&\P\Big(\Gamma \cap \Big\{\int_s^{\tau\wedge T} \|v(t)\|_{H^{-\s_0+2\beta,(q,2)}}^{2\pz}\,\dd t \geq \lambda \Big\}\Big)\\
\nonumber
&\lesssim \frac{1}{\log\log(\lambda)}\Big(1+\E[\one_{\Gamma}\|v(s)\|_{B^{2/q}_{(q,2),\pz}}^{\pz}] +\E[\one_{\Gamma}\|v(s)\|_{H^1}^4]+\E[\one_{\Gamma}\|\theta(s)\|_{L^2}^2]\Big). 
\end{align}
To prove the above, note that, due to $
(B^{2/q}_{(q,2),\pz},H^{2-\s_0,(q,2)})_{1/2,1}\embed H^{-\s_0+2\beta,(q,2)}$, the H\"older inequality yields
$$
L^{\infty}(0,t;B^{2/q}_{(q,2),\pz})\cap L^{\pz}(0,t;H^{-\s_0+2\beta,(q,2)})\embed L^{2\pz}(0,t;H^{-\s_0+2\beta,(q,2)}) \ \text{ for } \  t>0,
$$
and embedding constant independent of $t>0$.
Hence, the estimate \eqref{eq:last_claim_proof_temp_3} follows from Step 2 of the current proof.

\emph{Step 4: Conclusion}. By \eqref{eq:serrin_temp} and the estimate of Steps 2-3, as in the proof of Theorem \ref{t:global}, it readily follows that $\tau=\infty$ a.s.
Analogously, from the estimates of Step 2-3, the continuity w.r.t.\ the initial data follows by repeating almost verbatim the proof of Theorem \ref{t:continuous_dependence} by using the stochastic maximal $L^p$-regularity estimate of Step 2 in the proof of Theorem \ref{t:local_temp}.
\end{proof}

\section{Stratonovich formulation}
\label{ss:stratonovich}
In this subsection, we show how the results presented in Section \ref{s:statement} also apply to the PEs reformulated as in \eqref{eq:primitive2} with transport noise in Stratonovich form, i.e.\  
\begin{equation}
\label{eq:primitive_stra}
\left\{
\begin{aligned}
		&\partial_t v  		
		= \p\big[\Delta v- (v\cdot \nabla_{x,y})v- w(v)\partial_z v  + f(\cdot,v,\nabla v)\big]\\
		&\qquad \qquad\qquad\qquad\qquad\qquad\qquad
		 +\sum_{n\geq 1} \p[ (\sigma_{n}\cdot\nabla) v]\circ \dot{ \beta}_t^n, &\text{on }&\Tor^3,\\
&v(\cdot,0)=v_0,&\text{on }&\Tor^3,
\end{aligned}
\right.
\end{equation}
where $w(v)=-\int_{0}^{\cdot} \nabla_{x,y}\cdot v (\cdot,z')\,\dd z'$, cf.\ \eqref{eq:def_w}.
As in the previous sections, in the above, we could also consider global Lipschitz lower-order terms in the stochastic perturbation. For simplicity of exposition, we do not pursue this here. 

Below we list the main assumptions used in the subsection.

\begin{assumption} We say that Assumption \ref{ass:primitive_stra}$(p,\s,q)$ holds if the following conditions are satisfied.
\label{ass:primitive_stra}
\begin{enumerate}[{\rm(1)}]
\item $\delta\in [0,1)$ and either $[\, p\in (2,\infty)\text{ and }q\in [2,\infty)\, ]$ or $[\, p=q=2\, ]$.
\item\label{it:ass_primitive_stra1} For all $n\geq 1$ the following map is $\Progress\otimes \Borel(\T^3)$-measurable
\begin{align*}
\sigma_n&=(\sigma_{n,\xi})_{\xi\in \{x,y,z\}}:\R_+\times \O\times \T^3\to \R^3.
\end{align*}

\item\label{it:ass_primitive_stra2} For all $n\geq 1$, the maps
$$
(\sigma_{n,\xi})_{\xi\in \{x,y\}}   \text{ is independent of }z\in \T_z.
$$
\item\label{it:ass_primitive_stra3} There exist $M,\g>0$ such that $\g> 1-\s$ and 
$$
\|(\sigma_n(t,\cdot))_{n\geq 1}\|_{C^{\g}(\Tor^3;\ell^2)}\leq M \ \  \text{ a.e.\ on } \R_+\times \O.
$$
\item\label{it:ass_primitive_stra4} There exists $N>0$ such that 
$$
\|(\nabla \cdot \sigma_n)_{n\geq 1}\|_{L^{\infty}(\T^3;\ell^2)}\leq N  \ \  \text{ a.e.\ on } \R_+\times \O.
$$
\item\label{it:ass_primitive_stra5} For all $n\geq 1$, the map $f:\R_+\times \O\times \T^3\times \R^2\times \R^{2\times 3}\to \R^2$
is $\Progress\otimes \Borel(\T^3\times \R^2\times \R^{3\times 2})$-measurable and
$f(\cdot,0,0)
\in L^{\infty}(\R_+\times \O\times \T^2;\R^2).
$

Finally, there exists $K >0$ for which the following holds for all $\xi,\xi'\in \R^3$, $\eta,\eta'\in \R^{3\times 2}$ and a.e.\ on $\R_+\times \O\times \T^3$
\begin{align*}
|f(\cdot,\xi,\eta)-f(\cdot,\xi',\eta')|&\leq K\big(|\xi-\xi'|+ |\eta-\eta'|\big).
\end{align*}
\end{enumerate}
\end{assumption}

An example of transport noise satisfying Assumption \ref{ass:primitive_stra}\eqref{it:ass_primitive_stra1}-\eqref{it:ass_primitive_stra4} and related to the Kolmogorov spectrum is given in 
Example \ref{ex:Kolgomorov}.

The above should be compared with Assumption \ref{ass:primitive}. The main addition w.r.t.\ to the latter assumption is in Assumption \ref{ass:primitive_stra}\eqref{it:ass_primitive_stra4} where the divergence operator is taken in the distributional sense. Such assumption already appeared in other situations, see e.g.\ \cite[eq.\ (6.2)]{BM13_unbounded} and \cite[condition (B1)]{MR05}. Typically in stochastic fluid mechanics, one enforces $\nabla\cdot \sigma_n\equiv 0$ and therefore Assumption \ref{ass:primitive_stra}\eqref{it:ass_primitive_stra4} is satisfied. However, in the previous situation  Assumption \ref{ass:primitive_stra}\eqref{it:ass_primitive_stra2} implies $\partial_x \sigma_{n,x}+\partial_{y}\sigma_{n,y}=\partial_z \sigma_{n,z}=0$, as it follows by applying $\int_{\T_z}\cdot\,\dd z$ to the equality $\nabla\cdot \sigma_n= 0$.

As usual, we formally rewrite the Stratonovich SPDE \eqref{eq:primitive_stra} as an It\^o SPDE in which new correcting terms appear. Following \cite[Section 8]{Primitive1}, formally we obtain
\begin{align}
\label{eq:stratonovich_correction1}
\sum_{n\geq 1}\p [(\sigma_n\cdot\nabla) v]\circ \dot{\beta}_t^n
&=
\sum_{n\geq 1}\p [(\sigma_n\cdot\nabla) v]\, \dot{\beta}_t^n\\
\nonumber
&+\frac{1}{2}\sum_{n\geq 1}\p\Big[(\sigma_n\cdot\nabla) \p [(\sigma_n\cdot\nabla) v]\Big].  
\end{align} 
Next, we rewrite the last term on the RHS\eqref{eq:stratonovich_correction1}. Recalling that $\p [(\sigma_n\cdot\nabla) v]= -\ph \wt{p}_n +(\sigma_n\cdot\nabla) v$ where $\ph \wt{p}_n= \qq [(\sigma_n\cdot\nabla) v]$ does not depend on $z$ (cf.\ \eqref{eq:pressure2}), we obtain 
\begin{align}
\label{eq:stratonovich_correction2}
\p\Big[(\sigma_n\cdot\nabla) \p [(\sigma_n\cdot\nabla) v]\Big]
&= \p\Big[\nabla \cdot ([\sigma_n\otimes \sigma_n]\cdot \nabla v) - (\nabla\cdot \sigma_n) [(\sigma_n\cdot\nabla) v]\Big]\\
\nonumber
& - \p\Big[ \nabla \cdot (\ph \wt{p}_n \otimes \sigma_n) - (\nabla \cdot \sigma_n) \ph \wt{p}_n\Big].
\end{align} 
Note that \eqref{eq:stratonovich_correction1}-\eqref{eq:stratonovich_correction2} shows that \eqref{eq:primitive_stra} is a special case of \eqref{eq:primitive2} with the choice:
\begin{equation}
\label{eq:def_abh_stra}
a=\frac{1}{2}\big[ \mathrm{Id}+\sum_{n\geq 1}(\sigma_n\otimes \sigma_n) \big], \ \ 
 b= -\frac{1}{2} \sum_{n\geq 1} (\nabla \cdot \sigma_n) \sigma_n, \ \ 
  b_{0,n}=\frac{1}{2} \nabla\cdot\sigma_n, 
  \ \ h=1.
\end{equation}
In particular the notion of $(p,\a,\s,q)$-solution of Definition \ref{def:solution} extends to \eqref{eq:primitive_stra}, where $\a\in [0,\frac{p}{2}-1)$ if $p>2$ or $\a=0$ otherwise. 

The following is a consequence of the results in Section \ref{s:statement}.

\begin{corollary}[Local well-posedness, regularity, blow-up criteria and global well-posedness -- Stratonovich noise]
\label{cor:stratonovich}
Let the Assumption \ref{ass:primitive_stra}$(p,\s,q)$. Suppose that \eqref{eq:assumptions_critical_setting} holds. 
\begin{enumerate}[{\rm(1)}]
\item\label{it:cor_stratonovich1} Theorems \ref{t:local}, \ref{t:regularity1}, \ref{t:serrin}, Proposition \ref{prop:continuity}, Corollary \ref{cor:compatibility}, hold with \eqref{eq:primitive} replaced by \eqref{eq:primitive_stra}.
\item\label{it:cor_stratonovich2} If $\partial_z \sigma_{n,z}\in L^{\infty}(\T^3)$ a.s.\ for all $n\geq 1$ and for some constant $K>0$ 
$$
 \big\|(\partial_{z}\sigma_n(t,\om,\cdot))_{n\geq 1}\big\|_{L^{\infty}(\T^3;\ell^2)}
 \leq K\ \ \text{ a.e.\ on }\R_+\times \O,
$$
then Theorems \ref{t:global} and \ref{t:continuous_dependence} hold with \eqref{eq:primitive} replaced by \eqref{eq:primitive_stra}.
\end{enumerate}
\end{corollary}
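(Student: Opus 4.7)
The plan is to reduce \eqref{eq:primitive_stra} to the It\^o PE system \eqref{eq:primitive} by means of the It\^o–Stratonovich correction \eqref{eq:stratonovich_correction1}-\eqref{eq:stratonovich_correction2}, with the specific choice of coefficients \eqref{eq:def_abh_stra}, and then to verify that Assumption \ref{ass:primitive} (for part \eqref{it:cor_stratonovich1}) and Assumption \ref{ass:global} (for part \eqref{it:cor_stratonovich2}) hold. Once these are checked, the claims follow by invoking Theorems \ref{t:local}, \ref{t:regularity1}, \ref{t:serrin}, \ref{t:global}, \ref{t:continuous_dependence}, Proposition \ref{prop:continuity} and Corollary \ref{cor:compatibility} applied to the equivalent It\^o equation.

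For part \eqref{it:cor_stratonovich1}, set $(a,b,b_{0,n},h)$ as in \eqref{eq:def_abh_stra}. The measurability and $z$-independence conditions in Assumption \ref{ass:primitive}\eqref{it:ass_primitive1}-\eqref{it:ass_primitive4} follow directly from the corresponding hypotheses on $(\sigma_n)_{n\geq 1}$ in Assumption \ref{ass:primitive_stra}; in particular $(a_{\eta,\xi})_{\eta,\xi\in\{x,y\}}=\tfrac{1}{2}\delta_{\eta\xi}+\tfrac{1}{2}\sum_n \sigma_{n,\eta}\sigma_{n,\xi}$ is $z$-independent since each factor is. The $C^{\g}$-bound on $a$ follows from the algebra property of $C^{\g}(\T^3)$ and the bound $\|(\sigma_n)_{n\geq 1}\|_{C^{\g}(\ell^2)}\leq M$. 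The parabolicity Assumption \ref{ass:primitive}\eqref{it:ass_primitive2} is immediate because by construction
\begin{equation*}
a_{\eta,\xi}-\tfrac{1}{2}\textstyle{\sum_{n\geq 1}}\sigma_{n,\eta}\sigma_{n,\xi}=\tfrac{1}{2}\delta_{\eta\xi},
\end{equation*}
so the required coercivity holds with $\ellip=\tfrac{1}{2}$. Finally, using Assumption \ref{ass:primitive_stra}\eqref{it:ass_primitive_stra4},
\begin{equation*}
\|b\|_{L^{\infty}}\leq \tfrac{1}{2}\|(\nabla\cdot\sigma_n)_{n\geq 1}\|_{L^{\infty}(\ell^2)}\|(\sigma_n)_{n\geq 1}\|_{L^{\infty}(\ell^2)}\leq \tfrac{1}{2}NM,\quad \|(b_{0,n})_{n\geq 1}\|_{L^{\infty}(\ell^2)}\leq \tfrac{1}{2}N,
\end{equation*}
so Assumption \ref{ass:primitive}\eqref{it:ass_primitiveb_b0} holds. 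The conditions on $(f,g)$ in Assumption \ref{ass:primitive}\eqref{it:ass_primitive7}-\eqref{it:ass_primitive8} are trivially satisfied since $g\equiv 0$ and $f$ satisfies Assumption \ref{ass:primitive_stra}\eqref{it:ass_primitive_stra5}. Hence Assumption \ref{ass:primitive}$(p,\s,q)$ holds, and part \eqref{it:cor_stratonovich1} follows directly from the corresponding results applied to \eqref{eq:primitive} via the equivalence \eqref{eq:stratonovich_correction1}-\eqref{eq:def_abh_stra}.

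For part \eqref{it:cor_stratonovich2}, it suffices to verify the additional regularity in Assumption \ref{ass:global}. Since $\sigma_{n,\eta}$ is $z$-independent for $\eta\in\{x,y\}$, we have, for $\eta\in\{x,y\}$,
\begin{equation*}
a_{z,\eta}=a_{\eta,z}=\tfrac{1}{2}\textstyle{\sum_{n\geq 1}}\sigma_{n,z}\sigma_{n,\eta},\qquad \partial_z a_{z,\eta}=\partial_z a_{\eta,z}=\tfrac{1}{2}\textstyle{\sum_{n\geq 1}}(\partial_z \sigma_{n,z})\sigma_{n,\eta},
\end{equation*}
and therefore by the additional assumption $\|(\partial_z\sigma_n)_{n\geq 1}\|_{L^{\infty}(\ell^2)}\leq K$ and Assumption \ref{ass:primitive_stra}\eqref{it:ass_primitive_stra3},
\begin{equation*}
\|\partial_z a_{z,\eta}\|_{L^{\infty}}+\|\partial_z a_{\eta,z}\|_{L^{\infty}}+\|(\partial_z\sigma_{n,z})_{n\geq 1}\|_{L^{\infty}(\ell^2)}\leq KM+K.
\end{equation*}
Hence Assumption \ref{ass:global} holds, and Theorems \ref{t:global} and \ref{t:continuous_dependence} apply. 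The only step that is not entirely routine is the formal manipulation in \eqref{eq:stratonovich_correction1}-\eqref{eq:stratonovich_correction2}: since an a priori solution to \eqref{eq:primitive_stra} is not smooth in time, one must justify the It\^o–Stratonovich conversion at the level of $(p,\a,\s,q)$-solutions. This can be done in the standard way by noting that the quadratic variation of the process $t\mapsto \p[(\sigma_n\cdot\nabla)v(t)]$ with the driving Brownian motion $\beta^n$ is computed from the martingale part of the It\^o decomposition, which by Theorem \ref{t:regularity1} applied to the It\^o equation is meaningful in $H^{-\s,(q,2)}$; this yields \eqref{eq:stratonovich_correction2} identically in that space. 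With the conversion justified, $(p,\a,\s,q)$-solutions of \eqref{eq:primitive_stra} are precisely $(p,\a,\s,q)$-solutions of the It\^o system obtained through \eqref{eq:def_abh_stra}, and the two parts are established.
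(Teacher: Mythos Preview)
Your proof is correct and follows the same approach as the paper: verify that Assumption~\ref{ass:primitive} (and, for part~\eqref{it:cor_stratonovich2}, Assumption~\ref{ass:global}) holds for the specific choice~\eqref{eq:def_abh_stra}, then invoke the It\^o results. One minor point: your closing paragraph on rigorously justifying the It\^o--Stratonovich conversion is unnecessary in the paper's framework, since immediately before the corollary the notion of $(p,\a,\s,q)$-solution to~\eqref{eq:primitive_stra} is \emph{defined} via the It\^o system with coefficients~\eqref{eq:def_abh_stra}; the formal computation~\eqref{eq:stratonovich_correction1}--\eqref{eq:stratonovich_correction2} serves only as motivation, so there is nothing to prove at that step.
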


A version of Theorem \ref{t:regularity2} also holds for \eqref{eq:primitive_stra}.

\begin{proof}
To begin, note that Assumption \ref{ass:primitive_stra} ensures that Assumption \ref{ass:primitive} is satisfied with the choice \eqref{eq:def_abh_stra}. Hence 
\eqref{it:cor_stratonovich1} follows. To prove 
\eqref{it:cor_stratonovich2}, it is enough to note that $
 \big\|(\partial_{z}\sigma_n)_{n\geq 1}\big\|_{L^{\infty }(\T^3;\ell^2)}
 \leq K$ implies that Assumption \ref{ass:global} hold with $a$ as in \eqref{eq:def_abh_stra}. 
\end{proof}

As in Section \ref{s:non_isothermal}, Theorems \ref{t:local_temp}-\ref{t:regularity_temp} and \ref{t:global_temp} holds in the case the temperature equation in \eqref{eq:primitive_temperature} is in Stratonovich formulation. More precisely, the $\theta$-equation in \eqref{eq:primitive_temperature} is replaced by
\begin{equation}
\label{eq:temp_stratonovich}
\partial_t \theta   -\Delta \theta= \big[-(v\cdot\nabla_{x,y})\theta-w(v)\partial_z \theta\big]
+\sum_{n\geq 1}\big[(\nt_{n}\cdot\nabla) \theta\big] \circ \dot{ \beta}_t^n, \ \ \ \text{ on }\Tor^3.
\end{equation}
Arguing as in \cite[Subsection 8.1]{Primitive1},  at least formally, one has (see eq.\ (8.4) in \cite{Primitive1})
\begin{align*}
\sum_{n\geq 1}\big[(\nt_{n}\cdot\nabla) \theta\big] \circ \dot{ \beta}_t^n
=\nabla \cdot (d\cdot \nabla \theta)+ (k\cdot\nabla) \theta+
\sum_{n\geq 1}\big[(\nt_{n}\cdot\nabla) \theta\big] \, \dot{ \beta}_t^n,
\end{align*}
where 
$$
d=\frac{1}{2} \big(\mathrm{Id}+\sum_{n\geq 1} \nt_n \otimes \nt_n\big), 
\quad \text{and }\quad b=- \frac{1}{2}\sum_{n\geq 1}(\nabla \cdot \nt_n) \nt_n. 
$$
In particular, the system formed by \eqref{eq:primitive_stra} and \eqref{eq:temp_stratonovich} is in the form of \eqref{eq:primitive_temperature}, analyzed in Section \ref{s:non_isothermal}. 
In particular, due to the results in Subsection \ref{ss:statement_non_isothermal}, one can obtain an extension of Corollary \ref{cor:stratonovich} to the non-isothermal case \eqref{eq:primitive_stra} under the additional assumptions $(\nt_n)_{n\geq 1}\in C^{\g_1}(\T^3;\ell^2)$ for some $\g_1>0$ and 
$(\nabla\cdot \nt_n)_{n\geq 1}\in L^{\infty}(\T^3;\ell^2)$.

\appendix 

\section{Some useful results on anisotropic spaces}
\label{app:anisotropic}
In this appendix, we collect some results on anisotropic Bessel potential and Besov spaces which are used in the main body of this work and the subsequent Appendix \ref{app:smr}. 
Below we only discuss function spaces in the periodic setting. Most of the results below also extend to the whole space case.

We begin with the main definitions. For $\q=(q_1,\dots,q_d)\in (1,\infty)^d$, we denote by $L^{\q}(\T^d)$ the set of (equivalence classes of) measurable maps $f:\T^d\to \R$ such that
$$
\|f\|_{L^{\q}(\T^d)}\stackrel{{\rm def}}{=} 
\Big(\int_{\T_{1}}\Big( \dots \Big(\int_{\T_d} |f(x_1,\dots,x_d)|^{q_d}\,\dd x_d\Big)^{q_{d-1}/q_d}\dots\Big)^{q_{2}/q_{1}}\,\dd x_1\Big)^{1/q_1}<\infty,
$$
where $\T^d=\T_1\times\dots \times\T_d$. As in the main body of this manuscript, we used subscripts in each coordinate to make explicit their different role.
Equivalently $L^{\q}(\T^d)$ can be defined as an iterated Bochner space $L^{q_1}(\T_1;\dots;L^{q_d}(\T_d)\dots)$, cf.\ \cite[Chapter 1]{Analysis1}.
The anisotropic Sobolev space $H^{s,\q}(\T^d)$ can be defined as the set of all $f\in \D'(\T^d)$  such that $(1-\Delta)^{s/2} f\in L^{\q}(\T^d)$ endowed with the natural norm:
$$
\|f\|_{H^{s,\q}(\T^d)}\stackrel{{\rm def}}{=}\|(1-\Delta)^{s/2} f\|_{L^{\q}(\T^d)}.
$$
It is known that classical tools from harmonic analysis such as the Mihlin multiplier theorem (see e.g.\ \cite[Subsection 5.2]{G14_classical} or \cite[Subsection 5.5]{Analysis1})
extends to the case of $L^{\q}$-spaces. Indeed, this is a straightforward consequence of the Mihlin multiplier theorem  in the weighted setting (see e.g.\ \cite[Theorem 7.1]{L21_JGA}) and Rubio de Francia extrapolation (see e.g.\ \cite{CUMP11,N23_JFA} or \cite[Theorem 6.2]{K21_rubio}). Further details on anisotropic function spaces can be found in \cite{L21_JAT} and the references therein.

In the following, we collect some useful properties of the above-introduced spaces.

\begin{lemma}
\label{l:anisotropic_B}
The following hold.
\begin{enumerate}[{\rm(1)}]
\item\label{it:anisotropic_B1} For all $s	\in \R$ and $\q\in (1,\infty)^d$ we have $C^{\infty}(\T^d)\embed H^{s,\q}(\T^d)$ densely.
\item\label{it:anisotropic_B2} For all $s_0,s_1\in \R$, $\q_0,\q_1\in (1,\infty)^d$ and $\theta\in (0,1)$,  we have
$$
[H^{s_0,\q_0}(\T^d),H^{s_1,\q_1}(\T^d)]_{\theta}= H^{s_{\theta},\q_{\theta}} (\T^d)
$$
where $s_{\theta}=(1-\theta)s_0+\theta s_1$ and $\frac{1}{q_{\theta,i}}=\frac{1-\theta}{q_{0,i}}+\frac{1-\theta}{q_{1,i}}$ for all $i\in \{1,\dots,d\}$.
\item\label{it:anisotropic_B3} For all $\q\in (1,\infty)^d$ and  $s, s_i\geq 0$ satisfying $\sum_{i=1}^d s_i=s$, we have
\begin{align*}
H^{s,\q}(\T^d)
&\embed H^{s_1,q_1}(\T_1;\dots;H^{s_d,q_d}(\T_d)\dots),\\
H^{-s_1,q_1}(\T_1;\dots;H^{-s_d,q_d}(\T_d)\dots)
&\embed 
H^{-s,\q}(\T^d).
\end{align*}
\item\label{it:anisotropic_B4} For all $s_0\geq s_1$, $\q_0,\q_1\in (1,\infty)^d$ satisfying $q_{0,i}\geq q_{1,i}$ for all $i\in \{1,\dots,d\}$ and $s_0-\sum_{i=1}^d \frac{1}{q_{0,i}}\geq s_1-\sum_{i=1}^d \frac{1}{q_{1,i}}$, the Sobolev embedding holds:
$$
H^{s_0,\q_0}(\T^d)\embed H^{s_1,\q_1}(\T^d).
$$
\end{enumerate}
\end{lemma}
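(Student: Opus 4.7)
The unifying idea is that $(1-\Delta)^{s/2}$ is an isometric isomorphism from $H^{s,\q}(\T^d)$ onto $L^{\q}(\T^d)$ by definition, so each assertion reduces to a statement about Fourier multipliers on anisotropic Lebesgue spaces, for which the Mihlin multiplier theorem is available. More precisely, since $L^{\q}(\T^d)$ coincides with the iterated Bochner space $L^{q_1}(\T_1;\dots;L^{q_d}(\T_d)\dots)$, the Mihlin multiplier theorem on $L^r$-spaces with Muckenhoupt weights (cf.\ \cite[Theorem 7.1]{L21_JGA}) combined with Rubio de Francia extrapolation (cf.\ \cite{CUMP11,N23_JFA}, \cite[Theorem 6.2]{K21_rubio}) and iteration in each coordinate yields the boundedness on $L^{\q}(\T^d)$ of every Mihlin multiplier. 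This fact will be the workhorse below.

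For \eqref{it:anisotropic_B1}, since $(1-\Delta)^{s/2}: H^{s,\q}\to L^{\q}$ is an isometric isomorphism which maps $C^{\infty}(\T^d)$ onto itself, it suffices to prove the density of $C^{\infty}(\T^d)$ in $L^{\q}(\T^d)$. The latter follows by iterated application of the density of $C^{\infty}(\T_i)$ in $L^{q_i}(\T_i)$ in each coordinate and standard approximation in Bochner spaces. For \eqref{it:anisotropic_B2}, by the retraction/coretraction given by the lifting $(1-\Delta)^{s_0/2}$ together with $(1-\Delta)^{-s_0/2}(1-\Delta)^{s_1/2}$, which are Mihlin multipliers, the identity reduces to the complex interpolation statement $[L^{\q_0}(\T^d),L^{\q_1}(\T^d)]_{\theta}= L^{\q_{\theta}}(\T^d)$. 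This last identity is obtained by iterating the Calder\'on complex interpolation of Bochner spaces $[L^{q_0}(S;X_0),L^{q_1}(S;X_1)]_{\theta}=L^{q_{\theta}}(S;[X_0,X_1]_{\theta})$ (see e.g.\ \cite[Theorem 5.1.2]{BeLo}) in each coordinate.

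For \eqref{it:anisotropic_B3}, writing $(1-\Delta)^{s/2}$ and using
$$
(1-\Delta)^{s/2} = \Big[\prod_{i=1}^d (1-\partial_{x_i}^2)^{s_i/2}\Big]\, m(\nabla)
$$
where $m(\xi)= (1+|\xi|^2)^{s/2} \prod_{i}(1+|\xi_i|^2)^{-s_i/2}$, the symbol $m$ satisfies the Mihlin condition as $s_i\geq 0$ and $\sum s_i=s$; hence $m(\nabla)$ is bounded on every $L^{\q}$ (and between all anisotropic Sobolev spaces, by iteration with $(1-\Delta)^{\sigma/2}$). Applying $m(\nabla)$ followed by inverting one coordinate at a time via the boundedness of $(1-\partial_{x_i}^2)^{-s_i/2}: L^{q_i}(\T_i)\to H^{s_i,q_i}(\T_i)$ gives the first embedding; the second follows by duality, since the dual of $H^{t,q}(\T_i;X)$ is $H^{-t,q'}(\T_i;X^*)$ for a UMD space $X$, and Bessel potential spaces $H^{r,q}(\T_j)$ are UMD.

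For \eqref{it:anisotropic_B4}, one reduces by interpolation with \eqref{it:anisotropic_B2} and the case $s_0=s_1=0$ (i.e.\ pure anisotropic Lebesgue embeddings, which follow from H\"older's inequality coordinate by coordinate) to the isotropic-index case in each coordinate. By \eqref{it:anisotropic_B3} and its second embedding, one can move all the smoothness to a single coordinate $i$ where $\frac{1}{q_{0,i}}$ exceeds $\frac{1}{q_{1,i}}$ by enough to absorb the smoothness gap $s_0-s_1$; in that coordinate one invokes the standard one-dimensional Sobolev embedding $H^{r,q_{0,i}}(\T_i)\hookrightarrow L^{q_{1,i}}(\T_i)$ valid for $r-\frac{1}{q_{0,i}}\geq -\frac{1}{q_{1,i}}$, which extends to Bochner-valued functions with the same UMD target. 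The main obstacle will be the bookkeeping needed to distribute the smoothness gap $s_0-s_1$ among the coordinates in which the integrability actually drops, as a naive application of the one-dimensional Sobolev embedding requires the gap to be absorbed on a single axis; this is handled by iterating \eqref{it:anisotropic_B3} to trade anisotropic smoothness with smoothness on the axis where integrability decreases.
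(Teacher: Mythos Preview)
Your overall strategy matches the paper's: reduce everything to Mihlin multiplier boundedness on $L^{\q}(\T^d)$ via the lift $(1-\Delta)^{s/2}$ and invoke weighted Mihlin plus Rubio de Francia extrapolation. Parts \eqref{it:anisotropic_B1} and \eqref{it:anisotropic_B2} are essentially the paper's argument.

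There is, however, a genuine error in your argument for \eqref{it:anisotropic_B3}. You claim that the symbol
\[
m(\xi)=(1+|\xi|^2)^{s/2}\prod_{i=1}^d (1+\xi_i^2)^{-s_i/2}
\]
is a Mihlin multiplier. It is not even bounded: taking $\xi=(t,0,\dots,0)$ with $t\to\infty$ gives $m(\xi)\sim t^{\,s-s_1}=t^{\,s_2+\dots+s_d}$, which diverges as soon as more than one $s_i$ is positive. What actually has to be bounded on $L^{\q}$ is the operator $\big[\prod_i(1-\partial_{x_i}^2)^{s_i/2}\big](1-\Delta)^{-s/2}$, whose symbol is the \emph{reciprocal} $m^{-1}(\xi)=\prod_i(1+\xi_i^2)^{s_i/2}(1+|\xi|^2)^{-s/2}$; this is bounded by $1$ (since $1+\xi_i^2\le 1+|\xi|^2$ and $\sum_i s_i=s$) and does satisfy the Mihlin condition. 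The paper factorises this further as $\prod_i\big[(1-\partial_{x_i}^2)^{s_i/2}(1-\Delta)^{-s_i/2}\big]$ and verifies each factor via \cite[Example 6.2.9]{G14_classical}. Your duality argument for the second embedding is then fine.

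Your sketch of \eqref{it:anisotropic_B4} is also shaky: the plan to ``move all the smoothness to a single coordinate'' runs into the same unbounded-multiplier obstruction (you would again need the bad direction $m$, not $m^{-1}$), and the interpolation reduction you propose does not by itself yield the sharp Sobolev-index condition. The paper's route is cleaner: reduce to $s_1=0$ via the lift, use the Sobolev-index hypothesis to split $s_0=\sum_i r_i$ with $r_i\ge \tfrac{1}{q_{0,i}}-\tfrac{1}{q_{1,i}}\ge 0$ for each $i$, apply the first embedding of \eqref{it:anisotropic_B3} to land in the iterated space $H^{r_1,q_{0,1}}(\T_1;\dots;H^{r_d,q_{0,d}}(\T_d)\dots)$, and then use one-dimensional Banach-valued Sobolev embeddings $H^{r_i,q_{0,i}}(\T_i;X)\hookrightarrow L^{q_{1,i}}(\T_i;X)$ coordinate by coordinate.
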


The above is known to experts. For the reader's convenience, we include a proof.

\begin{proof}
\eqref{it:anisotropic_B1}: It is a straightforward consequence of the definition.

\eqref{it:anisotropic_B2}: The proof is analog to the one in the isotropic case (see e.g.\ \cite[Chapter 6]{BeLo} or \cite[Chapter 5]{Analysis1}) by employing the weighted Mihlin's multiplier theorem \cite[Theorem 7.1]{L21_JGA}  and Rubio de Francia extrapolation and \cite[Theorem 6.2]{K21_rubio}.

\eqref{it:anisotropic_B3}: For the first embedding of \eqref{it:anisotropic_B1}, it is enough to show the boundedness on $L^{\q}(\T^d)$ of  the operator
$\big[\,\prod_{i=1}^d (1-\partial^2_{x_i})^{{s_i}/2}\,\big] (1-\Delta)^{-{s}/2}$. 
Therefore, it is enough to prove $(1-\partial_{x_i}^2)^{s_i/2}(1-\Delta)^{-s_i/2}$ is bounded on $L^{\q}(\T^d))$ for all $i\in \{1,\dots,d\}$. As above,  the latter follows by combining \cite[Theorem 7.1]{L21_JGA} and \cite[Theorem 6.2]{K21_rubio} (the Mihlin condition can be checked by using the argument in \cite[Example 6.2.9]{G14_classical}).

For the second embedding of \eqref{it:anisotropic_B1}, we argue by duality. Firstly, by duality of Bochner's spaces \cite[Theorem 1.3.10]{Analysis1} and the fact that $(1-\Delta)^{s/2}:H^{s,\q}(\T^d)\to L^{\q}(\T^d)$ is an isomorphism for $s\in \R$, it follows that $(H^{s,\q}(\T^d))^*=H^{-s,\vec{q'}}(\T^d)$ where $\vec{q'}=(q_1',\dots,q_d')$ where $\frac{1}{q_i}+\frac{1}{q_i'}=1$. Moreover, \cite[Proposition 5.6.7]{Analysis1}, we have 
$$
\big(H^{s_1,q_1}(\T_1;\dots;H^{s_d,q_d}(\T_d)\dots)\big)^*=H^{-s_1,q_1'}(\T_1;\dots;H^{-s_d,q_d'}(\T_d)\dots).
$$
Hence the second embedding in \eqref{it:anisotropic_B3} follows from the first one, duality and \eqref{it:anisotropic_B1}.

\eqref{it:anisotropic_B4}: Since $(1-\Delta)^{s}:H^{s+r,\q}(\T^d)\to H^{r,\q}(\T^d)$ is an isomorphism for all $s,r\in\R$ and $\q\in (1,\infty)$, it is enough to consider $s_1=0$. Note that, if $s_1=0$, then $s_0\geq \sum_{i=1}^d (\frac{1}{q_{0,i}}-\frac{1}{q_{1,i}})\geq 0$. Moroever, by assumption there exist $s_1,\dots,s_d\geq 0$ such that  $s_0=\sum_{i=1}^d s_i$ and $s_i\geq \frac{1}{q_{0,i}}-\frac{1}{q_{1,i}}\geq 0$.
Then, by Lemma \ref{l:anisotropic_B}\eqref{it:anisotropic_B3},
\begin{align*}
H^{s_0,\q_0}(\T^d)
&\embed H^{r_1,q_{0,1}}(\T_1;\dots;H^{r_d,q_{0,d}}(\T_d)\dots)\\
&\stackrel{(i)}{\embed} L^{q_{1,1}}(\T_1;\dots;L^{q_{1,d}}(\T_d)\dots)=L^{\q_1}(\T^d), 
\end{align*}
where in $(i)$ we used the Banach-valued Sobolev embeddings, see e.g.\ \cite{MV12}.
\end{proof}

We conclude this appendix by looking at anisotropic Besov spaces. For all $s\in \R$, $\q\in (1,\infty)^d$ and $p\in (1,\infty)$ we let
\begin{equation}
\label{eq:def_B_anisotropic}
B^{s}_{\q,p}(\T^d)\stackrel{{\rm def}}{=}(H^{s_0,\q}(\T^d),H^{s_1,\q}(\T^d))_{\theta,p}
\end{equation}
whenever $s_0,s_1\in\R$ satisfies $s=(1-\theta)s_0+\theta s_1$ for some $\theta\in (0,1)$. 

The independence of the above definition on the choice of $s_0,s_1$ follows from \cite[Theorem 4.7.2]{BeLo} and Lemma \ref{l:anisotropic_B}\eqref{it:anisotropic_B2}.
The space $B^{s}_{\q,p}(\T^d)$ can also be introduced by using Littlewood-Paley decompositions. We omit this as it will not be needed here. The following elementary embeddings follow from known interpolation properties:
\begin{align}
\label{eq:elementary_emb0}
B^{s}_{\q,p_0}(\T^d)\embed B^{s_1}_{\q,p_1}(\T^d) \ \ \text{ for all }&s\in \R,\  \q\in (1,\infty)^d\text{ and }p_1\geq  p_0,\\
\label{eq:elementary_emb1}
B^{s_0}_{\q,\infty}(\T^d)\embed B^{s_1}_{\q,1}(\T^d) \ \ \text{ for all }&s_0>s_1 \text{ and } \q\in (1,\infty)^d,\\
\label{eq:elementary_emb2}
B^{s}_{\q,1}(\T^d) \embed H^{s,\q}(\T^d) \embed B^{s}_{\q,\infty}(\T^d) \ \ \text{ for all }&s\in \R\text{ and } \q\in (1,\infty)^d. 
\end{align}
Indeed \eqref{eq:elementary_emb0} and \eqref{eq:elementary_emb1} follow from \cite[Theorem 3.4.1(b)]{BeLo} and \cite[Theorem 3.4.1(d)]{BeLo}, respectively. Finally, \eqref{eq:elementary_emb2} can be proven by combining Lemma \ref{l:anisotropic_B}\eqref{it:anisotropic_B2} and \cite[Theorem 4.7.1]{BeLo} (see also \cite[Theorem C.4.1]{Analysis1} for a refinement).

We conclude this appendix with the following consequence of Lemma \ref{l:anisotropic_B}.

\begin{corollary}[Sobolev embedding for anisotropic Besov spaces]
\label{cor:sob_emb_besov}
For all $s_0\geq s_1$, $\q_0,\q_1\in (1,\infty)^d$ and $p_0,p_1\in (1,\infty)$ satisfying $q_{0,i}\geq q_{1,i}$ for all $i\in \{1,\dots,d\}$, $p_1\geq p_0$ and $s_0-\sum_{i=1}^d \frac{1}{q_{0,i}}\geq s_1-\sum_{i=1}^d \frac{1}{q_{1,i}}$, the following embedding holds:
$$
B^{s_0}_{\q_0,p_0}(\T^d)\embed 
B^{s_1}_{\q_1,p_1}(\T^d).
$$
\end{corollary}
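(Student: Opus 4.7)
The plan is to deduce Corollary \ref{cor:sob_emb_besov} from the $H^{s,\q}$-Sobolev embedding in Lemma \ref{l:anisotropic_B}\eqref{it:anisotropic_B4} by a standard real-interpolation argument, followed by a monotonicity step in the fine parameter via \eqref{eq:elementary_emb0}.

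First I would reduce to the case $p_0 = p_1$. Since $p_1 \geq p_0$, the elementary embedding \eqref{eq:elementary_emb0} gives
$$
B^{s_1}_{\q_1,p_0}(\T^d) \embed B^{s_1}_{\q_1,p_1}(\T^d),
$$
so it suffices to prove
$$
B^{s_0}_{\q_0,p_0}(\T^d) \embed B^{s_1}_{\q_1,p_0}(\T^d).
$$

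Next I would set $\delta \stackrel{{\rm def}}{=} s_0 - s_1 \geq 0$ and pick any auxiliary exponents $\sigma^- < s_0 < \sigma^+$ together with $\theta \in (0,1)$ such that $s_0 = (1-\theta)\sigma^- + \theta \sigma^+$. Define $\sigma^{\pm}_1 \stackrel{{\rm def}}{=} \sigma^{\pm} - \delta$, so that $s_1 = (1-\theta)\sigma^-_1 + \theta \sigma^+_1$. By the definition \eqref{eq:def_B_anisotropic} of anisotropic Besov spaces,
$$
B^{s_0}_{\q_0,p_0}(\T^d) = (H^{\sigma^-,\q_0}(\T^d), H^{\sigma^+,\q_0}(\T^d))_{\theta,p_0}
$$
and similarly with $\sigma^{\pm}_1$ and $\q_1$ for $B^{s_1}_{\q_1,p_0}(\T^d)$. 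Now Lemma \ref{l:anisotropic_B}\eqref{it:anisotropic_B4} applies to each endpoint: the monotonicity $\sigma^{\pm} \geq \sigma^{\pm}_1$ holds because $\delta \geq 0$, and the Sobolev index inequality reads
$$
\sigma^{\pm} - \sum_{i=1}^d \frac{1}{q_{0,i}} - \Big(\sigma^{\pm}_1 - \sum_{i=1}^d \frac{1}{q_{1,i}}\Big) = \delta + \sum_{i=1}^d \frac{1}{q_{1,i}} - \sum_{i=1}^d \frac{1}{q_{0,i}} = s_0 - s_1 - \sum_{i=1}^d \frac{1}{q_{0,i}} + \sum_{i=1}^d \frac{1}{q_{1,i}} \geq 0
$$
by hypothesis. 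Hence
$$
H^{\sigma^{\pm},\q_0}(\T^d) \embed H^{\sigma^{\pm}_1,\q_1}(\T^d).
$$

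Finally, I would apply the interpolation property of bounded linear operators with the same parameters $(\theta, p_0)$ at both endpoints (see \cite[Theorem 3.1.2]{BeLo}), yielding
$$
B^{s_0}_{\q_0,p_0}(\T^d) = (H^{\sigma^-,\q_0}, H^{\sigma^+,\q_0})_{\theta,p_0} \embed (H^{\sigma^-_1,\q_1}, H^{\sigma^+_1,\q_1})_{\theta,p_0} = B^{s_1}_{\q_1,p_0}(\T^d),
$$
and composing with \eqref{eq:elementary_emb0} concludes the proof. There is no substantive obstacle here; the only point requiring care is the simultaneous shift of both endpoints by the same amount $\delta$, which preserves the interpolation parameter $\theta$ and allows direct use of Lemma \ref{l:anisotropic_B}\eqref{it:anisotropic_B4} on each endpoint.
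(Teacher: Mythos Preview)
Your proposal is correct and follows essentially the same approach as the paper: reduce to $p_0=p_1$ via \eqref{eq:elementary_emb0}, then use the definition \eqref{eq:def_B_anisotropic} together with Lemma \ref{l:anisotropic_B}\eqref{it:anisotropic_B4} on the endpoints and real interpolation. The paper states this in one line without spelling out the endpoint shift by $\delta$, but your more explicit version is exactly what is meant.
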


\begin{proof}
In the case $p_0=p_1$, the embedding follows from \eqref{eq:def_B_anisotropic}, interpolation and Lemma \ref{l:anisotropic_B}\eqref{it:anisotropic_B4}. The case $p_0>p_1$ follows from the previous case and \eqref{eq:elementary_emb0}.
\end{proof}

\section{Stochastic maximal $L^p$-regularity in anisotropic spaces}
\label{app:smr}
In this appendix, we prove stochastic maximal $L^p$-regularity estimates for the following linear SPDE in the anisotropic spaces:
\begin{equation}
\label{eq:app_problem}
\left\{
\begin{aligned}
&\partial_t u + \nabla \cdot(a\cdot \nabla u) = f + \sum_{n\geq 1} \big[(b_n\cdot\nabla) u+ g_n \big]\,\dot{\beta}^n_t,&\text{ on }&\T^d,\\
&u(0)=0 ,&\text{ on }&\T^d.
\end{aligned}\right.
\end{equation}
As in Appendix \ref{app:anisotropic}, we denote by $H^{s,\q}(\T^d)$ and $B^s_{\q,p}(\T^d)$ the anisotropic Bessel potential and Besov spaces, respectively. The space $H^{s,\q}(\T^d;\ell^2)$ of $\ell^2$-valued maps is defined via the requirement $\|f\|_{H^{s,\q}(\ell^2)}\stackrel{{\rm def}}{=}\|((1-\Delta)^{s/2}f_n)_{n\geq 1}\|_{L^{\q}(\T^d;\ell^2)}<\infty
$.
Finally $(\beta^n)_{n\geq 1}$ is a sequence of standard independent Brownian motions on a given filtered probability space.

To state the main result of this appendix we need the following conditions.

\begin{assumption}
\label{ass:stoch_max_reg_anisotropic}
Let $s\in \R$ and let the following be satisfied for all $n\geq 1$:
\begin{itemize}
\item $a=(a^{i,j})_{i,j=1}^d:\R_+\times \O\times \T^d\to \R^{d\times d}$ and $b_n=(b_n^j):\R_+\times \O\times \T^d\to \R^d$ be $\Progress\otimes \Borel(\R_+\times \T^d)$-measurable.
\item There exist $M>0$ and $\g>|1-s|$ such that for a.a.\ $(t,\om)\in\R_+\times\O$
$$
\|a(t,\om,\cdot)\|_{C^{\g}(\T^d;\R^{d\times d})}+
\|(b(t,\om,\cdot))_{n\geq 1}\|_{C^{\g}(\T^d;\ell^2)}\leq M.
$$
\item There exists $\ellip>0$ such that for all $\xi\in \R^d$ and a.e.\ on $\R_+\times\O\times \T^d$
$$
\sum_{i,j=1}^d \Big(a^{i,j}-\sum_{n\geq 1} b^j_n b^i_n\Big)\xi^i\xi^j \geq \ellip |\xi|^2.
$$ 
\end{itemize}
\end{assumption}

Strong solutions to \eqref{eq:app_problem} can be defined considering its integrated version. 
More precisely, a progressively measurable process $u\in L^p((0,T)\times \O,w_{\a};H^{2-s,\q})$ is a strong solution to \eqref{eq:app_problem} if a.s.\ and for all $t\in (0,T)$ 
$$
u(t)-\int_{0}^t \nabla (a\cdot\nabla u)\,\dd s =\int_{0}^t f\,\dd s+ \int_0^t ((b\cdot\nabla)u+g_n)_{n\geq 1}\,\dd \Br_{\ell^2}(s)
$$
where $\Br_{\ell^2}(s)$ is the $\ell^2$-cylindrical Brownian motion induced by $(\beta^n)_{n\geq 1}$, see \eqref{eq:def_Br}. As below Definition \ref{def:solution}, all the integrals appearing in the above equality are well-defined due to Lemma \ref{l:pointwise} below and the required regularity of $u$.

\begin{theorem}[Stochastic maximal $L^p$-regularity in anisotropic spaces]
\label{t:smr_anisotropic}
Let Assumption \ref{ass:stoch_max_reg_anisotropic} be satisfied. Let $T\in (0,\infty)$. Assume that $p\in (2,\infty)$, $\a\in [0,\frac{p}{2}-1)$, 
\begin{equation}
\label{eq:assumption_integrability_q}
q_i\in (2,\infty) \ \  \  \text{ for all }1\leq i\leq d-1 , \quad \text{and }\quad q_d\in [2,\infty).
\end{equation}
Then for all progressively measurable maps
$$
f\in L^p((0,T)\times \O,w_{\a};H^{-s,\q}(\T^d)), \  \text{ and }\ \ g\in L^p((0,T)\times \O,w_{\a};H^{1-s,\q}(\T^d;\ell^2)),
$$
there exists a unique strong solution $u\in L^p((0,T)\times \O,w_{\a};H^{2-s,\q}(\T^d))$ such that for all $\theta\in [0,\frac{1}{2})$ one has $u\in L^p(\O,H^{\theta,p}(0,T,w_{\a};H^{2-s-2\theta,\q}(\T^d)))$ and 
\begin{align}
\label{eq:max_reg_anisotropic_theta}
\E\|u\|_{H^{\theta,p}(0,T,w_{\a};H^{2-s-2\theta,\q}(\T^d))}^p 
&\lesssim_{\theta,p}
\E\|f\|_{L^p(0,T,w_{\a};H^{-s,\q}(\T^d))}^p\\
\nonumber
&+ \E\|g\|_{L^p(0,T,w_{\a};H^{1-s,\q}(\T^d;\ell^2))}^p,
\end{align}
where the implicit constant is independent of $(f,g)$.
\end{theorem}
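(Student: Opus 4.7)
The plan is to deduce Theorem \ref{t:smr_anisotropic} from the known isotropic stochastic maximal $L^p(L^q)$-regularity on the torus via a mixed-norm Rubio de Francia extrapolation, combined with a perturbation/freezing-of-coefficients argument for the variable coefficients.

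First, I would reduce to the case $s=0$ by the lifting $v=(1-\Delta)^{-s/2}u$; the resulting SPDE has coefficients of the form $a$ plus commutators with $(1-\Delta)^{-s/2}$, which are of strictly lower order thanks to the hypothesis $\g>|1-s|$ and can be absorbed via Lemma \ref{l:pointwise} and the perturbation result \cite[Theorem 3.2]{AV_torus}. Next, arguing as in \cite[Subsection 5.2]{AV_torus} in combination with the method of continuity of \cite[Proposition 3.13]{AV19_QSEE_2}, it suffices to prove an a priori estimate on a short interval $[t,t+t_\star]$ with $t_\star$ depending only on $p$, $\q$, $\a$, $M$, $\g$, $\ellip$; a partition of unity on $\T^d$ of small diameter $r$, together with the H\"older continuity of $a$ and $(b_n)_{n\geq 1}$ with exponent $\g>0$, then reduces the analysis to an operator whose leading coefficients are frozen at a basepoint $\x_0\in\T^d$, the oscillation of size $Mr^\g$ being absorbed via \cite[Proposition 2.5(2)]{AV_torus}.

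Second, for the frozen constant-in-$\x$ problem, the isotropic estimate $u\in L^p(\O;L^p(w_\a;L^q(\T^d)))$ for every $q\in[2,\infty)$ is available from \cite{AV_torus,NVW1}, and by extrapolating the underlying $R$-bounded $H^{\infty}$-calculus of $-\Delta$ it extends to a weighted estimate on $L^q(\T^d,w)$ for every $w$ in an appropriate Muckenhoupt class. To transfer this to the anisotropic $L^\q$-scale I would iterate a vector-valued off-diagonal Rubio de Francia extrapolation theorem one spatial direction at a time, as developed in \cite{CUMP11,K21_rubio,L21_JGA,N23_JFA}: starting from the weighted isotropic bound in $x_1$, extrapolation yields boundedness on $L^{q_1}(\T_1)$ with values in the unchanged target; iterating in $x_2,\dots,x_d$ produces the estimate on $L^{q_1}(\T_1;\dots;L^{q_d}(\T_d))=L^\q(\T^d)$. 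The stochastic integral is handled analogously, using the Fubini-type identification $\g(\ell^2,L^\q)=L^\q(\ell^2)$ (the anisotropic analogue of \eqref{eq:gamma_identification}, established by the same extrapolation scheme), the type-$2$ property of $L^\q$, valid since all $q_i\geq 2$, and the UMD property of each iterated Bochner factor, valid since each $q_i\in(1,\infty)$. The fractional-time regularity $H^{\theta,p}_t(w_\a;H^{2-s-2\theta,\q})$ for $\theta\in[0,\tfrac12)$ then follows by complex interpolation between the endpoints $L^p_t(w_\a;H^{2-s,\q})$ and $W^{1,p}_t(w_\a;H^{-s,\q})$, using Lemma \ref{l:anisotropic_B}\eqref{it:anisotropic_B2}.

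The main obstacle will be verifying that the $R$-boundedness (equivalently $\g$-boundedness) of the stochastic-convolution solution operator, which underpins the abstract characterization of stochastic maximal $L^p$-regularity in \cite{NVW1,AV19_QSEE_1}, is genuinely preserved by the iterated mixed-norm extrapolation. The restriction \eqref{eq:assumption_integrability_q}, namely $q_i>2$ for $i<d$ while $q_d=2$ is admissible, reflects exactly this: each non-innermost slot requires a reference exponent $q>2$ so that the corresponding UMD Banach space carries non-trivial type $2$ (a prerequisite for stochastic maximal $L^p$-regularity), while the innermost slot may safely be taken to be the ground Hilbert direction $q_d=2$ since no further extrapolation step is performed on it. Once the estimate is in place, the parabolicity $a-\sum_n b_n\otimes b_n\geq\ellip\,\mathrm{Id}$ guarantees that after It\^o correction the principal symbol is strictly elliptic, so the sharp maximal-regularity constant is controlled uniformly in the localization parameters and the perturbation scheme closes.
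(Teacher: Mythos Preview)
Your outer framework---reduce to $s=0$, freeze coefficients via a partition of unity, use Lemma \ref{l:pointwise} and the perturbation machinery of \cite{AV_torus} to absorb the oscillation---matches the paper exactly, and indeed the paper's proof of Theorem \ref{t:smr_anisotropic} is explicitly a sketch that defers to \cite[Theorem 5.2]{AV_torus} once three ingredients are in place. Where your route diverges is in the constant-coefficient core, and there are two concrete gaps.

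First, for the stochastic heat equation on $L^{\q}$ the paper does \emph{not} extrapolate the stochastic maximal regularity estimate via Rubio de Francia. Instead it verifies the abstract hypothesis $(H_p)$ of \cite{NVW13} directly on $L^{\q}(\T^d)$ through Proposition \ref{prop:R_boundedness}: starting from the known $\mathcal R$-boundedness of $\NN_\varphi^{(r)}(X)$ when $X$ is a closed subspace of some $L^{q_d}$ with $q_d\geq 2$ (this is \cite[Theorem 7.1]{NVW11}), one iterates outward using the $\gamma$-Fubini isomorphism \eqref{eq:gamma_fubini_isomorphism} and Kahane--Khintchine to pass from $X$ to $L^q(S;X)$ for any $q>2$. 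This is an elementary pointwise-in-$s\in S$ argument and is precisely what produces the asymmetry in \eqref{eq:assumption_integrability_q}. Your proposal to run Rubio de Francia on the stochastic convolution would require, as a starting point, SMR on $L^q(\T^d,w)$ with constants depending only on $[w]_{A_q}$; the references you cite (\cite{CUMP11,K21_rubio,L21_JGA,N23_JFA}) treat deterministic multipliers, and \cite{LV21} handles time weights, not spatial Muckenhoupt weights for the stochastic integral. You correctly flag this as the main obstacle, but you do not resolve it---and the paper's resolution is a different, and considerably simpler, mechanism. For general $(t,\omega)$-dependent but $x$-independent coefficients (Lemma \ref{l:smr_x_independent}) the paper then uses the transference results \cite[Theorems 3.9 and 3.18]{VP18} to reduce back to the heat equation; Rubio de Francia enters only for the \emph{deterministic} maximal regularity input, via \cite{DK_18_Ap}.

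Second, your claim that the $H^{\theta,p}$ time regularity for $\theta\in[0,\tfrac12)$ follows by complex interpolation between $L^p_t(X_1)$ and $W^{1,p}_t(X_0)$ is incorrect for the stochastic part: the stochastic convolution $\int_0^{\cdot} g\,\dd \Br_{\ell^2}$ does not lie in $W^{1,p}_t$ (Brownian paths are not differentiable), so there is no $W^{1,p}$ endpoint to interpolate against. The paper obtains the fractional time regularity directly from the NVW framework, cf.\ the proof of Lemma \ref{l:smr_heat} and the references to \cite[Theorem 1.2]{MaximalLpregularity} and \cite[Theorem 7.16]{AV19}, which build the parameter $\eta$ into the kernel $k_\lambda$ from the outset.
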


As the proof below shows, Theorem \ref{t:smr_anisotropic} also holds with $(0,T)$ replaced by $(0,\tau)$ where $\tau$ is a stopping time taking values in a compact interval of $[0,\infty)$. 
By \cite[Proposition 3.10]{AV19_QSEE_1}, the above also implies stochastic maximal $L^p$-regularity estimates with non-trivial initial data $u(0)\in L^p_{\F_0}(\O;B^{2-s-2\frac{1+\a}{p}}_{\q,p})$.

In \eqref{eq:assumption_integrability_q} the case $q_d=2$ is allowed, but $q_i>2$ for all $i\leq d-1$. The optimality of such a condition is unclear.
This condition arises as a consequence of the application of Proposition \ref{prop:R_boundedness}. For the latter result \eqref{eq:assumption_integrability_q} seems optimal, cf.\ the counterexample \cite[Section 8]{NVW11} in a related situation. However, it is not known whether the conclusion of Proposition \ref{prop:R_boundedness} is necessary for stochastic maximal $L^p$-regularity to hold (cf.\ condition $(H_p)$ in \cite[Section 7]{NVW13}). 
As discussed in Subsection \ref{ss:scaling_intro} the possibility of choosing $q_d=2$ is of fundamental importance for the applications of stochastic maximal $L^p$-regularity in the context of PEs with rough noise. 

Theorem \ref{t:smr_anisotropic} is an extension of \cite[Theorem 5.2]{AV_torus} to the anisotropic setting.  
The proof of Theorem \ref{t:smr_anisotropic} is an adaptation of the one of \cite[Theorem 5.2]{AV_torus}.
The main tools needed to adapt the proofs of \cite{AV_torus} in an anisotropic setting are as follows:

\begin{roadmap}\
\begin{enumerate}[{\rm(1)}]
\item\label{it:smr_time} Stochastic maximal $L^p$-regularity in $H^{-s,\q}(\T^d)$ for the stochastic heat equation, i.e.\ $a^{i,j}=\delta^{i,j}$ and $b_n^j\equiv 0$.
\item\label{it:smr_without_time} Stochastic maximal $L^p$-regularity \emph{without} time regularity, i.e.\ proof of the estimate \eqref{eq:max_reg_anisotropic_theta} with $\theta=0$, in case of constant coefficients $(a,b)$.
\item\label{it:pointwise_multiplier} Pointwise multiplication in anisotropic Bessel potential spaces.
\end{enumerate}
\end{roadmap}

The precise statement, formulation and proofs of \eqref{it:smr_time}--\eqref{it:pointwise_multiplier} are given in  Subsections \ref{ss:laplace_anisotropic}--\ref{ss:pointwise_multiplication}. The proof of Theorem \ref{t:smr_anisotropic} is given in Subsection \ref{ss:smr_anisotropic_proof}.

\subsection{The stochastic heat equation in anisotropic spaces}
\label{ss:laplace_anisotropic}
The aim of this subsection is to prove stochastic maximal $L^p$-regularity for the stochastic heat equation in anisotropic spaces.

\begin{lemma}
\label{l:smr_heat}
Theorem \ref{t:smr_anisotropic} holds in case $a^{i,j}=\delta^{i,j}$ and $b_n^j=0$.
\end{lemma}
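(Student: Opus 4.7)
The plan is to prove Lemma~\ref{l:smr_heat} via three main moves: reduce to the case $s=0$ by commutation with the Bessel potential, verify that the Laplacian admits bounded $H^\infty$-calculus on the anisotropic space $L^{\q}(\T^d)$ viewed as a UMD Banach space with type $2$, and then invoke the abstract stochastic maximal $L^p$-regularity theorem of van Neerven--Veraar--Weis.

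First, I would apply the operator $(1-\Delta)^{s/2}$ to the whole equation. Since it commutes with $\partial_t$ and with $-\Delta$, and is an isomorphism from $H^{r+s,\q}(\T^d)$ onto $H^{r,\q}(\T^d)$ for every $r\in\R$, this reduces the problem to proving
\[
\E\|u\|_{L^p(0,T,w_\a; H^{2,\q})}^p
\lesssim
\E\|\tilde f\|_{L^p(0,T,w_\a; L^{\q})}^p
+\E\|\tilde g\|_{L^p(0,T,w_\a; H^{1,\q}(\ell^2))}^p
\]
with $\tilde f = (1-\Delta)^{s/2} f$, $\tilde g = (1-\Delta)^{s/2} g$, for solutions to the stochastic heat equation.

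Second, I would identify $L^{\q}(\T^d)$ with the iterated Bochner space $L^{q_1}(\T_1;L^{q_2}(\T_2;\ldots;L^{q_d}(\T_d)\ldots))$. Since each $q_i\in[2,\infty)$, this space is UMD (iterated $L^q$-spaces of UMD spaces are UMD for $q\in(1,\infty)$) and has type $2$ (iterated $L^q$-spaces preserve type $2$ for $q\geq 2$), cf.\ \cite[Chapter 4]{Analysis1}, \cite[Chapter 7]{Analysis2}. I would then show that $-\Delta$ admits bounded $H^\infty$-calculus of angle $\omega<\pi/2$ on $L^{\q}(\T^d)$ by writing $-\Delta=\sum_{i=1}^{d}(-\partial^2_{x_i})$ as a sum of pairwise commuting sectorial operators of angle $0$. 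Each summand satisfies a Mihlin-type symbol condition on $L^{\q}(\T^d)$, which follows from the anisotropic Mihlin multiplier theorem recalled in Appendix~\ref{app:anisotropic} (itself obtained from the weighted Mihlin theorem \cite[Theorem 7.1]{L21_JGA} and Rubio de Francia extrapolation \cite[Theorem 6.2]{K21_rubio}). Kalton--Weis joint functional calculus for commuting sectorial operators (see e.g.\ \cite[Chapter 10]{Analysis3}) then upgrades this to bounded $H^\infty$-calculus of the sum.

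Third, with $L^{\q}(\T^d)$ UMD with type $2$ and $-\Delta$ having bounded $H^\infty$-calculus of angle ${<\pi/2}$, the abstract stochastic maximal $L^p$-regularity theorem (see \cite[Theorem 1.1]{NVW12} and the weighted/progressive-measurable extension in \cite[Proposition 3.9 and Proposition 3.12]{AV19_QSEE_1}) applies and yields \eqref{eq:max_reg_anisotropic_theta} for $\theta=0$. The fractional time smoothness $\theta\in(0,\tfrac12)$ is then a standard consequence of the $\theta=0$ estimate via interpolation in the weighted parabolic scale, e.g.\ \cite[Proposition 2.7]{AV19_QSEE_1} or \cite[Theorem 1.2]{ALV23}.

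The main obstacle is the $H^\infty$-calculus step: while each $-\partial^2_{x_i}$ is controlled individually by the anisotropic Mihlin theorem, passing to the joint calculus via Kalton--Weis requires simultaneous $R$-boundedness of the resolvent families, which is where the restriction $q_i\in(2,\infty)$ for $i\leq d-1$ in \eqref{eq:assumption_integrability_q} is needed; type $2$ at each outer level allows the transfer from boundedness to $R$-boundedness, whereas the endpoint $q_d=2$ remains admissible because the innermost level is Hilbertian and $R$-boundedness there is free.
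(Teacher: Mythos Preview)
Your argument has a genuine gap at the third step. You claim that UMD, type $2$, and bounded $H^\infty$-calculus of angle $<\pi/2$ together suffice to invoke the abstract stochastic maximal $L^p$-regularity theorem. They do not: the theorem of van Neerven--Veraar--Weis in \cite{MaximalLpregularity} additionally requires that the underlying space $X$ be isomorphic to a closed subspace of some $L^q(\mu)$ with $q\in[2,\infty)$, and the more general version in \cite[Section 7]{NVW13} replaces this by the condition $(H_p)$, namely the $\Rsec$-boundedness of the family $\NN_\varphi^{(p)}(X)$. Mixed-norm spaces $L^{\q}(\T^d)$ with distinct exponents are not in general isomorphic to closed subspaces of a single $L^q$, so neither hypothesis is automatic. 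The paper closes this gap via Proposition~\ref{prop:R_boundedness}, which shows by an iteration argument based on the $\gamma$-Fubini isomorphism that $(H_p)$ holds for $L^{\q}(\T^d)$ precisely under the restriction \eqref{eq:assumption_integrability_q}; this is the substantive step you are missing.

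Relatedly, your final paragraph misattributes the origin of the restriction $q_i>2$ for $i\leq d-1$. The $H^\infty$-calculus of $-\Delta$ on $L^{\q}(\T^d)$ holds for \emph{all} $\q\in(1,\infty)^d$ directly from the anisotropic Mihlin theorem (no Kalton--Weis machinery is needed, and no type condition enters there); the paper obtains it in one line from \cite[Theorem 10.2.25]{Analysis2}. The constraint \eqref{eq:assumption_integrability_q} arises solely from the verification of $(H_p)$ in Proposition~\ref{prop:R_boundedness}, where the outer integrability exponents must be strictly greater than $2$ for the iteration to go through, while the innermost exponent may equal $2$ because one starts from the known $\Rsec$-boundedness on $L^{q_d}$. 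Your remark that ``type $2$ at each outer level allows the transfer from boundedness to $\Rsec$-boundedness'' is also incorrect: type $2$ alone does not give this transfer.
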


We prove the above by using the semigroup approach due to J.\ van Neerven, M.C.\ Veraar, L.\ Weis \cite{MaximalLpregularity}. Here we argue as in \cite[Section 7]{NVW13} which generalizes \cite{MaximalLpregularity}. Note that the results \cite[Section 7]{NVW13} covers only the case $\a=0$, while the case $\a>0$ follows from the latter by extrapolation (see either \cite[Section 7]{AV19} or \cite{LV21}).

To obtain stochastic maximal $L^p$-regularity from \cite[Theorem 7.1]{NVW13} and its variant for time regularity (cf.\ also the proof of \cite[Theorem 7.16]{AV19}), we need to check assumption $(H_p)$ in \cite[Theorem 7.1]{NVW13}. In particular, one is interested in the $\Rsec$-boundedness of certain families of operators of stochastic convolution type (for the notion of $\Rsec$-boundedness see \cite[Chapter 8]{Analysis2}). A throughout investigation of the latter issue is provided in \cite{NVW11}. In particular, we employ the characterization of such $\Rsec$-boundedness given in \cite[Theorem 7.1]{NVW11}. 

Let $X$ be a Banach space with type 2 (see e.g.\ \cite[Chapter 7]{Analysis2} for the definition), and let $\eta\in [0,1/2)$ be fixed. 
For each $\lambda \in \C$ with $\Re (\lambda)>0$, let $k_{\lambda}(t)\stackrel{{\rm def}}{=} \lambda^{1/2-\eta} t^{-\eta}e^{-\lambda t}$ for $t>0$. Similar to \cite[Section 9]{LV21}, for all $p\in [2,\infty)$, we define the following bounded operator: 
\begin{align*}
N_{\lambda}&:L^p(\R_+;X)\to L^p(\R_+;\g(L^2(\R_+),X))\\  
N_{\lambda }f(s)&\stackrel{{\rm def}}{=}
k_{\lambda}(s-\cdot)f \ \text{ for } \  s>0.
\end{align*}
Compared to \cite{LV21}, the additional parameter $\eta$ is needed to obtain sharp space-time estimates in the semigroup case, see e.g.\ \cite[Theorem 1.2]{MaximalLpregularity} or \cite[Subsection 7.3]{AV19}. However, the results in \cite[Section 9]{LV21} readily extend to this case.

For $\varphi\in [0,\pi/2)$ and $p\in [2,\infty)$, we set 
\begin{align*}
\NN_{\varphi}^{(p)}(X)&\stackrel{{\rm def}}{=}\{ N_{\lambda }\,:\, |\arg(\lambda)|\leq \varphi\}
\subseteq \mathscr{L}(L^p(\R_+;X), L^p(\R_+;\g(L^2(\R_+),X)))
.
\end{align*}
The following result can be used to obtain the $\Rsec$-boundedness of $\NN_{\varphi}^{(p)}(X)$ when $X$ is an anisotropic Lebesgue space.

\begin{proposition}
\label{prop:R_boundedness}
Let $X$ be a Banach space with type $2$. Assume that $\NN^{(r)}_{\varphi}(X)$ is $\Rsec$-bounded for some $r\in [2,\infty)$ and $\varphi\in [0,\pi/2)$. Then, for any $\sigma$-finite measure space $(S,\mathcal{A},\mu)$ and any $q,p\in (2,\infty)$,
$$
\NN_{\varphi}^{(p)}(L^q(S;X)) \text{ is $\Rsec$-bounded.}
$$
\end{proposition}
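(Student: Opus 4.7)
The plan is to deduce the proposition via two independent extrapolation steps: first broadening the target space from $X$ to $Y\stackrel{{\rm def}}{=}L^q(S;X)$, then broadening the outer time integrability from $L^r(\R_+;\cdot)$ to $L^p(\R_+;\cdot)$. As a preliminary, since $X$ has type $2$ and $q\in(2,\infty)$, the space $Y$ itself has type $2$ with comparable constant, so the family $\NN_\varphi^{(p)}(Y)$ is well defined as operators into $L^p(\R_+;\gamma(L^2(\R_+),Y))$, and the statement to be proved makes sense.

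For the first step, I would unfold the $\Rsec$-boundedness of $\NN_\varphi^{(r)}(X)$ into a single-operator bound on the randomized space $L^r(\R_+\times \O';X)$, where $\O'$ carries a Rademacher sequence $(\varepsilon_j)_{j\geq 1}$; the operator becomes the diagonal $\mathbf{N}=\sum_j\varepsilon_j N_{\lambda_j}$. Since each $N_{\lambda_j}$ acts only in the time variable, $\mathbf{N}$ commutes with evaluation at $s\in S$, so the sought $\Rsec$-boundedness of $\NN_\varphi^{(r)}(Y)$ reduces to a bound on $L^r(\R_+\times\O';L^q(S;X))$. The passage from $L^r(\R_+\times\O';X)$ to $L^r(\R_+\times\O';L^q(S;X))$ would be obtained by using Kahane's inequality to replace the $L^r(\O')$-norm of a Rademacher sum by its $L^q(\O')$-norm, applying Fubini on $S\times\O'$ when the exponents match, and finally restoring the $L^r(\O')$-norm by Kahane again. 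The role of type $2$ is essential to reconcile the $\gamma$-valued nature of the range of $N_\lambda$ with ordinary integration against $(\varepsilon_j)$, since only in type $2$ spaces can Rademacher and Gaussian sums be interchanged without loss.

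For the second step, the operators $N_\lambda$ are convolutions in time with kernels $k_\lambda(t)=\lambda^{1/2-\eta}t^{-\eta}e^{-\lambda t}$, which satisfy Calder\'on--Zygmund-type size and smoothness bounds uniformly in $\lambda$ over the sector $|\arg\lambda|\leq \varphi$. Having obtained $\Rsec$-boundedness of $\NN_\varphi^{(r)}(Y)$ from Step~1, I would promote it to $\Rsec$-boundedness of $\NN_\varphi^{(p)}(Y)$ for every $p\in(2,\infty)$ by combining a weighted $L^r_w(\R_+;Y)$-bound with uniform control on the $A_r$-characteristic of $w$, and invoking the Banach-valued Rubio de Francia extrapolation theorem in the form of \cite{CUMP11,K21_rubio}. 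Unrolling the $\O'$-randomization at the end reinterprets the resulting single-operator $L^p$-bound as $\Rsec$-boundedness of $\NN_\varphi^{(p)}(Y)$, which is the desired conclusion.

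The main obstacle I expect is the first step. Generic $\Rsec$-boundedness does not survive tensoring with $\mathrm{Id}_{L^q(S)}$, and the argument relies delicately on the joint assumptions $q\geq 2$ and type $2$ of $X$, which together supply the Khintchine-type comparability needed to swap $\O'$- and $S$-integration. Failure of this comparability when $q<2$ is precisely the source of the counterexample recalled from \cite[Section~8]{NVW11}. Once Step~1 is carried out with constants uniform in $\lambda_1,\ldots,\lambda_N$, Step~2 is a largely mechanical application of weighted-extrapolation technology.
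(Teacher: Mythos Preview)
Your plan assembles the right ingredients (Kahane--Khintchine, $\gamma$-Fubini, and $p$-extrapolation), but the order of Steps 1 and 2 creates a genuine gap. In Step~1 you want to pass from $\NN^{(r)}_\varphi(X)$ to $\NN^{(r)}_\varphi(L^q(S;X))$ while keeping the time exponent~$r$. Kahane lets you switch the moment in $\O'$ from $r$ to $q$, but the time norm $L^r(\R_+;\cdot)$ still sits between $\O'$ and $S$. To apply the hypothesis pointwise in $s\in S$ you would need to interchange $L^r(\R_+)$ with $L^q(S)$ \emph{twice}---once on the left and once on the right of the $\Rsec$-bound---and Minkowski's integral inequality only furnishes one of those directions when $r\neq q$. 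The argument therefore does not close.

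The paper fixes this by reversing your two steps. It first invokes the extrapolation of \cite[Theorem~9.1]{LV21} (essentially your Step~2) on both $X$ and $L^q(S;X)$: this yields $\Rsec$-boundedness of $\NN^{(p)}_\varphi(X)$ for every $p\in(2,\infty)$, and shows that $\Rsec$-boundedness of $\NN^{(p)}_\varphi(L^q(S;X))$ is independent of $p\in(2,\infty)$. Hence it suffices to treat $p=q$. With all three exponents ($\O'$, $\R_+$, $S$) equal to $q$, the $\gamma$-Fubini isomorphism $\gamma(L^2(\R_+);L^q(S;X))=L^q(S;\gamma(L^2(\R_+);X))$ and ordinary Fubini make the pointwise-in-$s$ application of the hypothesis immediate, in two lines. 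A minor side remark: the $\gamma$-Fubini isomorphism holds for arbitrary $X$, so the type~2 assumption is used only to ensure the operators $N_\lambda$ are well defined (through the embedding of $L^2\otimes X$ into $\gamma(L^2;X)$), not in the core of the argument as you suggest.
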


Recall that, by Theorem \cite[Theorem 7.1]{NVW11}, $\NN_{\varphi}^{(p)}(X)$ is $\Rsec$-bounded for all $p\in (2,\infty)$ if $X$ is isomorphic to a closed subspace of a space $L^q(S)$ where $q\in [2,\infty)$ (the case $q=p=2$ is also true). Hence, the above result shows that ($H_p$) in \cite[Section 7]{NVW13} holds for all $p\in (2,\infty)$ and all Banach space $X$ isomorphic to a closed subspace of $L^{q_1}(S_1;L^{q_2}( S_2))$ whenever $q_1\in (2,\infty)$, $q_2\in [2,\infty)$ and $(S_1,\mathcal{A}_1,\mu_1)$, $(S_2,\mathcal{A}_2,\mu_2)$ are $\sigma$-finite measure spaces. The previous argument can be further iterated by using $L^{\xi}$-spaces with $\xi\in (2,\infty)$. In particular $\NN_{\varphi}^{(p)}(L^{\q}(\T^d))$ is $\Rsec$-bounded if \eqref{eq:assumption_integrability_q} holds.

\begin{proof} Fix $\varphi\in [0,\pi/2)$.
Arguing as in \cite[Theorem 9.1]{LV21}, we have that $\NN^{(p)}_{\varphi}(X)$ is $\Rsec$-bounded and that the $\Rsec$-boundedeness of $\NN_{\varphi}^{(p)}(L^q(S;X))$ is independent of $p\in (2,\infty)$. In particular, it is enough to consider the case $q=p$. 
Since $(S,\mu,\mathcal{A})$ is $\sigma$-finite, the $\g$-Fubini isomorphism \cite[Theorem 9.4.8]{Analysis2} yields
\begin{equation}
\label{eq:gamma_fubini_isomorphism}
\g(L^2(\R_+);L^q(S;X))=L^q(S;\g(L^2(\R_+),X)) \ \text{ isometrically}. 
\end{equation} 

Let $(\varepsilon_j)_{j\geq 1}$ be a sequence of independent Rademacher variable over a probability space $(\wt{\O},\wt{\mathcal{A}},\wt{\P})$, i.e.\ $\wt{\P}(\varepsilon_j=1)=\wt{\P}(\varepsilon_j=-1)=1/2$. Let $\wt{\E}[\cdot]\stackrel{{\rm def}}{=}\int_{\wt{\O}}\cdot\,\dd \wt{\P}$ be the corresponding expected value.
By the Kahane-Khintchine inequality (see e.g.\ \cite[Theorem 6.2.4]{Analysis2}), for all integer $J\geq 1$, $(\lambda_j)_{j=1}^J\subset \C$ such that $|\arg(\lambda_j)|\leq \varphi$ and $(f_i)_{i=1}^J\subset L^p(S;X)$,  
\begin{align*}
\wt{\E}\Big\| \sum_{j=1}^J \varepsilon_j N_{\lambda_j} f_j \Big\|_{L^q(\R_+;\g(L^2(\R_+),L^q(S;X)))}^q
\stackrel{\eqref{eq:gamma_fubini_isomorphism}}{=} 
\wt{\E}\Big\| \sum_{j=1}^J \varepsilon_j N_{\lambda_j} f_j \Big\|_{L^q(S;L^q(\R_+;\g(L^2(\R_+),X)))}^q&\\
=\int_{S} \wt{\E}
\Big\| \sum_{j=1}^J \varepsilon_j N_{\lambda_j} f_j(s) \Big\|_{L^q(\R_+;\g(L^2(\R_+),X))}^q \,\dd \mu(s)&\\
\stackrel{(i)}{\leq} \Rsec (\NN^{(q)}_{\varphi}(X)) 
\int_{S} \wt{\E}
\Big\| \sum_{j=1}^J \varepsilon_j  f_j(s) \Big\|_{L^q(\R_+;X)}^q \,\dd \mu(s)&\\
= \Rsec (\NN^{(q)}_{\varphi}(X)) 
 \wt{\E}
\Big\| \sum_{j=1}^J \varepsilon_j  f_j(s) \Big\|_{L^q(\R_+;L^q(S;X))}^q&,
\end{align*}
where in $(i)$ we use the $\Rsec$-boundedness of $\NN^{(q)}_{\varphi}(X)$ pointwise in $s\in S $.
The above estimate proves the $\Rsec$-boundedness of $\NN_{\varphi}^{(q)}(L^q(S;X))$.
\end{proof}

\begin{proof}[Proof of Lemma \ref{l:smr_heat}]
Firstly, let us note that from the Miklin multiplier theorem in $L^{\q}$ and the periodic version of \cite[Theorem 10.2.25]{Analysis2}, it follows that the operator 
\begin{equation}
\label{eq:def_A0}
\begin{aligned}
A_0&: H^{2-s,\q}(\T^d)\subseteq H^{-s,\q}(\T^d)\to H^{-s,\q}(\T^d), \\  
A_0f & \stackrel{{\rm def}}{=}-\Delta f, \qquad f\in H^{2-s,\q}(\T^d),
\end{aligned}
\end{equation}
has a bounded $H^{\infty}$-calculus with angle $0$ (for the notion of $H^{\infty}$-calculus, see either \cite[Chapter 10]{Analysis2} or \cite[Chapter 3, Subsection 3.5]{pruss2016moving}).
Now, as noticed below Proposition \ref{prop:R_boundedness}, due to \eqref{eq:assumption_integrability_q}, the space $L^{\q}(\T^d)$ satisfies the condition 
$(H_p)$ in \cite[Saction 7]{NVW13}. Hence, in virtue of the boundedness of the $H^{\infty}$-calculus of $A$, the $\a=0$-case follows as in the proof of \cite[Theorem 7.1]{NVW13}, where for the case of space-time regularity one can argue as in the proof of \cite[Theorem 1.2]{MaximalLpregularity} (see also \cite[Theorem 7.16]{AV19}). The case $\a>0$, now follows from the unweighted case and an extrapolation argument, see either \cite[Section 7]{AV19} or \cite{LV21}.
\end{proof}

\subsection{Stochastic maximal $L^p$-regularity with constant coefficients}
Here we prove 
Theorem \ref{t:smr_anisotropic} in the special case of $x$-independent coefficients. 

\begin{lemma}
\label{l:smr_x_independent}
Theorem \ref{t:smr_anisotropic} holds with $\theta=0$ in \eqref{eq:max_reg_anisotropic_theta} in case $a^{i,j},b^j_n$ are independent of $x\in \T^d$, i.e.\ $a^{i,j}, b^j_n$ depend only on $(t,\om)\in \R_+\times\O$.
\end{lemma}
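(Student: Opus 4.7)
The plan is to reduce to Lemma \ref{l:smr_heat} by combining a spatial smoothing with the method of continuity, exploiting that every differential operator appearing in \eqref{eq:app_problem} is an $x$-independent Fourier multiplier when $(a,b)$ depend only on $(t,\omega)$.

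First, I would eliminate the smoothness parameter $s$. Since $(1-\Delta)^{s/2}$ commutes with every Fourier multiplier whose symbol is $x$-independent, applying $(1-\Delta)^{-s/2}$ to \eqref{eq:app_problem} shows that $v\stackrel{{\rm def}}{=}(1-\Delta)^{-s/2}u$ solves the same SPDE with right-hand sides $(1-\Delta)^{-s/2}f$ and $((1-\Delta)^{-s/2}g_n)_{n\geq 1}$. This reduces the claim to the case $s=0$, i.e.\ to the bound
\begin{equation*}
\E\|u\|_{L^p(0,T,w_{\a};H^{2,\q})}^p\lesssim \E\|f\|_{L^p(0,T,w_{\a};L^{\q})}^p+\E\|g\|_{L^p(0,T,w_{\a};H^{1,\q}(\ell^2))}^p.
\end{equation*}

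Second, I would apply the method of continuity. For $\lambda\in[0,1]$ introduce the family $a_\lambda=(1-\lambda)\mathrm{Id}+\lambda\, a$ and $b_{n,\lambda}=\sqrt{\lambda}\,b_n$. Assumption \ref{ass:stoch_max_reg_anisotropic} is preserved uniformly in $\lambda$:
\begin{equation*}
\sum_{i,j}\Bigl(a^{i,j}_{\lambda}-\sum_{n\geq 1}b^i_{n,\lambda}b^j_{n,\lambda}\Bigr)\xi^i\xi^j=(1-\lambda)|\xi|^2+\lambda\sum_{i,j}\Bigl(a^{i,j}-\sum_{n}b^i_nb^j_n\Bigr)\xi^i\xi^j\geq \min(1,\ellip)|\xi|^2.
\end{equation*}
At $\lambda=0$ the SPDE reduces to the stochastic heat equation, for which Lemma \ref{l:smr_heat} supplies the desired bound. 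By the abstract continuation principle of \cite[Proposition 3.13]{AV19_QSEE_2}, the lemma follows once we establish the estimate \eqref{eq:max_reg_anisotropic_theta} (with $\theta=0$) uniformly in $\lambda\in[0,1]$.

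Third, for the uniform a priori estimate I would pass to Fourier series. Writing $u(t,x)=\sum_{k\in\Z^d}\wh u(t,k)e^{2\pi i k\cdot x}$, each mode obeys a scalar SDE
\begin{equation*}
\dd\wh u(t,k)+\Lambda_\lambda(t,k)\wh u(t,k)\,\dd t=\wh f(t,k)\,\dd t+\sum_{n\geq 1}\bigl[M_{n,\lambda}(t,k)\wh u(t,k)+\wh g_n(t,k)\bigr]\,\dd\beta^n_t,
\end{equation*}
with symbols $\Lambda_\lambda(t,k)=(2\pi)^2\sum_{i,j}a^{i,j}_\lambda(t)k^ik^j$ and $M_{n,\lambda}(t,k)=2\pi i\sum_j b^j_{n,\lambda}(t)k^j$. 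The uniform parabolicity yields
$\mathrm{Re}\,\Lambda_\lambda(t,k)-\tfrac12\sum_n|M_{n,\lambda}(t,k)|^2\geq c\min(1,\ellip)|k|^2$, which is exactly the coercivity encoded in the It\^o correction needed to run the abstract stochastic maximal $L^p$-regularity machinery.

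The main obstacle will be converting this modewise coercivity into the norm estimate in the anisotropic space $H^{2,\q}(\T^d)$ with constants independent of $\lambda$. For this I would argue as in the proof of Lemma \ref{l:smr_heat}: the solution operator $(f,g)\mapsto u$ is a (randomized) Fourier multiplier whose symbol is parabolic uniformly in $\lambda$; verifying the condition $(H_p)$ of \cite[Section 7]{NVW13} for the Banach space $H^{2,\q}(\T^d)$ via Proposition \ref{prop:R_boundedness} (applicable because of \eqref{eq:assumption_integrability_q}), combined with the $L^{\q}$-boundedness of the relevant Mihlin multipliers coming from \cite[Theorem 7.1]{L21_JGA} and Rubio de Francia extrapolation as recorded in Appendix \ref{app:anisotropic}, then yields the desired $L^p(w_{\a};H^{2,\q})$ estimate with a constant depending only on $\nu$, $M$, $\vec{q}$, $p$ and $\a$. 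Closing the continuity argument concludes the proof.
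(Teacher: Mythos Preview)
Your reduction to $s=0$ via conjugation by $(1-\Delta)^{-s/2}$ is correct and matches the paper. The gap is in the step you yourself flag as ``the main obstacle'': converting the modewise coercivity into the a~priori bound in $H^{2,\q}(\T^d)$ uniformly in $\lambda$. Your proposed resolution --- argue ``as in the proof of Lemma \ref{l:smr_heat}'', check $(H_p)$ via Proposition \ref{prop:R_boundedness}, invoke Mihlin/Rubio de Francia --- does not close. The machinery behind Lemma \ref{l:smr_heat} (namely \cite[Theorem 7.1]{NVW13}) applies to a \emph{fixed} sectorial $A$ with bounded $H^{\infty}$-calculus and $B=0$; it does not cover a $(t,\omega)$-dependent family $A(t,\omega)=-\nabla\cdot(a_\lambda(t,\omega)\cdot\nabla)$ together with multiplicative gradient noise $(b_{n,\lambda}(t,\omega)\cdot\nabla)$. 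Condition $(H_p)$ is a property of the space $L^{\q}$, already established in Lemma \ref{l:smr_heat}; it does not by itself upgrade the heat estimate to variable coefficients or nonzero $B$. And note that if you could prove the a~priori estimate directly for every $\lambda$ with a $\lambda$-free constant, the method of continuity would be redundant for the estimate --- so your homotopy does not actually reduce the difficulty.

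The paper's route is genuinely different and supplies exactly the missing tools. After the same reduction to $s=0$, it invokes two transference results from \cite{VP18}. First, \cite[Theorem 3.18]{VP18} together with the argument of \cite[Theorem 5.3]{VP18} uses the $x$-independence of $b_n$ and the translation invariance of $L^{\q}(\T^d)$ to remove the multiplicative noise, reducing to $b_n\equiv 0$. Second, \cite[Theorem 3.9]{VP18} transfers the remaining problem with $(t,\omega)$-dependent $a$ back to the heat case $A_0=-\Delta$ (handled by Lemma \ref{l:smr_heat}), with the required deterministic maximal regularity input coming from \cite[Theorem 5.2]{DK_18_Ap} plus Rubio de Francia extrapolation \cite[Theorem 6.2]{K21_rubio}. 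These transference theorems are precisely the device that bridges modewise parabolicity to the anisotropic $L^{\q}$ estimate; without them (or an equivalent substitute) your argument remains incomplete.
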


\begin{proof}Firstly, as $(1-\Delta)^{t/2}:H^{r,\q}(\T^d)\to H^{r-t,\q}(\T^d)$ is an isomorphism for all $r,t\in\R$ and $a^{i,j}, b^{j}_n$ are $x$-independent, it is enough to consider $s=0$.

Next, to prove the result with $s=0$ we use two `transference' results proven in \cite{VP18}. 
More precisely, by \cite[Theorem 3.18]{VP18} and arguing as in the proof of \cite[Theorem 5.3]{VP18} the $x$-independence assumption on $a^{i,j},b^j_n$ and the invariance under translation of $L^{\q}(\T^d)$, allow us to reduce the proof of Lemma \ref{l:smr_heat} to the case $b^j_n\equiv 0$. The claim of Lemma \ref{l:smr_x_independent} with $s=0$ now follows from \cite[Theorem 3.9]{VP18} where one choses $A_0$ as \eqref{eq:def_A0}. It remains to check the assumptions of \cite[Theorem 3.9]{VP18}. The assumption $(ii)$ of \cite[Theorem 3.9]{VP18} is satisfied due to Lemma \ref{l:smr_heat}. Finally, assumption $(i)$ in  \cite[Theorem 3.9]{VP18} follows from \cite[Theorem 5.2]{DK_18_Ap} and the Rubio de Francia extrapolation type result \cite[Theorem 6.2]{K21_rubio} (see also \cite{CUMP11}).  
\end{proof}

\subsection{Pointwise multiplication in anisotropic spaces}
\label{ss:pointwise_multiplication}
We begin by extending the pointwise multiplication result of \cite[Proposition 4.1(2) and (4)]{AV_torus} to the anisotropic setting.

\begin{lemma}
\label{l:pointwise}
Let $s\geq 0$, $\q\in (1,\infty)^d$, $\g>s$ and $H$ be an Hilbert space. Then
\begin{enumerate}[{\rm(1)}]
\item\label{it:pointwise1}
$
\displaystyle{\|f g\|_{H^{s,\q}(\T^d;H)}\lesssim\|f\|_{H^{s,\q}(\T^d)}\|g\|_{L^{\infty}(\T^d;H)}+\|f\|_{L^{\q}(\T^d)}\|g\|_{C^{\g}(\T^d;H)};}
$
\item\label{it:pointwise2} 
$
\displaystyle{\|f g\|_{H^{-s,\q}(\T^d;H)}\lesssim\|f\|_{H^{-s,\q}(\T^d)}\|g\|_{L^{\infty}(\T^d;H)}+\|f\|_{H^{-s-\varepsilon,\q}(\T^d)}\|g\|_{C^{\g}(\T^d;H)}}
$
where $\varepsilon\in (0,\g-s)$ is arbitrary.
\end{enumerate}
In the above the implicit constants are independent of $(f,g)$.
\end{lemma}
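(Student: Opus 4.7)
The plan is to reduce the two assertions to isotropic Littlewood--Paley techniques, using that the anisotropic spaces $H^{s,\q}(\T^d)$ are already known to be invariant under Mihlin-type Fourier multipliers and to satisfy Rubio de Francia extrapolation (as recalled in Appendix \ref{app:anisotropic}). The first step is therefore to fix an \emph{isotropic} dyadic partition of unity $(\Delta_j)_{j\geq 0}$ on $\T^d$, with $\Delta_j$ localised at frequency $|\kb|\sim 2^j$, and to establish the square-function characterization
\[
\|f\|_{H^{s,\q}(\T^d)}\eqsim \Big\|\Big(\sum_{j\geq 0}4^{js}|\Delta_j f|^2\Big)^{1/2}\Big\|_{L^{\q}(\T^d)}\qquad (s\in\R),
\]
together with the vector-valued Fefferman--Stein maximal inequality in $L^{\q}(\T^d)$. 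Both follow from the scalar Mihlin theorem on $L^{\q}$ (Lemma \ref{l:anisotropic_B}) by randomisation and the anisotropic extrapolation of \cite{K21_rubio}; this is the step where most of the new work lies.

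Once the above characterisation is in place, I would prove \eqref{it:pointwise1} by Bony's decomposition $fg=T_fg+T_gf+R(f,g)$, where $T_fg=\sum_j S_{j-3}f\,\Delta_j g$, $T_gf=\sum_j S_{j-3}g\,\Delta_jf$, and $R(f,g)=\sum_{|j-k|\leq 2}\Delta_j f\,\Delta_k g$. The high-low paraproduct $T_g f$ has summands Fourier-localised at scale $2^j$, so bounding $|S_{j-3}g|\leq\|g\|_{L^\infty}$ pointwise and pulling it out of the square function yields $\|T_g f\|_{H^{s,\q}}\lesssim \|g\|_{L^\infty}\|f\|_{H^{s,\q}}$. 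For $T_f g$, again localised at scale $2^j$, I would use $|S_{j-3}f|\lesssim Mf$ (maximal operator) together with $\|\Delta_j g\|_{L^\infty}\lesssim 2^{-j\g}\|g\|_{C^\g}$; the anisotropic Fefferman--Stein bound and the gap $\g>s$ then give the summable bound $\|T_fg\|_{H^{s,\q}}\lesssim\|f\|_{L^{\q}}\|g\|_{C^\g}$. The remainder $R(f,g)$, whose summands are no longer annulus-localised, is handled by shifting the frequency decomposition and using that $(2^{j(s-\g)})_{j}\in\ell^1$ to collapse the double sum, producing the same bound with $\|f\|_{L^{\q}}\|g\|_{C^\g}$.

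For \eqref{it:pointwise2} I would argue by duality. Since $(H^{-s,\q})^*\cong H^{s,\q'}$ via the integral pairing (a consequence of Lemma \ref{l:anisotropic_B}\eqref{it:anisotropic_B2} and standard Banach-space duality for Bochner iterated spaces), it suffices to test $\langle fg,\varphi\rangle=\langle f,\varphi g\rangle$ against $\varphi\in H^{s,\q'}$, where $s\geq 0$. Decomposing $\varphi g=T_\varphi g+T_g\varphi+R(\varphi,g)$, the summand $T_g\varphi$ belongs to $H^{s,\q'}$ with norm controlled by $\|g\|_{L^\infty}\|\varphi\|_{H^{s,\q'}}$ (by the dual version of the estimate of the previous paragraph), producing the term $\|f\|_{H^{-s,\q}}\|g\|_{L^\infty}$. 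The low-frequency contributions $T_\varphi g$ and $R(\varphi,g)$ involve $C^\g$-control of $g$ but only $L^{\q'}$-type norms of $\varphi$; after pairing with $f$ this translates into a slight loss of derivatives on $f$, giving the term $\|f\|_{H^{-s-\varepsilon,\q}}\|g\|_{C^\g}$, with $\varepsilon>0$ arising precisely to make the paraproduct series converge after losing the $C^{\g}$-cost $2^{-j\g}$ against the gain $2^{j(s+\varepsilon)}$ on the dual side.

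The main obstacle is the first paragraph: verifying the Littlewood--Paley characterisation and the vector-valued maximal inequality in the anisotropic spaces. Once these tools are in hand, the paraproduct arguments follow the isotropic template of \cite[Proposition 4.1]{AV_torus} essentially verbatim, and no new estimates beyond elementary geometric-series summation (using $\g>s$ and $\varepsilon>0$) are needed.
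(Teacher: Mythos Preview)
Your proposal is correct and follows the same overall architecture as the paper: Bony's paraproduct decomposition for \eqref{it:pointwise1}, duality for \eqref{it:pointwise2}, with Rubio de Francia extrapolation as the bridge to the anisotropic setting. The organizational difference is where the extrapolation is inserted. You propose to extrapolate the \emph{building blocks}---the Littlewood--Paley square-function characterisation of $H^{s,\q}$ and the vector-valued Fefferman--Stein inequality in $L^{\q}$---and then run the paraproduct argument directly in the anisotropic space. The paper instead extrapolates the \emph{paraproduct estimates themselves}: it first transfers to $\R^d$, then observes that by \cite[Theorem~6.2]{K21_rubio} it suffices to prove the paraproduct bounds \eqref{eq:paraproduct_1}--\eqref{eq:paraproduct_2} in the weighted isotropic spaces $H^{s,r}(\R^d,w)$ uniformly over $w\in A_r$, where they are essentially classical (weighted Mihlin gives the square function, $w\in A_r$ gives the maximal bound). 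The paper's route is slightly more economical, since it reduces to a setting where everything is already documented and no anisotropic Fefferman--Stein need be stated separately; your route is perfectly valid and arguably more transparent about which tools are actually used in $L^{\q}$. One small discrepancy: you estimate the remainder $R(f,g)$ by $\|f\|_{L^{\q}}\|g\|_{C^\g}$, whereas the paper groups $R(f,g)$ with $T_g f$ and bounds both by $\|f\|_{H^{s,\q}}\|g\|_{L^\infty}$; either works for $s\geq 0$ and the final statement is the same.
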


\begin{proof}
For simplicity, we prove the result for $H=\R$, the general case is analog.
By a standard localization argument, it is enough to show the claimed estimates with $\T^d$ replaced by $\R^d$.

\eqref{it:pointwise1}: 
As in the proof of \cite[Proposition 4.1(2)]{AV_torus}, we now follow \cite[Chapter 2, Section 1]{ToolsPDEs}. 
Let $(\psi_j)_{j\geq 0}$ be a Littlewood-Paley partition of the unity, see \cite[pp.\ 4]{ToolsPDEs} and set $\Psi_j\stackrel{{\rm def}}{=}\sum_{ k\leq j} \psi_j$ for all $j\geq 0$. Next we write the product $fg$ by using Bony's paraproducts, i.e.\ $fg=T_f g + R(f,g)+ T_g f$, where 
$$
T_f g\stackrel{{\rm def}}{=} \sum_{k\geq 5} \Psi_{k-5}(D)f\, \psi_{k+1} (D)g , \quad \text{ and }\quad R(f,g)\stackrel{{\rm def}}{=}\sum_{|j-k|\leq 4} \psi_j (D)f \psi_k (D)g.
$$
As in \cite{ToolsPDEs}, for all smooth functions $\varphi$, we set $\varphi(D)f\stackrel{{\rm def}}{=}\mathcal{F}^{-1}(\varphi(\xi)\mathcal{F}(f))$ where $\mathcal{F}$ is the Fourier transform.

The proof now follows by combining the argument in the proof of \cite[Proposition 4.1(2)]{AV_torus} and the above-mentioned Rubio de Francia extrapolation. Indeed,  by \cite[Theorem 6.2]{K21_rubio}), one can readily check that to prove the estimate in Lemma \ref{l:pointwise}, it is enough to show that for all $r\in (1,\infty)$ and $w\in A_r$
\begin{align}
\label{eq:paraproduct_1}
\|T_g f\|_{H^{s,r}(\R^d,w)}+\|R(f,g)\|_{H^{s,r}(\R^d,w)}
&\leq C_r([w]_{A_r})\|f\|_{H^{s,r}(\R^d,w)}\|g\|_{L^{\infty}(\R^d)},\\
\label{eq:paraproduct_2}
\|T_f g \|_{H^{s,r}(\R^d,w)}
&\leq C_r([w]_{A_r}) \|g\|_{C^{\g}(\R^d)}\|f\|_{L^{r}(\R^d,w)},
\end{align}
whenever the RHS of the above estimates make sense, and
where $C_r:[0,\infty)\to [0,\infty)$ is a locally bounded map. Finally, $[w]_{A_r}$ is the Muckenhoupt $A_r$ characteristic of the weight $w$:
$$
[w]_{A_r}\stackrel{{\rm def}}{=}\sup_{Q} \Big\{\Big(\fint_{Q}w \,\dd x\Big)\Big(\fint_{Q} w^{1-r'}\,\dd x\Big)^{r-1}\Big\}
$$ 
where the supremum is over all axes-parallel cubes.

To conclude the proof, it remains to discuss the validity of \eqref{eq:paraproduct_1}--\eqref{eq:paraproduct_2}. Firstly,  recall that the Littlewood-Paley decomposition of $H^{s,r}$ still holds in the weighted setting, as it is a consequence of the weighted Mihlin multiplier theorem \cite[Theorem 7.1]{L21_JGA}, cf.\ \cite[Chapter 13, eq.\ (5.37)--(5.46)]{TayPDE3}. Hence the proof of \eqref{eq:paraproduct_1} follows as in  \cite[Chapter 2, Section 1]{ToolsPDEs} where one uses that $w\in A_r$ implies that the maximal function is bounded on $L^r(\R^d,w)$ (see e.g.\ \cite[Theorem J.1.1]{Analysis2}). The latter also shows that the estimate \eqref{eq:paraproduct_2} can be proven by a straightforward modification of the inequalities in \cite[eq.\ (4.3)]{AV_torus}.

\eqref{it:pointwise2}: As in \cite[Proposition 4.1(2)]{AV_torus}, the estimate in \eqref{it:pointwise2} follows by the duality and the decomposition involving Bony's paraproducts. 
\end{proof}

\subsection{Proof of Theorem \ref{t:smr_anisotropic}}
\label{ss:smr_anisotropic_proof}
In virtue of Lemmas \ref{l:smr_heat}--\ref{l:pointwise}, the argument in \cite[Theorem 5.2]{AV_torus} readily extends to anisotropic spaces. For brevity, we only include a short sketch where we indicate the needed modifications.

\begin{proof}[Proof of Theorem \ref{t:smr_anisotropic}  -- Sketch]
Here we indicate how Lemmas \ref{l:smr_heat} and \ref{l:smr_x_independent}-\ref{l:pointwise} can be used to modify the proof of \cite[Theorem 5.2]{AV_torus} and therefore leading to an extension of \cite[Theorem 5.2]{AV_torus} to the case of anisotropic spaces. Lemma \ref{l:smr_heat} is needed to use \cite[Proposition 3.1]{AV_torus} with $(\wh{A},\wh{B})=(-\Delta,0)$ with $X_j=H^{2j-s,\q}(\T^d)$ for $j\in \{0,1\}$. Lemmas \ref{l:smr_x_independent} and \ref{l:pointwise} are needed in the proof of Steps 1 and 3 of \cite[Lemma 5.4]{AV_torus} respectively. The remaining arguments used in the proof of \cite[Theorem 5.2]{AV_torus} remain unchanged.
\end{proof}


\bibliographystyle{plain}
\bibliography{literature}

\end{document}